\pdfoutput=1
\newif\ifpersonal
\personaltrue 
\documentclass[leqno,10pt]{amsart}
\usepackage[normal]{phaine_style}
\usepackage[margin=1.175in]{geometry}


\ifpersonal
\newcommand{\personal}[1]{\textcolor[rgb]{0,0,1}{(Personal: #1)}}
\newcommand{\discussion}[1]{\textcolor{violet}{(Discussion: #1)}}
\else
\newcommand{\personal}[1]{\ignorespaces}
\newcommand{\discussion}[1]{\ignorespaces}
\fi



\newcommand{\cB}{\mathcal B}
\newcommand{\cC}{\mathcal C}

\newcommand{\cE}{\mathcal E}

\DeclareFontFamily{U}{BOONDOX-calo}{\skewchar\font=45 }
\DeclareFontShape{U}{BOONDOX-calo}{m}{n}{<-> s*[1.05] BOONDOX-r-calo}{}
\DeclareFontShape{U}{BOONDOX-calo}{b}{n}{<-> s*[1.05] BOONDOX-b-calo}{}
\DeclareMathAlphabet{\mathcalboondox}{U}{BOONDOX-calo}{m}{n}


\makeatletter
\let\save@mathaccent\mathaccent
\newcommand*\if@single[3]{%
	\setbox0\hbox{${\mathaccent"0362{#1}}^H$}%
	\setbox2\hbox{${\mathaccent"0362{\kern0pt#1}}^H$}%
	\ifdim\ht0=\ht2 #3\else #2\fi
}
\newcommand*\rel@kern[1]{\kern#1\dimexpr\macc@kerna}
\newcommand*\widebar[1]{\@ifnextchar^{{\wide@bar{#1}{0}}}{\wide@bar{#1}{1}}}
\newcommand*\wide@bar[2]{\if@single{#1}{\wide@bar@{#1}{#2}{1}}{\wide@bar@{#1}{#2}{2}}}
\newcommand*\wide@bar@[3]{%
	\begingroup
	\def\mathaccent##1##2{%
		\let\mathaccent\save@mathaccent
		\if#32 \let\macc@nucleus\first@char \fi
		\setbox\z@\hbox{$\macc@style{\macc@nucleus}_{}$}%
		\setbox\tw@\hbox{$\macc@style{\macc@nucleus}{}_{}$}%
		\dimen@\wd\tw@
		\advance\dimen@-\wd\z@
		\divide\dimen@ 3
		\@tempdima\wd\tw@
		\advance\@tempdima-\scriptspace
		\divide\@tempdima 10
		\advance\dimen@-\@tempdima
		\ifdim\dimen@>\z@ \dimen@0pt\fi
		\rel@kern{0.6}\kern-\dimen@
		\if#31
		\overline{\rel@kern{-0.6}\kern\dimen@\macc@nucleus\rel@kern{0.4}\kern\dimen@}%
		\advance\dimen@0.4\dimexpr\macc@kerna
		\let\final@kern#2%
		\ifdim\dimen@<\z@ \let\final@kern1\fi
		\if\final@kern1 \kern-\dimen@\fi
		\else
		\overline{\rel@kern{-0.6}\kern\dimen@#1}%
		\fi
	}%
	\macc@depth\@ne
	\let\math@bgroup\@empty \let\math@egroup\macc@set@skewchar
	\mathsurround\z@ \frozen@everymath{\mathgroup\macc@group\relax}%
	\macc@set@skewchar\relax
	\let\mathaccentV\macc@nested@a
	\if#31
	\macc@nested@a\relax111{#1}%
	\else
	\def\gobble@till@marker##1\endmarker{}%
	\futurelet\first@char\gobble@till@marker#1\endmarker
	\ifcat\noexpand\first@char A\else
	\def\first@char{}%
	\fi
	\macc@nested@a\relax111{\first@char}%
	\fi
	\endgroup
}
\makeatother













\tikzcdset{
  cells={font=\everymath\expandafter{\the\everymath\displaystyle}},
}

\renewcommand{\longrightarrow}{\to}

\renewcommand{\Open}{\mathrm{Open}}
\renewcommand{\Cup}{\mathrm{C}}
\newcommand{\wclwc}{wclwc\xspace}
\renewcommand{\ni}{\smallni}


\DeclareMathOperator{\ctr}{ctr}
\DeclareMathOperator{\all}{all}

\DeclareMathOperator{\sing}{sing}
\DeclareMathOperator{\sheaf}{sheaf}

\newcommand{\RGamma}{\Rup\Gamma}
\DeclareMathOperator{\RHom}{RHom}

\newcommand{\Hsing}{\Hup_{\sing}}
\newcommand{\Hsheaf}{\Hup_{\sheaf}}


\newcommand{\Piinfty}{\Pi_{\infty}}
\newcommand{\PiEinfty}{\Pi_{\E}^{\infty}}
\newcommand{\PiinftyE}{\Pi_{\infty}^{\E}}
\newcommand{\Catinfty}{\Cat_{\infty}}

\DeclareMathOperator{\Cons}{Cons}
\newcommand{\ConsP}{\Cons_{P}}
\newcommand{\ConsPhyp}{\ConsP^{\hyp}}

\DeclareMathOperator{\Env}{Env}

\newcommand{\LChyp}{\LC^{\hyp}}
\newcommand{\TopP}{\Top_{\!/P}}

\renewcommand{\C}{\mathcal{C}}
\renewcommand{\D}{\mathcal{D}}
\renewcommand{\E}{\mathcal{E}}

\newcommand{\allctr}{\all,\ctr}
\newcommand{\Openctr}{\Open_{\ctr}}
\newcommand{\Openctrall}{\Open_{\allctr}}
\newcommand{\Opencross}{\Open_{\cross}}

\DeclareMathOperator{\lwc}{lwc}
\newcommand{\Toplwc}{\Top^{\lwc}}
\newcommand{\Toplwcop}{\Top^{\lwc,\op}}


\newcommand{\prSlowersharp}{\pr_{S,\sharp}}
\newcommand{\prTlowersharp}{\pr_{T,\sharp}}

\newcommand{\prslowersharphyp}{\pr_{\{s\},\sharp}^{\hyp}}

\newcommand{\prSlowersharphyp}{\pr_{S,\sharp}^{\hyp}}
\newcommand{\prTlowersharphyp}{\pr_{T,\sharp}^{\hyp}}


\newcommand{\prXlowerstar}{\pr_{X,\ast}}
\newcommand{\prSlowerstar}{\pr_{S,\ast}}
\newcommand{\prTlowerstar}{\pr_{T,\ast}}


\newcommand{\prSupperstar}{\prupperstar_{S}}

\newcommand{\prUupperstar}{\prupperstar_{U}}

\newcommand{\prupperstarhyp}{\pr^{\ast,\hyp}}
\newcommand{\prsupperstarhyp}{\prupperstarhyp_{\{s\}}}
\newcommand{\prSupperstarhyp}{\prupperstarhyp_{S}}
\newcommand{\prTupperstarhyp}{\prupperstarhyp_{T}}


\newcommand{\GammaXlowerstar}{\Gamma_{X,\ast}}


\newcommand{\GammaXlowersharphyp}{\Gamma_{X,\sharp}^{\hyp}}


\newcommand{\upperstarhyp}{^{\ast,\hyp}}
\newcommand{\fupperstarhyp}{f\upperstarhyp}
\newcommand{\iupperstarhyp}{i\upperstarhyp}
\newcommand{\gupperstarhyp}{g\upperstarhyp}

\newcommand{\GammaXupperstar}{\Gammaupperstar_{X}}

\newcommand{\Gammaupperstarhyp}{\Gamma\upperstarhyp}

\newcommand{\GammaXupperstarhyp}{\Gammaupperstarhyp_{X}}


\newcommand{\inv}{^{-1}}

\newcommand{\sigmainverse}{\sigma\inv}
\newcommand{\finverse}{f\inv}


\newcommand{\coloneqq}{\coloneq}


\theoremstyle{definition}


\makeatletter
\def\@tocline#1#2#3#4#5#6#7{\relax
  \ifnum #1>\c@tocdepth 
  \else
    \par \addpenalty\@secpenalty\addvspace{#2}%
    \begingroup \hyphenpenalty\@M
    \@ifempty{#4}{%
      \@tempdima\csname r@tocindent\number#1\endcsname\relax
    }{%
      \@tempdima#4\relax
    }%
    \parindent\z@ \leftskip#3\relax \advance\leftskip\@tempdima\relax
    \rightskip\@pnumwidth plus4em \parfillskip-\@pnumwidth
    #5\leavevmode\hskip-\@tempdima
      \ifcase #1
       \or\or \hskip 1em \or \hskip 2em \else \hskip 3em \fi%
      #6\nobreak\relax
    \hfill\hbox to\@pnumwidth{\@tocpagenum{#7}}\par
    \nobreak
    \endgroup
  \fi}
\makeatother

\addbibresource{References.bib}


\title{The homotopy-invariance of constructible sheaves}

\author{Peter J. Haine}
\address{Peter J. Haine, Department of Mathematics, University of California, Berkeley, Evans Hall, Berkeley, CA 94720, USA}
\email{phaine@math.berkeley.edu}

\author{Mauro Porta}
\address{Mauro Porta, Institut de Recherche Mathématique Avancée, 7 Rue René Descartes, 67000 Strasbourg, France}
\email{porta@math.unistra.fr}

\author{Jean-Baptiste Teyssier}
\address{Jean-Baptiste Teyssier, Institut de Mathématiques de Jussieu, 4 place Jussieu, 75005 Paris, France}
\email{jean-baptiste.teyssier@imj-prg.fr}

\date{\today}

\begin{document}

\begin{abstract} 
	The purpose of this paper is to explain why the functor that sends a stratified topological space $ S $ to the \category of constructible (hyper)sheaves on $ S $ with coefficients in a large class of presentable \categories is homotopy-invariant.
	To do this, we first establish a number of results in the unstratified setting, i.e., the setting of locally constant (hyper)sheaves.
	For example, if $X$ is a locally weakly contractible topological space and $\cE$ is a presentable \category, then we give a concrete formula for the constant hypersheaf functor \smash{$\cE \to \Shhyp(X;\cE)$}.
	This formula lets us show that the constant hypersheaf functor is a right adjoint, and is fully faithful if $ X $ is also weakly contractible.
	It also lets us prove a general monodromy equivalence and categorical Künneth formula for locally constant hypersheaves.
\end{abstract}

\maketitle

\tableofcontents


\setcounter{section}{-1}

\section{Introduction}

A classical result from sheaf theory says that the functor $\goesto{S}{\LC(S;\Set)}$ sending a topological space $ S $ to the category of locally constant sheaves of sets on $ S $ is homotopy-invariant.
More generally, if $ P $ is a poset then the functor
\begin{equation*}
	\goesto{S}{\ConsP(S;\Set)}
\end{equation*}
sending a $ P $-stratified topological space $ S $ to the category of sheaves of sets on $ S $ that are constructible with respect to the stratification $ \fromto{S}{P} $ is invariant under stratified homotopy equivalences.
Lurie's work on the topological exodromy equivalence (see \cite[Theorems \HAappthmlink{A.1.15} \& \HAappthmlink{A.4.19}]{HA}) generalizes these results by considering sheaves with values in the \category of \textit{spaces}, provided that we restrict to the following classes of well-behaved (stratified) topological spaces:
\begin{enumerate}
	\item For locally constant sheaves, we take topological spaces $S$ that are \emph{locally of singular shape}.
	
	\item For constructible sheaves, we take stratified topological spaces $S \to P$ for which the poset $P$ is \emph{Noetherian}, the stratification is \emph{conical}, $S$ is \emph{paracompact}, and all of the strata of $ S $ are \emph{locally of singular shape}.
\end{enumerate}
The goal of this paper is to establish the homotopy-invariance result for the \categories of locally constant and constructible sheaves with coefficients in the \category of spaces \emph{removing all of the above hypotheses}.

In the higher-categorical world, alongside with sheaves, it is often important to also consider hypersheaves.
Depending on the situation, one is better behaved than the other (see the discussion in \cite[\HTTsubsec{6.5.4}]{HTT}).
In the main body of the paper, we prove two versions of the homotopy-invariance theorem: one in the setting of hypersheaves and one in the setting of sheaves.
The hypersheaf one is stronger, requiring fewer assumptions than its sheaf-theoretic counterpart.
The precise statements are given later in this introduction, but the main advantages can be summarized as follows:
\begin{enumerate}
	\item Working with hypersheaves, we establish invariance not only with respect homotopy equivalences, but to a large class of \textit{weak} homotopy equivalences (a result that seems new even for sheaves of \textit{sets}).
	
	\item Working with hypersheaves, we can drop the Noetherianity assumption on the poset $P$.
\end{enumerate}
We expect both of these statements to fail in the sheaf-theoretic setting.
Furthermore, both facts have interesting consequences.
The first is needed, \emph{at this level of generality}, in the companion paper of Porta--Teyssier \cite{PortaTeyssier} concerning a strengthening of the exodromy equivalence of \HAa{Theorem}{A.9.3}.
The second lets us apply the homotopy-invariance theorem to key examples like infinite Grassmannians or the \textit{Ran space} of a manifold \cites[\HTTsubsec{5.5.1}]{HTT}[\S3.7]{MR3590534}{arXiv:1910.11980}{arXiv:2107.11243}{arXiv:2012.08504}, whose natural stratification is not Noetherian.
This was one of the motivations behind Lejay's work on the exodromy equivalence \cite{arXiv:2102.12325}.

Finally, we do not limit ourselves to sheaves of spaces.
Rather, our results apply to more general presentable (not necessarily compactly generated) \categories: the methods of this paper are explicit enough that we can handle any stable presentable \category, and any \topos without any added difficulty.


\subsection*{Statement of results}\label{subsec:statementofresults}

Before giving the precise statements of the main homotopy-invariance results of this paper, let us be precise about what we mean by \textit{homotopy-invariance}.
Fix a poset $P$, that we regard as a topological space via the Alexandroff topology (where the open subsets are the upwards-closed subsets, see \Cref{notation:Alexandroff_topology}).
A \textit{$ P $-stratified topological space} is the data of a topological space $S$ together with a continuous map $S \to P$.
When $P = \pt$, a $P$-stratified space is just a topological space.
Given \smash{$S \in \TopP$} and $X \in \Top$, we regard $S \cross X$ as a $P$-stratified space via the projection $S \cross X \to S \to P$.
Consider the following definition:

\begin{definition}\label{def:homotopy_invariant}
	Let $ P $ be a poset. 
	A functor \smash{$ C \colon \fromto{\TopP^{\op}}{\Catinfty} $} is \defn{homotopy-invariant} if for each $ P $-stratified space $ S $, the functor
	\begin{equation*}
		C(\pr_{S}) \colon \fromto{C(S)}{C(S \cross [0,1])}
	\end{equation*}
	induced by the projection $\pr_S \colon S \cross [0,1] \to S$ is an equivalence of \categories.
	A functor \smash{$ C \colon \fromto{\TopP^{\op}}{\Catinfty} $} is \defn{strongly homotopy-invariant} if for each $P$-stratified space $S$ and each weakly contractible and locally weakly contractible\footnote{Starting from here on we shorten ``weakly contractible and locally weakly contractible'' to \textit{\wclwc}.} topological space $X$, the induced functor
	\begin{equation*} 
		C(\pr_S) \colon \fromto{C(S)}{C(S \cross X)}
	\end{equation*}
	is an equivalence of \categories.
\end{definition}

Let $ \E $ be a presentable \category and $ S $ a topological space.
We write $ \LC(S;\E) $ for the \category of locally constant $ \E $-valued sheaves on $ S $, and write \smash{$ \LChyp(S;\E) $} for the hypersheaf variant of this \category (see \cref{subsec:sheaveshypersheaves,subsec:background_locally_constant}).
Beware that, in general, the notions of local constancy for sheaves and hypersheaves are not the same and \smash{$ \LChyp(S;\E) $} is not a subcategory of $ \LC(S;\E) $. 

\begin{theorem}[(\Cref{thm:homotopy_invariance_hyperconstant} \& \Cref{cor:hiofLC})]\label{thm:introhiofLC}
	The functors
	\begin{equation*}
		\LC(-;\E), \: \LChyp(-;\E) \colon \fromto{\Top^{\op}}{\Catinfty} 
	\end{equation*}
	are homotopy-invariant.
	Moreover, $\LChyp(-;\E)$ is strongly homotopy invariant
\end{theorem}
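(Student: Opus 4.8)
The plan is to observe that, by \Cref{def:homotopy_invariant}, the entire statement reduces to showing that for every topological space $S$ the pullback functor $\pr_S^\ast\colon\fromto{\LChyp(S;\E)}{\LChyp(S\cross X;\E)}$ along $\pr_S\colon S\cross X\to S$ is an equivalence of \categories, where $X$ is an arbitrary \wclwc space for the strong assertion about $\LChyp(-;\E)$, and $X=[0,1]$ for the assertion about $\LC(-;\E)$ and --- since $[0,1]$ is itself \wclwc --- for the weak assertion about $\LChyp(-;\E)$. (That a functor with this property also inverts every homotopy equivalence is then formal: one applies functoriality to the two endpoint inclusions $S\to S\cross[0,1]$, each of which is forced to be inverse to $\pr_S^\ast$.) So it suffices to treat two cases: $\LChyp$ for general \wclwc $X$, and $\LC$ for $X=[0,1]$.

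For $\LChyp(-;\E)$ I would first settle the case $S=\pt$, where the claim is that for every \wclwc space $X$ and every presentable coefficient \category $\D$ the constant hypersheaf functor $\fromto{\D}{\LChyp(X;\D)}$ is an equivalence. It is fully faithful because $X$ is weakly contractible --- one of the consequences drawn from the concrete formula for the constant hypersheaf functor --- and essentially surjective by the general monodromy equivalence: the monodromy equivalence applies because $X$ is locally weakly contractible, and the shape of $X$ is a point because $X$ is weakly contractible, so $\LChyp(X;\D)$ collapses onto $\D$. To promote this to an arbitrary $S$ I would invoke the categorical K\"unneth formula, which identifies $\LChyp(S\cross X;\E)$ with $\LChyp\bigl(X;\LChyp(S;\E)\bigr)$ in a way that carries $\pr_S^\ast$ to the constant hypersheaf functor for $X$ with values in the presentable \category $\LChyp(S;\E)$; combined with the case $S=\pt$ applied to $\D=\LChyp(S;\E)$, this gives the desired equivalence. (Alternatively, and without K\"unneth: the pushforward $\pr_{S,\ast}$ preserves local constancy, and the unit and counit of $\pr_S^\ast\dashv\pr_{S,\ast}$ are equivalences on $\LChyp$ precisely because the shape of the weakly contractible $X$ is a point --- again with the concrete constant-hypersheaf formula as the key computational input.)

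The case of $\LC(-;\E)$ with $X=[0,1]$ is handled by running the analogous argument for sheaves in place of hypersheaves. Hyperdescent and the monodromy equivalence are no longer available, but $[0,1]$ is \emph{compact}, so $\pr_S\colon S\cross[0,1]\to S$ is proper; proper base change, together with the contractibility of $[0,1]$ (so that the global sections of a constant sheaf on $[0,1]$ recover the coefficient object), supplies the analogue of the constant-hypersheaf input. This reliance on compactness is exactly why strong homotopy-invariance is not expected to hold for $\LC(-;\E)$.

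The main obstacle throughout is the complete absence of hypotheses on $S$: since $S$ is an arbitrary topological space, none of the shape-theoretic machinery --- the monodromy equivalence, local contractibility, paracompactness --- is available on the $S$ side, and $\LChyp(S;\E)$ may be an essentially arbitrary presentable \category. The argument must therefore be organized so that every appeal to contractibility, to hyperdescent along weakly contractible opens, and to the concrete constant-hypersheaf formula takes place on the auxiliary factor $X$, with $\LChyp(S;\E)$ entering only as an opaque presentable coefficient \category; establishing the categorical K\"unneth formula --- equivalently, the constant-hypersheaf formula relative to $\pr_S$ --- with no regularity assumption whatsoever on $S$ is where the real work lies.
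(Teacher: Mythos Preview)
Your parenthetical alternative for $\LChyp$ --- that $\pr_{S,\ast}$ preserves local hyperconstancy and the unit and counit of $\prSupperstarhyp \dashv \pr_{S,\ast}$ restrict to equivalences on $\LChyp$ --- is the paper's approach, and you correctly locate the crux in the constant-hypersheaf formula relative to $\pr_S$ with no hypothesis on $S$. The paper packages this via an intermediate notion: the full subcategory $\LChyp_S(S\cross X;\E) \subset \Shhyp(S\cross X;\E)$ of hypersheaves that are, locally on $X$, pulled back from $\Shhyp(S;\E)$. The relative formula (\Cref{prop:computing_hyperconstant_hypersheaf}) shows that $\prSupperstarhyp \colon \Shhyp(S;\E) \to \LChyp_S(S\cross X;\E)$ is an equivalence (\Cref{thm:relative_full_faithfulness}); the remaining work is the inclusion $\LChyp(S\cross X;\E) \subset \LChyp_S(S\cross X;\E)$ and that the inverse $\pr_{S,\ast}$ carries the left side into $\LChyp(S;\E)$.

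Your primary K\"unneth route, however, has a gap. The paper's K\"unneth formula (\Cref{cor:Kunneth}) needs \emph{both} factors locally weakly contractible, and the currying $\LChyp(S\cross X;\E) \simeq \LChyp(X;\LChyp(S;\E))$ is not established for arbitrary $S$. Indeed for such $S$ the \category $\LChyp(S;\E)$ is not known to be presentable --- closure under colimits (\Cref{cor:locally_constant_closed_under_limits_colimits}) is proved only under local weak contractibility --- so the right-hand side may not even be defined. The correct relative identification is $\LChyp_S(S\cross X;\E) \simeq \Shhyp(S;\E)$, with $\Shhyp(S;\E)$ rather than $\LChyp(S;\E)$ as the coefficient \category and $\LChyp_S$ rather than $\LChyp$ on the left; that is precisely \Cref{thm:relative_full_faithfulness}.

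For $\LC(-;\E)$ the paper does not use proper base change. It exploits local compactness of $[0,1]$ via the tensor decomposition $\Sh(S\cross[0,1]) \simeq \Sh(S)\tensor\Sh([0,1])$: tensoring the triple $\Gamma_{[0,1],\sharp} \leftadjoint \Gamma_{[0,1]}^\ast \leftadjoint \Gamma_{[0,1],\ast}$ with $\Sh(S;\E)$ produces a \emph{left} adjoint $\prSlowersharp$ to $\prSupperstar$, and the argument runs through $\prSlowersharp \leftadjoint \prSupperstar$. This sidesteps the question of whether proper base change transfers to arbitrary presentable coefficients (note $\pr_{S,\ast}$ need not preserve colimits, and stalks are not jointly conservative on $\Sh(S;\E)$ for general $\E$, so the stalkwise verification you have in mind is insufficient). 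Your sketch also omits a necessary combinatorial input: a Lebesgue-number argument (\Cref{lem:opencover,lem:LCcover}) showing that every $L \in \LC(S\cross[0,1];\E)$ is already constant on $U_\alpha\cross[0,1]$ for some open cover $\{U_\alpha\}$ of $S$.
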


\noindent Passing to global sections, \Cref{thm:introhiofLC} implies that cohomology with coefficients in a locally constant sheaf valued in any presentable \category is homotopy-invariant.

\begin{remark}
	The same kind of techniques involved in the proof of \Cref{thm:introhiofLC} allow to show that the functor \smash{$\LChyp(-;\E)$} inverts all weak homotopy equivalences between locally weakly contractible topological spaces (see \cref{obs:monodromy}).
	However, the functors $ \LC(-;\E) $ and \smash{$ \LChyp(-;\E) $} do \textit{not} invert all weak homotopy equivalences between \emph{arbitrary} topological spaces: sheaf cohomology with constant coefficients is not an invariant of the weak homotopy type of a topological space.
	Indeed, for paracompact spaces, Čech cohomology and sheaf cohomology agree \cite[Théorème 5.10.1]{MR0102797}.
	Now, the Warsaw circle is weakly contractible, but the quotient map from it to the circle induces an isomorphism on Čech cohomology, hence sheaf cohomology.
	Note that this doesn't fall into the setting of \Cref{thm:introhiofLC}: the Warsaw circle is not even locally path-connected.
\end{remark}

Fix a poset $P$.
Given a $P$-stratified topological space $ \fromto{S}{P} $, we write $ \ConsP(S;\E) $ for the \category of constructible $ \E $-valued sheaves on $ S $, and write \smash{$ \ConsPhyp(S;\E) $} for the hypersheaf variant of this \category (see \cref{subsec:stratifiedspaces} for precise definitions).
Since constructible sheaves are locally constant along a stratification, as long as the poset $ P $ and coefficients $ \E $ allow to check equivalences after pulling back to strata, then \Cref{thm:introhiofLC} implies that constructible sheaves are homotopy-invariant.
We offer two ways of checking this:

\begin{theorem}[(\Cref{cor:hiofConsCG,cor:hiofConsnoeth})]\label{thm:mainintro}
	Consider the functors
	\begin{equation*}
		\ConsP(-;\E), \: \ConsPhyp(-;\E) \colon \fromto{\TopP^{\op}}{\Catinfty} \period
	\end{equation*}
	\begin{enumerate}[label=\stlabel{thm:mainintro}, ref=\arabic*]
		\item\label{thm:mainintro.1} If $ \E $ is compactly generated, then the functor $\ConsPhyp(-;\E)$ is strongly homotopy-invariant.

		\item\label{thm:mainintro.2} If $ P $ is Noetherian and $ \E $ is compactly generated, stable, or \atopos, then the functor $\ConsP(-;\E)$ is homotopy-invariant, and \smash{$\ConsPhyp(-;\E)$} is strongly homotopy-invariant.
	\end{enumerate}
\end{theorem}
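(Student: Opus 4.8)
\begin{proofoutline}
The plan is to bootstrap both assertions from the unstratified homotopy-invariance of \Cref{thm:introhiofLC}, using that a constructible (hyper)sheaf on a $P$-stratified space is exactly a (hyper)sheaf whose restriction to each stratum is locally constant. Fix a poset $P$, a $P$-stratified space $S$, and a \wclwc space $X$ (one takes $X=[0,1]$ in the statements concerning $\ConsP(-;\E)$), and let $\pi\colon S\cross X\to S$ be the projection. Since the strata of $S\cross X$ are the products $S_p\cross X$ of the strata of $S$ with $X$, the functor $\pi^{\ast}$ preserves constructibility, and over the stratum indexed by $p\in P$ it restricts to the pullback functor $\fromto{\LC^{(\hyp)}(S_p;\E)}{\LC^{(\hyp)}(S_p\cross X;\E)}$, which is an equivalence by \Cref{thm:introhiofLC}. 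The content of \Cref{thm:mainintro} is therefore to propagate ``equivalence on every stratum'' to an equivalence of the global categories of constructible (hyper)sheaves.

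For $P$ Noetherian this is carried out by a dévissage along the stratification. The structural input I would establish is that, when $P$ is Noetherian and $\E$ is compactly generated, stable, or \atopos, the category $\ConsP^{(\hyp)}(S;\E)$ is reconstructed from the locally constant categories $\LC^{(\hyp)}(S_p;\E)$ of the strata by a (possibly transfinite) tower of recollements: peeling off one stratum at a time, one expresses the constructible (hyper)sheaves over a closed substratum as a fiber product of \categories assembled from those over the open part, those over the remaining deeper closed part, and a gluing functor of the shape $i^{\ast}j_{\ast}$. The place where the hypotheses on $\E$, and the absence of any conicality assumption on the stratification, really enter is in showing that $i^{\ast}j_{\ast}$ preserves constructibility, so that the recollement closes up inside constructible objects. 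This tower is natural in the stratified space, so $\pi^{\ast}$ is the induced comparison between the towers attached to $S$ and to $S\cross X$; as equivalences of \categories are stable under the fiber products involved, \Cref{thm:introhiofLC} applied to the strata-wise pullbacks forces $\pi^{\ast}$ to be an equivalence. Taking $X=[0,1]$ gives homotopy-invariance of $\ConsP(-;\E)$, and taking $X$ an arbitrary \wclwc space gives strong homotopy-invariance of $\ConsP^{(\hyp)}(-;\E)$.

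To remove the Noetherianity hypothesis in the hypersheaf statement, I would approximate $P$ by its finite (hence Noetherian) sub-posets and deduce a presentation of $\ConsP^{\hyp}(S;\E)$ as a cofiltered limit of the corresponding constructible-hypersheaf categories, compatibly with $\pi^{\ast}$; the previous case then applies to each term of the limit, and equivalences of \categories are stable under cofiltered limits. Compact generation of $\E$ is precisely what makes such a limit presentation valid and exhaustive, which is also the reason this part is confined to hypersheaves and to compactly generated coefficients: the corresponding infinitary dévissage is not expected to converge for sheaves, nor for a general presentable $\E$.

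The main obstacle, as I see it, is exactly this structural dévissage: reconstructing $\ConsP^{(\hyp)}(S;\E)$ from the locally constant categories of its strata through the recollement tower above in the absence of any conicality hypothesis, equivalently proving that the gluing functors $i^{\ast}j_{\ast}$ preserve constructibility for the relevant coefficient categories $\E$, and, in the non-Noetherian regime, isolating the exact role of compact generation in the limit presentation. Once these inputs are available, the reduction to the strata and the propagation of equivalences are formal, and the theorem follows from \Cref{thm:introhiofLC}.
\end{proofoutline}
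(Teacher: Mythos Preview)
Your reduction to the strata is the right first move, but the structural input you propose to carry it out does not hold in the generality of the theorem. Reconstructing $\ConsP^{(\hyp)}(S;\E)$ from the locally constant categories $\LC^{(\hyp)}(S_p;\E)$ by an iterated recollement requires, as you note, that the gluing functors $\iupperstar\jlowerstar$ preserve local constancy along strata. Without a conicality hypothesis on the stratification this is simply false: for a general closed--open decomposition there is no reason for $\iupperstar\jlowerstar$ of a locally constant sheaf on the open part to be locally constant on the closed part. Since the whole point of the theorem is to avoid conicality, this is not a technical obstacle to be overcome but a genuine obstruction to your strategy as written. Similarly, the non-Noetherian case is not handled by approximating $P$ by finite subposets; there is no filtered-limit presentation of $\ConsPhyp(S;\E)$ of that shape available here.

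The paper's argument sidesteps both issues by working with the \emph{exceptional pushforward}: one shows that $\prSupperstarhyp$ admits a \emph{left} adjoint $\prSlowersharphyp$ (this comes from the explicit formula for the hyperconstant hypersheaf functor) and that $\prSupperstarhyp$ is fully faithful on all of $\Shhyp(S;\E)$, not just on constructible objects. The exceptional pushforward is then shown to preserve constructibility by a basechange argument, and essential surjectivity reduces to checking that the unit $F \to \prSupperstarhyp\prSlowersharphyp(F)$ is an equivalence for constructible $F$. This last step needs only that the restriction functors to strata $\{\restrict{(-)}{S_p \cross X}^{\hyp}\}_{p \in P}$ are \emph{jointly conservative}, a far weaker condition than your recollement reconstruction. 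For compactly generated $\E$ this holds for every $P$ because stalks detect equivalences of hypersheaves; for Noetherian $P$ and $\E$ stable or \atopos it holds by a short Noetherian induction using only that restriction to a closed subspace and its open complement are jointly conservative. No statement about $\iupperstar\jlowerstar$ preserving constructibility is ever needed.
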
	

\begin{remark}
	\Cref{thm:mainintro} holds under much weaker assumptions than Lurie's exodromy equivalence \HAa{Theorem}{A.9.3}.
	For instance, it holds for stratified spaces that are not necessarily conical.
\end{remark}

\noindent Again, passing to global sections, \Cref{thm:mainintro} implies that (under the above hypotheses) sheaf cohomology with coefficients in a constructible sheaf is homotopy-invariant.
Also note that \Cref{thm:mainintro} generalizes the following existing results about the homotopy-invariance of constructible sheaves:
\begin{enumerate}
	\item In the setting of topologically stratified spaces in the sense of Goresky--MacPherson \cite[\S1.1]{MR696691}, Treumann showed that constructible sheaves with values in the $ 2 $-category $ \Cat_{1} $ of $ 1 $-categories is homotopy-invariant \cite[Theorem 3.11]{MR2575092}.

	\item When $ P $ is Noetherian, Clausen--Ørsnes Jansen proved that $ \ConsP(-;\Spc) $ is homotopy-invariant \cite[Proposition 3.2]{arXiv:2108.01924}.
	Our proof for $ \ConsP(-;\E) $ is a mild extension of their work.
\end{enumerate}

One of the key ingredients of the proofs of \Cref{thm:introhiofLC,thm:mainintro} is the notion of \emph{topological family of locally hyperconstant hypersheaves}.
Concretely, if $X$ is a \wclwc topological space (e.g.\ $X = [0,1]$), and $S$ is any topological space, we are led to consider the full subcategory
\begin{equation*}
	\LChyp_S(S \cross X; \E) \subset \Shhyp(S \cross X; \E)
\end{equation*}
spanned by those hypersheaves that are, locally on $X$, pulled back from hypersheaves on $S$ (see \cref{def:LC} for the precise definition).
When $\E = \Spc$, this definition recovers the usual notion of \textit{foliated} hypersheaf (see \cref{subsec:foliated_hypersheaves}), but it is better behaved for general coefficients.
The main theorem concerning these objects is the following:

\begin{theorem}[(\Cref{prop:computing_hyperconstant_hypersheaf,thm:relative_full_faithfulness})]\label{thm_intro:full_faithfulness}
	Let $X$ be a locally weakly contractible topological space and let $\E$ be a presentable $\infty$-category.
	Then for every topological space $S$, the pullback functor
	\begin{equation*} 
		\prSupperstarhyp \colon \Shhyp(S; \E) \longrightarrow \Shhyp(S \cross X; \E) 
	\end{equation*}
	admits a left adjoint.
	Moreover, if $ X $ is also weakly contractible, then \smash{$ \prSupperstarhyp $} is fully faithful with essential image \smash{$\LChyp_S(S \cross X; \E)$}.
\end{theorem}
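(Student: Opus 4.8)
The plan is to reduce everything to a concrete description of $\prSupperstarhyp$ on the basis
\[
	\cB \coloneqq \{\, U \cross V \ :\ U \subseteq S \text{ open and } V \subseteq X \text{ a weakly contractible open}\,\}
\]
of $S \cross X$, which is a basis precisely because $X$ is locally weakly contractible. The crux — this is $\Cref{prop:computing_hyperconstant_hypersheaf}$ — is that for each $\cF \in \Shhyp(S;\E)$ the presheaf on $\cB$ carrying $U \cross V$ to $\cF(U)$ is a hypersheaf for the induced $\infty$-site structure, and that the corresponding hypersheaf $P_\cF$ on $S \cross X$ is canonically $\prSupperstarhyp\cF$. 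To identify $P_{(-)}$ with $\prSupperstarhyp$, note first — independently of the formula — that by the Yoneda lemma the right adjoint $\pr_{S,\ast}^{\hyp}$ of $\prSupperstarhyp$ satisfies $(\pr_{S,\ast}^{\hyp}\cG)(U) = \cG(U\cross X)$, since $\prSupperstarhyp$ carries the hypersheaf represented by an open $U$ to the one represented by $U\cross X$. One then checks directly that $\cF\mapsto P_\cF$ is left adjoint to $\cG\mapsto(U\mapsto\cG(U\cross X))$ — the counit is the collection of restriction maps $\cG(U\cross X)\to\cG(U\cross V)$ on $\cB$, the unit is assembled from the equalities $P_\cF(U\cross V)=\cF(U)$ along a hypercover of $U\cross X$, and the triangle identities are routine — so $P_{(-)}\simeq\prSupperstarhyp$ by uniqueness of left adjoints.

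Granting this, both conclusions are short. The functor $\prSupperstarhyp$ preserves limits: on $\cB$ it is the objectwise functor $\cF\mapsto(U\cross V\mapsto\cF(U))$, and limits of $\E$-valued hypersheaves are computed objectwise. Since $\Shhyp(S;\E)$ and $\Shhyp(S\cross X;\E)$ are presentable, the adjoint functor theorem supplies the left adjoint $\prSlowersharphyp$. If moreover $X$ is weakly contractible, then $X$ is itself a weakly contractible open of $X$, so $U\cross X\in\cB$ and $(\pr_{S,\ast}^{\hyp}\prSupperstarhyp\cF)(U) = P_\cF(U\cross X) = \cF(U)$ naturally in $U$; thus the unit of $\prSupperstarhyp\dashv\pr_{S,\ast}^{\hyp}$ is an equivalence, so $\prSupperstarhyp$ is fully faithful (and $\prSlowersharphyp\prSupperstarhyp\simeq\mathrm{id}$). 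Equivalently, the end formula for mapping spaces gives $\operatorname{Map}(\prSupperstarhyp\cF,\prSupperstarhyp\cF')\simeq\operatorname{Map}(\cF,\cF')^{\Piinfty(X)}$, and the natural map $\operatorname{Map}(\cF,\cF')\to\operatorname{Map}(\prSupperstarhyp\cF,\prSupperstarhyp\cF')$ corresponds to the constant inclusion into the cotensor $\operatorname{Map}(\cF,\cF')^{\Piinfty(X)}$, an equivalence exactly when $\Piinfty(X)\simeq\ast$.

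For the essential image, still with $X$ weakly contractible: the inclusion $\operatorname{im}(\prSupperstarhyp)\subseteq\LChyp_S(S\cross X;\E)$ is immediate, since $\prSupperstarhyp\cF$ is globally — hence locally on $X$ — pulled back from $S$. Conversely, as $\prSupperstarhyp$ is a fully faithful left adjoint, $\cG$ lies in its essential image iff the counit $\prSupperstarhyp\pr_{S,\ast}^{\hyp}\cG\to\cG$ is an equivalence. Given $\cG\in\LChyp_S(S\cross X;\E)$, choose (using local weak contractibility) a cover $\{V_i\}$ of $X$ by weakly contractible opens with $\cG|_{S\cross V_i}\simeq\pr^{\ast,\hyp}_{V_i}\cF_i$, writing $\pr^{\ast,\hyp}_{V_i}$ for the pullback $\Shhyp(S;\E)\to\Shhyp(S\cross V_i;\E)$, which is fully faithful because $V_i$ is weakly contractible. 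Checking the counit on the cover $\{S\cross V_i\}$ — using that a pullback from $S$ restricts to a pullback from $S$, and the full faithfulness of $\pr^{\ast,\hyp}_{V_i}$ — one reduces to showing that the restriction $\cG(U\cross X)\to\cG(U\cross V_i)$ is an equivalence for every open $U\subseteq S$. This in turn follows by computing $\Gamma(U\cross X;\cG)$ via hyperdescent along a hypercover of $U\cross X$ all of whose terms have the form $U\cross W$ with $W\subseteq X$ weakly contractible and contained in some $V_i$: on such a term $\cG$ restricts to a pullback from $U$, so $\Gamma(U\cross X;\cG)$ is a totalization over $\DDelta$ of products of the $\cF_i(U)$'s with transition maps prescribed by $\cG$, and this totalization collapses to $\cF_i(U)\simeq\cG(U\cross V_i)$ because a hypercover of the weakly contractible $X$ realizes to a point. (Conceptually: an object of $\LChyp_S(S\cross X;\E)$ amounts to a locally constant $\Shhyp(S;\E)$-valued hypersheaf on $X$, and over a weakly contractible base a locally constant object, with coefficients in any presentable category, is constant.)

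The hard part is the hyperdescent statement for the presheaf $U\cross V\mapsto\cF(U)$ on $\cB$. The difficulty is that $\cB$ is not closed under intersection — the intersection of two weakly contractible opens need not be weakly contractible — so hyperdescent is not formal: one must compare arbitrary hypercovers of $S\cross X$ against the product structure and invoke the basic fact that a hypercover of a weakly contractible space realizes to a point, equivalently that a basis of weakly contractible opens computes the homotopy type of $X$. For a general presentable $\E$ one reduces to $\E=\Spc$: whether a cone of $\E$-valued presheaves on $\cB$ is a limit cone can be tested by applying $\operatorname{Map}_\E(E,-)$ for $E$ ranging over a set of generators, and this carries $U\cross V\mapsto\cF(U)$ to the analogous construction applied to the $\Spc$-valued hypersheaf $\operatorname{Map}_\E(E,\cF(-))$ on $S$.
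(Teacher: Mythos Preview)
Your proof is correct and follows essentially the same strategy as the paper: both compute $\prSupperstarhyp$ explicitly on the basis $\Openctrall(S\times X)$ of products $U\times V$ with $V$ weakly contractible, observe that on such basis elements the formula is simply $\cF(U)$, and then leverage this description for both the full faithfulness and the essential-image statements via the contractibility of (the classifying space of) a suitable diagram of weakly contractible opens of $X$.

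There are two organizational differences worth noting. First, the paper constructs the left adjoint \emph{explicitly} as the functor $\Piinfty^{\E}(-/S)$ obtained by left Kan extending $V\times U \mapsto V\otimes\Piinfty(U)$; this gives a concrete formula that is reused later for basechange (\Cref{cor:projection_left_adjointable}) and for the comparison with singular cohomology. Your route via the adjoint functor theorem is shorter but loses this formula. Second, for the essential image the paper first pushes forward along $\pr_X$ to reduce to the case $S=\pt$ (using that $\pr_{X,\ast}$ preserves local hyperconstancy, itself a consequence of the left-adjointability just mentioned), and only then runs the ``diagram factors through $\Env(\cB_F)\simeq\Piinfty(X)\simeq\pt$'' argument. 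Your argument stays over $S$ throughout and computes $\cG(U\times X)$ directly by hyperdescent; this is a little more streamlined, though the content---that all transition maps in the diagram are equivalences, so the limit is taken over a contractible indexing $\infty$-groupoid---is identical. Your ``hard part'' (hyperdescent for $U\times V\mapsto\cF(U)$ on $\cB$) is what the paper handles by working on the larger site $\Opencross(S\times X)$ with the formula $G(V)^{\Piinfty(U)}$ and invoking \cite[\HAappthm{Proposition}{A.3.2} \& \HAappthm{Lemma}{A.3.10}]{HA}; this packaging avoids having to manipulate hypercovers in $\cB$ directly.
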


It is easy to explain the idea behind this theorem when $S = \pt$ and $\E = \Spc$.
Let $\Gamma_X \colon X \to \pt$ be the unique map, and let
\begin{equation*} 
	\Pi_\infty \colon \Shhyp(X) \longrightarrow \Spc 
\end{equation*}
be the left Kan extension of the functor sending an open $U \subset X$ to its underlying homotopy type $\Pi_\infty(U)$.
For formal reasons, $\Pi_\infty$ admits a right adjoint, that we denote $\Pi^\infty$.
Given $K \in \Spc$, the hypersheaf $\Pi^\infty(K) $ is given by the assignment
\begin{equation*} 
	U \mapsto \Pi^\infty(K)(U) \coloneqq \Map_{\Spc}(\Pi_\infty(U),K) \period 
\end{equation*}
There is a natural comparison map $\GammaXupperstarhyp \to \Pi^\infty$, and the fact that we are working in the hypercomplete setting and that $X$ is locally weakly contractible implies that this map is an equivalence (see \cref{prop:computing_hyperconstant_hypersheaf}).
Note that if $ X $ is also weakly contractible, then the full faithfulness part follows from the assumption that $\Pi_\infty(X) \simeq \pt $.

Besides \Cref{thm:introhiofLC,thm:mainintro}, \Cref{thm_intro:full_faithfulness} has many other consequences; we discuss them in \cref{sec:consequences_of_full_faithfulness}.
Among them are: a general form of the monodromy equivalence (\Cref{cor:monodromy}), a Künneth formula for locally hyperconstant hypersheaves (\Cref{cor:Kunneth}), and the comparison between sheaf and singular cohomology on locally weakly contractible spaces (\Cref{cor:cohomology_comparison}).
We also establish the following handy recognition criterion:

\begin{corollary}[(\Cref{prop:strong_local_hyperconstancy})]\label{cor_intro:strong_local_hyperconstancy}
	Let $X$ be a locally weakly contractible topological space and let $\E$ be a presentable \category.
	For a hypersheaf \smash{$F \in \Shhyp(X;\E)$}, the following statements are equivalent:
	\begin{enumerate}[label=\stlabel{cor_intro:strong_local_hyperconstancy}, ref=\arabic*]
		\item The hypersheaf $F$ is locally hyperconstant.
		
		\item For every pair of weakly contractible open subsets $U \subset V$ of $X$, the restriction map $ F(V) \to F(U) $ is an equivalence in $\E$.
	\end{enumerate}
\end{corollary}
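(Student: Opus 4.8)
The plan is to deduce both implications from \Cref{thm_intro:full_faithfulness} and \cref{prop:computing_hyperconstant_hypersheaf}, using the basis of weakly contractible open subsets of $X$. First I would record three elementary facts. (i) Any open subset $V \subset X$ is again locally weakly contractible, so its weakly contractible opens form a basis of its topology; since hypersheaves may be computed on any basis, a morphism in $\Shhyp(V;\E)$ is an equivalence as soon as it becomes one after evaluation on every weakly contractible open of $V$. (ii) The restriction of a locally hyperconstant hypersheaf to an open subset is again locally hyperconstant. (iii) By \cref{prop:computing_hyperconstant_hypersheaf}, when $V$ is locally weakly contractible the sections of the constant hypersheaf $\Gamma^{\ast,\hyp}_V(A)$ over any weakly contractible open are canonically identified with $A$, and all of its transition maps between weakly contractible opens are equivalences.

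For the implication (1) $\Rightarrow$ (2), given weakly contractible opens $U \subset V$ of $X$, I would use (ii) to see that $F|_V$ is locally hyperconstant on $V$. As $V$ is weakly contractible and locally weakly contractible, \Cref{thm_intro:full_faithfulness} (with $S = \pt$) then provides an equivalence of hypersheaves $F|_V \simeq \Gamma^{\ast,\hyp}_V(A)$, where $A \coloneqq F(V)$. Since this equivalence is compatible with restriction maps, the restriction $F(V) \to F(U)$ is carried to a transition map of the constant hypersheaf $\Gamma^{\ast,\hyp}_V(A)$ between two weakly contractible opens, hence is an equivalence by (iii).

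For the implication (2) $\Rightarrow$ (1), I would cover $X$ by weakly contractible opens $\{V_i\}_{i \in I}$ — possible since $X$ is locally weakly contractible — and show that each $F|_{V_i}$ is constant. Fix $i$, set $V \coloneqq V_i$ and $A \coloneqq F(V)$, and consider the counit $\epsilon \colon \Gamma^{\ast,\hyp}_V(A) \to F|_V$, at $F|_V$, of the adjunction between $\Gamma^{\ast,\hyp}_V$ and its right adjoint (global sections). By (i) it suffices to prove that $\epsilon(W)$ is an equivalence for every weakly contractible open $W \subset V$. Since $\epsilon$ is in particular a morphism of presheaves, its naturality square along the inclusion $W \subset V$ relates $\epsilon(W)$ and $\epsilon(V)$ through the restriction maps of $\Gamma^{\ast,\hyp}_V(A)$ and of $F|_V$; as the former restriction is an equivalence by (iii), it is enough to check that $\epsilon(V)$ and the restriction $F(V) \to F(W)$ are equivalences. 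The restriction $F(V) \to F(W)$ is an equivalence by hypothesis (2). And $\epsilon(V)$ is, by the triangle identity, inverse to the unit of the adjunction at $A$, which is an equivalence because $\Gamma^{\ast,\hyp}_V$ is fully faithful — here we use that $V$ is weakly contractible and locally weakly contractible and apply \Cref{thm_intro:full_faithfulness} again. Thus $\epsilon(W)$ is an equivalence for every weakly contractible $W \subset V$, so $\epsilon$ is an equivalence and $F|_V \simeq \Gamma^{\ast,\hyp}_V(A)$ is constant. Since the $V_i$ cover $X$, the hypersheaf $F$ is locally hyperconstant.

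The real content is carried entirely by \Cref{thm_intro:full_faithfulness} and \cref{prop:computing_hyperconstant_hypersheaf}; the one step I expect to require care is the reduction, in the direction (2) $\Rightarrow$ (1), of an equivalence of hypersheaves to its values on weakly contractible opens. This is precisely where local weak contractibility of $X$ is indispensable — otherwise the weakly contractible opens need not form a basis, and the statement itself fails — while the remaining manipulations with the counit and the restriction maps are purely formal.
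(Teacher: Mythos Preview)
Your proposal is correct and follows essentially the same approach as the paper's proof of \Cref{prop:strong_local_hyperconstancy} (specialized to $S = \pt$). Both directions use \Cref{thm:relative_full_faithfulness} and \Cref{prop:computing_hyperconstant_hypersheaf} in the same way; the only cosmetic difference is that for \enumref{cor_intro:strong_local_hyperconstancy}{2} $\Rightarrow$ \enumref{cor_intro:strong_local_hyperconstancy}{1} the paper reduces to the case where $X$ itself is \wclwc and identifies the counit with a restriction map by citing back to an earlier argument, whereas you work on a weakly contractible open $V$ and verify this identification directly via the naturality square and the triangle identity.
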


\noindent In particular, it immediately follows that if $X$ is locally weakly contractible, then locally hyperconstant hypersheaves are closed under arbitrary limits in \smash{$\Shhyp(X;\E)$}.


\subsection*{Linear overview}\label{subsec:linoverview}

\Cref{sec:sheaf_background} recalls background from sheaf theory.
In \cref{sec:hyperconstant}, we prove \Cref{thm_intro:full_faithfulness}.
\Cref{subsec:constant_hypersheaf} provides an alternative construction of the hyperconstant hypersheaf functor; in \cref{subsec:exceptional_pushforward}, we use this characterization to introduce the exceptional pushforward and study its properties.
In \cref{subsec:full_faithfulness} we establish \Cref{thm_intro:full_faithfulness}, and in \cref{subsec:foliated_hypersheaves} we discuss the relationship between the \category $\LC_S(S \cross X;\E)$ and the \category of \textit{foliated} hypersheaves.
\Cref{sec:consequences_of_full_faithfulness} is dedicated to the many consequences of \Cref{thm_intro:full_faithfulness}: in \cref{subsec:structural_results} we deduce many unexpected categorical properties of $\LC_S(S \cross X; \E)$; in \cref{subsec:monodromy} we establish a general form of the monodromy equivalence, as well as a Künneth formula for the \category of locally hyperconstant hypersheaves. 
In \cref{subsec:HIforLChypersheaves} we deduce the hypersheaf part of \Cref{thm:introhiofLC}, and in \cref{subsec:cohomology_comparison} we obtain a comparison result between sheaf and singular cohomology.
In \Cref{sec:homotopy_invariance_for_LC} we partially adapt these results to the setting of sheaves, first establishing the existence of the exceptional pushforward under certain additional assumptions, and then proving the sheaf part of \Cref{thm:introhiofLC}.
\Cref{sec:homotopy_invariance_for_constructible} is dedicated to the homotopy-invariance results for stratified spaces.
To begin with, we establish a ``formal'', unconditional version: for every stratified space $S \to P$, any presentable \category $\E$ and any \wclwc topological space $X$, we identify the essential image of the fully faithful the pullback functor
\begin{equation*} 
	\prSupperstarhyp \colon \ConsPhyp(S; \E) \inclusion \ConsPhyp(S \cross X; \E) 
\end{equation*}
(\cref{thm:formal_homotopy_invariance}).
We also provide several criteria to establish its essential surjectivity (\cref{cor:formal_homotopy_invariance}).
Finally, in \cref{subsec:detecting_equivalences_on_strata} we show how various combinations of assumptions on $\E$ and on $P$ can be used to verify one of these criteria.

\begin{acknowledgments}
	We greatly benefited from conversations with Guglielmo Nocera and Marco Volpe; it is a pleasure to thank them.
	We thank Dustin Clausen for explaining why we can't just work with hypersheaves that are constructible in the usual sense.
	We are indebted to Dustin Clausen and Mikala Ørsnes Jansen for their proof of \cite[Proposition 3.2]{arXiv:2108.01924}; expanding on their proof helped us prove these results in a much cleaner way than we initially had.
	We thank Kiran Luecke for asking a question that led to a great simplification of some of the material presented here.

	PH gratefully acknowledges support from the MIT Dean of Science Fellowship, the NSF Graduate Research Fellowship under Grant \#112237, UC President's Postdoctoral Fellowship, and NSF Mathematical Sciences Postdoctoral Research Fellowship under Grant \#DMS-2102957. 
\end{acknowledgments}


\section{Sheaf-theoretic background}\label{sec:sheaf_background}

The purpose of this section is to explain our sheaf-theoretic conventions and notation as well as recall some background on hypersheaves. 
We recall the basics in \cref{subsec:sheaveshypersheaves}.
In \cref{subsec:hypercovbases}, we recall the interaction between hypersheaves and bases for Grothendieck topologies.
In \cref{subsec:background_locally_constant}, we provide background on locally (hyper)constant (hyper)sheaves.


\subsection{Background on sheaves \& hypersheaves}\label{subsec:sheaveshypersheaves}

Throughout this subsection, we fix \asite $(\cC,\tau)$ and a presentable \category $\E$.

\begin{notation}
	We write
	\begin{equation*} 
		\PSh(\cC;\E) \coloneqq \Fun(\cC^{\op}, \E) 
	\end{equation*}
	for the \category of $\E$-valued presheaves on $ \cC $.
	We also write $\Sh_{\tau}(\cC;\E) \subset \PSh(\cC;\E) $ for the full subcategory spanned by $\E$-valued presheaves that satisfy $\tau$-descent.
	When $\E = \Spc$, we simply write
	\begin{equation*}
		\PSh(\cC) \colonequals \PSh(\cC;\Spc) \andeq \Sh_{\tau}(\cC) \colonequals \Sh_{\tau}(\cC;\Spc) \period
	\end{equation*}
\end{notation}	

\begin{nul}
	The \categories $\PSh(\cC;\E)$ and $\Sh_{\tau}(\cC;\E)$ are naturally identified with the tensor products of presentable \categories $\PSh(\cC) \tensor \E$ and $\Sh_{\tau}(\cC) \tensor \E $ \cite[\SAGthm{Remark}{1.3.1.6} \& \SAGthm{Proposition}{1.3.1.7}]{SAG}.
	We refer the reader to \cite[\HAsubsec{4.8.1}]{HA} for a thorough treatment of the tensor product of presentable \categories.
	As both points of view have their own advantages, in this paper we use both descriptions interchangeably.
\end{nul}

\begin{nul}\label{nul:hypersheaves}
	Crucial to the current paper is the notion of \textit{hypersheaf}.
	When $\E$ is the \category of spaces, hypersheaves can be defined intrinsically in the \topos $\Sh_{\tau}(\cC)$ as \defn{hypercomplete objects}, that is, objects that are local with respect to $ \infty $-connected morphisms.
	Hypersheaves thus form a full subcategory \smash{$\Shhyp_{\tau}(\cC) \subset \Sh_{\tau}(\cC)$}.
	It is then possible to \emph{define} hypersheaves with coefficients in $\E$ as the tensor product
	\begin{equation*} 
		\Shhyp_{\tau}(\cC;\E) \coloneqq \Shhyp_{\tau}(\cC) \tensor \E \period 
	\end{equation*}
	Each of the inclusions
	\begin{equation*} 
		\Shhyp_{\tau}(\cC) \subset \PSh(\cC) \andeq \Shhyp_{\tau}(\cC) \subset \Sh_{\tau}(\cC) 
	\end{equation*}
	admits a left adjoint.
	We refer to both left adjoints as the \defn{hypercompletion functors}, and we denote them by $(-)^{\hyp}$.
	Functoriality of the tensor product of presentable \categories produces functors
	\begin{equation*} 
		(-)^{\hyp} \colon \PSh(\cC;\E) \longrightarrow \Shhyp_{\tau}(\cC;\E) \andeq (-)^{\hyp} \colon \Sh_{\tau}(\cC;\E) \longrightarrow \Shhyp_{\tau}(\cC;\E) \period 
	\end{equation*}
	Both these functors still admit fully faithful right adjoints.
	We refer the reader unfamiliar with hypercomplete objects and hypercompletion to \cite[\S\S \HTTsubseclink{6.5.2}--\HTTsubseclink{6.5.4}]{HTT} or \cite[\S 3.11]{arXiv:1807.03281} for further reading on the subject.
\end{nul}

\begin{nul}
	If there exists an integer $ n \geq 0 $ such that $ \E $ is an $ n $-category, then $ \Shhyp_{\tau}(\cC;\E) = \Sh_{\tau}(\cC;\E) $ \cites[\HTTthm{Lemma}{6.5.2.9}]{HTT}[\HAthm{Example}{4.8.1.22}]{HA}.
	In particular, every sheaf of sets is a hypersheaf.
\end{nul}

\begin{notation}\label{nul:topological_space}
	Let $ S $ be a topological space.
	We write $ \Open(S) $ the poset of open subsets of $ S $, ordered by inclusion.
	We regard $ \Open(S) $ as a site with the covering families given by open covers.
	We write
	\begin{equation*}
		\PSh(S;\E) \colonequals \PSh(\Open(S);\E), \quad \Sh(S;\E) \colonequals \Sh(\Open(S);\E), \quad \text{and} \quad \Shhyp(S;\E) \colonequals \Shhyp(\Open(S);\E) \period
	\end{equation*}
\end{notation}

\begin{nul}\label{nul:CWhypercomplete}
	Sheaves and hypersheaves on topological spaces coincide in many situations in which homotopy-invariance is a well-behaved notion.
	For example, the \topos of sheaves on a topological space admitting a CW structure is hypercomplete \cite{MO:168526}.
\end{nul}

\begin{recollection}\label{rec:stalksconservativeCGhyp}
	Let $ S $ be a topological space.
	Then the stalk functors
	\begin{equation*}
		\{\supperstar \colon \fromto{\Shhyp(S)}{\Sh(\{s\}) \equivalent \Spc} \}_{s \in S}
	\end{equation*}
	are jointly conservative \HAa{Lemma}{A.3.9}.
	Since the stalk functors are left exact, \cite[Lemma 2.8]{arXiv:2108.03545} shows that for any compactly generated \category $ \E $, the stalk functors
	\begin{equation*}
		\{\supperstar \colon \fromto{\Shhyp(S;\E)}{\Sh(\{s\};\E) \equivalent \E} \}_{s \in S}
	\end{equation*}
	are jointly conservative.
\end{recollection}

\begin{nul}\label{nul:Shhyp_via_stalks}
	Let $ S $ be a topological space and $ \E $ a compactly generated \category.
	Then the subcategory
	\begin{equation*}
		\Shhyp(S;\E) \subset \Sh(S;\E)
	\end{equation*}
	is the localization obtained by inverting all morphisms that induce equivalences on stalks.
\end{nul}

\begin{nul}
	Let $ R $ be a ring and write $ \Dup(R) $ for the derived \category of $ R $.
	Since $ \Dup(R) $ is compactly generated, as a special case of \Cref{nul:Shhyp_via_stalks}, the \category \smash{$ \Shhyp(S;\Dup(R)) $} is the \categorical enhancement of the classical unbounded derived category of sheaves of $ R $-modules on $ S $.
\end{nul}

\subsection{Hypersheaves and bases}\label{subsec:hypercovbases}

In the rest of the paper, we make a through use of bases for $\infty$-sites:

\begin{definition}
	Let $(\cC,\tau)$ be \asite.
	A \emph{basis} of $(\cC, \tau)$ is a full subcategory $\cB$ of $\cC$ such that every object $U \in \cC$ admits a $\tau$-covering $\{U_i\}_{i \in I}$ where for each $ i \in I $, we have $U_i \in \cB$.
\end{definition}

\begin{example}\label{eg:contractible_opens_are_a_basis}
	Let $S$ and $X$ be topological spaces.
	Write
	\begin{equation*}
		\Opencross(S \cross X) \subset \Open(S \cross X)
	\end{equation*}
	for the subposet spanned by the open subsets of the form $V \cross U$, where $V \in \Open(S)$ and $U \in \Open(X)$.
	Then $\Opencross(S \cross X)$ is a basis of $\Open(S \cross X)$.
	
	We write
	\begin{equation*}
		\Openctrall(S \cross X) \subset \Opencross(S \cross X)
	\end{equation*}
	for the subposet spanned by the open subsets of the form $V \cross U$, where $U$ is a weakly contractible open subset of $X$.
	When $S = \pt$, we simply write $\Openctr(X)$ instead of $\Openctrall(\pt \cross X)$.

	If $ X $ is locally weakly contractible, then $\Openctrall(S \cross X)$ is also basis of $\Open(S \cross X)$.
\end{example}

Let $(\cC,\tau)$ be \asite and $\cB$ be a basis for $(\cC,\tau)$.
Write $j \colon \cB^{\op} \inclusion \cC^{\op}$ for the inclusion.
Right Kan extension along $j$ defines a fully faithful functor
\begin{equation*} 
	\jlowerstar \colon \PSh(\cB;\E) \inclusion \PSh(\cC;\E)  
\end{equation*}
with left adjoint $ \jupperstar \colon \PSh(\cC;\E) \to \PSh(\cB;\E) $ given by restriction of presheaves.

\begin{definition} \label{def:basis_hypersheaf}
	We say that an $\E$-valued presheaf $F \in \PSh(\cB;\E)$ on $\cB$ is a \defn{$\tau$-hypersheaf} if $\jlowerstar(F)$ belongs to \smash{$\Shhyp_{\tau}(\cC;\E)$}.
	We write
	\begin{equation*}
		\Shhyp_{\tau}(\cB;\E) \subset \PSh(\cB;\cE)
	\end{equation*}
	for the full subcategory spanned by $\tau$-hypersheaves.
\end{definition}

The key fact we need is that hypersheaves on a site and a basis agree:

\begin{proposition}[{\cites[Theorem A.6]{arXiv:2001.00319}[Proposition 3.12.11]{arXiv:1807.03281}}]\label{prop:hypersheaves_as_basis_RKE}
	Let $(\cC,\tau)$ be \asite and $\cB \subset \cC $ a basis. 
	Then:
	\begin{enumerate}[label=\stlabel{prop:hypersheaves_as_basis_RKE}, ref=\arabic*]
		\item\label{prop:hypersheaves_as_basis_RKE.1} For every $F \in \Shhyp_{\tau}(\cC;\E)$, the unit transformation $\unit \colon F \to \jlowerstar \jupperstar(F)$ is an equivalence.
		
		\item\label{prop:hypersheaves_as_basis_RKE.2} The functor $\jlowerstar \colon \Shhyp_\tau(\cB;\E) \to \Shhyp_\tau(\cC;\E)$ is an equivalence with inverse given by the presheaf-theoretic restriction $\jupperstar$.
	\end{enumerate}
\end{proposition}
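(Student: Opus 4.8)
`````
The plan is to prove \Cref{prop:hypersheaves_as_basis_RKE} by first establishing part (1) and then deducing part (2) from it together with a general-coefficients bootstrap. For part (1), I would work first with $\E = \Spc$, where the statement is exactly \cite[Theorem A.6]{arXiv:2001.00319} (or \cite[Proposition 3.12.11]{arXiv:1807.03281}): one checks that for a hypersheaf $F$ on $\cC$, the restriction $\jupperstar(F)$ to the basis $\cB$ is again a hypersheaf, and that $F$ is recovered as the right Kan extension $\jlowerstar \jupperstar(F)$. The key input is that $\cB$ is a basis, so every $U \in \cC$ has a $\tau$-cover by objects of $\cB$, and hence (iterating to get hypercovers by objects of $\cB$, using that $\cB$ is closed under nothing in particular but that covers of objects of $\cB$ can again be refined into $\cB$) the value $F(U)$ is determined by the values of $F$ on $\cB$ via descent along a $\cB$-valued hypercover; this is precisely the content of the unit $\unit \colon F \to \jlowerstar\jupperstar(F)$ being an equivalence. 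Since both sides of the unit are functors of presentable \infcats that commute with the relevant colimits, and $\Shhyp_\tau(\cC;\E) = \Shhyp_\tau(\cC) \tensor \E$ and similarly for presheaves, tensoring the $\Spc$-statement with $\E$ (using functoriality of $- \tensor \E$ and the fact that $\jlowerstar$, $\jupperstar$ are obtained by tensoring their $\Spc$-counterparts with $\E$) upgrades part (1) to arbitrary presentable coefficients.

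For part (2), I would argue as follows. The functor $\jlowerstar \colon \PSh(\cB;\E) \to \PSh(\cC;\E)$ is fully faithful with left adjoint $\jupperstar$, so $\jupperstar \jlowerstar \simeq \id$ on $\PSh(\cB;\E)$; restricting, $\jupperstar \jlowerstar \simeq \id$ on $\Shhyp_\tau(\cB;\E)$ as well. Conversely, by part (1), $\jlowerstar \jupperstar \simeq \id$ on $\Shhyp_\tau(\cC;\E)$. It remains to check that the two functors actually restrict to the indicated subcategories: $\jlowerstar$ sends $\Shhyp_\tau(\cB;\E)$ into $\Shhyp_\tau(\cC;\E)$ by the very \Cref{def:basis_hypersheaf} of a $\tau$-hypersheaf on $\cB$, and $\jupperstar$ sends $\Shhyp_\tau(\cC;\E)$ into $\Shhyp_\tau(\cB;\E)$ because for $F \in \Shhyp_\tau(\cC;\E)$ we have $\jlowerstar(\jupperstar F) \simeq F$ by part (1), which lies in $\Shhyp_\tau(\cC;\E)$, so $\jupperstar F$ is a $\tau$-hypersheaf on $\cB$ by definition. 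Together with the two unit/counit equivalences just noted, this exhibits $\jlowerstar$ and $\jupperstar$ as mutually inverse equivalences between $\Shhyp_\tau(\cB;\E)$ and $\Shhyp_\tau(\cC;\E)$.

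The main obstacle is genuinely part (1) in the case $\E = \Spc$ — i.e. that a hypersheaf is determined, as a right Kan extension, by its restriction to a basis. This is where one must use the theory of hypercovers: given $U \in \cC$, one builds a hypercover of $U$ by objects of $\cB$ (possible because $\cB$ is a basis and covers refine), and shows that $F(U)$ is the totalization of $F$ evaluated on that hypercover, which only involves objects of $\cB$; simultaneously one must check the matching condition for the right Kan extension formula, i.e. that $(\jlowerstar \jupperstar F)(U) = \lim_{(\cB_{/U})} F$ agrees with $F(U)$. Fortunately this is exactly what is proved in the cited references, so I would simply invoke them for the $\Spc$-case rather than reproving it, and spend the effort on the (routine but worth stating) tensoring-up step and the bookkeeping in part (2). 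A minor subtlety to flag: one should make sure the notion of ``$\tau$-hypersheaf on $\cB$'' from \Cref{def:basis_hypersheaf} — defined via $\jlowerstar$ landing in $\Shhyp_\tau(\cC;\E)$ — is the one being used consistently, so that no independent intrinsic characterization of hypersheaves on $\cB$ is needed for this proposition.
`````
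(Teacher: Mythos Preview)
The paper does not supply a proof of this proposition at all: it is stated with the citations to \cite[Theorem A.6]{arXiv:2001.00319} and \cite[Proposition 3.12.11]{arXiv:1807.03281} in the heading, and the text moves directly on to \Cref{rem:hypersheafification}. So there is no ``paper's own proof'' to compare against; your outline is a reasonable unpacking of what those references establish, and your deduction of part~(2) from part~(1) via the adjunction $\jupperstar \leftadjoint \jlowerstar$ and \Cref{def:basis_hypersheaf} is correct and is exactly how one would fill in that formal step.

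One small point worth tightening in your ``tensor up'' step: $\jlowerstar$ is a right Kan extension, hence only a priori a right adjoint, so it does not automatically commute with $(-)\tensor\E$. The clean way around this is to first run your argument for part~(2) in the case $\E = \Spc$ (which only uses part~(1) for $\Spc$, already supplied by the cited references); once you know $\jupperstar$ and $\jlowerstar$ are inverse equivalences between $\Shhyp_\tau(\cC)$ and $\Shhyp_\tau(\cB)$, both preserve colimits on these subcategories, and then tensoring the equivalence with $\E$ yields the general statement. You should also check that $\Shhyp_\tau(\cB)\tensor\E$ agrees with the subcategory $\Shhyp_\tau(\cB;\E)$ of \Cref{def:basis_hypersheaf}; this follows because $\jlowerstar \colon \PSh(\cB;\E)\to\PSh(\cC;\E)$ is fully faithful and the $\Spc$-level equivalence identifies its image intersected with $\Shhyp_\tau(\cC;\E)$ with $\Shhyp_\tau(\cB)\tensor\E$.
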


\begin{remark}\label{rem:hypersheafification}
	Let $F \in \PSh(\cC;\E)$.
	It follows directly from \Cref{prop:hypersheaves_as_basis_RKE} that if $j^\ast(F)$ is a hypersheaf in the sense of \Cref{def:basis_hypersheaf}, then the unit $F \to j_\ast j^\ast(F)$ exhibits $j_\ast j^\ast(F)$ as hypersheafification of $F$.
\end{remark}


\subsection{Background on locally (hyper)constant sheaves}\label{subsec:background_locally_constant}

In what follows we limit our discussion of locally (hyper)constancy and the functoriality of sheaves to the setting of topological spaces.
Throughout this subsection, we fix a presentable \category $\E$.

\begin{recollection}
	Let $ f \colon \fromto{X}{Y} $ be a map of topological spaces.
	We write
	\begin{equation*}
		\flowerstar \colon \fromto{\PSh(X;\E)}{\PSh(Y;\E)}
	\end{equation*}
	for the \defn{pushforward} functor defined by the formula
	\begin{equation*}
		\flowerstar(F)(V) \colonequals F(\finverse(V)) \period
	\end{equation*}
	Recall that the pushforward functor $ \flowerstar $ carries sheaves to sheaves and hypersheaves to hypersheaves (for the latter statement, see the proof of \HTT{Proposition}{6.5.2.13}). 

	We write
	\begin{equation*}
		\finverse \colon \fromto{\PSh(Y)}{\PSh(X)}
	\end{equation*}
	for \defn{presheaf pullback} functor; $ \finverse $ is the left adjoint to $ \flowerstar \colon \fromto{\PSh(Y;\E)}{\PSh(X;\E)} $.
	In general, the functor $ \finverse $ preserves neither sheaves nor hypersheaves.
	We write 
	\begin{equation*}
		\fupperstar \colon \fromto{\Sh(Y;\E)}{\Sh(X;\E)} \andeq \fupperstarhyp \colon \fromto{\Shhyp(Y;\E)}{\Shhyp(X;\E)}
	\end{equation*}
	for the composites of $ \finverse \colon \fromto{\Sh(Y;\E)}{\PSh(X;\E)} $ with (hyper)sheafification.
	It follows formally that $\fupperstarhyp \simeq (-)^{\hyp} \circ \fupperstar$.
	By construction, there are adjunctions $\fupperstar \leftadjoint \flowerstar$ and $\fupperstarhyp \leftadjoint \flowerstar$.
\end{recollection}

\begin{notation}
	If $ f \colon \incto{X}{Y} $ is the inclusion of a subspace, we simply write 
	\begin{equation*}
		\restrict{(-)}{X} \colonequals \fupperstar \andeq \restrict{(-)}{X}^{\hyp} \colonequals \fupperstarhyp \period
	\end{equation*}
	If the space-valued sheaf pullback functor $ \fupperstar \colon \fromto{\Sh(Y)}{\Sh(X)} $ admits a left adjoint, then for every presentable \category $ \E $, the pullback functor $ \fupperstar \colon \fromto{\Sh(Y;\E)}{\Sh(X;\E)} $ carries hypersheaves to hypersheaves \HAa{Lemma}{A.2.6}.
	In particular, if $ U \subset Y $ is an open subset, then the functor $ \restrict{(-)}{U} \colon \fromto{\Sh(Y;\E)}{\Sh(U;\E)} $ carries hypersheaves to hypersheaves.
\end{notation}

\begin{notation}
	Let $S$ and $X$ be topological spaces.
	We denote by
	\begin{equation*} 
		\pr_S \colon S \cross X \longrightarrow S \andeq \pr_X \colon S \cross X \longrightarrow X 
	\end{equation*}
	the projections.
	When $S = \pt$ we write $\Gamma_X$ instead of $\pr_\pt$.
	Thus $ \GammaXlowerstar \colon \fromto{\Sh(X;\E)}{\Sh(*;\E) \equivalent \E} $ is the global sections functor and $ \Gamma_{X}\inv $ is the constant \textit{pre}sheaf functor.
	Moreover, the functors
	\begin{equation*}
		\GammaXupperstar \colon \fromto{\E}{\Sh(X;\E)} \andeq \GammaXupperstarhyp \colon \fromto{\E}{\Shhyp(X;\E)}
	\end{equation*}
	are the constant sheaf and hypersheaf functors, respectively.
	Analogously, for every $S$ we refer to the functor $\pr_S\inv$ (resp.\ $\prSupperstar$, \smash{$\prSupperstarhyp$}) as the \emph{$S$-constant presheaf} (resp. \emph{sheaf}, \emph{hypersheaf}) \emph{functor}.
\end{notation}

\begin{definition}\label{def:LC}
	Let $ S $ and $ X $ be topological spaces and let $\E$ be a presentable \category.
	\begin{enumerate}[label=\stlabel{def:LC}, ref=\arabic*]
		\item We say that a sheaf $ L \in \Sh(S \cross X;\E) $ is \defn{constant relative to $ S $} (or \defn{$S$-constant}) if $ L $ is in the essential image of the $S$-constant sheaf functor $ \prSupperstar \colon \fromto{\Sh(S;\E)}{\Sh(S \cross X;\E)} $.
		
		\item We say that $ L \in \Sh(S \cross X;\E) $ is \defn{locally constant relative to $S$} (or \defn{locally $S$-constant}) if there exists an open cover $ \{U_{\alpha}\}_{\alpha \in A} $ of $ X $ such that for each $ \alpha \in A $, the restriction $ \restrict{L}{S \cross U_{\alpha}} $ is a $S$-constant sheaf on $ S \cross U_{\alpha} $.
		
		\item We say that a hypersheaf $ L \in \Shhyp(S \cross X;\E) $ is \defn{hyperconstant relative to $S$} (or \defn{$S$-hyperconstant}) if $ L $ is in the essential image of the constant hypersheaf functor \smash{$ \prSupperstarhyp \colon \fromto{\Shhyp(S;\E)}{\Shhyp(S \cross X;\E)} $}.
		
		\item We say that $ L \in \Shhyp(S \cross X;\E) $ is \defn{locally hyperconstant relative to $S$} (or \defn{locally $S$-hyperconstant}) if there exists an open cover $ \{U_{\alpha}\}_{\alpha \in A} $ of $ X $ such that for each $ \alpha \in A $, the restriction $ \restrict{L}{S \cross U_{\alpha}} $ is a $S$-hyperconstant \textit{hyper}sheaf on $ S \cross U_{\alpha} $.
	\end{enumerate}
	We write
	\begin{equation*}
		\LC_S(S \cross X;\E) \subset \Sh(S \cross X;\E) \andeq \LChyp_S(S \cross X;\E) \subset \Shhyp(S \cross X;\E)
	\end{equation*}
	for the full subcategories spanned by the locally $S$-constant sheaves and locally $S$-hyperconstant hypersheaves, respectively.
	When $S = \pt$ we denote these \categories by $\LC(X;\E)$ and \smash{$\LChyp(X;\E)$}, respectively.
\end{definition}

\begin{warning}\label{warning:LChyp}
	We emphasize that for a given object $ E \in \E $, the constant sheaf $ \GammaXupperstar(E) $ need not be hypercomplete.
	Similarly, a hyperconstant hypersheaf need not be a constant sheaf; the notions of constant sheaves and hyperconstant hypersheaves are genuinely different.
	Also notice that there is a containment
	\begin{equation*}
		\LC(X;\E) \intersect \Shhyp(X;\E) \subset \LChyp(X;\E) \period
	\end{equation*}
	However, this inclusion is not generally an equality.
\end{warning}

\begin{remark}\label{rem:LChyp_and_LC_agree}
	If $ X $ is a topological space \textit{locally of singular shape} in the sense of \HAa{Definition}{A.4.15}, then 
	\begin{equation*}
		\LChyp(X) = \LC(X) \intersect \Shhyp(X) = \LC(X) \period
	\end{equation*}
	See \cites[\HAappthm{Corollary}{A.1.17}]{HA}[Proposition 2.1]{arXiv:2102.12325}.
\end{remark}

\begin{observation}\label{obs:pullbackofhypLC}
	Let $ S $ be a topological space and $ f \colon \fromto{X}{Y} $ a map of topological spaces.
	Write $f_S \coloneqq \id{S} \cross f$. 
	Then the functors
	\begin{equation*}
		f_S^\ast \colon \fromto{\Sh(S \cross Y;\E)}{\Sh(S \cross X;\E)} \andeq f_S^{\ast,\hyp} \colon \fromto{\Shhyp(S \cross Y;\E)}{\Shhyp(S \cross X;\E)} 
	\end{equation*}
	preserve locally $S$-constant and $S$-hyperconstant sheaves.
	Hence the assignments
	\begin{equation*}
		\goesto{Y}{\LC_S(S \cross Y;\E)} \andeq \goesto{Y}{\LChyp_S(S \cross Y;\E)}
	\end{equation*}
	define subfunctors of the functors
	\begin{equation*}
		\Sh(S \cross -;\E),\Shhyp(S \cross -;\E) \colon \fromto{\Top^{\op}}{\Catinfty} \period
	\end{equation*}
	Moreover, they are hypercomplete sheaves with respect to the open topology on $\Top$.
\end{observation}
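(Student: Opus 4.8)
The plan is to break \Cref{obs:pullbackofhypLC} into three parts and treat them in order: (a) the pullback functors $f_S^\ast$ and $f_S^{\ast,\hyp}$ preserve local $S$-constancy, resp.\ local $S$-hyperconstancy; (b) these full subcategories then organize into subfunctors; and (c) the resulting subfunctors satisfy (hyper)descent for the open topology on $\Top$. For part (a) I would work with a single map $f\colon X\to Y$ and argue purely by pseudofunctoriality of pullback. If $L\in\Sh(S\cross Y;\E)$ is locally $S$-constant, with witnessing open cover $\{V_\beta\}$ of $Y$ and equivalences $\restrict{L}{S\cross V_\beta}\simeq\prSupperstar(M_\beta)$ for $M_\beta\in\Sh(S;\E)$, then I would use the open cover $\{f^{-1}(V_\beta)\}$ of $X$: writing $f_\beta\colon f^{-1}(V_\beta)\to V_\beta$ for the restriction of $f$, the map $S\cross f^{-1}(V_\beta)\to S\cross Y$ gotten by composing the open inclusion into $S\cross X$ with $f_S$ coincides with the one gotten by composing $\id{S}\cross f_\beta$ with the open inclusion of $S\cross V_\beta$. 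Applying $(-)^\ast$ to this identity gives
\[ \restrict{(f_S^\ast L)}{S\cross f^{-1}(V_\beta)}\;\simeq\;(\id{S}\cross f_\beta)^\ast\,\restrict{L}{S\cross V_\beta}\;\simeq\;(\id{S}\cross f_\beta)^\ast\,\prSupperstar(M_\beta)\;\simeq\;\prSupperstar(M_\beta), \]
the last step again by pseudofunctoriality applied to $\pr_S\circ(\id{S}\cross f_\beta)=\pr_S$ (the final $\prSupperstar$ now landing in $\Sh(S\cross f^{-1}(V_\beta);\E)$). Hence $f_S^\ast L$ is locally $S$-constant. The hyperconstant case is identical with $(-)^{\ast,\hyp}$ and $\prSupperstarhyp$ in place of $(-)^\ast$ and $\prSupperstar$, using that restriction to an open subset preserves hypersheaves.

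Part (b) is formal: a pointwise-full family of subcategories preserved by all transition functors of a functor $\cD^{\op}\to\Catinfty$ assembles into a subfunctor. Concretely, I would unstraighten $\Sh(S\cross-;\E)$ (resp.\ $\Shhyp(S\cross-;\E)$) to a cocartesian fibration over $\Top^{\op}$; by part (a) and fullness, the full subcategory spanned by the objects lying in some $\LC_S(S\cross Y;\E)$ (resp.\ $\LChyp_S(S\cross Y;\E)$) is again a cocartesian fibration over $\Top^{\op}$, and straightening returns the desired subfunctor together with its inclusion into $\Sh(S\cross-;\E)$ (resp.\ $\Shhyp(S\cross-;\E)$).

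For part (c) I would isolate the following locality principle: if $F\colon\cD^{\op}\to\Catinfty$ is a sheaf (resp.\ hypercomplete sheaf) for a Grothendieck topology, and $G(d)\subset F(d)$ are full subcategories such that (i) every transition functor carries $G(d)$ into $G(d')$, and (ii) for each covering $\{d_i\to d\}$ an object of $F(d)$ lies in $G(d)$ whenever all its images in the $F(d_i)$ do, then $G$ is a sheaf (resp.\ hypercomplete sheaf). Indeed, for a (hyper)cover $U_\bullet\to d$ the comparison $G(d)\to\lim_{\DDelta}G(U_\bullet)$ is fully faithful, since $F(d)\to\lim_{\DDelta}F(U_\bullet)$ is an equivalence and each $G(U_n)\subset F(U_n)$ is fully faithful, and it is essentially surjective, since an object of $\lim_{\DDelta}G(U_\bullet)$ has its degree-$0$ component in $G(U_0)$, whence (ii) applied to the covering $\{U_0\to d\}$ puts the corresponding object of $F(d)$ in $G(d)$. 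Hypothesis (i) is part (a); for (ii), if each $\restrict{L}{S\cross V_\beta}$ is locally $S$-(hyper)constant with witnessing cover $\{W_{\beta\gamma}\}$ of $V_\beta$, then $\{W_{\beta\gamma}\}_{\beta,\gamma}$ witnesses that $L$ is locally $S$-(hyper)constant, the converse being immediate. Finally $\Sh(S\cross-;\E)$ is a sheaf and $\Shhyp(S\cross-;\E)$ a hypercomplete sheaf for the open topology on $\Top$; both reduce to $S=\pt$ by precomposition with $S\cross-\colon\Top\to\Top$, which preserves open covers, hypercovers, and the fibre products occurring in their nerves, and hence preserves the (hypercomplete) sheaf property. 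The locality principle then shows $\LChyp_S(S\cross-;\E)$ is a hypercomplete sheaf and $\LC_S(S\cross-;\E)$ is a sheaf.

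The step I expect to carry the weight is the last one: supplying the ambient (hyper)descent of $\Sh(S\cross-;\E)$ and especially of $\Shhyp(S\cross-;\E)$. The sheaf case is the classical gluing of sheaves; the hyperdescent of $\Shhyp(S\cross-;\E)$ is where hypercompleteness genuinely enters (an $\infty$-connected map is invertible in a hypercomplete $\infty$-topos, which is what makes hypercomplete sheaves of $\infty$-categories satisfy hyperdescent), and if a clean reference is unavailable one can argue directly: restrict a hyperdescent datum for an open hypercover to the product basis of \Cref{eg:contractible_opens_are_a_basis}, check that the resulting presheaf on that basis is a hypersheaf, and right Kan extend via \Cref{prop:hypersheaves_as_basis_RKE}. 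Everything else --- parts (a), (b), and the locality principle itself --- is routine bookkeeping with full subcategories and subfunctors.
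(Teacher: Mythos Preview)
The paper records this as an Observation and gives no proof, so there is nothing to compare against; your argument is what one would write to justify it. Parts (a) and (b) are correct and routine. Your locality principle in part (c) is correctly stated and proved, and condition (ii) is indeed immediate from the definition of local $S$-(hyper)constancy.

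There is one small mismatch with the statement as written. The paper asserts that \emph{both} $\LC_S(S\cross-;\E)$ and $\LChyp_S(S\cross-;\E)$ are \emph{hypercomplete} sheaves on $\Top$, whereas you only conclude that $\LC_S(S\cross-;\E)$ is a sheaf. The fix is painless: $\Sh(-;\E)$ is itself a hypercomplete sheaf for the open topology on $\Top$ (not merely a sheaf), so your locality principle, applied in its hypercover form, upgrades the conclusion for $\LC_S$ as well. You already observe that $S\cross-$ preserves hypercovers, so the reduction to $S=\pt$ goes through; you just need to feed in the stronger input that $\Sh(-;\E)$ satisfies hyperdescent (this is standard: for an open hypercover $U_\bullet\to X$ one has $\Sh(X;\E)\simeq\lim_{\DDelta}\Sh(U_n;\E)$, e.g.\ because slicing an \topos over a simplicial object satisfies descent along effective epimorphisms and the realization of an open hypercover is the terminal object). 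With that adjustment your argument proves exactly what is claimed.
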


\begin{observation}
	Let $ X $ be a topological space and $ g \colon \fromto{S}{T} $ a map of topological spaces.
	Write $g_X \coloneqq g \cross \id{X}$.
	Then the functors
	\begin{equation*} 
		g_X^\ast \colon \Sh(T \cross X;\E) \longrightarrow \Sh(S \cross X;\E) \andeq g_X^{\ast,\hyp} \colon \Shhyp(T \cross X; \E) \longrightarrow \Shhyp(S \cross X; \E) 
	\end{equation*}
	carry locally $T$-constant sheaves to locally $S$-constant sheaves and locally $T$-hyperconstant hypersheaves to locally $S$-hyperconstant hypersheaves.
	In particular, objects of \smash{$\LChyp_S(S \cross X; \E)$} can be seen as \emph{families} of objects in \smash{$\LChyp(X;\E)$} parametrized by the points of $S$.
\end{observation}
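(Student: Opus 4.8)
The plan is to reduce the statement to the genuinely (hyper)constant case by working locally on $X$, and then to dispatch that case by base change along the tautological commutative square with vertices $S \cross X$, $T \cross X$, $S$, $T$, horizontal maps $g_X$ and $g$, and vertical maps the projections $\pr_S$ and $\pr_T$; its commutativity $\pr_T \circ g_X = g \circ \pr_S$ holds because $g_X = g \cross \id{X}$ acts trivially on the $X$-coordinate. Both sheaf pullback $(-)^\ast$ and hypersheaf pullback $(-)^{\ast,\hyp}$ are functorial in the space, being left adjoints of the strictly functorial pushforward (see \cref{subsec:background_locally_constant}); applying the composition law $(h \circ f)^\ast \simeq f^\ast \circ h^\ast$ to the identity $\pr_T \circ g_X = g \circ \pr_S$ therefore produces canonical equivalences
\[
	g_X^\ast \circ \prTupperstar \simeq \prSupperstar \circ g^\ast \andeq g_X^{\ast,\hyp} \circ \prTupperstarhyp \simeq \prSupperstarhyp \circ g^{\ast,\hyp} \period
\]
Consequently, if $L \simeq \prTupperstar F$ is $T$-constant then $g_X^\ast L \simeq \prSupperstar(g^\ast F)$ is $S$-constant, and likewise in the hypersheaf setting; this settles the constant case.

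The next step is to globalize. Given $L \in \LC_T(T \cross X;\E)$, fix an open cover $\{U_\alpha\}_{\alpha \in A}$ of $X$ with each restriction $\restrict{L}{T \cross U_\alpha}$ a $T$-constant sheaf. Since $g_X$ does not move the $X$-coordinate, we have $g_X^{-1}(T \cross U_\alpha) = S \cross U_\alpha$, and the restriction of $g_X$ to $S \cross U_\alpha$ is $g_{U_\alpha} \coloneqq g \cross \id{U_\alpha}$; combining this with the composition law for the open immersions in play gives $\restrict{(g_X^\ast L)}{S \cross U_\alpha} \simeq g_{U_\alpha}^\ast\bigl(\restrict{L}{T \cross U_\alpha}\bigr)$, which is $S$-constant on $S \cross U_\alpha$ by the first step. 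As the $S \cross U_\alpha$ cover $S \cross X$, it follows that $g_X^\ast L$ is locally $S$-constant. The hypersheaf case is proved identically, using in addition that restriction to an open subset carries hypersheaves to hypersheaves. (This statement is the mirror image of \cref{obs:pullbackofhypLC}, with the roles of the two factors exchanged, and one could equally well just refer to that proof.)

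Finally, the ``in particular'' clause is the special case $T = S$ with $g = i_s \colon \incto{\{s\}}{S}$ the inclusion of a point: the functor $(i_s)_X^{\ast,\hyp}$ carries $\LChyp_S(S \cross X;\E)$ into $\LChyp_{\{s\}}(\{s\} \cross X;\E) \equivalent \LChyp(X;\E)$, and letting $s$ range over the points of $S$ realizes each locally $S$-hyperconstant hypersheaf as an $S$-indexed family of locally hyperconstant hypersheaves on $X$. I do not expect any genuine obstacle: the entire argument is formal, and the only point requiring care is keeping the coherences among the various identifications of pullback functors straight --- but each of these is an instance of the composition law, so it is pure bookkeeping.
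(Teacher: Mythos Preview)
Your proof is correct; the paper states this as an \emph{Observation} with no proof, treating it as immediate from the functoriality of pullback and the commutativity of the square $\pr_T \circ g_X = g \circ \pr_S$. Your write-up simply spells out the routine details the paper omits, and the interpretation of the ``in particular'' clause via point inclusions $i_s \colon \{s\} \hookrightarrow S$ is exactly the intended reading.
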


\begin{warning}
	Let $S$ and $X$ be topological spaces.
	If $V_\bullet$ is a hypercover of $S$, we obtain a commutative square
	\begin{equation*} 
	\begin{tikzcd}
		\LChyp_S(S \cross X; \E) \arrow[d, hooked] \arrow[r] & \lim_{[n] \in \DDelta} \LChyp_{V_n}( V_n \cross X ; \E) \arrow[d, hooked] \\
		\Shhyp(S \cross X; \E) \arrow[r] & \lim_{[n] \in \DDelta} \Shhyp( V_n \cross X ; \E ) \period
	\end{tikzcd} 
	\end{equation*}
	The vertical functors are fully faithful, and the bottom horizontal functor is an equivalence.
	It follows that the top horizontal functor is fully faithful as well.
	In general, there is no reason for it to be essentially surjective; nonetheless we will show that this is the case if $X$ is \wclwc (see \cref{cor:descent}).
\end{warning}


\section{Sheaves on locally weakly contractible topological spaces}\label{sec:hyperconstant}

Let $ S $ and $ X $ be \wclwc topological spaces and $ \cE $ a presentable \category.
The first goal of this section is to prove that if $ X $ \wclwc, then the pullback functor
\begin{equation*}
	\prSupperstarhyp \colon \Shhyp(S;\cE) \longrightarrow \Shhyp(S \cross X;\cE) 
\end{equation*}
is fully faithful, and to identify its essential image with the subcategory of locally $ S $-hyperconstant hypersheaves introduced in \cref{def:LC} (see \cref{thm:relative_full_faithfulness}).
After that, we explore some important consequences of this result.

In \cref{subsec:constant_hypersheaf}, we provide an alternative characterization of the pullback \smash{$ \prSupperstarhyp $}.
The alternative description guarantees that \smash{$ \prSupperstarhyp $} admits a left adjoint, which we refer to as the \textit{exceptional pushforward}.
\Cref{subsec:exceptional_pushforward} explores the basechange properties of the exceptional pushforward.
In \cref{subsec:full_faithfulness}, we use the alternative description of \smash{$ \prSupperstarhyp $} to prove \Cref{thm:relative_full_faithfulness}.
Finally, in \cref{subsec:foliated_hypersheaves} we reinterpret our \category \smash{$\LChyp_S(S \cross X;\E)$} in terms of \textit{foliated} hypersheaves.


\subsection{Formula for the hypersheaf pullback}\label{subsec:constant_hypersheaf}

Fix topological spaces $ S $ and $ X $, and a presentable \category $\E$.
Our first goal is to show that if $X$ is locally weakly contractible, then the functor \smash{$ \prSupperstarhyp $} admits a left adjoint. 
To do this, we provide an alternative description of \smash{$ \prSupperstarhyp $}.
This is easiest to describe when $ S = \pt $ and $ \E = \Spc $.
Write $ \Piinfty \colon \fromto{\Top}{\Spc} $ for the functor sending a topological space to its underlying homotopy type.
In this case, we show that the constant hypersheaf functor \smash{$ \fromto{\Spc}{\Shhyp(X)} $} is given by the assignment
\begin{equation*}
	E \mapsto [U \mapsto \Map_{\Spc}(\Piinfty(U),E)] \period
\end{equation*}
The general construction is just a relative version of this functor.
For the following constructions, recall the notations for posets of open subsets introduced in \Cref{nul:topological_space,eg:contractible_opens_are_a_basis}.


\begin{construction}\label{constr:Piinf}
	Consider the functor
	\begin{equation*} 
		\Piinfty(-/S) \colon \PSh(\Opencross(S \cross X)) \longrightarrow \Shhyp(S) 
	\end{equation*}
	left Kan extended from the functor $\Opencross(S \cross X) \to \Shhyp(S)$ sending $V \cross U$ to $V \tensor \Piinfty(U)$.
	This functor admits a right adjoint
	\begin{equation*}
		\Pi^{\infty}(-/S) \colon \Shhyp(S) \to \PSh(\Opencross(S \cross X))
	\end{equation*}
	given by the assignment
	\begin{equation*}
		G \mapsto [W \mapsto \Map_{\Shhyp(S)}(\Piinfty(W/S),G)] \period
	\end{equation*}
	By \cite[\HAappthm{Proposition}{A.3.2} \& \HAappthm{Lemma}{A.3.10}]{HA}, the functor $ \Pi_{\infty}(-/S)$ takes hypercover diagrams to colimits in $\Shhyp(S)$, and it therefore factors through
	\begin{equation*}
		\Shhyp(\Opencross(S \cross X)) \equivalent \Shhyp(S \cross X) \period
	\end{equation*}
	(The above equivalence follows from the fact that \smash{$ \Opencross(S \cross X) $} is a basis for the opens of $ S \cross X $; see \Cref{prop:hypersheaves_as_basis_RKE}.)
	Consequently, it follows that $\Pi^\infty(-/S)$ factors through $\Shhyp(S \cross X)$.
	We use the same notation for the resulting adjunction
	\begin{equation*}
		\adjto{\Piinfty(-/S)}{\Shhyp(S \cross X)}{\Shhyp(S)}{\Pi^{\infty}(-/S)} \period
	\end{equation*}

	Given a presentable \category $ \E $, write  $ \PiinftyE(-/S) \colon \fromto{\Shhyp(S \cross X;\E)}{\Shhyp(S;\E)} $ for the tensor product
	\begin{equation*} 
		\begin{tikzcd}[sep=8em]
			\Shhyp(S \cross X;\E) \equivalent \Shhyp(S \cross X) \tensor \E \arrow[r, "\Piinfty(-/S) \tensor \id{\E}"] & \Shhyp(S) \tensor \E \equivalent \Shhyp(S;\E) \period
		\end{tikzcd}
	\end{equation*}
	We write
	\begin{equation*}
		\PiEinfty(-/S) \colon \fromto{\Shhyp(S;\E)}{\Shhyp(S \cross X;\E)}
	\end{equation*}
	for the right adjoint of $ \PiinftyE(-/S) $.
	Concretely, $ \PiEinfty(-/S) $ is defined by sending $ G \in \Shhyp(S;\E)$ to the $\E$-valued hypersheaf
	\begin{equation*} 
		V \cross U \mapsto G(V)^{\Piinfty(U)} \semicolon 
	\end{equation*}
	here the exponential notation denotes the cotensoring of $\E$ over $ \Spc $.
\end{construction}

\begin{observation}\label{obs:base_change_Piinfty}
	Let $ X $ be a topological space and $ g \colon \fromto{S}{T} $ a map of topological spaces.
	Write $g_X \coloneqq g \cross \id{X}$.
	For $V \cross U \in \Opencross(T \cross X)$ there are canonical and functorial identifications
	\begin{align*}
		\gupperstarhyp( \Piinfty(V \cross U / T) ) &\simeq \gupperstarhyp( V \tensor \Piinfty(U) ) \\ 
		&\simeq \gupperstarhyp(V) \tensor \Piinfty(U) \\
		&\simeq \Piinfty( \gupperstarhyp_X(V \cross U) / S ) \period
	\end{align*}
	This implies that the diagram of left adjoints
	\begin{equation}\label{sq:Pi_infty} 
		\begin{tikzcd}
			\Shhyp(T \cross X) \arrow[r, "g_{X}^{\ast,\hyp}"] \arrow[d, "\Piinfty(-/T)"'] & \Shhyp(S \cross X) \arrow[d, "\Piinfty(-/S)"] \\
			\Shhyp(T) \arrow[r, "g^{\ast,\hyp}"'] & \Shhyp(S)
		\end{tikzcd} 
	\end{equation}
	is canonically commutative.
	Given a presentable \category $\E$, tensoring the square \eqref{sq:Pi_infty} with $\E$, we see that the same commutativity holds with coefficients in $\E$.
\end{observation}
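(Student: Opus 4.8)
The plan is to verify the displayed chain of identifications for a single basis open $V \cross U$, functorially in $V \cross U \in \Opencross(T \cross X)$, then to promote it to the commutativity of the square \eqref{sq:Pi_infty} using that all four functors there preserve colimits and that $\Shhyp(T \cross X)$ is generated under colimits by the basis, and finally to deduce the coefficient statement by tensoring with $\E$. The argument is essentially formal; the only real input is a standard colimit-density argument together with the identification of hypersheaf pullback of a representable.

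For the pointwise identifications: the equivalence $\gupperstarhyp(\Piinfty(V \cross U / T)) \simeq \gupperstarhyp(V \tensor \Piinfty(U))$ is just the definition of $\Piinfty(-/T)$ in \Cref{constr:Piinf}, since that functor restricts along $\Opencross(T \cross X) \to \Shhyp(T \cross X)$ to the assignment $V \cross U \mapsto V \tensor \Piinfty(U)$. The equivalence $\gupperstarhyp(V \tensor \Piinfty(U)) \simeq \gupperstarhyp(V) \tensor \Piinfty(U)$ holds because $\gupperstarhyp$ is a left adjoint --- equivalently, a morphism in $\LPr$, hence $\Spc$-linear --- and therefore preserves colimits and in particular commutes with the copower $(-) \tensor \Piinfty(U)$, the relevant comparison map being canonical. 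For the last identification I would use that hypersheaf pullback carries the (hyper)sheaf represented by an open $V \subset T$ to the (hyper)sheaf represented by $g\inv(V) \subset S$; this follows from the adjunction $\gupperstarhyp \dashv g_\ast$, the formula $g_\ast(F)(V) = F(g\inv(V))$, and the analogous presheaf statement. Applied to $g_X = g \cross \id{X}$ it gives $g_X\upperstarhyp(V \cross U) \simeq g\inv(V) \cross U$, which is the image of a basis open of $S \cross X$, so that $\Piinfty(g_X\upperstarhyp(V \cross U)/S) \simeq g\inv(V) \tensor \Piinfty(U) \simeq \gupperstarhyp(V) \tensor \Piinfty(U)$ by the definition of $\Piinfty(-/S)$ together with the previous two steps. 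Each of these equivalences is manifestly functorial in $V \cross U$.

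To globalize, note that every functor in \eqref{sq:Pi_infty} preserves colimits: $\gupperstarhyp$ and $g_X\upperstarhyp$ because they are left adjoints, and $\Piinfty(-/T)$, $\Piinfty(-/S)$ by their construction in \Cref{constr:Piinf}. By \Cref{prop:hypersheaves_as_basis_RKE}, $\Shhyp(T \cross X) \simeq \Shhyp(\Opencross(T \cross X))$ is a localization of $\PSh(\Opencross(T \cross X))$, hence generated under colimits by the images of the basis opens, and restriction along $\Opencross(T \cross X) \to \Shhyp(T \cross X)$ is fully faithful on colimit-preserving functors into any presentable \category. Both composites around \eqref{sq:Pi_infty} are colimit-preserving, and by the previous paragraph their restrictions to $\Opencross(T \cross X)$ are naturally equivalent (both carry $V \cross U$ to $g\inv(V) \tensor \Piinfty(U)$); this equivalence therefore lifts uniquely to a natural equivalence of the composites, i.e.\ \eqref{sq:Pi_infty} commutes canonically. (Alternatively one can verify the mate square of right adjoints $g_{X,\ast} \circ \Pi^{\infty}(-/S) \simeq \Pi^{\infty}(-/T) \circ g_\ast$ directly on basis opens.) For the coefficient version, applying $(-) \tensor \E$ in $\LPr$ to the now-commuting square \eqref{sq:Pi_infty} transports the commutativity datum to the tensored square; by construction $\gupperstarhyp \tensor \id{\E}$ and $g_X\upperstarhyp \tensor \id{\E}$ are the $\E$-coefficient pullbacks and $\Piinfty(-/S) \tensor \id{\E} = \PiinftyE(-/S)$ (likewise for $T$), which is exactly the asserted commutativity with coefficients in $\E$.

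The step demanding the most care is the globalization: one cannot check commutativity of a square of presentable \categories objectwise, so I would make sure to route it through the universal property of $\Shhyp(T \cross X)$ as the colimit-completion of $\Opencross(T \cross X)$ subject to hypercover relations (\Cref{constr:Piinf}, \Cref{prop:hypersheaves_as_basis_RKE}), which is precisely what upgrades the natural-in-$V \cross U$ identifications of the first step to an equivalence of functors. The identification of $\gupperstarhyp$ of a representable with the representable on the preimage is the only genuinely sheaf-theoretic ingredient, and it is routine.
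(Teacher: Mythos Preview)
Your proposal is correct and follows essentially the same approach as the paper: the observation itself already contains its justification, displaying the chain of identifications on basis opens and asserting that this implies commutativity of the square, then tensoring with $\E$. You supply more detail than the paper does on the globalization step (colimit density via \Cref{prop:hypersheaves_as_basis_RKE}) and on why $\gupperstarhyp$ carries a representable to the representable on the preimage, but these are exactly the implicit steps the paper is relying on.
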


We now compare the functor $ \PiEinfty(-/S) $ to the hypersheaf pullback \smash{$ \prSupperstarhyp $}.

\begin{construction}
	Fix $G \in \Shhyp(S; \E)$ and $V \in \Open(S)$.
	The unique map $\Piinfty(X) \to \pt$ induces a map
	\begin{equation*} 
		\begin{tikzcd}[sep=5em]
			G(V) \simeq G(V)^{\pt} \arrow[r] & G(V)^{\Piinfty(X)} \simeq \PiEinfty(G/S)(V \cross X) \simeq \prSlowerstar( \PiEinfty(G/S) )(V) \period
		\end{tikzcd} 
	\end{equation*}
	By adjunction, this corresponds to a map $\alpha_G \colon \prSupperstarhyp(G) \to \PiEinfty(G/S)$.
	These maps assemble together into a natural transformation
	\begin{equation*} 
		\alpha \colon \prSupperstarhyp \longrightarrow \PiEinfty(-/S) 
	\end{equation*}
	of functors $\Shhyp(S;\E) \to \Shhyp(S \cross X;\E)$.
\end{construction}

\begin{proposition}\label{prop:computing_hyperconstant_hypersheaf}
	Let $S$ and $X$ be topological spaces.
	Assume that $X$ is locally weakly contractible.
	Then the natural transformation
	\begin{equation*}
		\alpha \colon \prSupperstarhyp \to \PiEinfty(-/S)
	\end{equation*}
	is an equivalence.
	In particular, the functor \smash{$ \prSupperstarhyp $} is right adjoint to $ \PiinftyE(-/S) \colon \fromto{\Shhyp(S \cross X;\E)}{\Shhyp(S;\E)} $.
\end{proposition}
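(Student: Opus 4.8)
The plan is to reduce the statement to a computation on a basis of $S \cross X$, and then read off the source and target of $\alpha$ from the explicit formulas of \Cref{constr:Piinf}. Since $X$ is locally weakly contractible, $\Openctrall(S \cross X)$ is a basis of $\Open(S \cross X)$ by \Cref{eg:contractible_opens_are_a_basis}. By \Cref{prop:hypersheaves_as_basis_RKE}, restriction of $\E$-valued hypersheaves along $j \colon \Openctrall(S \cross X)^{\op} \inclusion \Open(S \cross X)^{\op}$ is fully faithful; in particular a morphism of hypersheaves on $S \cross X$ is an equivalence once it is an equivalence on sections over every open $V \cross U$ with $V \in \Open(S)$ and $U \subseteq X$ weakly contractible. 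So it suffices to show that
\begin{equation*}
	\alpha_G(V \cross U) \colon \prSupperstarhyp(G)(V \cross U) \longrightarrow \PiEinfty(G/S)(V \cross U)
\end{equation*}
is an equivalence for all such $V \cross U$. On the target, \Cref{constr:Piinf} gives $\PiEinfty(G/S)(V \cross U) \simeq G(V)^{\Piinfty(U)}$, and $\Piinfty(U) \simeq \pt$ since $U$ is weakly contractible, so the target is just $G(V)$. The real work is to identify the source with $G(V)$, compatibly with $\alpha$.

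For the source, I would use that $\prSupperstarhyp(G)$ is the hypersheafification of the presheaf pullback $\pr_S\inv(G)$, which is the left Kan extension of $G$ along $V' \mapsto V' \cross X$ and hence is computed pointwise by a colimit. For $U \neq \emptyset$ the indexing category for $(\pr_S\inv(G))(V \cross U)$ is the poset of opens $V' \in \Open(S)$ containing $V$ with morphisms reversing inclusions, which has a terminal object corresponding to $V$, so $(\pr_S\inv(G))(V \cross U) \simeq G(V)$, naturally in $V \cross U \in \Openctrall(S \cross X)$. Comparing with the formula for $\PiEinfty(G/S)$, this shows that $j^\ast(\pr_S\inv(G))$ is naturally equivalent to $j^\ast \PiEinfty(G/S)$; the latter is the restriction to the basis of the hypersheaf $\PiEinfty(G/S)$ of \Cref{constr:Piinf}, hence a hypersheaf on the basis in the sense of \Cref{def:basis_hypersheaf}. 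By \Cref{rem:hypersheafification}, the unit then exhibits $j_\ast j^\ast(\pr_S\inv(G))$ as the hypersheafification of $\pr_S\inv(G)$, i.e.\ $\prSupperstarhyp(G) \simeq j_\ast j^\ast(\pr_S\inv(G))$; restricting back along $j$ gives $\prSupperstarhyp(G)(V \cross U) \simeq G(V)$ for $U$ weakly contractible.

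It then remains to check that these identifications are the ones carried out by $\alpha$, i.e.\ that $j^\ast \alpha_G$ is the equivalence $j^\ast(\pr_S\inv(G)) \xrightarrow{\sim} j^\ast \PiEinfty(G/S)$ obtained above. This should follow by unwinding the construction of $\alpha$: the composite of the hypersheafification unit $\pr_S\inv(G) \to \prSupperstarhyp(G)$ with $\alpha_G$ is, via the adjunction $\pr_S\inv \dashv \prSlowerstar$, the presheaf map $\pr_S\inv(G) \to \PiEinfty(G/S)$ which on $V \cross U$ is the map $G(V)^{\pt} \to G(V)^{\Piinfty(U)}$ induced by $\Piinfty(U) \to \pt$, an equivalence when $U$ is weakly contractible. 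Granting this, $\alpha_G(V \cross U)$ is an equivalence on all basic opens, so $\alpha$ is an equivalence. The final sentence of the statement is then automatic, since $\PiEinfty(-/S)$ is by construction the right adjoint of $\PiinftyE(-/S)$.

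The main obstacle I anticipate is precisely the last paragraph: tracking the comparison transformation $\alpha$ through the hypersheafification and verifying that its restriction to the basis really is the evident equivalence, rather than merely knowing abstractly that source and target coincide. Everything else is a routine combination of the basis formalism recalled in \Cref{prop:hypersheaves_as_basis_RKE} and \Cref{rem:hypersheafification} with the explicit description of $\PiEinfty(-/S)$.
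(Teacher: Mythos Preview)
Your argument is correct and follows essentially the same route as the paper: reduce to the basis $\Openctrall(S \cross X)$, compute both $\pr_S\inv(G)$ and $\PiEinfty(G/S)$ there as $G(V)$, use \Cref{rem:hypersheafification} to identify $\prSupperstarhyp(G)$ with $j_\ast j^\ast(\pr_S\inv(G))$, and verify that the composite $\pr_S\inv(G) \to \prSupperstarhyp(G) \xrightarrow{\alpha_G} \PiEinfty(G/S)$ restricts on the basis to the map induced by $\Piinfty(U) \to \pt$. The ``obstacle'' you flag is exactly what the paper calls $\widetilde{\alpha}$, and is handled just as you describe.

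The one difference is in the passage to general $\E$. You work directly with $\E$-coefficients throughout, invoking the concrete formula $\PiEinfty(G/S)(V \cross U) \simeq G(V)^{\Piinfty(U)}$ from \Cref{constr:Piinf}. The paper instead first proves the case $\E = \Spc$, then observes that the resulting equivalence $\Pi^\infty(-/S) \simeq \prSupperstarhyp$ forces $\Pi^\infty(-/S)$ to preserve colimits; only then does it conclude $\PiEinfty(-/S) \simeq \Pi^\infty(-/S) \tensor \id{\E}$ and transfer the result. The paper's detour is there precisely to \emph{justify} the concrete formula for general $\E$ (a right adjoint need not commute with $-\tensor\E$), so your shortcut is fine provided you are willing to take that formula as given by the Construction rather than as something to be proved.
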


\begin{proof}
	First we treat the case where $\cE = \Spc$.
	Let
	\begin{equation*}
		j \colon \Openctrall(S \cross X)^{\op} \inclusion \Open(S \cross X)^{\op} \andeq i \colon \Shhyp(S \cross X) \inclusion \PSh(S \cross X)
	\end{equation*}
	denote the inclusions.
	Write $\unit \colon \pr_S\inv \to i \prSupperstarhyp$ for the unit.
	Write
	\begin{equation*} 
		\alphatilde \colon \pr_S\inv \longrightarrow i\Pi^\infty(-/S) 
	\end{equation*}
	for the composite of $i(\alpha)$ with $\unit$.
	Fix $F \in \Shhyp(S)$ and let $V \cross U \in \Openctrall(S \cross X)$.
	Unraveling the definitions shows that
	\begin{equation*} 
		\pr_S\inv(F)(V \cross U) \simeq F(V) \andeq i \Pi^\infty(F/S)(V \cross U) \simeq F(V)^{\Pi_\infty(U)} \period 
	\end{equation*}
	Moreover, the map $\widetilde{\alpha}$ is induced by the unique map $\Pi_\infty(U) \to \pt $.
	Since $U$ is weakly contractible, we deduce that for every \smash{$F \in \Shhyp(S)$}, the map $j^\ast(\alphatilde)$ is an equivalence.
	
	Since $\Pi^\infty(F/S)$ is a hypersheaf, it follows from \enumref{prop:hypersheaves_as_basis_RKE}{1} that $j^\ast( \pr_S\inv(F) )$ is a hypersheaf on \smash{$\Openctrall(S \cross X)$}.
	By \Cref{rem:hypersheafification} we can identify the unit \smash{$\pr_S\inv(F) \to i \prSupperstarhyp(F)$} with the unit
	\begin{equation*} 
		\pr_S\inv(F) \longrightarrow j_\ast j^\ast(\pr_S\inv(F)) \simeq j_\ast\jupperstar(i\Pi^\infty(F/S) ) \period 
	\end{equation*}
	Using \enumref{prop:hypersheaves_as_basis_RKE}{1} once more shows that the unit map $i\Pi^\infty(F/S) \to j_\ast\jupperstar(i\Pi^\infty(F/S) )$ is an equivalence, as desired.
	
	Now we treat the case where $\E$ is any presentable \category.
	Since we just showed that $\Pi^\infty(-/S)$ is equivalent to \smash{$ \prSupperstarhyp $}; it follows that $\Pi^\infty(-/S)$ commutes with colimits.
	The functoriality of tensor product of presentable \categories implies therefore that $\PiEinfty(-/S) \simeq \Pi^\infty(-/S) \tensor \id{\E}$.
	Since the same holds for the functor \smash{$\prSupperstarhyp$} and the map $\alpha$ respects such decomposition, the conclusion follows.
\end{proof}

\begin{remark}[(truncated coefficients)]\label{rem:local_weak_n-connectedness}
	Let $ n \geq 1 $ be an integer and let $ \E $ be a presentable $ n $-category.
	In this setting, to prove \Cref{prop:computing_hyperconstant_hypersheaf}, we only need to assume that $ X $ is \textit{locally weakly $ (n-1) $-connected} in the following sense: there is a basis of opens $ U \subset X $ such that $ \uppi_0(U) = \ast $ and all of the homotopy groups of $ U $ in degrees $ \leq n-1 $ vanish.
	In this case, in \Cref{constr:Piinf} we replace the underlying homotopy type $ \Piinfty(U) $ by the fundamental $ (n-1) $-groupoid $ \Pi_{n-1}(U) $.
	That is, we use the $ (n-1) $-truncation of $ \Piinfty(U) $.
	In particular, when $ \E = \Set $ and $ S = \ast $, the constant sheaf functor $ \fromto{\Set}{\Sh(X;\Set)} $ is given by sending a set $ E $ to the sheaf
	\begin{equation*} 
		\goesto{U}{\Map_{\Set}(\uppi_0(U),E)} \period
	\end{equation*}
	For $ n = 2 $ and $ S = \pt $, these results (essentially) recover results of Polesello--Waschkies \cite[\S\S2.1--2.2]{MR2155521}.

	All of the results in the rest of the paper can be formulated with coefficients in a presentable $ n $-category replacing assumptions of (local) weak contractibility with assumptions of (local) weak $ (n-1) $-connectedness.
	The proofs are exactly the same, replacing $ \Piinfty $ by $ \Pi_{n-1} $.
	Since we are most interested in \categories that are \textit{not} truncated, we will not explicitly highlight this generalization in the rest of the text.
\end{remark}


\subsection{The exceptional pushforward}\label{subsec:exceptional_pushforward}

Before moving on to the main result of this section, we need a brief digression about the exceptional pushforward whose existence is guaranteed by \cref{prop:computing_hyperconstant_hypersheaf}:

\begin{notation}
	Let $S$ and $X$ be topological spaces and assume that $X$ is locally weakly contractible.
	In light of \Cref{prop:computing_hyperconstant_hypersheaf}, we write
	\begin{align*}
		\prSlowersharphyp \colon \fromto{\Shhyp(S \cross X;\E)}{\Shhyp(S;\E)}
	\end{align*}
	for the left adjoint to \smash{$ \prSupperstarhyp $}.
	We refer to \smash{$\prSlowersharphyp$} as the \defn{exceptional pushforward}.
\end{notation}

\begin{corollary} \label{cor:projection_left_adjointable}
	Let $g \colon S \to T$ be a map of topological spaces, let $X$ be a locally weakly contractible topological space, and let $ \E $ be a presentable \category.
	Then the squares
	\begin{equation*}
		\begin{tikzcd}[row sep=2.5em, column sep=4em]
			\Shhyp(T; \E) \arrow[r, "g^{*,\hyp}"] \arrow[d, "\prSupperstarhyp"'] & \Shhyp(S; \E) \arrow[d, "\pr_T^{*,\hyp}"] \\
			\Shhyp(T \cross X ; \E) \arrow[r, "g_X^{*,\hyp}"'] & \Shhyp(S \cross X; \E)
		\end{tikzcd}
		\andeq 
		\begin{tikzcd}[column sep=4em, row sep=2.5em]
			\Shhyp(S \cross X;\E) \arrow[r, "{g_{X,\ast}}"] \arrow[d, "\prSlowerstar"'] & \Shhyp(T \cross X;\E) \arrow[d, "\prTlowerstar"] \\ 
			\Shhyp(S;\E) \arrow[r, "\glowerstar"'] & \Shhyp(T;\E) 
		\end{tikzcd} 
	\end{equation*}
	are vertically left adjointable.
	In particular, taking $T = \pt$ shows that if $F \in \Shhyp(S \cross X;\E)$ is a $S$-hyperconstant hypersheaf on $S \cross X$, then $\pr_{X,*}(F)$ is a hyperconstant hypersheaf on $X$.
\end{corollary}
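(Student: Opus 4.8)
The plan is to read off both assertions from \Cref{prop:computing_hyperconstant_hypersheaf} and \Cref{obs:base_change_Piinfty}. By \Cref{prop:computing_hyperconstant_hypersheaf}, \smash{$\prSupperstarhyp$} and \smash{$\prTupperstarhyp$} admit left adjoints, namely \smash{$\prSlowersharphyp \simeq \PiinftyE(-/S)$} and \smash{$\prTlowersharphyp \simeq \PiinftyE(-/T)$}, while the verticals of the second square, \smash{$\prSlowerstar$} and \smash{$\prTlowerstar$}, admit the left adjoints \smash{$\prSupperstarhyp$} and \smash{$\prTupperstarhyp$}; so vertical left adjointability is meaningful for both squares. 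Next, the first square commutes by functoriality of hypersheaf pullback applied to the identity $\pr_T \circ g_X = g \circ \pr_S$ of maps $S \cross X \to T$, and the second commutes by functoriality of pushforward applied to the same identity. Hence the claim reduces to showing that the two Beck--Chevalley mates
\begin{equation*}
	\prSlowersharphyp \circ g_X^{\ast,\hyp} \longrightarrow g^{\ast,\hyp} \circ \prTlowersharphyp \andeq \prTupperstarhyp \circ \glowerstar \longrightarrow g_{X,\ast} \circ \prSupperstarhyp
\end{equation*}
are equivalences.

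For the first mate, both its source and its target are colimit-preserving functors \smash{$\Shhyp(T \cross X;\E) \to \Shhyp(S;\E)$} (each is a composite of left adjoints). By \Cref{prop:hypersheaves_as_basis_RKE}, \smash{$\Shhyp(T \cross X;\E)$} is generated under colimits by the hypersheafifications of the representable presheaves on the basis \smash{$\Opencross(T \cross X)$}, tensored with objects of $\E$, so it suffices to verify the mate on such generators. On the generator at $V \cross U \in \Opencross(T \cross X)$, using \smash{$\prSlowersharphyp \simeq \PiinftyE(-/S)$} and the fact that \smash{$g_X^{\ast,\hyp}$} carries the representable hypersheaf at $V \cross U$ to the one at $g\inv(V) \cross U$, the mate unwinds to exactly the canonical equivalence $g^{\ast,\hyp}(\Piinfty(V \cross U / T)) \simeq \Piinfty(g_X^{\ast,\hyp}(V \cross U) / S)$ of \Cref{obs:base_change_Piinfty}, i.e.\ the square \eqref{sq:Pi_infty} tensored with $\E$.

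For the second mate, we go through adjunctions. For $F \in \Shhyp(T \cross X;\E)$ and $G \in \Shhyp(S;\E)$, repeated use of the adjunctions \smash{$g^{\ast,\hyp} \leftadjoint \glowerstar$}, \smash{$g_X^{\ast,\hyp} \leftadjoint g_{X,\ast}$}, \smash{$\prSlowersharphyp \leftadjoint \prSupperstarhyp$} and \smash{$\prTlowersharphyp \leftadjoint \prTupperstarhyp$} identifies \smash{$\Map(F, \prTupperstarhyp\glowerstar(G))$} with \smash{$\Map(g^{\ast,\hyp}\prTlowersharphyp(F), G)$} and \smash{$\Map(F, g_{X,\ast}\prSupperstarhyp(G))$} with \smash{$\Map(\prSlowersharphyp g_X^{\ast,\hyp}(F), G)$}; these agree by the equivalence \smash{$g^{\ast,\hyp}\prTlowersharphyp \simeq \prSlowersharphyp g_X^{\ast,\hyp}$} established above, and tracing this through the Yoneda identification shows the resulting natural equivalence is the second Beck--Chevalley map. (One can also evaluate the second mate directly on the basis \smash{$\Opencross(T \cross X)$} via the formula \smash{$\prSupperstarhyp(G) \simeq [\,V \cross U \mapsto G(V)^{\Piinfty(U)}\,]$} of \Cref{constr:Piinf}: both sides restrict to the cotensor \smash{$G(g\inv(V))^{\Piinfty(U)}$}, with the comparison being the identity.) Finally, for the ``in particular'' set $T = \pt$, so that \smash{$\glowerstar = \GammaSlowerstar$}, \smash{$g_{X,\ast} = \prXlowerstar$}, and \smash{$\prTupperstarhyp = \GammaXupperstarhyp$}; if $F \simeq \prSupperstarhyp(G)$ is $S$-hyperconstant, then the second square yields \smash{$\prXlowerstar(F) \simeq \prXlowerstar\prSupperstarhyp(G) \simeq \GammaXupperstarhyp\GammaSlowerstar(G)$}, which is a hyperconstant hypersheaf on $X$ by definition.

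The step I expect to be the main obstacle is the identification, in the last two paragraphs, of the abstract Beck--Chevalley mates with the explicit comparison maps coming from \Cref{obs:base_change_Piinfty} and \Cref{constr:Piinf}: one must check that these two a priori distinct natural transformations really coincide, and in particular that on the generating family (resp.\ on the basis) the comparison is literally the identity rather than merely an abstract isomorphism. Everything else is formal.
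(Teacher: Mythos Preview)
Your proposal is correct and follows essentially the same route as the paper's proof: reduce both exchange transformations to \Cref{obs:base_change_Piinfty} via the identification \smash{$\prSlowersharphyp \simeq \PiinftyE(-/S)$} from \Cref{prop:computing_hyperconstant_hypersheaf}, and obtain the second mate from the first by passing to right adjoints. The paper's argument is just terser: it does not unpack the first mate on generators but simply invokes \Cref{obs:base_change_Piinfty}, since that observation already records the commutativity of the square \eqref{sq:Pi_infty} of left adjoints, which is precisely the Beck--Chevalley condition once the identification with $\PiinftyE$ is in place. Your added verification on generators and the explicit adjunction chase for the second mate are fine but not needed; the concern you flag at the end (that the abstract mate must coincide with the explicit comparison) is legitimate in principle, but here it is automatic because the commutativity in \Cref{obs:base_change_Piinfty} is built from the canonical identifications $g^{\ast,\hyp}(V \tensor \Piinfty(U)) \simeq g^{\ast,\hyp}(V) \tensor \Piinfty(U)$, and the equivalence $\alpha$ of \Cref{prop:computing_hyperconstant_hypersheaf} is constructed compatibly with pullback.
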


\begin{proof}
	We have to prove that the exchange transformations
	\begin{equation*} 
		\prTlowersharphyp \circ g_X^{*,\hyp} \longrightarrow g^{*,\hyp} \circ \prSlowersharphyp \andeq \prTupperstarhyp \circ g_* \longrightarrow g_{X,*} \circ \pr_S^{*,\hyp}
	\end{equation*}
	are equivalences.
	The one on the right can be deduced from the one on the left by passing to right adjoints.
	By \Cref{prop:computing_hyperconstant_hypersheaf}, 
	\begin{equation*}
		\prSlowersharphyp \equivalent \Piinfty^\E(-/S) \andeq \prTlowersharphyp \equivalent \Piinfty^\E(-/T) \period
	\end{equation*}
	Hence the conclusion follows from \cref{obs:base_change_Piinfty}.
\end{proof}

\begin{remark}\label{rem:projection_left_adjointable}
	In the statement of \cref{cor:projection_left_adjointable}, one could ask whether the right-hand square is vertically \emph{right} adjointable (a question involving the regular pushforward instead of the exceptional one).
	However, there are two problems that prevent the resulting statement from being true: 
	\begin{enumerate}[label=\stlabel{rem:projection_left_adjointable}, ref=\arabic*]
		\item The topological space $ X $ is not assumed to be compact, but only locally weakly contractible.
		
		\item Even when $X$ is compact, we are working with \emph{hyper}sheaves, so the Proper Basechange Theorem \HTT{Corollary}{7.3.1.18} does not apply; see \HTT{Counterexample}{6.5.4.2.}.
	\end{enumerate}
\end{remark}

\begin{corollary} \label{cor:projection_locally_S_hyperconstant}
	Let $S$ and $X$ be topological spaces and assume that $X$ is locally weakly contractible.
	Then the pushforward functor \smash{$\pr_{X,*} \colon \Shhyp(S \cross X; \E) \longrightarrow \Shhyp(X; \E)$} restricts to a functor
	\begin{equation*} 
		\pr_{X,*} \colon \LChyp_S(S \cross X; \E) \longrightarrow \LChyp(X; \E) \period 
	\end{equation*}
\end{corollary}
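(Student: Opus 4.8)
The plan is to argue locally on $X$ and reduce to the final assertion of \Cref{cor:projection_left_adjointable}. Fix $L \in \LChyp_S(S \cross X;\E)$. By \Cref{def:LC}, there is an open cover $\{U_\alpha\}_{\alpha \in A}$ of $X$ such that, for each $\alpha$, the restriction $\restrict{L}{S \cross U_\alpha}$ is a $S$-hyperconstant hypersheaf on $S \cross U_\alpha$. Since the pushforward functor carries hypersheaves to hypersheaves, $\pr_{X,*}(L)$ already lies in $\Shhyp(X;\E)$, so by \Cref{def:LC} it suffices to show that $\restrict{\pr_{X,*}(L)}{U_\alpha}$ is a hyperconstant hypersheaf on $U_\alpha$ for every $\alpha$.

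The key input is the compatibility of $*$-pushforward with restriction to an open subset: for every open $U \subset X$ there is a canonical equivalence $\restrict{\pr_{X,*}(L)}{U} \simeq \pr_{U,*}\bigl(\restrict{L}{S \cross U}\bigr)$, where $\pr_U \colon S \cross U \to U$ denotes the projection. This is essentially tautological here, because restriction of a (hyper)sheaf along an open immersion is just the restriction of the underlying presheaf to the opens contained in the subspace (no sheafification is needed, and hypercompleteness is preserved); hence both sides send an open $W \subset U$ to $L(S \cross W)$, naturally in $W$. (One may also invoke open-immersion base change for hypersheaves.)

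To conclude, fix $\alpha$. The open subset $U_\alpha \subset X$ is locally weakly contractible, being open in the locally weakly contractible space $X$, and $\restrict{L}{S \cross U_\alpha}$ is $S$-hyperconstant; so the last sentence of \Cref{cor:projection_left_adjointable}, applied with $X$ replaced by $U_\alpha$ and $T = \pt$, shows that $\pr_{U_\alpha,*}\bigl(\restrict{L}{S \cross U_\alpha}\bigr)$ is a hyperconstant hypersheaf on $U_\alpha$. By the previous paragraph this hypersheaf is $\restrict{\pr_{X,*}(L)}{U_\alpha}$, so $\pr_{X,*}(L)$ is locally hyperconstant, i.e.\ $\pr_{X,*}(L) \in \LChyp(X;\E)$. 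The only step that is not purely formal is the base-change equivalence, and in the open-immersion case at hand it is immediate, so I do not expect a genuine obstacle.
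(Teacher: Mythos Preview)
Your proof is correct and follows essentially the same approach as the paper's own argument: both reduce to the $S$-hyperconstant case by using that $\pr_{X,*}$ is compatible with restriction to opens of $X$, and then appeal to the final assertion of \Cref{cor:projection_left_adjointable}. Your version simply spells out in more detail what the paper compresses into two sentences.
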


\begin{proof}
	Since the formation of \smash{$\pr_{X,*}$} is compatible with restriction to an open subset of $X$, the question is local on $X$.
	Thus it is enough to check that if $F$ is a $S$-hyperconstant hypersheaf, then \smash{$\pr_{X,*}(F) \in \LChyp(X;\E)$}.
	This is guaranteed by \cref{cor:projection_left_adjointable}.
\end{proof}

We conclude this section by using \Cref{cor:projection_left_adjointable} to show that one can check that the unit morphism \smash{$ \unit_{F} \colon \fromto{F}{\prSupperstar\prSlowersharphyp(F)} $} is an equivalence locally.
In fact, we prove a slightly more general result that applies to the strata of a suitable stratification:

\begin{corollary}\label{cor:pullbackcounit}
	Let $ S $ and $ X $ be topological spaces and $ \{f_{\alpha} \colon \fromto{S_{\alpha}}{S}\}_{\alpha \in A} $ a collection of maps of topological spaces.
	Assume that $X$ is \wclwc and that the hypersheaf pullback functors
	\begin{equation*}
		\{(f_{\alpha} \cross \id{X})\upperstarhyp \colon \fromto{\Shhyp(S \cross X;\E)}{\Shhyp(S_{\alpha} \cross X;\E)}\}_{\alpha \in A}
	\end{equation*}
	are jointly conservative.
	Then the unit \smash{$ \fromto{F}{\prSupperstarhyp\prSlowersharphyp(F)} $} is an equivalence if and only if for each $ \alpha \in A $, the unit
	\begin{equation*}
		(f_{\alpha} \cross \id{X})\upperstarhyp(F) \to \prupperstarhyp_{S_{\alpha}}\pr_{S_{\alpha},\sharp}^{\hyp}(f_{\alpha} \cross \id{X})\upperstarhyp(F)
	\end{equation*}
	is an equivalence.
\end{corollary}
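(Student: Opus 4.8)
The plan is to deduce the corollary from \Cref{cor:projection_left_adjointable} using the joint-conservativity hypothesis. Write $\eta$ for the unit of the adjunction $\prSlowersharphyp \dashv \prSupperstarhyp$ on $S \cross X$ (so the morphism appearing in the statement is $\eta_F \colon F \to \prSupperstarhyp\prSlowersharphyp(F)$), and, for each $\alpha \in A$, write $\eta_\alpha$ for the unit of the corresponding adjunction $\pr_{S_{\alpha},\sharp}^{\hyp} \dashv \prupperstarhyp_{S_{\alpha}}$ on $S_\alpha \cross X$; both adjunctions exist because $X$ is locally weakly contractible, by \Cref{prop:computing_hyperconstant_hypersheaf}. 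Since the functors $\{(f_\alpha \cross \id{X})\upperstarhyp\}_{\alpha \in A}$ are jointly conservative by hypothesis, $\eta_F$ is an equivalence if and only if $(f_\alpha \cross \id{X})\upperstarhyp(\eta_F)$ is an equivalence in $\Shhyp(S_\alpha \cross X;\E)$ for every $\alpha$. Thus it suffices to identify, for each $\alpha$, the morphism $(f_\alpha \cross \id{X})\upperstarhyp(\eta_F)$ with the unit $\eta_{\alpha,(f_\alpha \cross \id{X})\upperstarhyp(F)}$ up to composition with equivalences.

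To produce this identification, I would apply \Cref{cor:projection_left_adjointable} with $g = f_\alpha$. Commutativity of the first of its two squares unwinds to a natural equivalence
\begin{equation*}
	\beta \colon (f_\alpha \cross \id{X})\upperstarhyp \circ \prSupperstarhyp \;\xrightarrow{\ \sim\ }\; \prupperstarhyp_{S_{\alpha}}\circ f_\alpha\upperstarhyp ,
\end{equation*}
and the assertion that this square is vertically left adjointable says exactly that its mate
\begin{equation*}
	\mu \colon \pr_{S_{\alpha},\sharp}^{\hyp}\circ (f_\alpha \cross \id{X})\upperstarhyp \;\xrightarrow{\ \sim\ }\; f_\alpha\upperstarhyp \circ \prSlowersharphyp
\end{equation*}
is an equivalence. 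Together, $\beta$ and $\mu$ promote the pair $\bigl(f_\alpha\upperstarhyp,\,(f_\alpha \cross \id{X})\upperstarhyp\bigr)$ to a morphism of adjunctions from $\bigl(\prSlowersharphyp \dashv \prSupperstarhyp\bigr)$ to $\bigl(\pr_{S_{\alpha},\sharp}^{\hyp} \dashv \prupperstarhyp_{S_{\alpha}}\bigr)$; since a morphism of adjunctions intertwines units, it follows that $(f_\alpha \cross \id{X})\upperstarhyp(\eta_F)$ and $\eta_{\alpha,(f_\alpha \cross \id{X})\upperstarhyp(F)}$ become identified after composing with the equivalences $\beta_{\prSlowersharphyp(F)}$ and $\prupperstarhyp_{S_{\alpha}}(\mu_F)$ into $\prupperstarhyp_{S_{\alpha}}f_\alpha\upperstarhyp\prSlowersharphyp(F)$. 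Concretely, this is the identity
\begin{equation*}
	\beta_{\prSlowersharphyp(F)}\circ(f_\alpha \cross \id{X})\upperstarhyp(\eta_F) \;\simeq\; \prupperstarhyp_{S_{\alpha}}(\mu_F)\circ \eta_{\alpha,(f_\alpha \cross \id{X})\upperstarhyp(F)} ,
\end{equation*}
which one checks directly by unwinding the construction of $\mu$ out of $\eta$, $\beta$, and the counit of $\pr_{S_{\alpha},\sharp}^{\hyp} \dashv \prupperstarhyp_{S_{\alpha}}$, and then using naturality of $\eta_\alpha$ together with the triangle identity for that adjunction.

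Granting the identification, $(f_\alpha \cross \id{X})\upperstarhyp(\eta_F)$ is an equivalence if and only if $\eta_{\alpha,(f_\alpha \cross \id{X})\upperstarhyp(F)}$ is, because $\beta$ and $\mu_F$ are equivalences; combined with the reduction of the first paragraph, this is exactly the asserted equivalence of conditions. I expect the only genuinely delicate point to be the homotopy-coherent bookkeeping in the middle step — upgrading the ``commuting square plus vertical left adjointability'' output of \Cref{cor:projection_left_adjointable} to an honest morphism of adjunctions, so that ``pullback carries the unit to the unit'' is a licit move. This is a standard formal fact about adjunctions in $\infty$-categories, and in any case it can be bypassed by the explicit mate computation indicated above.
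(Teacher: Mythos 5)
Your proposal is correct and follows the same route as the paper: the paper's proof is exactly the observation that \Cref{cor:projection_left_adjointable} implies $(f_\alpha \cross \id{X})\upperstarhyp$ carries the unit of $\prSlowersharphyp \leftadjoint \prSupperstarhyp$ to the unit of $\pr_{S_\alpha,\sharp}^{\hyp} \leftadjoint \pr_{S_\alpha}^{*,\hyp}$, after which joint conservativity gives the conclusion. Your write-up merely makes explicit the mate/morphism-of-adjunctions bookkeeping that the paper leaves implicit, which is fine.
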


\begin{proof}
	\Cref{cor:projection_left_adjointable} implies that $(f_\alpha \cross \id{X})^{*,\hyp}$ takes the unit of the adjunction \smash{$\prSlowersharphyp \leftadjoint \prSupperstarhyp$} to the unit of the adjunction \smash{$\pr_{S_\alpha,\sharp}^{\hyp} \leftadjoint \pr_{S_\alpha}^{*,\hyp}$}.
	The conclusion follows.
\end{proof}


\subsection{Full faithfulness of the hypersheaf pullback}\label{subsec:full_faithfulness}

Now we prove \Cref{thm_intro:full_faithfulness}.

\begin{notation}\label{ntn:contractibility_total_category}
	We write $ \Env \colon \fromto{\Catinfty}{\Spc} $ for the left adjoint to the inclusion $ \Spc \subset \Catinfty $.
	Recall that for \acategory $ \cC $, the space $ \Env(\cC) $ can be computed as the colimit of the constant functor $ \cC \to \Spc$ at the terminal object $ \pt \in \Spc $ \cite[Corollary 2.10]{arXiv:2108.01924}.
\end{notation}

\begin{theorem}\label{thm:relative_full_faithfulness}
	Let $S$ and $X$ be topological spaces and assume that $X$ is \wclwc.
	Then:
	\begin{enumerate}[label=\stlabel{thm:relative_full_faithfulness}, ref=\arabic*]
		\item\label{thm:relative_full_faithfulness.1} The hypersheaf pullback \smash{$ \prSupperstarhyp \colon \Shhyp(S;\cE) \longrightarrow \Shhyp(S \cross X;\E) $} is fully faithful.
		
		\item\label{thm:relative_full_faithfulness.2} The essential image of \smash{$ \prSupperstarhyp $} consists of the \emph{locally} $S$-hyperconstant hypersheaves.
	\end{enumerate}
\end{theorem}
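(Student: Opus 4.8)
\emph{Overview.} The plan is to obtain the full faithfulness statement~(1) by a short formal argument resting on \Cref{prop:computing_hyperconstant_hypersheaf}, and then to prove the description of the essential image~(2) by a descent argument along a cover of $X$ by weakly contractible opens. For~(1): I would use that \smash{$\prSupperstarhyp$} is left adjoint to the pushforward $\prSlowerstar$, so it is fully faithful exactly when the unit of the adjunction \smash{$\prSupperstarhyp \leftadjoint \prSlowerstar$} is an equivalence. Identifying \smash{$\prSupperstarhyp$} with \smash{$\PiEinfty(-/S)$} via \Cref{prop:computing_hyperconstant_hypersheaf} and unwinding its description in \Cref{constr:Piinf}, for $G \in \Shhyp(S;\E)$ and $V \in \Open(S)$ one has
\begin{equation*}
	\prSlowerstar \prSupperstarhyp(G)(V) \simeq \prSupperstarhyp(G)(V \cross X) \simeq G(V)^{\Piinfty(X)} \simeq G(V) \period
\end{equation*}
The last equivalence holds because $X$ is weakly contractible, so $\Piinfty(X) \simeq \pt$; tracing the construction of the comparison map $\alpha$ shows this identification is the $V$-th component of the unit, which is therefore an equivalence, so \smash{$\prSupperstarhyp$} is fully faithful.

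\emph{The easy containment in~(2).} By \Cref{def:LC} the essential image of \smash{$\prSupperstarhyp$} consists of the $S$-hyperconstant hypersheaves, and these are locally $S$-hyperconstant (take the trivial cover $\{X\}$). For the reverse containment I would fix a locally $S$-hyperconstant \smash{$L \in \Shhyp(S \cross X;\E)$}; since \smash{$\prSupperstarhyp$} is fully faithful with left adjoint \smash{$\prSlowersharphyp$}, it suffices to show that the unit \smash{$\unit_L \colon L \to \prSupperstarhyp\prSlowersharphyp(L)$} is an equivalence. Using local weak contractibility of $X$, I would refine the cover witnessing local $S$-hyperconstancy to a cover $\{U_\alpha\}_{\alpha \in A}$ of $X$ by weakly contractible opens; each $U_\alpha$ is then \wclwc (being locally weakly contractible is a local property), each restriction \smash{$L|_{S \cross U_\alpha}$} is again $S$-hyperconstant (restricting an $S$-hyperconstant hypersheaf to a smaller product keeps it $S$-hyperconstant, by functoriality of the hypersheaf pullback), and~(1) applies to each $U_\alpha$.

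\emph{Computing \smash{$\prSlowersharphyp(L)$}.} Writing \smash{$j_{\vec\alpha} \colon S \cross U_{\vec\alpha} \inclusion S \cross X$} for the inclusion of the intersection attached to a tuple $\vec\alpha$ and $j_{\vec\alpha,!}$ for the left adjoint of \smash{$j_{\vec\alpha}\upperstarhyp$}, Čech descent for the open cover \smash{$\{S \cross U_\alpha\}$} presents $L$ as the geometric realization of the simplicial object \smash{$[n] \mapsto \bigsqcup_{\vec\alpha} j_{\vec\alpha,!}\, j_{\vec\alpha}\upperstarhyp(L)$}. Applying the colimit-preserving functor \smash{$\prSlowersharphyp \simeq \PiinftyE(-/S)$} together with the base change \smash{$\prSlowersharphyp \circ j_{\vec\alpha,!} \simeq \PiinftyE(-/S)$} relative to $S \cross U_{\vec\alpha} \to S$ (obtained by passing to left adjoints in \smash{$j_{\vec\alpha}\upperstarhyp \circ \prSupperstarhyp \simeq \PiEinfty(-/S)$}, which holds since each $U_{\vec\alpha}$ is locally weakly contractible, by functoriality of the pullback and \Cref{prop:computing_hyperconstant_hypersheaf}) realizes \smash{$\prSlowersharphyp(L)$} as \smash{$[n] \mapsto \bigsqcup_{\vec\alpha} \PiinftyE(L|_{S \cross U_{\vec\alpha}}/S)$}. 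Since each \smash{$L|_{S \cross U_{\vec\alpha}}$} is $S$-hyperconstant and \smash{$\PiinftyE(\PiEinfty(M/S)/S) \simeq M \tensor \Piinfty(U_{\vec\alpha})$} (valid for $U_{\vec\alpha}$ locally weakly contractible, from \Cref{prop:computing_hyperconstant_hypersheaf} and \Cref{obs:base_change_Piinfty}), each term becomes a copy of a local monodromy object of $L$ tensored with \smash{$\Piinfty(U_{\vec\alpha})$}; since \smash{$\Piinfty$} sends the Čech nerve of $\{U_\alpha\}$ to a colimit diagram computing \smash{$\Piinfty(X) \simeq \pt$}, this realization should collapse to a single $M \in \Shhyp(S;\E)$ together with coherent equivalences \smash{$L|_{S \cross U_\alpha} \simeq \prSupperstarhyp(M)|_{S \cross U_\alpha}$}, which glue to \smash{$L \simeq \prSupperstarhyp(M)$}, exhibiting $L$ as $S$-hyperconstant.

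\emph{The main obstacle.} The hard part is exactly this last collapse. The multi-fold intersections $U_{\vec\alpha}$ are only locally, not globally, weakly contractible, so~(1) does not apply to them and the local monodromy objects \smash{$\PiinftyE(L|_{S \cross U_\alpha}/S)$} cannot be compared to one another directly; the transition data must instead be organized coherently over the entire Čech nerve. This is where I expect \Cref{ntn:contractibility_total_category} to enter: the descent datum attached to $L$ determines a functor out of a category whose classifying space --- its \smash{$\Env$} --- computes \smash{$\Piinfty(X)$}, and the weak contractibility of $X$ forces that functor to be constant. Assembling these coherences, and checking that the resulting equivalence is inverse to \smash{$\unit_L$}, is the technical core of the argument; none of it uses anything special about the coefficients or the base, so it should go through uniformly for an arbitrary presentable \category $\E$ and an arbitrary topological space $S$.
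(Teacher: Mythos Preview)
Your argument for~(1) is correct and is essentially the paper's: both identify \smash{$\prSupperstarhyp$} with \smash{$\PiEinfty(-/S)$} and read off the unit computation from the weak contractibility of $X$.

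For~(2), however, your route and the paper's diverge in a way that matters. You work with the \emph{left} adjoint \smash{$\prSlowersharphyp$} and try to show the unit \smash{$L \to \prSupperstarhyp\prSlowersharphyp(L)$} is an equivalence by computing \smash{$\prSlowersharphyp(L)$} via the \v{C}ech resolution. The paper instead works with the \emph{right} adjoint $\prSlowerstar$ and shows the counit \smash{$\prSupperstarhyp\prSlowerstar(F) \to F$} is an equivalence. This choice is decisive: the pushforward is computed pointwise, so the counit on an open $V \cross U$ is simply the restriction map $F(V \cross X) \to F(V \cross U)$, and the problem becomes one about a single functor \smash{$F_V \colon \cB_F^{\op} \to \E$} on the basis $\cB_F$ of weakly contractible opens where $F$ is $S$-hyperconstant. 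The paper shows this functor inverts every arrow (using~(1) on each individual $U \in \cB_F$, never on an intersection), hence factors through $\Env(\cB_F) \simeq \Piinfty(X) \simeq \pt$.

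Your approach, by contrast, forces you to control the entire simplicial object \smash{$[n] \mapsto \bigsqcup_{\vec\alpha} (p_{\vec\alpha})_\sharp^{\hyp}(L|_{S \cross U_{\vec\alpha}})$} and identify its colimit with a single $M$ together with a coherent equivalence \smash{$\prSupperstarhyp(M) \simeq L$}. You correctly identify the obstacle---the intersections $U_{\vec\alpha}$ are not weakly contractible, so the local monodromy objects $G_{\alpha_i}$ cannot be compared directly---but you do not resolve it. Saying the diagram ``should collapse to a single $M$'' because \smash{$\Piinfty$} of the \v{C}ech nerve computes \smash{$\Piinfty(X)$} elides exactly the coherence problem: the terms are not of the form \smash{$M \tensor \Piinfty(U_{\vec\alpha})$} for a \emph{fixed} $M$, and organizing the varying $G_{\vec\alpha}$'s into a functor out of a category with contractible $\Env$ is the whole content of the argument. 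The paper sidesteps this by never needing to name \smash{$\prSlowersharphyp(L)$} at all: switching to the pushforward turns the coherence problem into the elementary observation that a functor out of a poset with contractible $\Env$ which inverts every arrow is constant.
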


\begin{remark}
	Since the objects in the essential image of $\prSupperstarhyp$ are (by definition) globally $ S $-hyperconstant sheaves, we can rephrase \enumref{thm:relative_full_faithfulness}{2} as follows: if $X$ is \wclwc, then every locally $S$-hyperconstant sheaf is automatically globally $S$-hyperconstant.
\end{remark}

\begin{proof}
	For \enumref{thm:relative_full_faithfulness}{1}, note that since $\prSupperstarhyp $ is left adjoint to $ \prSlowerstar $, it suffices to provide a natural equivalence \smash{$\prSlowerstar \prSupperstarhyp \equivalent \id{} $} \cite[Lemma 3.3.1]{MR4246977}.
	Now note that since $ X $ is weakly contractible, applying \Cref{prop:computing_hyperconstant_hypersheaf} we see that for \smash{$ G \in \Shhyp(S;\E) $} and $V \in \Open(S)$ we have natural equivalences
	\begin{align*}
		\paren{\prSlowerstar\prSupperstarhyp(G)}(V) &\equivalent \paren{\PiEinfty(G/S)}(V \cross X) \\ 
		&\simeq G(V)^{\Piinfty(X)} \simeq G(V)^\pt \simeq G(V) \period
	\end{align*}
	
	Now we prove \enumref{thm:relative_full_faithfulness}{2}.
	Let $F \in \LChyp_S(S \cross X;\E)$.
	It suffices to prove that the counit
	\begin{equation*} 
		\counit \colon \prSupperstarhyp\prSlowerstar(F) \longrightarrow F 
	\end{equation*}
	is an equivalence.
	Let $\cB_F$ be the full subposet of $\Openctr(X)$ formed by those weakly contractible opens $U$ such that $\restrict{F}{S \cross U}$ is hyperconstant.
	Since $X$ is locally weakly contractible and $F$ is locally $S$-hyperconstant, the inclusion
	\begin{equation*} 
		\Open(S) \cross \cB_F \inclusion \Openctrall(S \cross X) \inclusion \Open(S \cross X) 
	\end{equation*}
	is a basis for $\Open(S \cross X)$.
	Since both the source and target of $ \counit $ are hypersheaves, \enumref{prop:hypersheaves_as_basis_RKE}{1} shows that it suffices to check that $\counit$ is an equivalence when restricted to $\Open(S) \cross \cB_F$.
	Fix $U \in \cB_F$ and write $q_U \colon S \cross U \to S$ for the projection; note that we have a natural identification
	\begin{equation*} 
		\restrict{\paren{\prSupperstarhyp\prSlowerstar(F)}\big}{S \cross U} \simeq q_U^{\ast,\hyp}\prSlowerstar(F) \period
	\end{equation*}
	Since $U$ is \wclwc, statement \enumref{thm:relative_full_faithfulness}{1} implies that the pushforward of \smash{$ q_U^{\ast,\hyp}\prSlowerstar(F) $} along $q_U$ canonically coincides with $\prSlowerstar(F)$.
	It follows that the counit transformation $\counit$ evaluated on $V \cross U \in \Open(S) \cross \cB_F$ is identified with the restriction morphism
	\begin{equation}\label{eq:equational_characterization}
		F(V \cross X) \longrightarrow F(V \cross U) \period
	\end{equation}
	Setting
	\begin{equation*} 
		F_V \coloneqq \prXlowerstar( \restrict{F}{V \cross X} ) \in \Shhyp(X;\E) \comma 
	\end{equation*}
	we are reduced to proving that for every fixed $V \in \Open(S)$ and every $U \in \Openctr(X)$, the restriction map
	\begin{equation*}
		F_V(X) \to F_V(U)
	\end{equation*}
	is an equivalence.
	\Cref{cor:projection_left_adjointable} implies that $F_V \in \LChyp(X;\E)$; we are therefore reduced to the case $S = \pt$.
	
	Let $j \colon \cB_F^{\op} \inclusion \Openctr(X)^{\op}$ denote the inclusion.
	\Cref{prop:hypersheaves_as_basis_RKE} guarantees that the unit transformation $F_V \to \jlowerstar \jupperstar(F_V)$ is an equivalence.
	It follows that for every $V \in \Open(S)$, the natural map
	\begin{equation*} 
		F_V(X) \longrightarrow \lim_{U \in \cB_F} F_V(U) 
	\end{equation*}
	is an equivalence.
	We claim that the functor $\jupperstar(F_V) = F_V \circ j$ inverts every morphism in $\cB_F$.
	To see this, let $i \colon W \inclusion U$ be a morphism in $\cB_F$.
	Since $U \in \cB_F$, there exists an object $E \in \E$ and an equivalence $\Gamma_{U}^{\ast,\hyp}(E) \simeq \restrict{F_V}{U}$.
	Since $\Gamma_W = \Gamma_U \circ i$, it follows that $\restrict{F_V}{W} \simeq \Gamma_W^{\ast,\hyp}(E)$.
	Consider the commutative triangle
	\begin{equation*} 
		\begin{tikzcd}[column sep = small]
			& E \arrow[dr] \arrow[dl] \\
			\Gamma_{U,\ast}\paren{ \Gamma_U^{\ast,\hyp}(E)} \arrow[rr] & & \Gamma_{W,\ast}\paren{\Gamma_W^{\ast,\hyp}(E)} \period
		\end{tikzcd} 
	\end{equation*}
	The bottom horizontal morphism is naturally identified with the restriction map $F_V(i) \colon F_V(U) \to F_V(W)$.
	On the other hand, since both $W$ and $U$ are \wclwc, \enumref{thm:relative_full_faithfulness}{1} implies that both the diagonal morphisms are equivalences.
	The $ 2 $-of-$ 3 $ property implies that $F_V(i)$ is an equivalence as well.
	
	Thus, the functor $\jupperstar(F_V)$ factors through $\Env(\cB_F)$.
	Observe that the functor $\Piinfty \colon \cB_F \to \Spc$ is equivalent to the constant functor sending every object of $\cB_F$ to $\pt \in \Spc$.
	It follows from \Cref{ntn:contractibility_total_category} that
	\begin{equation*} 
		\Env(\cB_F) \simeq \colim_{V \in \cB_F} \Piinfty(V) \period 
	\end{equation*}
	Van Kampen's Theorem identifies this colimit with $\Piinfty(X)$.
	Since $X$ is weakly contractible, we conclude that $\Env(\cB_F) \simeq \pt $, and therefore that $\jupperstar(F_V)$ is a constant functor.
	Finally, since $ \Env(\cB_F) $ is contractible, the restriction maps
	\begin{equation*} 
		F_V(X) \simeq \lim_{U \in \cB_F} F(U) \longrightarrow F(U) 
	\end{equation*}
	are equivalences for every $U \in \cB_F$.
	The conclusion follows.
\end{proof}

Taking $S = \pt$ we obtain:

\begin{corollary}\label{cor:absolute_full_faithfulness}
	Let $X$ be a \wclwc topological space and let $\E$ be a presentable \category.
	Then:
	\begin{enumerate}[label=\stlabel{cor:absolute_full_faithfulness}, ref=\arabic*]
		\item The constant hypersheaf functor \smash{$ \GammaXupperstarhyp \colon \cE \longrightarrow \Shhyp(X;\E) $} is fully faithful.
		
		\item The essential image of \smash{$ \GammaXupperstarhyp $} consists of the \emph{locally} hyperconstant hypersheaves.
	\end{enumerate}
\end{corollary}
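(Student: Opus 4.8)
The plan is to deduce this directly from \Cref{thm:relative_full_faithfulness} by specializing to $S = \pt$. First I would spell out the relevant identifications. The site $\Open(\pt)$ has only the two objects $\emptyset \subset \pt$, so evaluation at $\pt$ exhibits an equivalence $\Sh(\pt;\E) \simeq \E$; for $\E = \Spc$ the \topos $\Sh(\pt) \simeq \Spc$ is hypercomplete, and tensoring with $\E$ gives $\Shhyp(\pt;\E) \simeq \E$. Under this equivalence, the projection $\pr_{\pt} \colon \pt \cross X \to \pt$ is identified with the structure map $\Gamma_X$, so the functor $\prSupperstarhyp$ becomes the constant hypersheaf functor $\GammaXupperstarhyp \colon \E \to \Shhyp(X;\E)$ and $\prSlowerstar$ becomes the global sections functor $\GammaXlowerstar$.

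Next I would recall that, by the final sentence of \Cref{def:LC}, the \category $\LChyp_{\pt}(\pt \cross X;\E)$ of locally $\pt$-hyperconstant hypersheaves is \emph{by definition} the \category $\LChyp(X;\E)$ of locally hyperconstant hypersheaves on $X \simeq \pt \cross X$. Since a \wclwc space is in particular locally weakly contractible, all the hypotheses of \Cref{thm:relative_full_faithfulness} hold with $S = \pt$.

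With these identifications in hand, \enumref{thm:relative_full_faithfulness}{1} becomes precisely part (1) of the corollary, and \enumref{thm:relative_full_faithfulness}{2} becomes part (2). There is no genuine obstacle here: the only thing to check is that the conventions of \Cref{def:LC} and the description of $\Shhyp(\pt;\E)$ specialize as claimed, which is routine.
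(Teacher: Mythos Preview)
Your proposal is correct and is exactly the paper's approach: the paper simply writes ``Taking $S = \pt$ we obtain:'' before stating the corollary, and you have spelled out the routine identifications (namely $\Shhyp(\pt;\E)\simeq\E$, $\pr_{\pt}=\Gamma_X$, and $\LChyp_{\pt}(\pt\cross X;\E)=\LChyp(X;\E)$) that make this specialization go through. One small remark: your sentence about a \wclwc space being ``in particular locally weakly contractible'' is unnecessary, since \Cref{thm:relative_full_faithfulness} already assumes $X$ is \wclwc, matching the corollary's hypothesis verbatim.
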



\subsection{Foliated hypersheaves}\label{subsec:foliated_hypersheaves}

We end this section with an alternative description of locally $ S $-hyperconstant hypersheaves on $ S \cross X $.
The idea is that in order to check local $ S $-hyperconstancy, it suffices to check hyperconstancy on the `leaves' $ \{s\} \cross X $.

\begin{definition}\label{def:foliated}
	Let $S$ and $X$ be topological spaces and assume that $X$ is \wclwc.
	Let $ \E $ be a presentable \category.
	A hypersheaf \smash{$ F \in \Shhyp(S \cross X;\E) $} is \defn{foliated} if for each $ s \in S $, the restriction \smash{$ \restrict{F}{\{s\} \cross X}^{\hyp} $} is a hyperconstant hypersheaf.
\end{definition}

\begin{example}\label{ex:pullback_is_foliated}
	Given a hypersheaf $ G \in \Shhyp(S;\E) $, the pullback $ \prSupperstarhyp(G) $ is foliated.
\end{example}

The following generalizes \HAa{Proposition}{A.2.5} from $ X = \RR $ to any \wclwc topological space.

\begin{proposition}\label{prop:classification_of_foliated}
	Let $S$ and $X$ be topological spaces and assume that $X$ is \wclwc.
	Let $ \E $ be a \emph{compactly generated} \category.
	For \smash{$F \in \Shhyp(S \cross X;\cE)$}, the following statements are equivalent:
	\begin{enumerate}[label=\stlabel{prop:classification_of_foliated}, ref=\arabic*]
		\item\label{prop:classification_of_foliated.1} The hypersheaf $ F $ is in the essential image of \smash{$ \prSupperstarhyp \colon \Shhyp(S;\cE) \inclusion \Shhyp(S \cross X;\E) $}.

		\item\label{prop:classification_of_foliated.2} The hypersheaf $ F $ is foliated.
	\end{enumerate}
\end{proposition}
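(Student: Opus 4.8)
The plan is to prove the two implications separately, the interesting direction being \enumref{prop:classification_of_foliated}{2} $\Rightarrow$ \enumref{prop:classification_of_foliated}{1}. The implication \enumref{prop:classification_of_foliated}{1} $\Rightarrow$ \enumref{prop:classification_of_foliated}{2} is immediate: by \Cref{ex:pullback_is_foliated} any hypersheaf in the essential image of $\prSupperstarhyp$ is foliated, so nothing needs to be done there.

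For the converse, the key is to reduce foliatedness to \emph{local} $S$-hyperconstancy and then invoke \Cref{thm:relative_full_faithfulness}\ref{thm:relative_full_faithfulness.2}, which says that locally $S$-hyperconstant hypersheaves are precisely the essential image of $\prSupperstarhyp$. So the heart of the proof is: if $F$ is foliated, then $F$ is locally $S$-hyperconstant, i.e.\ there is an open cover $\{U_\alpha\}$ of $X$ such that each $\restrict{F}{S \times U_\alpha}$ is $S$-hyperconstant. This is a genuinely local question on both $S$ and $X$. First I would fix a point $(s_0, x_0) \in S \times X$ and a weakly contractible open neighborhood $U$ of $x_0$; by \Cref{thm:relative_full_faithfulness}\ref{thm:relative_full_faithfulness.1} applied to the space $U$, the functor $\prSupperstarhyp \colon \Shhyp(S;\E) \to \Shhyp(S \times U;\E)$ is fully faithful, so it suffices to show $\restrict{F}{S \times U}$ lies in its essential image, i.e.\ that the counit $\prSupperstarhyp \prSlowerstar(\restrict{F}{S\times U}) \to \restrict{F}{S\times U}$ is an equivalence. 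Here I would want to use \Cref{cor:pullbackcounit}: since $\E$ is compactly generated, by \Cref{rec:stalksconservativeCGhyp} the stalk functors $\{s\}^{\ast,\hyp}$ on $\Shhyp(S;\E)$ are jointly conservative, so the pullbacks along $\{s\} \hookrightarrow S$ detect the counit being an equivalence. Thus it is enough to check, for each $s \in S$, that the unit for the adjunction $\Gamma_{U,\sharp}^{\hyp} \dashv \Gamma_U^{\ast,\hyp}$ is an equivalence on $\restrict{F}{\{s\}\times U}^{\hyp}$ — and this holds precisely because $F$ is foliated, i.e.\ $\restrict{F}{\{s\}\times X}^{\hyp}$ is hyperconstant, hence so is its further restriction to $\{s\}\times U$ (restriction to an open preserves hyperconstancy by \Cref{obs:pullbackofhypLC}).

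The one subtlety to be careful about is the interaction between the stalk $\{s\}^{\ast,\hyp}$ on $\Shhyp(S;\E)$ and the restriction $\restrict{(-)}{\{s\}\times U}^{\hyp}$ on $\Shhyp(S\times U;\E)$: I need the version of \Cref{cor:pullbackcounit} with the collection of maps $\{f_s \colon \{s\} \hookrightarrow S\}_{s\in S}$, whose base-changed pullbacks $\{(f_s \times \id{U})^{\ast,\hyp}\}$ are jointly conservative — this last fact follows because these agree with the stalk functors $(s,-)^{\ast,\hyp}$ composed with the conservative family on $U$, or more simply because $\{s\}^{\ast,\hyp}$ on $S$ being jointly conservative implies the $S$-pullbacks $\{f_s\times\id U\}^{\ast,\hyp}$ are jointly conservative on $\Shhyp(S\times U;\E)$ after further composing with the conservative stalk family on $U$. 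The main obstacle is thus simply bookkeeping: lining up \Cref{cor:pullbackcounit}, \Cref{rec:stalksconservativeCGhyp}, and the definition of foliated so that "foliated" exactly becomes the per-stratum hypothesis in \Cref{cor:pullbackcounit}. Once $\restrict{F}{S\times U}$ is shown $S$-hyperconstant for a basis of weakly contractible opens $U$, covering $X$ by such $U$ exhibits $F$ as locally $S$-hyperconstant, and \Cref{thm:relative_full_faithfulness}\ref{thm:relative_full_faithfulness.2} finishes the proof.
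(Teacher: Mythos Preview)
Your proof is correct and uses the same key ingredients as the paper (\Cref{cor:pullbackcounit}, \Cref{rec:stalksconservativeCGhyp}, and the full faithfulness of $\prSupperstarhyp$), but you take an unnecessary detour. You localize to weakly contractible opens $U \subset X$, show $\restrict{F}{S\times U}$ is $S$-hyperconstant via \Cref{cor:pullbackcounit}, and then globalize using \enumref{thm:relative_full_faithfulness}{2}. The paper instead applies \Cref{cor:pullbackcounit} \emph{directly} on $S \times X$: the restriction functors $\{\restrict{(-)}{\{s\}\times X}^{\hyp}\}_{s\in S}$ are jointly conservative by \Cref{rec:stalksconservativeCGhyp} applied to $S\times X$, and for each $s$ the unit on $\restrict{F}{\{s\}\times X}^{\hyp}$ is an equivalence by the very definition of foliated plus \Cref{cor:absolute_full_faithfulness}. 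Your localization step gains nothing, since foliatedness already hands you hyperconstancy along the entire fiber $\{s\}\times X$, not just along $\{s\}\times U$.

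Two small points of bookkeeping: first, you wrote ``the counit $\prSupperstarhyp\prSlowerstar(\restrict{F}{S\times U}) \to \restrict{F}{S\times U}$'' but then invoked \Cref{cor:pullbackcounit}, which is about the \emph{unit} $F \to \prSupperstarhyp\prSlowersharphyp(F)$ of the other adjunction; both detect the essential image since $\prSupperstarhyp$ is fully faithful, but be consistent. Second, the point $s_0$ you fix plays no role---your argument works uniformly over all $s \in S$ once $U$ is chosen.
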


\begin{proof}
	The implication \enumref{prop:classification_of_foliated}{1} $ \Rightarrow $ \enumref{prop:classification_of_foliated}{2} is the content of \Cref{ex:pullback_is_foliated}.

	To see that \enumref{prop:classification_of_foliated}{2} $ \Rightarrow $ \enumref{prop:classification_of_foliated}{1}, we need to show that if $ F $ is foliated, then the unit \smash{$ \unit_F \colon \fromto{F}{\prSupperstarhyp\prSlowersharphyp(F)} $} is an equivalence.
	Notice that since $ \E $ is compactly generated, the restriction functors
	\begin{equation*}
		\set{\restrict{(-)}{\{s\} \cross X}^{\hyp} \colon \fromto{\Shhyp(S \cross X;\E)}{\Shhyp(\{s\} \cross X;\E)} }_{s \in S}
	\end{equation*}
	are jointly conservative (\Cref{rec:stalksconservativeCGhyp}).
	Applying \Cref{cor:pullbackcounit}, we see that to prove that $ \unit_F $ is an equivalence, it suffices to show that for each $ s \in S $, the unit
	\begin{equation*}
		\begin{tikzcd}[sep=2.5em]
			\restrict{F}{\{s\} \cross X}^{\hyp} \arrow[r] & \prsupperstarhyp\prslowersharphyp\paren{\restrict{F}{\{s\} \cross X}^{\hyp}} \equivalent \GammaXupperstarhyp\GammaXlowersharphyp\paren{\restrict{F}{\{s\} \cross X}^{\hyp}}
 		\end{tikzcd}
	\end{equation*} 
	is an equivalence.
	The claim now follows from the assumption that \smash{$ \restrict{F}{\{s\} \cross X}^{\hyp} $} is hyperconstant combined with \Cref{cor:absolute_full_faithfulness}.
\end{proof}


\section{Consequences of the full faithfulness of the hypersheaf pullback}\label{sec:consequences_of_full_faithfulness}

In this section, we explore some immediate consequences of \Cref{thm:relative_full_faithfulness}.
We begin in \cref{subsec:structural_results} by deducing several structural results concerning the \category \smash{$\LChyp_S(S \cross X;\E)$}.
In \cref{subsec:monodromy} we deduce a monodromy equivalence and prove a Künneth formula for locally hyperconstant hypersheaves.
In \cref{subsec:HIforLChypersheaves} we establish the first homotopy-invariance result of the paper, and in \cref{subsec:behavioronLC} we analyze the behavior of the exceptional pushforward on locally hyperconstant hypersheaves.
Finally, in \cref{subsec:cohomology_comparison} we obtain a general comparison result for sheaf and singular cohomology for locally weakly contractible spaces.


\subsection{Structural results for locally hyperconstant hypersheaves}\label{subsec:structural_results}

\Cref{thm:relative_full_faithfulness} can be used to prove that the \category \smash{$\LChyp_S(S \cross X;\E)$} enjoys many nice properties.
We start with the following recognition criterion:

\begin{proposition}\label{prop:strong_local_hyperconstancy}
	Let $S$ and $X$ be topological spaces and let $\cE$ be a presentable \category.
	Assume that $X$ is locally weakly contractible.
	For \smash{$F \in \Shhyp(S \cross X;\cE)$}, the following statements are equivalent:
	\begin{enumerate}[label=\stlabel{prop:strong_local_hyperconstancy}, ref=\arabic*]
		\item\label{prop:strong_local_hyperconstancy.1} The sheaf $F$ is locally $S$-hyperconstant.
		
		\item\label{prop:strong_local_hyperconstancy.2} For every pair of weakly contractible open subsets $V \subset U$ of $X$ and every open subset $W \subset S$, the restriction map $ F(W \cross U) \to F(W \cross V) $ is an equivalence in $\cE$.
	\end{enumerate}
\end{proposition}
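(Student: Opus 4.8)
The plan is to deduce everything from \Cref{thm:relative_full_faithfulness}, which treats the \wclwc case, by localizing on $X$. Since $X$ is locally weakly contractible it admits an open cover by weakly contractible opens (\Cref{eg:contractible_opens_are_a_basis}), and every such open $U$ is itself \wclwc (opens of $U$ are opens of $X$), so \Cref{thm:relative_full_faithfulness} applies over $S \cross U$. Moreover, local $S$-hyperconstancy is a local condition on $X$ (\Cref{obs:pullbackofhypLC}), so $\restrict{F}{S \cross U}$ is locally $S$-hyperconstant whenever $F$ is. Throughout we use \Cref{prop:computing_hyperconstant_hypersheaf}: for a locally weakly contractible space $Y$ and $G \in \Shhyp(S;\E)$ one has $\prSupperstarhyp(G) \simeq \PiEinfty(G/S)$, so $\prSupperstarhyp(G)(W \cross V) \simeq G(W)^{\Piinfty(V)}$ on boxes $W \cross V \in \Open(S) \cross \Open(Y)$, with the restriction along $V' \subseteq V$ induced by the map of homotopy types $\Piinfty(V') \to \Piinfty(V)$.

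\textbf{(1) $\Rightarrow$ (2).} Let $V \subseteq U$ be weakly contractible opens of $X$ and $W \subseteq S$ open. As $\restrict{F}{S \cross U}$ is locally $S$-hyperconstant and $U$ is \wclwc, \enumref{thm:relative_full_faithfulness}{2} yields $G \in \Shhyp(S;\E)$ with $\restrict{F}{S \cross U} \simeq \prSupperstarhyp(G)$ over $S \cross U$. Evaluating on the boxes $W \cross U$ and $W \cross V$, the restriction map $F(W \cross U) \to F(W \cross V)$ is identified with the map $G(W)^{\Piinfty(U)} \to G(W)^{\Piinfty(V)}$ induced by $\Piinfty(V) \to \Piinfty(U)$; since $U$ and $V$ are weakly contractible, both homotopy types are contractible, so this map — hence the restriction map — is an equivalence.

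\textbf{(2) $\Rightarrow$ (1).} Fix a weakly contractible open $U_0 \subseteq X$; since such $U_0$ cover $X$, it suffices to show $F_0 \coloneqq \restrict{F}{S \cross U_0}$ is $S$-hyperconstant. As $U_0$ is \wclwc, \enumref{thm:relative_full_faithfulness}{1} says $\prSupperstarhyp \colon \Shhyp(S;\E) \to \Shhyp(S \cross U_0;\E)$ is fully faithful, so it is enough to prove that the counit $\counit_{F_0} \colon \prSupperstarhyp\prSlowerstar(F_0) \to F_0$ of the adjunction $\prSupperstarhyp \leftadjoint \prSlowerstar$ is an equivalence. Both sides are hypersheaves and $\Openctrall(S \cross U_0)$ is a basis of $\Open(S \cross U_0)$ (\Cref{eg:contractible_opens_are_a_basis}), so by \enumref{prop:hypersheaves_as_basis_RKE}{1} it suffices to check this on each box $W \cross V$ with $W \in \Open(S)$ and $V \in \Openctr(U_0)$. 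Set $G \coloneqq \prSlowerstar(F_0)$, so that $G(W) = F(W \cross U_0)$. On sections over the full fiber $W \cross U_0$, the map $\counit_{F_0}$ is the component at $W$ of $\prSlowerstar(\counit_{F_0})$; the triangle identity $\prSlowerstar(\counit_{F_0}) \circ \unit_{G} = \id{G}$, together with the fact that $\unit \colon \id{} \to \prSlowerstar\prSupperstarhyp$ is an equivalence (as $\prSupperstarhyp$ is fully faithful), shows that $\prSlowerstar(\counit_{F_0})$, hence $\counit_{F_0}$ on $W \cross U_0$, is an equivalence. Now use the naturality square of $\counit_{F_0}$ for the inclusion $W \cross V \subseteq W \cross U_0$: its top edge ($\counit_{F_0}$ on $W \cross U_0$) is an equivalence; its left edge, the restriction $\prSupperstarhyp(G)(W \cross U_0) \to \prSupperstarhyp(G)(W \cross V)$, is identified via \Cref{prop:computing_hyperconstant_hypersheaf} with $G(W)^{\Piinfty(U_0)} \to G(W)^{\Piinfty(V)}$ and is an equivalence since $U_0$ and $V$ are weakly contractible; and its right edge, the restriction $F(W \cross U_0) \to F(W \cross V)$, is an equivalence by hypothesis \enumref{prop:strong_local_hyperconstancy}{2}, since $V \subseteq U_0$ are weakly contractible opens of $X$. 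Hence the bottom edge, $\counit_{F_0}$ on $W \cross V$, is an equivalence, as needed.

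The main thing to get right is this last step: identifying $\counit_{F_0}$ on the full-fiber box $W \cross U_0$ with the (automatically invertible) component of $\prSlowerstar(\counit_{F_0})$, and then running the naturality square — this is exactly the mechanism already used, for a \wclwc ambient space, in the proof of \enumref{thm:relative_full_faithfulness}{2}. Everything else is formal from \Cref{thm:relative_full_faithfulness,prop:computing_hyperconstant_hypersheaf,prop:hypersheaves_as_basis_RKE}.
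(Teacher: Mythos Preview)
The proposal is correct and follows essentially the same approach as the paper: both directions localize to a \wclwc open and reduce to \Cref{thm:relative_full_faithfulness}, with \enumref{prop:strong_local_hyperconstancy}{2} $\Rightarrow$ \enumref{prop:strong_local_hyperconstancy}{1} handled by showing the counit $\prSupperstarhyp\prSlowerstar(F) \to F$ is an equivalence on boxes in $\Openctrall$. The only cosmetic differences are that for \enumref{prop:strong_local_hyperconstancy}{1} $\Rightarrow$ \enumref{prop:strong_local_hyperconstancy}{2} the paper restricts to $W \cross U$ (obtaining $W$-hyperconstancy) and argues via the unit triangle, whereas you restrict to $S \cross U$ and invoke the explicit formula of \Cref{prop:computing_hyperconstant_hypersheaf}; and for the converse the paper simply cites the identification of the counit with a restriction map established inside the proof of \enumref{thm:relative_full_faithfulness}{2}, while you re-derive it via the triangle identity and a naturality square.
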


\begin{proof}
	We first prove that \enumref{prop:strong_local_hyperconstancy}{1} implies \enumref{prop:strong_local_hyperconstancy}{2}.
	Write
	\begin{equation*} 
		q_V \colon W \cross V \longrightarrow W \andeq q_U \colon W \cross U \longrightarrow W 
	\end{equation*}
	for the projections.
	Since $U$ is weakly contractible, \cref{thm:relative_full_faithfulness} implies that $F |_{W\cross U}$ is $W$-hyperconstant.
	We can then choose a hypersheaf $G \in \Shhyp(W;\cE)$ and an equivalence
	\begin{equation*}
		\restrict{F}{W\cross U} \simeq q_U^{*,\hyp}(G)
	\end{equation*}
	It follows that $\restrict{F}{W\cross V} \simeq q_V^{*,\hyp}(G)$.
	Now consider the commutative triangle
	\begin{equation*} 
		\begin{tikzcd}[column sep = small]
			& G \arrow[dr] \arrow[dl] \\
			q_{U,*} q_U^{*,\hyp}(G) \arrow[rr] & & q_{V,*} q_V^{*,\hyp}(G) \period
		\end{tikzcd} 
	\end{equation*}
	Since $U$ and $V$ are weakly contractible, the full faithfulness part of \cref{thm:relative_full_faithfulness} implies that the diagonal maps are equivalences.
	Thus the horizontal map is an equivalence.
	To conclude, note that, unraveling the definitions, this horizontal map coincides with the restriction map $F(W \cross U) \to F(W \cross V)$.
	
	We now prove that \enumref{prop:strong_local_hyperconstancy}{2} implies \enumref{prop:strong_local_hyperconstancy}{1}.
	By choosing an open cover of $ X $ by \wclwc opens, we are reduced to the case that $ X $ is \wclwc.
	Let $F$ be a hypersheaf satisfying assumption \enumref{prop:strong_local_hyperconstancy}{2}.
	Since $ X $ is \wclwc, is enough to prove that the counit
	\begin{equation*} 
		\counit_F \colon \prSupperstarhyp\prSlowerstar(F) \longrightarrow F  
	\end{equation*}
	is an equivalence.
	In the first segment of the proof of \enumref{thm:relative_full_faithfulness}{2} we proved that this is the same as saying that the for every $U \in \Openctr(X)$ and $W \in \Open(S)$, the restriction map $F(W \cross X) \to F(W \cross U)$ is an equivalence.
	Since $X$ and $U$ are weakly contractible, this is guaranteed by our hypothesis.
\end{proof}

\begin{corollary}\label{cor:locally_constant_closed_under_limits_colimits}
	Let $S$ and $X$ be topological spaces, and assume that $X$ is locally weakly contractible.
	Then the full subcategory
	\begin{equation*}
		\LChyp_S(S \cross X; \E) \subset \Shhyp(S \cross X; \E)
	\end{equation*}
	is closed under limits and colimits.
\end{corollary}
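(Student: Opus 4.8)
The plan is to handle limits and colimits by separate arguments, since limits of hypersheaves are computed sectionwise whereas colimits are not. For the limit case, I would argue directly from the recognition criterion in \Cref{prop:strong_local_hyperconstancy}: the inclusion $\Shhyp(S \cross X;\E) \subset \PSh(S \cross X;\E)$ is right adjoint to hypercompletion (see \Cref{nul:hypersheaves}), so a limit $F \coloneqq \lim_{i} F_{i}$ of locally $S$-hyperconstant hypersheaves is computed sectionwise. Hence, for every pair of weakly contractible open subsets $V \subset U$ of $X$ and every open subset $W \subset S$, the restriction map $F(W \cross U) \to F(W \cross V)$ is the limit of the restriction maps $F_{i}(W \cross U) \to F_{i}(W \cross V)$; these are equivalences by \enumref{prop:strong_local_hyperconstancy}{2}, and a limit of equivalences is an equivalence, so $F$ again satisfies condition \enumref{prop:strong_local_hyperconstancy}{2} and is therefore locally $S$-hyperconstant.

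For the colimit case, I would first reduce to the situation where $X$ is itself \wclwc. Local $S$-hyperconstancy is a local condition on $X$ (\Cref{obs:pullbackofhypLC}), so I would choose an open cover $\{X_{\alpha}\}_{\alpha \in A}$ of $X$ by \wclwc opens. The restriction functors $\restrict{(-)}{S \cross X_{\alpha}}^{\hyp}$ are left adjoints, hence preserve colimits, and they carry locally $S$-hyperconstant hypersheaves on $S \cross X$ to locally $S$-hyperconstant ones on $S \cross X_{\alpha}$; since $X_{\alpha}$ is \wclwc, \enumref{thm:relative_full_faithfulness}{2} identifies the latter with the $S$-hyperconstant hypersheaves, i.e.\ with the essential image of $\prSupperstarhyp$. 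It therefore suffices to show that when $X$ is \wclwc the subcategory $\LChyp_S(S \cross X;\E)$ is closed under colimits in $\Shhyp(S \cross X;\E)$. But in that case \Cref{thm:relative_full_faithfulness} identifies $\LChyp_S(S \cross X;\E)$ with the essential image of the fully faithful functor $\prSupperstarhyp \colon \Shhyp(S;\E) \to \Shhyp(S \cross X;\E)$, which is a left adjoint (to $\prSlowerstar$) and hence preserves colimits: writing $F_{i} \simeq \prSupperstarhyp(G_{i})$, we get $\colim_{i} F_{i} \simeq \prSupperstarhyp(\colim_{i} G_{i})$, again in the essential image.

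I expect the only genuine subtlety to be the colimit case, precisely because colimits of hypersheaves are not sectionwise, which is what forces passing to a \wclwc cover and using the adjunction $\prSupperstarhyp \dashv \prSlowerstar$ rather than the sectionwise criterion. The remaining ingredients are routine, and I would only check them carefully: that restriction to an open subset of $X$ preserves hypersheaves and colimits; that restricting an $S$-hyperconstant hypersheaf to a smaller open keeps it $S$-hyperconstant (which is formal from the functoriality of $q^{\ast,\hyp}$ in the projection $q$); and that a colimit of objects in the essential image of a fully faithful colimit-preserving functor remains in that essential image.
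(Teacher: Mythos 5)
Your proposal is correct and takes essentially the same route as the paper: limits are handled sectionwise via the recognition criterion of \cref{prop:strong_local_hyperconstancy}, and colimits by localizing on $X$ to reduce to the \wclwc case and then using \cref{thm:relative_full_faithfulness} to identify \smash{$\LChyp_S(S \cross X;\E)$} with the essential image of the fully faithful, colimit-preserving functor \smash{$\prSupperstarhyp$}. The only difference is cosmetic: you spell out the routine checks (restriction to opens preserving colimits and local $S$-hyperconstancy) that the paper compresses into ``the question is local on $X$.''
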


\begin{remark}
	\Cref{cor:locally_constant_closed_under_limits_colimits} implies that \smash{$\LChyp_S(S \cross X; \cE)$} is both a reflective and coreflective subcategory of \smash{$\Shhyp(S \cross X; \E)$}.
\end{remark}

\begin{proof}[Proof of \Cref{cor:locally_constant_closed_under_limits_colimits}]
	Let $A$ be a small \category and let $F_{\bullet} \colon A \to \LChyp_S(S \cross X;\cE)$ be a diagram.
	First we treat the case of limits.
	By \cref{prop:strong_local_hyperconstancy}, it is enough to prove that for every $V \subset U$ in \smash{$\Openctr(X)$}, and every $W \in \Open(S)$, the restriction map
	\begin{equation*}
		\lim_{\alpha \in A} F_\alpha(W \cross U) \longrightarrow \lim_{\alpha \in A} F_\alpha(W \cross V) 
	\end{equation*}
	is an equivalence.
	Since limits in $\Shhyp(X;\cE)$ are computed objectwise, the above map is the limit of the individual restriction maps $F_\alpha(W \cross U) \to F_\alpha(W \cross V)$.
	Since each $F_\alpha$ is locally $ S $-hyperconstant, \cref{prop:strong_local_hyperconstancy} implies that all these maps are equivalences.
	Thus, the same goes for their limit.
	
	For the case of colimits, we have to check that the colimit $ \colim_{\alpha \in A} F_{\alpha} $ computed in \smash{$\Shhyp(S \cross X; \E)$} is locally $S$-hyperconstant.
	The question is local on $X$, and we can assume that $X$ is weakly contractible.
	In this case, \cref{thm:relative_full_faithfulness} shows the functor \smash{$\prSupperstarhyp$} is fully faithful; thus there exists a diagram
	\begin{equation*}
		F_{\bullet}' \colon A \to \Shhyp(S; \E)
	\end{equation*}
	and an equivalence $F_{\bullet} \simeq \prSupperstarhyp \circ F_{\bullet}'$.
	The fact that $\prSupperstarhyp$ commutes with colimits completes the proof.
\end{proof}

\begin{corollary} \label{cor:descent}
	Let $S$ and $X$ be topological spaces and let $\E$ be a presentable \category.
	Assume that $X$ is \wclwc.
	Then for every hypercover $V_\bullet$ of $S$, the natural functor
	\begin{equation*} 
		\LChyp_S(S \cross X; \E) \longrightarrow \lim_{[n] \in \DDelta} \LChyp_{V_n}(V_n \cross X ; \E) 
	\end{equation*}
	is an equivalence.
\end{corollary}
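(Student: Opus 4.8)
The plan is to reduce the assertion to hyperdescent for hypersheaves, using \Cref{thm:relative_full_faithfulness} to identify each \category $\LChyp_T(T \cross X;\E)$ appearing in the statement with $\Shhyp(T;\E)$. Indeed, since $X$ is \wclwc, \Cref{thm:relative_full_faithfulness} tells us that for every topological space $T$ the pullback along the projection $T \cross X \to T$ is a fully faithful embedding $\Shhyp(T;\E) \hookrightarrow \Shhyp(T \cross X;\E)$ with essential image $\LChyp_T(T \cross X;\E)$. Applying this with $T = S$ and with $T = V_n$ for every $[n] \in \DDelta$ yields equivalences $\Shhyp(S;\E) \simeq \LChyp_S(S \cross X;\E)$ and $\Shhyp(V_n;\E) \simeq \LChyp_{V_n}(V_n \cross X;\E)$.

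Next I would check that these identifications are natural enough in the base variable to be assembled over $\DDelta$. This is precisely \Cref{cor:projection_left_adjointable}: for a map of topological spaces $g \colon S' \to T'$, the square relating the projection pullbacks $\prupperstarhyp_{T'}$, $\prupperstarhyp_{S'}$ and the base pullbacks $\gupperstarhyp$, $(g \cross \id{X})\upperstarhyp$ commutes. Applying this to the structure maps of the hypercover $V_\bullet$ and to the map $V_0 \to S$, the equivalences $\Shhyp(V_n;\E) \simeq \LChyp_{V_n}(V_n \cross X;\E)$ organize into an equivalence of $\DDelta$-indexed diagrams of \categories, compatibly with $\Shhyp(S;\E) \simeq \LChyp_S(S \cross X;\E)$ and with the restriction functors along the maps $V_n \to S$. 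Since a pointwise equivalence of diagrams induces an equivalence on limits, passing to the limit over $\DDelta$ identifies the functor in the statement with the descent comparison
\begin{equation*}
	\Shhyp(S;\E) \longrightarrow \lim_{[n] \in \DDelta} \Shhyp(V_n;\E) \comma
\end{equation*}
which is an equivalence because hypersheaves satisfy hyperdescent --- this is exactly the bottom row of the square in the final warning of \Cref{subsec:background_locally_constant}, taken with $X$ a point. As a byproduct this re-derives the full faithfulness of the functor in the statement already noted in that warning, and supplies the missing essential surjectivity.

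The argument is formal once \Cref{thm:relative_full_faithfulness} and hyperdescent are in hand, so the only real subtlety --- and the step I expect to be the main obstacle --- is the coherence needed in the second step: \Cref{cor:projection_left_adjointable} must be used as a coherent natural transformation of $\DDelta$-diagrams, not as a bare collection of commuting squares, so that one is entitled to pass to the limit. The cleanest way to secure this, and the way I would write it up, is to note that the whole package is obtained by tensoring with $\E$ the $\Spc$-linear statement, in which, by \Cref{prop:computing_hyperconstant_hypersheaf}, the projection pullback is identified with the right adjoint $\Pi^\infty(-/T)$ of \Cref{constr:Piinf}; the required base-change compatibility is then \Cref{obs:base_change_Piinfty}, a commuting square of left adjoints that is manifestly functorial in the base. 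Everything else --- stability of equivalences under limits, and matching the functor in the statement with the restriction functors in the warning's square --- is routine bookkeeping.
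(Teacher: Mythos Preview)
Your proposal is correct and follows essentially the same approach as the paper: both reduce to hyperdescent for $\Shhyp(-;\E)$ by using \Cref{thm:relative_full_faithfulness} to identify the vertical arrows in the evident commutative square with equivalences. The paper's proof is more terse, simply drawing the square and invoking naturality, whereas you spend extra effort justifying the coherence of the identifications over $\DDelta$; this caution is not misplaced, but the coherence is already implicit once one recognizes that $T \mapsto \Shhyp(T;\E)$ and $T \mapsto \LChyp_T(T \cross X;\E)$ are both functors $\Top^{\op} \to \Catinfty$ (the latter via pullback along $g \cross \id{X}$) and that $\prupperstarhyp$ is a natural transformation between them.
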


\begin{proof}
	Consider the commutative square
	\begin{equation*} 
		\begin{tikzcd}
			\Shhyp(S;\E) \arrow[d, "\prSupperstarhyp"']  \arrow[r] & \lim_{[n] \in \DDelta} \Shhyp(V_n;\E) \arrow[d, "\pr_{V_\bullet}^{*,\hyp}"] \\
			\LChyp_S(S \cross X; \E) \arrow[r] & \lim_{[n] \in \DDelta} \LChyp_{V_n}(V_n \cross X; \E) \period
		\end{tikzcd} 
	\end{equation*}
	Since $ \Shhyp(-;\E) $ satisfies hyperdescent, the top horizontal functor is an equivalence.
	\Cref{thm:relative_full_faithfulness} implies that both vertical functors are equivalences.
	The conclusion follows.
\end{proof}


\subsection{Monodromy equivalence and Künneth formula}\label{subsec:monodromy}

Let $X$ be a topological space.
There is a natural map from the underlying homotopy type $\Piinfty(X)$ of $X$ to the shape of the \topos \smash{$ \Shhyp(X) $}.
However, this map is typically \textit{not} an equivalence.
Our work in \cref{sec:hyperconstant} implies that these invariants agree when $ X $ is locally weakly contractible:

\begin{corollary}\label{cor:Shape_of_ShhypX}
	Let $X$ be a locally weakly contractible topological space.
	Then the \topos $\Shhyp(X)$ is locally of constant shape, and its shape coincides with $\Piinfty(X)$.
\end{corollary}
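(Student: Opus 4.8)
The plan is to read off both assertions from \Cref{prop:computing_hyperconstant_hypersheaf}. Recall that an $\infty$-topos $\cX$ is of \emph{constant shape} exactly when the constant-object functor $\pi^{*} \colon \Spc \to \cX$ --- the left adjoint of the global sections functor $\pi_{*}$ --- admits a further left adjoint $\pi_{!}$; in that case the shape of $\cX$ is the space $\pi_{!}(\mathbf 1_{\cX})$, equivalently the space corepresenting $\pi_{*}\pi^{*} \colon \Spc \to \Spc$ (see \cite[\S A.1]{HA}). For $\cX = \Shhyp(X)$ the global sections functor is evaluation at $X$, i.e.\ $\Gamma_{X,\ast}$ restricted to hypersheaves, so $\pi^{*} = \GammaXupperstarhyp$. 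The first step is therefore to invoke \Cref{prop:computing_hyperconstant_hypersheaf} with $S = \pt$ and $\E = \Spc$: it produces a left adjoint to $\GammaXupperstarhyp$, namely the functor $\Piinfty = \Piinfty(-/\pt)$ of \Cref{constr:Piinf}. Hence $\Shhyp(X)$ has constant shape.

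Next I would identify the shape. Since $\Piinfty$ is the left Kan extension of the assignment $U \mapsto \Piinfty(U)$ along the inclusion of the opens of $X$, it agrees with that assignment on representables; and the terminal hypersheaf $\mathbf 1_{\Shhyp(X)}$ is represented by $X$ itself, which is hypercomplete because it is $(-2)$-truncated. Therefore $\pi_{!}(\mathbf 1_{\Shhyp(X)}) \simeq \Piinfty(X)$, so the shape of $\Shhyp(X)$ is $\Piinfty(X)$. (Alternatively, the explicit formula of \Cref{constr:Piinf} gives $\pi_{*}\pi^{*}(K) \simeq \GammaXupperstarhyp(K)(X) \simeq K^{\Piinfty(X)} \simeq \Map_{\Spc}(\Piinfty(X), K)$, exhibiting $\Piinfty(X)$ as the corepresenting object directly.)

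It remains to upgrade \emph{constant shape} to \emph{locally of constant shape}. Here I would use the standard reduction that it suffices to produce a family of objects of $\Shhyp(X)$ that generates under colimits and whose slice $\infty$-topoi have constant shape (see \cite[\S A.1]{HA}). For this family take the hypersheafifications $L^{\hyp}(h_{U})$ of the representable sheaves $h_{U}$, for $U \subseteq X$ open: they generate $\Shhyp(X)$ under colimits since the $h_{U}$ generate $\PSh(X)$ and $L^{\hyp} \colon \PSh(X) \to \Shhyp(X)$ is a colimit-preserving localization. One then has $\Shhyp(X)_{/L^{\hyp}(h_{U})} \simeq \Shhyp(U)$: indeed $\Sh(X)_{/h_{U}} \simeq \Sh(U)$, and hypercompletion is compatible with passage to a slice over a representable open, so both sides are identified with the hypercompletion of $\Sh(U)$. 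Finally, any open subspace of a locally weakly contractible space is again locally weakly contractible (a basis of weakly contractible opens of $X$ restricts to one of $U$), so the first step applied to $U$ shows $\Shhyp(U)$ has constant shape, and we conclude.

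The genuinely new input is only \Cref{prop:computing_hyperconstant_hypersheaf}, already in hand; everything else is bookkeeping with higher topos theory. The step most in need of care is the slice identification $\Shhyp(X)_{/L^{\hyp}(h_{U})} \simeq \Shhyp(U)$, since $h_{U}$ need not itself be hypercomplete in $\Sh(X)$; one must pass through the hypercompletion of the slice topos $\Sh(X)_{/h_{U}} \simeq \Sh(U)$. It is worth noting that, unlike \Cref{cor:absolute_full_faithfulness}, this corollary does not assume $X$ weakly contractible, and accordingly the argument uses only \Cref{prop:computing_hyperconstant_hypersheaf} and not the full-faithfulness statements --- which is why $\Piinfty(X)$ is allowed to be nontrivial.
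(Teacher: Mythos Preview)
Your argument is correct, but it contains a terminological slip that causes you to do redundant work. The characterization you recall at the outset---``an $\infty$-topos $\cX$ is of constant shape exactly when $\pi^{*}$ admits a further left adjoint $\pi_{!}$''---is in fact Lurie's characterization of being \emph{locally} of constant shape (this is \cite[\HAappthm{Proposition}{A.1.8}]{HA}). Having constant shape is the weaker condition that the pro-space $\pi_{*}\pi^{*}$ is corepresentable, which does not force $\pi^{*}$ to preserve all limits. So once you invoke \Cref{prop:computing_hyperconstant_hypersheaf} with $S=\pt$ and $\E=\Spc$ to produce the left adjoint $\Piinfty$, you are already done with the ``locally of constant shape'' claim, and your third paragraph (passing to slices over representable opens) is unnecessary. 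That paragraph is correct on its own terms, and the slice identification $\Shhyp(X)_{/L^{\hyp}(h_{U})} \simeq \Shhyp(U)$ is fine, but note that the reduction you invoke---checking constant shape on a generating family---itself ultimately goes through A.1.8 to conclude.

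The paper's proof is accordingly a one-liner: it cites \Cref{prop:computing_hyperconstant_hypersheaf} together with \cite[\HAappthm{Proposition}{A.1.8} \& \HAappthm{Remark}{A.1.10}]{HA}, the latter identifying the shape with $\pi_{!}(\mathbf 1) \simeq \Piinfty(X)$. Your identification of the shape via $\Piinfty$ evaluated on the terminal object (or via the corepresentability of $K \mapsto K^{\Piinfty(X)}$) is exactly the content of that remark.
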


\begin{proof}
	This is a direct consequence of \Cref{prop:computing_hyperconstant_hypersheaf} and \cite[\HAappthm{Proposition}{A.1.8} \& \HAappthm{Remark}{A.1.10}]{HA}.
\end{proof}

\begin{notation}
	Write $ \Toplwc \subset \Top $ for the full subcategory spanned by the locally weakly contractible topological spaces.
\end{notation}

\begin{corollary}[(monodromy equivalence)]\label{cor:monodromy}
	Let $X$ be a locally weakly contractible topological space.
	Then the functor
	\begin{equation}\label{eq:monodromy1}
		\Piinfty \colon \LChyp(X) \longrightarrow \Spc_{/\Piinfty(X)} 
	\end{equation}
	is an equivalence.
\end{corollary}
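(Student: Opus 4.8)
The plan is to deduce \Cref{cor:monodromy} from \Cref{cor:absolute_full_faithfulness} by a hyperdescent argument that reduces the general locally weakly contractible $X$ to the case where $X$ is \wclwc. The key point is that, viewed as functors of an open subset $U \subseteq X$, both $\infty$-categories occurring in the statement are hypercomplete sheaves on $\Open(X)$ valued in $\widehat{\Catinfty}$, and $\Piinfty$ upgrades to a morphism between them. For $U \mapsto \LChyp(U)$, hypercompleteness is the content of \Cref{obs:pullbackofhypLC} with $S = \pt$. For $U \mapsto \Spc_{/\Piinfty(U)}$, with transition functors given by pullback along the maps $\Piinfty(V) \to \Piinfty(U)$ for $V \subseteq U$, one combines the fact that $\Piinfty$ carries hypercovers of open subsets to colimit diagrams in $\Spc$ \cite[\HAappthm{Proposition}{A.3.2} \& \HAappthm{Lemma}{A.3.10}]{HA} with the observation that the assignment $K \mapsto \Spc_{/K} \simeq \Fun(K,\Spc)$, equipped with pullback transition functors, converts colimits of spaces into limits of $\infty$-categories.

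On an open subset $U$, the comparison functor sends $F \in \LChyp(U)$ to the object $\Piinfty(F) \to \Piinfty(U)$ of $\Spc_{/\Piinfty(U)}$ induced by the unique map from $F$ to the terminal hypersheaf, whose image under $\Piinfty$ is $\Piinfty(U)$; this assignment is natural in $U$ because the left adjoint $\Piinfty \colon \Shhyp(U) \to \Spc$ of $\GammaUupperstarhyp$ provided by \Cref{prop:computing_hyperconstant_hypersheaf} is compatible with restriction along open inclusions. I would then check that this morphism of sheaves is an equivalence over the basis of \wclwc open subsets of $X$: these do form a basis, because an open subset of a locally weakly contractible space is again locally weakly contractible, so every weakly contractible open of $X$ is automatically \wclwc. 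For a \wclwc open $U$ one has $\Piinfty(U) \simeq \pt$, so $\Spc_{/\Piinfty(U)} \simeq \Spc$, and under this identification the comparison functor becomes $\Piinfty \colon \LChyp(U) \to \Spc$, which is an equivalence: by \Cref{cor:absolute_full_faithfulness} the constant hypersheaf functor $\GammaUupperstarhyp$ is fully faithful with essential image $\LChyp(U)$, and by \Cref{prop:computing_hyperconstant_hypersheaf} the functor $\Piinfty$ is left adjoint to it, hence inverse to it. Since a morphism of hypercomplete sheaves that is an equivalence on a basis is an equivalence (cf.\ \Cref{prop:hypersheaves_as_basis_RKE}), the comparison is an equivalence of sheaves, and evaluating at $U = X$ gives the statement.

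The step I expect to require the most care is the naturality of the comparison functor in the open subset $U$ --- concretely, the base-change identification $\Piinfty(\restrict{F}{V}) \simeq \Piinfty(V) \times_{\Piinfty(U)} \Piinfty(F)$ for $V \subseteq U$ and $F \in \LChyp(U)$ --- since this goes slightly beyond the pointwise statements already in hand. One can bypass this entirely by appealing to the general monodromy equivalence for $\infty$-topoi that are locally of constant shape: by \Cref{cor:Shape_of_ShhypX} the $\infty$-topos $\Shhyp(X)$ is locally of constant shape with shape $\Piinfty(X)$; its locally constant objects are exactly the objects of $\LChyp(X)$ (an open cover of $X$ yields an effective epimorphism onto the terminal object, and conversely any such effective epimorphism is refined by one arising from an open cover, along which a locally constant object restricts to a constant one); and the monodromy functor identifies these objects with $\Spc_{/\Piinfty(X)}$.
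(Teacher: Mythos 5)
Your proposal is correct, and the ``bypass'' in your final paragraph is in fact the paper's own proof: the paper deduces \Cref{cor:monodromy} directly from \Cref{prop:computing_hyperconstant_hypersheaf} (equivalently, from \Cref{cor:Shape_of_ShhypX}: the \topos $\Shhyp(X)$ is locally of constant shape with shape $\Piinfty(X)$) together with Lurie's general monodromy theorem \HAa{Theorem}{A.1.15}. You add one point the paper leaves implicit, namely that the locally constant objects of $\Shhyp(X)$ in Lurie's sense (effective epimorphisms from coproducts of arbitrary objects) coincide with $\LChyp(X)$ (open covers); your refinement argument for this is correct and worth making explicit.

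Your primary route --- viewing $U \mapsto \LChyp(U)$ and $U \mapsto \Spc_{/\Piinfty(U)}$ as hypercomplete sheaves of (large) \categories on $\Open(X)$, comparing them over the basis of \wclwc opens via \Cref{prop:computing_hyperconstant_hypersheaf} and \Cref{cor:absolute_full_faithfulness}, and concluding by descent --- is genuinely different from the paper's, and its local input is exactly right. But the step you flag is the real mathematical content: the base-change identification $\Piinfty(\restrict{F}{V}) \simeq \Piinfty(V) \times_{\Piinfty(U)} \Piinfty(F)$ for locally hyperconstant $F$ does not follow from \Cref{obs:base_change_Piinfty} (which handles base change in the $S$-direction, not restriction along opens of $X$); it is essentially the key lemma inside the proof of \HAa{Theorem}{A.1.15}, and promoting the pointwise comparison to a morphism of sheaves of \categories also requires the attendant coherence data. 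A minor further caveat: \Cref{prop:hypersheaves_as_basis_RKE} is stated for presentable coefficients, so the ``equivalence on a basis'' step for sheaves valued in large \categories needs the standard variant for arbitrary complete targets. Since you recognized the gap and supplied the topos-theoretic argument as a complete substitute, the proposal stands; carrying out the descent route in full would amount to reproving that portion of Lurie's theorem, which is precisely what citing \HAa{Theorem}{A.1.15} buys.
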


\begin{proof}
	\Cref{prop:computing_hyperconstant_hypersheaf} shows that \smash{$ \GammaXupperstarhyp $} is right adjoint to the functor \smash{$ \Piinfty \colon \fromto{\Shhyp(X)}{\Spc} $}.
	The conclusion follows then from \HAa{Theorem}{A.1.15}
\end{proof}

\begin{observation}\label{obs:monodromy}
	Unraveling the proof of \HAa{Theorem}{A.1.15}, we see that the inverse to \eqref{eq:monodromy1} is given by sending a map $ \fromto{K}{\Piinfty(X)} $ to the sheaf
	\begin{equation*}
		U \mapsto \Map_{/\Piinfty(X)}(\Piinfty(U),K) \period
	\end{equation*}
	Straightening/unstraightening puts the monodromy equivalence \eqref{eq:monodromy1} into a more familiar form:
	\begin{equation}\label{eq:monodromy2}
		\LChyp(X) \equivalent \Fun(\Piinfty(X),\Spc) \period
	\end{equation}
	Moreover, the equivalence \eqref{eq:monodromy2} refines to an equivalence of functors $ \fromto{\Toplwcop}{\Cat_{\infty}} $.
	In particular, the functor \smash{$ \LChyp \colon \fromto{\Toplwcop}{\Cat_{\infty}} $} inverts \textit{weak} homotopy equivalences between locally weakly contractible topological spaces.
\end{observation}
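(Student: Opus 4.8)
The plan is to unwind the three pieces of the statement in turn — the explicit inverse of \eqref{eq:monodromy1}, the passage to $\Fun(\Piinfty(X),\Spc)$, and naturality in $X$ — after which the last sentence will be immediate.

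\emph{The inverse functor and its straightened form.} I would start from the proof of \Cref{cor:monodromy}, where \eqref{eq:monodromy1} is produced by feeding the adjunction $\Piinfty \leftadjoint \GammaXupperstarhyp$ of \Cref{prop:computing_hyperconstant_hypersheaf} into \HAa{Theorem}{A.1.15}; so the first assertion will follow by transcribing the inverse functor built in Lurie's proof of that theorem, which is precisely the relative constant-hypersheaf construction $\bigl(K \to \Piinfty(X)\bigr) \mapsto \bigl[U \mapsto \Map_{/\Piinfty(X)}(\Piinfty(U),K)\bigr]$, with $\Piinfty(U) \to \Piinfty(X)$ induced by $U \subset X$. (As a sanity check one can also verify this directly, following the pattern of the proof of \Cref{thm:relative_full_faithfulness}: the displayed formula restricts to a hypersheaf on the basis of weakly contractible opens by \Cref{prop:hypersheaves_as_basis_RKE}, it is locally hyperconstant by van Kampen's theorem and \Cref{cor:absolute_full_faithfulness}, and composing it with $\Piinfty$ returns $K \to \Piinfty(X)$.) Since $\Piinfty(X)$ is a Kan complex, straightening/unstraightening supplies an equivalence $\Spc_{/\Piinfty(X)} \simeq \Fun(\Piinfty(X),\Spc)$, natural in $\Piinfty(X) \in \Spc$, because a left fibration over a Kan complex is a Kan fibration (\cite[\HTTsubsec{2.2.1}]{HTT}); composing with \eqref{eq:monodromy1} will give \eqref{eq:monodromy2}, under which the inverse equivalence becomes $F \mapsto [\,U \mapsto \lim_{\Piinfty(U)} F\,]$, i.e.\ the functor of sections of the local system $F$ (the limit taken along $\Piinfty(U) \to \Piinfty(X)$).

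\emph{Naturality and the final assertion.} Next I would promote \eqref{eq:monodromy2} to a natural equivalence of functors $\Toplwcop \to \Catinfty$: for each $f \colon X \to Y$ in $\Toplwc$ I need a coherent square comparing $\fupperstarhyp \colon \LChyp(Y) \to \LChyp(X)$ with $(\Piinfty f)^{\ast} \colon \Fun(\Piinfty(Y),\Spc) \to \Fun(\Piinfty(X),\Spc)$. The clean route is to observe that $\goesto{X}{\Shhyp(X)}$ is functorial in $X$ via the inverse-image functors $\fupperstarhyp$, that on its essential image $\Piinfty$ computes the topos-theoretic shape (\Cref{cor:Shape_of_ShhypX}), and that the monodromy equivalence of \HAa{Theorem}{A.1.15} is natural in the $\infty$-topos; restricting along $\Toplwc \inclusion \Top$ then yields the natural equivalence. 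Should the topos-level naturality be inconvenient to extract, a hands-on alternative is available: the maps $\Piinfty(f^{-1}V) \to \Piinfty(V)$, for $V \subset Y$ open, assemble into an exchange transformation $\Piinfty \circ \fupperstarhyp \Rightarrow \Piinfty$ (of functors $\Shhyp(Y) \to \Spc$), giving for each $F \in \LChyp(Y)$ a commuting square of spaces over $\Piinfty(X) \to \Piinfty(Y)$; one then checks this square is cartesian, which — since the restriction of $\fupperstarhyp$ to $\LChyp$ preserves colimits by \Cref{obs:pullbackofhypLC} and \Cref{cor:locally_constant_closed_under_limits_colimits}, and base change along a map of spaces preserves colimits — may be done on a generating set of $\LChyp(Y)$, e.g.\ on the objects corresponding under \eqref{eq:monodromy2} to the corepresentable functors $\Map_{\Piinfty(Y)}(y,-)$, $y \in \Piinfty(Y)$. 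Once naturality is in hand, the last sentence is immediate: $f \colon X \to Y$ in $\Toplwc$ is a weak homotopy equivalence exactly when $\Piinfty(f)$ is an equivalence in $\Spc$, whereupon $(\Piinfty f)^{\ast}$ — and hence, by naturality, $\fupperstarhyp \colon \LChyp(Y) \to \LChyp(X)$ — is an equivalence.

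\emph{Main obstacle.} Everything other than the naturality is bookkeeping; the real work lies there, in either pinning down the $\infty$-topos-level functoriality inside Lurie's proof of \HAa{Theorem}{A.1.15} or carrying out the cartesian-square check on generators in the hands-on approach.
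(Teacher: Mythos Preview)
Your proposal is correct and follows the same route the paper indicates: the paper treats this as an \emph{observation} with no formal proof, simply pointing to ``unraveling the proof of \HAa{Theorem}{A.1.15}'' for the inverse formula and invoking straightening/unstraightening for \eqref{eq:monodromy2}, with the naturality claim and its consequence left entirely implicit. You have supplied substantially more detail than the paper does --- in particular the two approaches to naturality (via topos-level functoriality of the shape, or via a cartesian-square check on generators) --- and correctly identified that as the only nontrivial step; everything else is indeed bookkeeping.
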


\begin{observation}\label{obs:general_monodromy}
	Let $ \E $ be a presentable \category.
	Since restriction of sheaves to an open subset is both a left and a right adjoint, the equivalence
	\begin{align*}
		\Shhyp(X) \tensor \E &\equivalence \Shhyp(X;\E) \\ 
		\intertext{restricts to an equivalence}
		\LChyp(X) \tensor \E &\equivalence \LChyp(X;\E) \period
	\end{align*}
	Thus tensoring \eqref{eq:monodromy2} with $ \E $ provides a monodromy equivalence
	\begin{equation}\label{eq:monodromy3}
		\LChyp(X;\E) \equivalent \Fun(\Piinfty(X),\E)
	\end{equation}
	for $ \E $-valued locally hyperconstant hypersheaves.
	Also note that the functoriality of the equivalence \eqref{eq:monodromy3} implies that given $ L \in \LChyp(X;\E) $, the associated functor $ \fromto{\Piinfty(X)}{\E} $ carries $ x \in X $ to the stalk $ \xupperstar L \in \E $.
\end{observation}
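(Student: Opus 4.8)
The plan is to verify the observation's three assertions in turn: (i) the equivalence $\Shhyp(X) \tensor \E \simeq \Shhyp(X;\E)$ restricts to an equivalence $\LChyp(X) \tensor \E \simeq \LChyp(X;\E)$; (ii) hence $\LChyp(X;\E) \simeq \Fun(\Piinfty(X),\E)$; and (iii) under the latter equivalence a locally hyperconstant hypersheaf $L$ corresponds to the functor $x \mapsto \xupperstar L$. Throughout, $X$ is locally weakly contractible (as in \Cref{cor:monodromy}) and $\E$ is presentable.

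For (i), the key input is \Cref{cor:locally_constant_closed_under_limits_colimits} with $S = \pt$: the inclusion $\iota \colon \LChyp(X) \hookrightarrow \Shhyp(X)$ is a reflective localization (write $L$ for its left adjoint) and moreover preserves colimits, so it is a morphism in $\LPr$. Applying $- \tensor \id{\E}$ on $\LPr$ turns it into a reflective localization $\LChyp(X) \tensor \E \hookrightarrow \Shhyp(X;\E)$ --- the counit $L\iota \to \id$, whose being an equivalence encodes full faithfulness, is carried to a counit which is still an equivalence since $- \tensor \E$ preserves equivalences; write $\cA \subset \Shhyp(X;\E)$ for the resulting full subcategory. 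It remains to show $\cA = \LChyp(X;\E)$. For $\cA \subseteq \LChyp(X;\E)$: the subcategory $\cA$ is generated under colimits by the pure tensors $G \tensor E$ with $G \in \LChyp(X)$ and $E \in \E$, and each of these is locally hyperconstant, since locally on $X$ we have $\restrict{G}{U} \simeq \GammaUupperstarhyp(K)$ for some $K \in \Spc$, whence $\restrict{(G \tensor E)}{U} \simeq \GammaUupperstarhyp(K) \tensor E \simeq \GammaUupperstarhyp(K \tensor E)$; as $\LChyp(X;\E)$ is closed under colimits (\Cref{cor:locally_constant_closed_under_limits_colimits} again), the inclusion follows. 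For the reverse inclusion, given $F \in \LChyp(X;\E)$ I would check that the reflection unit $F \to F'$ with $F' \in \cA$ is an equivalence by restricting along a cover of $X$ by weakly contractible --- hence, by local weak contractibility, \wclwc --- opens $U$; this is legitimate because restriction to an open is both a left and a right adjoint, so it commutes with the reflector, and because equivalences of hypersheaves are detected on a basis (\Cref{prop:hypersheaves_as_basis_RKE}). On each such $U$, the hypersheaf $\restrict{F}{U}^{\hyp}$ is hyperconstant by \enumref{thm:relative_full_faithfulness}{2}, hence of the form $\GammaUupperstarhyp(E)$, which plainly lies in the essential image of $\LChyp(U) \tensor \E$; so the unit is an equivalence over each $U$, and therefore globally.

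Step (ii) is then formal: tensoring the equivalence \eqref{eq:monodromy2} with $\E$, the source becomes $\LChyp(X;\E)$ by (i), while the target becomes $\Fun(\Piinfty(X),\E)$ --- since $\Piinfty(X)$ is a space, $\Fun(\Piinfty(X),\Spc) \simeq \PSh(\Piinfty(X))$ and $\PSh(\Piinfty(X)) \tensor \E \simeq \PSh(\Piinfty(X);\E) \simeq \Fun(\Piinfty(X),\E)$, using the identification of $\E$-valued presheaves with the relevant tensor product recalled in \cref{subsec:sheaveshypersheaves}; functoriality in $X \in \Toplwcop$ is inherited from that of \eqref{eq:monodromy2} and of $- \tensor \E$. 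For (iii), I would use the explicit inverse of the monodromy equivalence recorded in \Cref{obs:monodromy}: a map $q \colon K \to \Piinfty(X)$ corresponds to the hypersheaf $F_K$ with $F_K(U) = \Map_{/\Piinfty(X)}(\Piinfty(U),K)$. Its stalk $\xupperstar F_K$ is the filtered colimit of the $F_K(U)$ over open neighborhoods $U$ of $x$, and since $X$ is locally weakly contractible this colimit may be computed over the weakly contractible neighborhoods alone, for which $\Piinfty(U) \simeq \pt$ sits over $\Piinfty(X)$ via the point $x$; hence $\xupperstar F_K \simeq \Map_{/\Piinfty(X)}(\{x\},K)$, the fiber $K_x$ of $q$ over $x$. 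On the other hand, straightening identifies the object $q$ of $\Spc_{/\Piinfty(X)}$ with the functor $x \mapsto K_x$, which settles the case $\E = \Spc$; for general $\E$, the stalk functor $\Shhyp(X;\E) \to \E$ is obtained from its $\Spc$-valued counterpart by $- \tensor \id{\E}$ and the equivalence of (ii) respects this decomposition, so the identification propagates.

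I expect the main obstacle to be the reverse inclusion $\LChyp(X;\E) \subseteq \cA$ in step (i). The point is that local hyperconstancy is a \emph{local} condition, whereas colimits in $\Shhyp(X)$ are not computed objectwise, so one cannot simply transport the $\Spc$-valued characterization of \Cref{prop:strong_local_hyperconstancy} term by term along the tensor decomposition; the argument must instead go through the reflective structure of $\LChyp(X) \subset \Shhyp(X)$ together with the compatibility of restriction-to-opens with the reflector. Everything else is a formal manipulation of $- \tensor \E$ on $\LPr$ together with results already established in \cref{sec:hyperconstant}.
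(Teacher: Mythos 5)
Your overall route is the paper's own (the observation is proved there in one line: restrict the tensor equivalence, tensor \eqref{eq:monodromy2}, read off stalks by functoriality), and your forward containment $\cA \subseteq \LChyp(X;\E)$ via pure tensors and \Cref{cor:locally_constant_closed_under_limits_colimits}, your step (ii), and your stalk computation in (iii) (an explicit-inverse variant of the paper's appeal to functoriality) are all fine. The genuine gap is in the reverse inclusion $\LChyp(X;\E) \subseteq \cA$: the claim that restriction to an open $j \colon U \subset X$ ``commutes with the reflector because it is both a left and a right adjoint'' is not a formal consequence of two-sided adjointness. For a reflective subcategory $\cA_X \subset \Shhyp(X;\E)$ with reflector $L_X$, the canonical comparison $L_U \circ j^{\ast,\hyp} \to j^{\ast,\hyp} \circ L_X$ is invertible on all objects precisely when $j^{\ast,\hyp}$ preserves $L$-equivalences, equivalently when the right adjoint $j_{\ast}$ carries $\cA_U$ into $\cA_X$; and pushforward along an open immersion does not preserve local (hyper)constancy (already $j_{\ast}$ of the constant sheaf of sets at a two-point set on $\RR \sminus \{0\} \subset \RR$ fails to be locally constant, and by your own containment $\cA \subseteq \LChyp$ it therefore fails to lie in $\cA_{\RR}$). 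So the commutation you invoke is false in general, and the single instance you actually need --- that $\restrict{(F \to F')}{U}$ is again a reflection unit --- amounts to already knowing $F \in \cA$, i.e.\ it is circular; without it, on $U$ you only have \emph{some} map between two objects of $\cA_U$, which need not be an equivalence.

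The two-sidedness of the adjoints should instead be used to test membership in $\LChyp(X)\tensor\E$ levelwise. Since $j^{\ast,\hyp}$ and the inclusion $\LChyp(X) \subset \Shhyp(X)$ preserve limits as well as colimits, one may identify $\Shhyp(X;\E) \simeq \Shhyp(X)\tensor\E$ with limit-preserving functors $\E^{\op} \to \Shhyp(X)$ compatibly with sections over opens; under this identification $\LChyp(X)\tensor\E$ becomes the full subcategory of those $F$ for which each $\Spc$-valued hypersheaf $V \mapsto \Map_{\E}(e, F(V))$ is locally hyperconstant, and \Cref{prop:strong_local_hyperconstancy}, applied once with coefficients $\E$ and once with coefficients $\Spc$, shows this is equivalent to $F \in \LChyp(X;\E)$. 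Alternatively, argue by descent over a cover of $X$ by weakly contractible opens: both $\LChyp(-;\E)$ (\Cref{obs:pullbackofhypLC}) and $\Fun(\Piinfty(-),\E) \simeq \LChyp(-)\tensor\E$ (Van Kampen) are sheaves for the open topology and agree on weakly contractible opens by \Cref{cor:absolute_full_faithfulness}. With either repair, the rest of your argument goes through as written.
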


\begin{remark}[(the classical monodromy equivalence)]
	Write $ \Pi_1(X) $ for the fundamental groupoid of $ X $.
	Since $ \Pi_1(X) $ is the homotopy $ 1 $-category of $ \Piinfty(X) $,  if $ \E $ is a presentable $ 1 $-category, then
	\begin{equation*}
		\LChyp(X;\E) = \LC(X;\E) \andeq \Fun(\Piinfty(X),\E) \equivalent \Fun(\Pi_1(X),\E) \period
	\end{equation*}
	In particular, \Cref{obs:general_monodromy} recovers the classical monodromy equivalence for locally weakly contractible topological spaces.

	In the classical monodromy equivalence, only local $ 1 $-connectedness is needed, so this seems to use stronger hypotheses than the classical result.
	However, the truncated variants of our results (see \Cref{rem:local_weak_n-connectedness}) recover and generalize the classical monodromy equivalence.
	Let $ n \geq 1 $ and let $ \E $ be a presentable $ n $-category.
	If $ X $ is a locally weakly $ (n-1) $-connected topological space, then the constant sheaf functor $ \fromto{\E}{\Sh(X;\E)} $ admits a left adjoint.
	In particular, the \topos \smash{$ \Shhyp(X) $} is \textit{locally $ (n-1) $-connected} in the sense of \cite[Definition 3.2]{MR3763287}.
	Write \smash{$ \Pi_{n}\Shhyp(X) $} for the $ n $-truncation of the shape of the \topos \smash{$ \Shhyp(X) $}.
	Applying \cite[Theorem 3.13]{MR3763287} provides a monodromy equivalence
	\begin{equation}\label{eq:monodromy_truncated_shape}
		\LC(X;\E) \equivalent \Fun(\Pi_{n}\Shhyp(X),\E) \period
	\end{equation}

	If $ X $ is locally weakly $ n $-connected, then the formula for the constant sheaf functor \smash{$ \fromto{\Spc_{\leq n}}{\Sh(X;\Spc_{\leq n})} $} provided by \Cref{prop:computing_hyperconstant_hypersheaf} shows that \smash{$ \Pi_{n}\Shhyp(X) $} coincides with the fundamental $ n $-groupoid $ \Pi_n(X) $ of $ X $.
	(This is the truncated variant of \Cref{cor:Shape_of_ShhypX}.)
	Hence \eqref{eq:monodromy_truncated_shape} becomes an equivalence 
	\begin{equation*}
		\LC(X;\E) \equivalent \Fun(\Pi_{n}(X),\E) \period
	\end{equation*}
	Setting $ n = 1 $ we obtain a generalization of the classical monodromy equivalence to locally \textit{weakly} $ 1 $-connected topological spaces.
\end{remark}

We conclude this subsection with a \textit{categorical Künneth formula} for locally hyperconstant hypersheaves. 
Given topological spaces $ X $ and $ Y $, note that the functors
\begin{align*}
	\Sh(X) \cross \Sh(Y) \to \Sh(X \cross Y) &\andeq  \Shhyp(X) \cross \Shhyp(Y) \to \Shhyp(X \cross Y) \\
	\shortintertext{given by}
	(F,G) \mapsto \prupperstar_X(F) \cross \prupperstar_Y(G)  &\andeq \qquad(F,G) \mapsto \prupperstarhyp_X(F) \cross \prupperstarhyp_Y(G)
\end{align*}
preserve colimits separately in each variable.
Since the coproduct in the \category of \topoi and left exact left adjoints is given by the tensor product of presentable \categories \cites[\HAthm{Example}{4.8.1.19}]{HA}[Theorem 2.15]{arXiv:1802.10425}, these functors induce left exact colimit-preserving functors
\begin{equation*}
	\Sh(X) \tensor \Sh(Y) \to \Sh(X \cross Y) \andeq  \Shhyp(X) \tensor \Shhyp(Y) \to \Shhyp(X \cross Y) \period
\end{equation*}
In general, neither of these functors is an equivalence.%
\footnote{If $ X $ or $ Y $ is locally compact Hausdorff, then the functor $ \Sh(X) \tensor \Sh(Y) \to \Sh(X \cross Y) $ is an equivalence \HTT{Proposition}{7.3.1.11}.}
Nonetheless, locally hyperconstant hypersheaves on $X \cross Y$ \textit{do} decompose as a tensor product:

\begin{corollary}[(Künneth formula)] \label{cor:Kunneth}
	Let $ X $ and $ Y $ be a locally weakly contractible topological spaces.
	The natural functor \smash{$ \LChyp(X) \cross \LChyp(Y) \to \LChyp(X \cross Y) $} induces an equivalence of \categories 
	\begin{equation*}
		\LChyp(X) \tensor \LChyp(Y) \equivalence \LChyp(X \cross Y) \period
	\end{equation*}
\end{corollary}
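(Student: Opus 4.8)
The plan is to reduce the statement to the monodromy equivalence of \Cref{cor:monodromy} (in its $\E$-linear form \Cref{obs:general_monodromy}) together with the standard fact that, for small $\infty$-groupoids $A$ and $B$, the external product induces an equivalence $\Fun(A,\Spc) \tensor \Fun(B,\Spc) \simeq \Fun(A \cross B, \Spc)$.

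I would first record two elementary preliminaries. First, $X \cross Y$ is again locally weakly contractible: if $U \subset X$ and $V \subset Y$ are weakly contractible opens, then $\mathrm{Sing}(U \cross V) = \mathrm{Sing}(U) \cross \mathrm{Sing}(V)$ is a product of weakly contractible Kan complexes, hence weakly contractible, and the opens of this form constitute a basis of $X \cross Y$; thus \Cref{cor:monodromy} and \Cref{obs:general_monodromy} apply to $X \cross Y$. Second, $\Piinfty \colon \Top \to \Spc$ preserves finite products: it is the composite of $\mathrm{Sing}$ (a right adjoint, hence product-preserving) with the localization $\mathrm{sSet} \to \Spc$, which preserves finite products since weak homotopy equivalences of simplicial sets are stable under finite products. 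In particular $\Piinfty(X \cross Y) \simeq \Piinfty(X) \cross \Piinfty(Y)$.

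With these in hand, applying \Cref{obs:general_monodromy} to $X$, $Y$, and $X \cross Y$ produces a chain of equivalences
\begin{align*}
	\LChyp(X) \tensor \LChyp(Y)
	&\simeq \Fun(\Piinfty(X),\Spc) \tensor \Fun(\Piinfty(Y),\Spc) \\
	&\simeq \Fun(\Piinfty(X) \cross \Piinfty(Y),\Spc) \\
	&\simeq \Fun(\Piinfty(X \cross Y),\Spc) \\
	&\simeq \LChyp(X \cross Y),
\end{align*}
in which the second equivalence is induced by the external product $(\phi,\psi) \mapsto [(a,b) \mapsto \phi(a) \cross \psi(b)]$ (valid because $\Fun(A,\Spc) \tensor \C \simeq \Fun(A,\C)$ for presentable $\C$), and the third uses the second preliminary above. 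It remains to check that this composite coincides with the natural functor of the statement. To do this I would trace the functor $(F,G) \mapsto \prupperstarhyp_X(F) \cross \prupperstarhyp_Y(G)$ through the monodromy equivalences: by the functoriality of \eqref{eq:monodromy2} in the space (\Cref{obs:monodromy}) together with \Cref{obs:base_change_Piinfty}, pullback along $\pr_X \colon X \cross Y \to X$ corresponds under monodromy to precomposition with $\Piinfty(\pr_X)$, which after the identification $\Piinfty(X \cross Y) \simeq \Piinfty(X) \cross \Piinfty(Y)$ is the projection; likewise for $\pr_Y$. Moreover, by \Cref{cor:locally_constant_closed_under_limits_colimits} the product $\prupperstarhyp_X(F) \cross \prupperstarhyp_Y(G)$ is computed in $\Shhyp(X \cross Y)$, hence, after transport, pointwise on $\Piinfty(X) \cross \Piinfty(Y)$. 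Thus the natural functor is carried to $(\phi,\psi) \mapsto [(a,b) \mapsto \phi(a) \cross \psi(b)]$, the very functor inducing the middle equivalence above, so it induces an equivalence on tensor products, as claimed.

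The part I expect to require the most care is this last step: pinning down how the natural functor interacts with the monodromy equivalences, and in particular recognizing that it is the functoriality of monodromy \emph{in the space} (and its compatibility with products, rather than with colimits) that is relevant. The remaining ingredients are either formal consequences of results already in the paper or the two elementary facts about products recorded above.
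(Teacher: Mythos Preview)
Your proof is correct and follows exactly the same route as the paper: reduce to the monodromy equivalence \eqref{eq:monodromy2}, use that $\Piinfty$ preserves finite products, and use that $\Fun(-,\Spc)$ takes products of $\infty$-groupoids to tensor products of presentable $\infty$-categories. The paper's proof is a single sentence invoking these three ingredients; your version simply unpacks them, and additionally supplies two points the paper leaves implicit—namely that $X\times Y$ is again locally weakly contractible (so that monodromy applies there), and that the specific ``natural functor'' of the statement transports to the external product under monodromy.
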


\begin{proof}
	Since $ \Piinfty $ preserves finite products and $ \Fun(-,\Spc) $ carries products of \categories to tensor products of presentable \categories, the conclusion follows from the monodromy equivalence \eqref{eq:monodromy2}.
\end{proof}


\subsection{Homotopy-invariance for locally hyperconstant hypersheaves}\label{subsec:HIforLChypersheaves}

Our next goal is to use \cref{thm:relative_full_faithfulness} to show that, for every presentable \category $\E$, the functor
\begin{equation*} 
	\LChyp(-;\E) \colon \Top^{\op} \longrightarrow \Cat_\infty
\end{equation*}
is strongly homotopy invariant in the sense of \Cref{def:homotopy_invariant}.
We need the following two preliminary results:

\begin{lemma} \label{lem:hyperconstant_implies_S_hyperconstant}
	Let $S$ and $X$ be topological spaces, and assume that $X$ is \wclwc.
	Then 
	\begin{equation*} 
		\LChyp(S \cross X; \E) \subset \LChyp_S(S \cross X; \E) \period 
	\end{equation*}
\end{lemma}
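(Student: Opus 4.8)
The plan is to reduce to the case that $X$ is \wclwc, handle compactly generated coefficients using the foliated picture, and then bootstrap to arbitrary presentable coefficients.

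First I would reduce to the case $X$ \wclwc. Being locally $S$-hyperconstant is a condition local on $X$: by \Cref{obs:pullbackofhypLC} the assignment $\goesto{Y}{\LChyp_S(S\cross Y;\E)}$ is a sub-hypersheaf of $\Shhyp(S\cross Y;\E)$ and $\LChyp(S\cross-;\E)$ restricts into it, so whether $F$ lies in $\LChyp_S$ can be tested on an open cover of $X$. Since $X$ is locally weakly contractible it has an open cover by weakly contractible opens, and each such open is again \wclwc because an open subspace of a locally weakly contractible space is locally weakly contractible. Hence we may assume $X$ is \wclwc.

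Next I would treat the case that $\E$ is compactly generated. Given $F\in\LChyp(S\cross X;\E)$, for each $s\in S$ the restriction $\restrict{F}{\{s\}\cross X}^{\hyp}$ lies in $\LChyp(X;\E)$ by \Cref{obs:pullbackofhypLC}, and since $X$ is \wclwc, \Cref{cor:absolute_full_faithfulness} identifies $\LChyp(X;\E)$ with the hyperconstant hypersheaves; thus $\restrict{F}{\{s\}\cross X}^{\hyp}$ is hyperconstant, i.e.\ $F$ is foliated in the sense of \Cref{def:foliated}. Then \Cref{prop:classification_of_foliated} says $F$ is $S$-hyperconstant, and in particular $F\in\LChyp_S(S\cross X;\E)$.

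For a general presentable $\E$, I would realize $\E$ as an accessible localization of $\cD\coloneqq\PSh(\cC)$ for a small \category $\cC$ (so $\cD$ is compactly generated), with localization functor $L\colon\cD\to\E$ and fully faithful, limit-preserving right adjoint $R\colon\E\to\cD$. Postcomposition with $R$ preserves the hypersheaf condition and yields a fully faithful functor $R_\ast\colon\Shhyp(S\cross X;\E)\to\Shhyp(S\cross X;\cD)$ that commutes with restriction to open subsets and with the functors $\prSupperstarhyp$ and $\GammaXupperstarhyp$ (it is $\id{\Shhyp(S\cross X)}\tensor R$, and these functors are natural in the coefficient \category), and whose left adjoint $L_\ast$ satisfies $L_\ast R_\ast\simeq\id{}$ since $LR\simeq\id{}$. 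Restricting to a cover trivializing $F$ shows $R_\ast(F)$ is again locally hyperconstant, so by the compactly generated case and \enumref{thm:relative_full_faithfulness}{2} we get $R_\ast(F)\simeq\prSupperstarhyp(G)$ for some $G\in\Shhyp(S;\cD)$; applying $L_\ast$ and using $L_\ast R_\ast\simeq\id{}$ together with $L_\ast\circ\prSupperstarhyp\simeq\prSupperstarhyp\circ L_\ast$ gives $F\simeq\prSupperstarhyp(L_\ast G)$, so $F\in\LChyp_S(S\cross X;\E)$. The step I expect to require the most care is this last one — checking that the change-of-coefficients functors $R_\ast$ and $L_\ast$ really commute with $\prSupperstarhyp$ and with restriction to opens — whereas the actual content sits in the compactly generated case, which is immediate from \Cref{prop:classification_of_foliated}.
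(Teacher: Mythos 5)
Your compactly generated case is correct, and it is a genuinely different route from the paper's for that case: foliatedness via \Cref{obs:pullbackofhypLC} and \Cref{cor:absolute_full_faithfulness}, then \Cref{prop:classification_of_foliated}, immediately gives that $F$ is globally $S$-hyperconstant. (Your opening reduction to $X$ \wclwc is redundant, since that is already the hypothesis, but harmless.) The paper instead argues uniformly in $\E$, with no case split: by \Cref{cor:descent} one has descent for $\LChyp_S$ along hypercovers of $S$, and \Cref{thm:relative_full_faithfulness} gives descent in the $X$-direction along hypercovers by \wclwc opens, so membership in $\LChyp_S(S \cross X;\E)$ can be tested on a product hypercover $V_\bullet \cross U_\bullet$; any cover trivializing $F \in \LChyp(S \cross X;\E)$, refined to product opens, then witnesses membership because a hyperconstant hypersheaf on $V_m \cross U_n$ is in particular $V_m$-hyperconstant.

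The gap is in your reduction of general presentable $\E$ to the compactly generated case. The unjustified step is the claim that ``restricting to a cover trivializing $F$ shows $R_\ast(F)$ is again locally hyperconstant.'' This needs the fully faithful right adjoint $R_\ast$ to commute with the constant hypersheaf functors $\Gamma_{W}^{\ast,\hyp}$ on the opens $W$ of the trivializing cover, i.e., to carry hyperconstant hypersheaves in $\E$ to hyperconstant hypersheaves in $\cD$. Even after refining to product opens $V \cross U$ with $U$ weakly contractible, the factor $V$ is an arbitrary open of the arbitrary space $S$, so $V \cross U$ is not locally weakly contractible and \Cref{prop:computing_hyperconstant_hypersheaf} provides no cotensor formula for $\Gamma_{V \cross U}^{\ast,\hyp}$; that constant hypersheaf functor is a hypersheafification of a constant presheaf (a colimit-type construction), and there is no reason it should commute with the limit-preserving but non-colimit-preserving functor $R_\ast$ --- such commutations are exactly what the paper only has on locally weakly contractible spaces. (By contrast, the commutations you flag as delicate, of $R_\ast$ with $\prSupperstarhyp$ and with restriction to opens, are the unproblematic ones, precisely because $X$ is locally weakly contractible.) What the formula does give is that $R_\ast(F)|_{V \cross U}$ is pulled back from $V$, i.e., $V$-hyperconstant; but locally $S$-hyperconstant requires a trivializing cover of $X$, not of $S \cross X$, so passing from ``pulled back from the $S$-factor on a product cover of $S \cross X$'' to membership in $\LChyp_S(S \cross X;\cD)$ requires an additional descent argument --- which is essentially the paper's actual proof and is absent from your proposal.
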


\begin{proof}
	Applying \cref{cor:descent}, we see that for every hypercover $V_\bullet$ of $S$, the natural functor
	\begin{equation*} 
		\LChyp_S(S \cross X; \E) \longrightarrow \lim_{[m] \in \DDelta} \LChyp_{V_m}(V_m \cross X ; \E) 
	\end{equation*}
	is an equivalence.
	On the other hand, for every hypercover $U_{\bullet}$ of $X$ where each $ U_n $ is \wclwc, the natural functor
	\begin{equation*} 
		\LChyp_{V_m}( V_m \cross X ; \E ) \longrightarrow \lim_{[n] \in \DDelta} \LChyp_{V_m}( V_m \cross U_n ) 
	\end{equation*}
	is an equivalence.
	Thus, a hypersheaf $F \in \Shhyp(S \cross X; \E)$ belongs to the full subcategory $\LChyp_S(S \cross X; \E)$ if and only if we can find a hypercover $V_\bullet \cross U_\bullet$ of $S \cross X$ such that, for every $([n], [m]) \in \DDelta \cross \DDelta$, the restriction $F |_{V_m \cross U_n}$ belongs to \smash{$\LChyp_{V_m}(V_m \cross U_n ; \E)$}.
	If $F \in \LChyp(S \cross X; \E)$, there exists a hypercover $V_\bullet \cross U_\bullet$ such that, for every $([n],[m]) \in \DDelta \cross \DDelta$, the restriction $\restrict{F}{V_m \cross U_n}$ is hyperconstant, hence $V_m$-hyperconstant. 
	The conclusion follows.
\end{proof}

\begin{lemma} \label{lem:exceptional_pushforward_locally_hyperconstant}
	Let $S$ and $X$ be topological spaces, and assume that $X$ is \wclwc.
	Then the pushforward
	\begin{equation*}
		\prSlowerstar \colon \Shhyp(S \cross X; \E) \to \Shhyp(S; \E)
	\end{equation*}
	preserves locally hyperconstant hypersheaves.
\end{lemma}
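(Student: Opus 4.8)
The idea is to reduce everything to the full faithfulness of \smash{$\prSupperstarhyp$} (\Cref{thm:relative_full_faithfulness}). Let $F \in \LChyp(S \cross X;\E)$. Since $X$ is \wclwc, \Cref{lem:hyperconstant_implies_S_hyperconstant} shows that $F$ is in particular locally $S$-hyperconstant, so by \enumref{thm:relative_full_faithfulness}{2} it lies in the essential image of the fully faithful functor \smash{$\prSupperstarhyp$}. Set $G \coloneqq \prSlowerstar(F)$; then the counit \smash{$\prSupperstarhyp(G) \to F$} is an equivalence. In particular \smash{$\prSlowerstar(F) \simeq G$}, so the lemma reduces to the assertion that $G$ is a locally hyperconstant hypersheaf on $S$.

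To prove this I would pick a convenient cover. Since $F$ is locally hyperconstant and \smash{$\Openctrall(S \cross X)$} is a basis for $S \cross X$ (\Cref{eg:contractible_opens_are_a_basis}), refining a cover of $S \cross X$ on which $F$ is hyperconstant by product opens from this basis (and using that the restriction of a hyperconstant hypersheaf to an open is again hyperconstant) yields an open cover \smash{$\{V_\alpha \cross U_\alpha\}_{\alpha \in A}$} of $S \cross X$ with each $U_\alpha$ a weakly contractible open of $X$ and each \smash{$\restrict{F}{V_\alpha \cross U_\alpha}$} hyperconstant. The opens $\{V_\alpha\}_{\alpha \in A}$ then cover $S$, and since local hyperconstancy on $S$ is a local condition and \smash{$\prSlowerstar$} commutes with restriction to opens of $S$, it suffices to show that each \smash{$\restrict{G}{V_\alpha}$} is hyperconstant.

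Fix $\alpha$, write $V \coloneqq V_\alpha$, $U \coloneqq U_\alpha$, and let $q \colon V \cross U \to V$ be the projection. Base change of \smash{$\prSupperstarhyp$} along the open immersion $V \cross X \inclusion S \cross X$ (exactly as in the proof of \enumref{thm:relative_full_faithfulness}{2}) identifies \smash{$\restrict{F}{V \cross U}$} with \smash{$q^{\ast,\hyp}(\restrict{G}{V})$}. On the other hand, since \smash{$\restrict{F}{V \cross U}$} is hyperconstant we may write \smash{$\restrict{F}{V \cross U} \simeq \Gamma_{V \cross U}^{\ast,\hyp}(E)$} for some $E \in \E$, and because $\Gamma_{V \cross U}$ factors as $\Gamma_V \circ q$ this is \smash{$q^{\ast,\hyp}\bigl(\Gamma_V^{\ast,\hyp}(E)\bigr)$}. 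Hence \smash{$q^{\ast,\hyp}(\restrict{G}{V}) \simeq q^{\ast,\hyp}\bigl(\Gamma_V^{\ast,\hyp}(E)\bigr)$}. Finally, $U$ is weakly contractible and open in the locally weakly contractible space $X$, hence \wclwc, so \enumref{thm:relative_full_faithfulness}{1} applies with $U$ in place of $X$ and shows that \smash{$q^{\ast,\hyp}$} is fully faithful; cancelling it gives \smash{$\restrict{G}{V} \simeq \Gamma_V^{\ast,\hyp}(E)$}, which is hyperconstant, as desired.

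There is no real obstacle here beyond organization: the genuine input — recognizing $F$ as pulled back from $S$ — is handed to us by \Cref{lem:hyperconstant_implies_S_hyperconstant} together with \Cref{thm:relative_full_faithfulness}, and the only point that deserves a moment's care is arranging that the hyperconstant charts can be chosen of the product form $V_\alpha \cross U_\alpha$ with $U_\alpha$ weakly contractible, which is precisely the statement that \smash{$\Openctrall(S \cross X)$} is a basis when $X$ is locally weakly contractible.
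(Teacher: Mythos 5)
Your proof is correct and follows essentially the same route as the paper: identify $F \simeq \prSupperstarhyp(G)$ via \Cref{lem:hyperconstant_implies_S_hyperconstant} and \Cref{thm:relative_full_faithfulness}, cover $S \cross X$ by products $V_\alpha \cross U_\alpha$ with $U_\alpha$ weakly contractible on which $F$ is hyperconstant, and conclude that each $\restrict{G}{V_\alpha}$ is hyperconstant. The only (harmless) difference is that you cancel the fully faithful pullback $q^{\ast,\hyp}$ directly, whereas the paper first uses the unit along $q_\alpha$ to reduce to the globally hyperconstant case and then applies full faithfulness of $\prSupperstarhyp$.
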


\begin{proof}
	Let $F \in \LChyp(S \cross X; \E)$.
	By \cref{lem:hyperconstant_implies_S_hyperconstant}, we know that \smash{$F \in \LChyp_S(S \cross X; \E)$}.
	Thus there exists a hypersheaf $G $ on $ S $ and an equivalence \smash{$F \simeq \prSupperstarhyp(G)$}.
	Since \smash{$\prSupperstarhyp$} is fully faithful (\cref{thm:relative_full_faithfulness}), the unit defines an equivalence $\equivto{G}{\prSlowerstar(F)}$.
	Hence our goal is to show that $G$ is locally hyperconstant.
	
	Since $F \in \LChyp(S \cross X; \E)$, there exists an open cover $\{V_\alpha \cross U_\alpha\}_{\alpha \in A}$ of $S \cross X$ such that for each $\alpha \in A$, the hypersheaf $F |_{V_\alpha \cross U_\alpha}$ is hyperconstant.
	Since $X$ is locally weakly contractible, we can furthermore assume that every $U_\alpha$ is weakly contractible.
	Write $q_\alpha \colon V_\alpha \cross U_\alpha \to V_\alpha$ for the projection.
	Since \smash{$F \simeq \prSupperstarhyp(G)$}, we see that
	\begin{equation*}
		\restrict{F}{V_\alpha \cross U_\alpha} \simeq q_\alpha^{*,\hyp}(\restrict{G}{V_\alpha}) \period
	\end{equation*}
	Since $U_\alpha$ is weakly contractible, using \cref{thm:relative_full_faithfulness} again we see that the unit
	\begin{equation*} 
		\restrict{G}{V_{\alpha}} \longrightarrow q_{\alpha,*}( \restrict{F}{V_\alpha \cross U_\alpha} ) 
	\end{equation*}
	is an equivalence.
	We can therefore replace $S$ and $X$ by $U_\alpha$ and $V_\alpha$, respectively.
	Equivalently, we can assume from the beginning that $F$ is globally hyperconstant.
	We can therefore write \smash{$F \simeq \Gamma_{S \cross X}^{*,\hyp}(E)$}, for some object $E \in \E$.
	In this case, we obtain equivalences
	\begin{equation*} 
		\prSupperstarhyp(G) \simeq F \simeq \Gamma_{S \cross X}^{*,\hyp}(E) \simeq \prSupperstarhyp( \Gamma_S^{*,\hyp}(E)) \period
	\end{equation*}
	Applying \cref{thm:relative_full_faithfulness} once more, we deduce that
	\begin{equation*} 
		G \simeq \prSlowerstar(F) \simeq \Gamma_S^{*,\hyp}(E) \period \qedhere
	\end{equation*}
\end{proof}

\begin{theorem}\label{thm:homotopy_invariance_hyperconstant}
	Let $S$ and $X$ be topological spaces, and assume that $X$ is \wclwc.
	Then the functors
	\begin{equation*}
		\adjto{\prSupperstarhyp}{\LChyp(S; \E)}{\LChyp(S \cross X; \E)}{\prSlowerstar}
	\end{equation*}
	are inverse equivalences of \categories.
	In particular, the functor $ \LChyp(-;\E) \colon \fromto{\Top^{\op}}{\Catinfty} $ is strongly homotopy-invariant
\end{theorem}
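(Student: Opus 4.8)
The plan is to bootstrap from \Cref{thm:relative_full_faithfulness} together with \Cref{lem:hyperconstant_implies_S_hyperconstant} and \Cref{lem:exceptional_pushforward_locally_hyperconstant}; once those are in hand, essentially no new argument is required. First I would record that $\LChyp(-;\E) \colon \fromto{\Top^{\op}}{\Catinfty}$ really is a functor: for any map $f \colon \fromto{Y}{Z}$ of topological spaces, the hypersheaf pullback $\fupperstarhyp$ carries locally hyperconstant hypersheaves to locally hyperconstant hypersheaves, because an open cover $\{U_\alpha\}$ of $Z$ trivializing a hypersheaf $F$ pulls back to an open cover $\{\finverse(U_\alpha)\}$ of $Y$ on which $\fupperstarhyp(F)$ is again hyperconstant (the composite $\finverse(U_\alpha) \hookrightarrow Y \xrightarrow{f} Z$ factors through $U_\alpha$, so pullbacks of constant hypersheaves are constant). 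Applying this to the projection $\pr_S \colon \fromto{S \cross X}{S}$ shows that the functor $\LChyp(\pr_S;\E)$ is precisely the restriction of $\prSupperstarhyp$ to $\LChyp(S;\E)$; in particular $\prSupperstarhyp$ does carry $\LChyp(S;\E)$ into $\LChyp(S \cross X;\E)$.

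Next I would run a two-sided inverse argument. By \Cref{thm:relative_full_faithfulness}, the functor $\prSupperstarhyp \colon \fromto{\Shhyp(S;\E)}{\Shhyp(S \cross X;\E)}$ is fully faithful with essential image $\LChyp_S(S \cross X;\E)$, and (as in the proof of that statement) its right adjoint $\prSlowerstar$ restricts to the inverse on this essential image; concretely, the unit $\fromto{\id{}}{\prSlowerstar \prSupperstarhyp}$ is an equivalence on all of $\Shhyp(S;\E)$, hence on $\LChyp(S;\E)$. For the opposite composite, \Cref{lem:hyperconstant_implies_S_hyperconstant} gives the inclusion $\LChyp(S \cross X;\E) \subset \LChyp_S(S \cross X;\E)$, so every $F \in \LChyp(S \cross X;\E)$ lies in the essential image of $\prSupperstarhyp$ and the counit $\fromto{\prSupperstarhyp \prSlowerstar(F)}{F}$ is an equivalence; meanwhile \Cref{lem:exceptional_pushforward_locally_hyperconstant} guarantees $\prSlowerstar(F) \in \LChyp(S;\E)$. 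Combining these facts: $\prSupperstarhyp$ and $\prSlowerstar$ restrict to functors between $\LChyp(S;\E)$ and $\LChyp(S \cross X;\E)$, the restricted $\prSupperstarhyp$ is fully faithful and essentially surjective (each $F$ is $\simeq \prSupperstarhyp(\prSlowerstar(F))$ with $\prSlowerstar(F)$ already locally hyperconstant), and the unit and counit exhibit the two composites as naturally equivalent to the identities; hence they are inverse equivalences.

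For the final clause, a $\pt$-stratified space is just a topological space, and by the first paragraph $\LChyp(\pr_S;\E)$ is exactly the restriction of $\prSupperstarhyp$ to $\LChyp(S;\E)$. The equivalence just established, valid for every \wclwc space $X$, is therefore precisely the condition in \Cref{def:homotopy_invariant} for $\LChyp(-;\E)$ to be strongly homotopy-invariant (and, taking $X = [0,1]$, homotopy-invariant, recovering the hypersheaf half of \Cref{thm:introhiofLC}).

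The main point worth stressing is that there is no real obstacle left at this stage: the substantive work has all been pushed into \Cref{thm:relative_full_faithfulness} (full faithfulness of $\prSupperstarhyp$ and the identification of its essential image with the \emph{locally} $S$-hyperconstant hypersheaves) and into \Cref{lem:hyperconstant_implies_S_hyperconstant} (the hypercover/basis bookkeeping showing that absolute local hyperconstancy on $S \cross X$ forces local $S$-hyperconstancy). The only care needed here is to verify that the two full subcategories $\LChyp(S;\E)$ and $\LChyp(S \cross X;\E)$ are carried into one another — which is exactly the combination of the functoriality observation, \Cref{lem:hyperconstant_implies_S_hyperconstant}, and \Cref{lem:exceptional_pushforward_locally_hyperconstant}.
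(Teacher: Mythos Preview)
Your proof is correct and follows essentially the same route as the paper: both invoke \Cref{thm:relative_full_faithfulness}, \Cref{lem:hyperconstant_implies_S_hyperconstant}, and \Cref{lem:exceptional_pushforward_locally_hyperconstant} in the same way, with the only difference being presentational---the paper packages the argument as a commutative square that is vertically right adjointable, while you spell out the unit/counit verification directly.
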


\begin{remark}
	When $S$ is itself locally weakly contractible, \Cref{thm:homotopy_invariance_hyperconstant} is a consequence of the monodromy equivalence (see \Cref{obs:monodromy}).
	The strength of \cref{thm:relative_full_faithfulness} is that we have no assumptions on $ S $.
\end{remark}

\begin{proof}[Proof of \Cref{thm:homotopy_invariance_hyperconstant}]
	In virtue of \cref{lem:hyperconstant_implies_S_hyperconstant}, we can consider the following commutative square:
	\begin{equation*} 
		\begin{tikzcd}
			\LChyp(S;\E) \arrow[d, "\prSupperstarhyp"'] \arrow[r, hooked] & \Shhyp(S;\E) \arrow[d, "\prSupperstarhyp"] \\
			\LChyp(S \cross X; \E) \arrow[r, hooked] & \LChyp_S(S \cross X; \E) \period
		\end{tikzcd} 
	\end{equation*}
	\cref{thm:relative_full_faithfulness} implies that the right vertical functor is an equivalence.
	Since the horizontal functors are fully faithful, \cref{lem:exceptional_pushforward_locally_hyperconstant} implies that this square is vertically right adjointable.
	The conclusion follows.
\end{proof}

 Although not needed in what follows, we remind the reader that the $2$-of-$6$ property of equivalences implies that a functor \smash{$ L \colon \fromto{\Top}{\Catinfty} $} is homotopy-invariant in the sense of \Cref{def:homotopy_invariant} if and only if $ L $ inverts \textit{all} homotopy equivalences of topological spaces:
%

\begin{lemma}\label{lem:LC_homotopy_invariance_reformulation}
	The following are equivalent for a functor \smash{$ L \colon \fromto{\Top}{\Catinfty} $}:
	\begin{enumerate}[label=\stlabel{lem:LC_homotopy_invariance_reformulation}, ref=\arabic*]
			\item The functor $ L $ is homotopy-invariant.
			
			\item For each homotopy equivalence of topological spaces $ f \colon \fromto{T}{S} $, the functor $ L(f) \colon \fromto{L(S)}{L(T)} $ is an equivalence of \categories.
		\end{enumerate}
\end{lemma}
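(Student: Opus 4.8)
The plan is to prove the two implications separately, concentrating the work in \enumref{lem:LC_homotopy_invariance_reformulation}{1} $\Rightarrow$ \enumref{lem:LC_homotopy_invariance_reformulation}{2}. The reverse implication \enumref{lem:LC_homotopy_invariance_reformulation}{2} $\Rightarrow$ \enumref{lem:LC_homotopy_invariance_reformulation}{1} is immediate: for every topological space $S$, the projection $\pr_S \colon S \cross [0,1] \to S$ is a homotopy equivalence, with homotopy inverse the inclusion $\incto{S}{S \cross [0,1]}$ at $0$ and the straight-line homotopy $(s,t,u) \mapsto (s,tu)$ on $S \cross [0,1]$; so if $L$ inverts all homotopy equivalences then $L(\pr_S)$ is an equivalence, which is precisely the condition of \Cref{def:homotopy_invariant}.

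For the forward implication, the first step is to extract the following consequence of homotopy-invariance: if $L$ is homotopy-invariant, then the two endpoint inclusions $i_0, i_1 \colon S \to S \cross [0,1]$ induce naturally equivalent functors, $L(i_0) \simeq L(i_1)$. This holds because $\pr_S \circ i_0 = \pr_S \circ i_1 = \id{S}$, so both $L(i_0)$ and $L(i_1)$ are inverse to the equivalence $L(\pr_S)$ (a one-sided inverse of an equivalence is automatically the inverse), and hence equivalent to one another. From this it follows formally that $L$ sends homotopic maps to equivalent functors: if $\phi_0, \phi_1 \colon A \to B$ are homotopic, witnessed by $\Phi \colon A \cross [0,1] \to B$ with $\Phi \circ i_k = \phi_k$, then $L(\phi_0)$ and $L(\phi_1)$ are each obtained from $L(\Phi)$ by composing with $L(i_0)$, respectively $L(i_1)$, so they agree.

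With this in hand, I would conclude as follows. Let $f \colon T \to S$ be a homotopy equivalence and pick a homotopy inverse $g \colon S \to T$, so that $g \circ f \simeq \id{T}$ and $f \circ g \simeq \id{S}$. Applying $L$ and using the previous step, one of the composites of $L(f)$ and $L(g)$ is equivalent to $L(\id{T}) = \id{L(T)}$ and the other to $L(\id{S}) = \id{L(S)}$; thus $L(f)$ admits $L(g)$ as a two-sided inverse up to natural equivalence, and is therefore an equivalence of \categories. Alternatively, matching the $2$-of-$6$ formulation mentioned just above the lemma, apply $L$ to the composable triple $T \xrightarrow{f} S \xrightarrow{g} T \xrightarrow{f} S$; its two length-two composites are sent to functors equivalent to identities, so the $2$-of-$6$ property of equivalences forces $L(f)$, and $L(g)$, to be equivalences.

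I do not expect a genuine obstacle here. The only points that warrant care are the bookkeeping of the variance of $L$ --- the statement is phrased for a covariant functor on $\Top$, but, consistently with \Cref{def:homotopy_invariant}, the functors of interest are contravariant, and this affects nothing in the argument --- and the observation that ``$L$ sends homotopic maps to equivalent functors'' is not formal but genuinely relies on homotopy-invariance through the equivalence $L(i_0) \simeq L(i_1)$.
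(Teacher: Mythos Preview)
Your proof is correct. The paper does not supply a proof at all: it merely prefaces the lemma with the remark that it follows from the $2$-of-$6$ property of equivalences, which is exactly the alternative argument you spell out at the end; your primary route via ``homotopic maps are sent to equivalent functors'' is equally valid and makes the mechanism more explicit.
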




\subsection{Exceptional pushforward on locally hyperconstant hypersheaves}\label{subsec:behavioronLC}

We now prove that the exceptional pushforward preserves locally hyperconstant hypersheaves.
We start with the following observations:

\begin{observation}
	Let $X$ be a topological space and let $j \colon U \to X$ be a local homeomorphism.
	Then $j\inv \colon \PSh(X;\E) \to \PSh(U;\E)$ preserves (hyper)sheaves.
	Furthermore, the functor
	\begin{equation*} 
		j\inv \colon \Shhyp(X;\E) \longrightarrow \Shhyp(U;\E) 
	\end{equation*}
	commutes with arbitrary limits, hence admits a left adjoint \smash{$j_\sharp^{\hyp}$}.
	Observe that if $j$ is an open immersion, then \smash{$j_\sharp^{\hyp}$} coincides with the hypersheafification of the usual extension by zero.
\end{observation}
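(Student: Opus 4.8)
The plan is to reduce everything to the behavior of $j^{-1}$ on a single well-chosen basis. Since $j$ is a local homeomorphism, the open subsets $W \subseteq U$ on which $j$ restricts to a homeomorphism onto an open subset of $X$ form a basis $\cB$ of $\Open(U)$; for such $W$ the map $j|_W$ is the composite of a homeomorphism $\psi_W \colon W \to j(W)$ with the open inclusion $j(W) \hookrightarrow X$. Because presheaf pullback is functorial and commutes with restriction to open subsets, for every $F \in \PSh(X;\E)$ we get $j^{-1}(F)|_W \simeq \psi_W^{-1}(F|_{j(W)})$, and hence $j^{-1}(F)(W) \simeq F(j(W))$, naturally in $F$ and in $W \in \cB$. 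This formula is the engine for all four assertions.

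First I would deduce that $j^{-1}$ preserves sheaves and hypersheaves. Restriction to an open subset preserves (hyper)sheaves (recalled in \cref{subsec:background_locally_constant}) and pullback along a homeomorphism does so trivially, being induced by an equivalence of sites; so the displayed identity shows that $j^{-1}(F)|_W$ is a (hyper)sheaf for every $W \in \cB$ whenever $F$ is one. Since $\cB$ contains an open cover of $U$ and the sheaf and hypersheaf conditions are local (by descent and hyperdescent), $j^{-1}(F)$ is itself a (hyper)sheaf. For coefficients in a general presentable $\infty$-category one can alternatively note that, under $\PSh(-;\E) \simeq \PSh(-) \tensor \E$, the functor $j^{-1}$ is identified with $j^{-1}_{\Spc} \tensor \id{\E}$, which therefore restricts to hypersheaves once the case $\E = \Spc$ is known.

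Next I would prove that $j^{-1} \colon \Shhyp(X;\E) \to \Shhyp(U;\E)$ preserves arbitrary limits. Limits of hypersheaves are computed objectwise, since the inclusion $\Shhyp(U;\E) \hookrightarrow \PSh(U;\E)$ is a right adjoint; and by \enumref{prop:hypersheaves_as_basis_RKE}{1} a morphism of hypersheaves on $U$ is an equivalence once its restriction to the basis $\cB$ is. Evaluating the canonical comparison $j^{-1}(\lim_\alpha F_\alpha) \to \lim_\alpha j^{-1}(F_\alpha)$ at $W \in \cB$, both sides become $\lim_\alpha F_\alpha(j(W))$ by the basis formula, and the comparison is the identity under this identification. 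Thus $j^{-1}$ is limit-preserving; it is also accessible, being the restriction to hypersheaves of the colimit-preserving presheaf pullback (and $\Shhyp(-;\E) \hookrightarrow \PSh(-;\E)$ is accessible). Since $\Shhyp(X;\E)$ and $\Shhyp(U;\E)$ are presentable, the adjoint functor theorem \HTT{Corollary}{5.5.2.9} yields the left adjoint $j_\sharp^{\hyp}$.

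Finally, for an open immersion $j$ the functor $j^{-1}$ on presheaves is just restriction, whose left adjoint is the presheaf-level extension by zero $j_!$ (left Kan extension along $\Open(U)^{\op} \hookrightarrow \Open(X)^{\op}$). For $G \in \Shhyp(U;\E)$ and $H \in \Shhyp(X;\E)$ one has $\Map((j_! G)^{\hyp}, H) \simeq \Map(j_! G, H) \simeq \Map(G, j^{-1}H)$ — the first equivalence because $H$ is already a hypersheaf, the second by the presheaf adjunction together with the fact that $j^{-1}H$ is a hypersheaf — so $j_\sharp^{\hyp}$ is identified with $(j_!(-))^{\hyp}$, the hypersheafification of the usual extension by zero. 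The only real friction I anticipate is the bookkeeping with general coefficients $\E$ — checking that the presheaf-pullback, localization, and tensor-product manipulations are mutually compatible, and that the (hyper)sheaf condition is genuinely local with $\E$-coefficients — rather than any substantive difficulty; once the basis formula $j^{-1}(F)(W) \simeq F(j(W))$ is in hand, the remaining steps are formal.
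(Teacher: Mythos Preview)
Your argument has a genuine gap at the step where you assert that ``the sheaf and hypersheaf conditions are local (by descent and hyperdescent).'' Knowing that $G|_W$ is a (hyper)sheaf for every $W$ in an open cover of $U$ does not force $G$ itself to be one: the value $G(V)$ on an open $V$ not contained in any single $W$ is completely unconstrained by the restrictions. In fact, the first clause of the observation is false as literally stated, and your own basis formula exhibits the failure. Since a local homeomorphism is an open map, the identity $j^{-1}(F)(W)\simeq F(j(W))$ holds for \emph{every} open $W\subset U$, not just for $W\in\cB$. Take $j\colon X\sqcup X\to X$ the trivial double cover and $F$ any sheaf on $X$; then $j^{-1}(F)(X\sqcup X)=F(X)$, while the sheaf condition for the cover by the two copies of $X$ would demand that the diagonal $F(X)\to F(X)\times F(X)$ be an equivalence. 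So $j^{-1}(F)$ is not a sheaf, and your locality step cannot be repaired.

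The paper gives no proof (the statement is recorded as an observation), so there is nothing to compare against directly; what is actually used downstream is only that $j^{\ast,\hyp}\colon\Shhyp(X;\E)\to\Shhyp(U;\E)$ admits a left adjoint $j_\sharp^{\hyp}$. Your limit argument essentially proves this once you replace $j^{-1}$ by $j^{\ast,\hyp}$ throughout: for $W\in\cB$ one has $j^{\ast,\hyp}(F)|_W\simeq (j|_W)^{\ast,\hyp}(F|_{j(W)})$ by functoriality of hypersheaf pullback, so $j^{\ast,\hyp}(F)(W)\simeq F(j(W))$ on the basis, and then \Cref{prop:hypersheaves_as_basis_RKE} lets you test the limit comparison there exactly as you wrote. (Alternatively, one can invoke the étale slice description $\Shhyp(U)\simeq\Shhyp(X)_{/j_!1}$ to see that $j^{\ast,\hyp}$ is a right adjoint.) Your treatment of the open-immersion case is correct as written.
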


\begin{observation}
	Let $X$ be a topological space and let $U_\bullet$ be a hypercover of $X$.
	For every $[n] \in \DDelta$, denote by $j_n \colon U_n \to X$ the canonical morphism.
	Hyperdescent implies that the natural functor
	\begin{equation*} 
		j_\bullet^* \colon \Shhyp(X; \E) \longrightarrow \lim_{[n] \in \DDelta} \Shhyp(U_n; \E)
	\end{equation*}
	is an equivalence.
	In particular, $ j_\bullet^* $ admits a left adjoint, that we denote \smash{$j_{\bullet,\sharp}^{\hyp}$}.
	Using \cite[\S 8.2]{MR3545934}, the left adjoint \smash{$j_{\bullet,\sharp}^{\hyp}$} can be described as the functor sending a descent datum $\{F_n\}_{n \geq 0}$ to
	\begin{equation*} 
		j_{\bullet,\sharp}^{\hyp} \paren{\{F_n\}_{n \geq 0}} \simeq \colim_{[n] \in \Deltaop} j_{n,\sharp}^{\hyp}(F_n) \period 
	\end{equation*}
	In particular, for every hypersheaf $F \in \Shhyp(X;\E)$, there is a natural equivalence
	\begin{equation}\label{eq:exceptional_descent}
		F \simeq \colim_{[n] \in \Deltaop} j_{n,\sharp}^{\hyp}(j_n^*(F)) \period
	\end{equation}
\end{observation}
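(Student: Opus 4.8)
The plan is to produce an explicit left adjoint to $j_\bullet^*$ and then to observe that, since $j_\bullet^*$ is an equivalence, this left adjoint must be its inverse.

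First I would check that every structure functor of the cosimplicial \category $[n] \mapsto \Shhyp(U_n;\E)$ admits a left adjoint. Since $U_\bullet$ is a hypercover of $X$, the canonical maps $j_n \colon U_n \to X$ and all of the face and degeneracy maps $U_\alpha \colon U_m \to U_n$ (for $\alpha \colon [n] \to [m]$ in $\DDelta$) are local homeomorphisms; by the preceding observation the pullback $U_\alpha^* \colon \Shhyp(U_n;\E) \to \Shhyp(U_m;\E)$ then preserves limits and hence admits a left adjoint $U_{\alpha,\sharp}^{\hyp}$. These left adjoints are compatible with composition: from $j_m = j_n \circ U_\alpha$ one gets $j_m^* \simeq U_\alpha^* \circ j_n^*$, and passing to left adjoints yields $j_{m,\sharp}^{\hyp} \simeq j_{n,\sharp}^{\hyp} \circ U_{\alpha,\sharp}^{\hyp}$. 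In particular the cosimplicial diagram $[n] \mapsto \Shhyp(U_n;\E)$ takes values in $\RPr$, and by hyperdescent $j_\bullet^*$ identifies $\Shhyp(X;\E)$ with its limit (formed in $\Catinfty$, equivalently in $\RPr$).

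The main input is then \cite[\S 8.2]{MR3545934} (which packages the duality $\LPr \equivalent (\RPr)^{\op}$ of \cite{HTT} together with the bar resolution of colimits along simplicial diagrams): because every transition functor of the cosimplicial diagram admits a left adjoint, the assignment
\begin{equation*}
	\{F_n\}_{n \geq 0} \longmapsto \colim_{[n] \in \Deltaop} j_{n,\sharp}^{\hyp}(F_n)
\end{equation*}
upgrades to a well-defined functor $L \colon \lim_{[n] \in \DDelta} \Shhyp(U_n;\E) \to \Shhyp(X;\E)$; here the point is that a descent datum $\{F_n\}$ carries coherence data giving maps $U_{\alpha,\sharp}^{\hyp}(F_m) \to F_n$, which under $j_{n,\sharp}^{\hyp}$ turn $[n] \mapsto j_{n,\sharp}^{\hyp}(F_n)$ into a simplicial object of $\Shhyp(X;\E)$. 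I would then verify that $L \leftadjoint j_\bullet^*$ by a direct mapping-space computation: for $F \in \Shhyp(X;\E)$ and a descent datum $\{F_n\}$,
\begin{align*}
	\Map_{\Shhyp(X;\E)}\paren{L\{F_n\},\, F}
	&\simeq \lim_{[n] \in \DDelta} \Map_{\Shhyp(X;\E)}\paren{j_{n,\sharp}^{\hyp}(F_n),\, F} \\
	&\simeq \lim_{[n] \in \DDelta} \Map_{\Shhyp(U_n;\E)}\paren{F_n,\, j_n^*(F)} \\
	&\simeq \Map_{\lim_{[n] \in \DDelta} \Shhyp(U_n;\E)}\paren{\{F_n\},\, j_\bullet^*(F)} \comma
\end{align*}
where the first equivalence uses that $L\{F_n\}$ is a colimit over $\Deltaop$, the second uses $j_{n,\sharp}^{\hyp} \leftadjoint j_n^*$, and the third is the standard description of mapping spaces in a limit of \categories.

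Since $j_\bullet^*$ is an equivalence, its left adjoint $L$ is automatically its inverse; writing $j_{\bullet,\sharp}^{\hyp} \coloneqq L$ this gives the displayed formula for $j_{\bullet,\sharp}^{\hyp}$, and evaluating the identification $F \simeq j_{\bullet,\sharp}^{\hyp}(j_\bullet^*(F))$ at the tautological descent datum $\{j_n^*(F)\}_{n \geq 0}$ produces the equivalence \eqref{eq:exceptional_descent}. The step I expect to be the main obstacle is the one absorbed into the citation: promoting $\{F_n\} \mapsto \colim_{[n] \in \Deltaop} j_{n,\sharp}^{\hyp}(F_n)$ to an honest functor out of the limit \category, i.e.\ assembling the objectwise left adjoints into coherent data that sends a descent datum to a simplicial object of $\Shhyp(X;\E)$. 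This is not formal; granting it, everything else is a routine adjunction calculation together with the remark that a left adjoint of an equivalence is its inverse.
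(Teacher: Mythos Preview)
Your proposal is correct and follows the same route as the paper. The paper treats this as an observation and gives no argument beyond the citation to \cite[\S 8.2]{MR3545934}; your write-up simply unpacks that citation (left adjoints exist levelwise because the hypercover maps are local homeomorphisms, then the cited result assembles them into the colimit formula), and your additional mapping-space check of the adjunction is a harmless elaboration.
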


\begin{notation}\label{nul:generic_assumptions}
	For the remainder of this section, we fix topological spaces $S$ and $X$, as well as a presentable \category $\E$.
	Furthermore, we assume that $X$ is locally weakly contractible.
\end{notation}

\begin{lemma} \label{lem:exceptional_descent}
	Let $U_\bullet$ be a hypercover of $X$.
	For every $[n] \in \DDelta$, let $j_n \colon U_n \to X$ be the canonical morphism and set $p_n \coloneqq \pr_S \circ (\id{S} \cross j_n)$.
	Then for every \smash{$F \in \Shhyp(S \cross X;\E)$}, one has a natural equivalence
	\begin{equation*} 
		\prSlowersharphyp(F) \simeq \colim_{[n] \in \DDelta} p_{n,\sharp}^{\hyp} (\id{S} \cross j_n)^*(F) \period 
	\end{equation*}
\end{lemma}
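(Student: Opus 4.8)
The plan is to apply the absolute descent equivalence \eqref{eq:exceptional_descent} to the space $S \cross X$ and then push the resulting formula forward along the exceptional pushforward $\prSlowersharphyp$. First I would check that $S \cross U_\bullet$ is a hypercover of $S \cross X$: it is the pullback of the hypercover $U_\bullet$ along the projection $\pr_X \colon S \cross X \to X$, and pullback along a continuous map carries hypercovers to hypercovers, since the presheaf-level pullback is left exact and sends covers to covers. For each $[n] \in \DDelta$ the canonical morphism $S \cross U_n \to S \cross X$ is $\id{S} \cross j_n$, which is a local homeomorphism because $j_n$ is; in particular its hypersheaf pullback agrees with the presheaf pullback and it admits a left adjoint $(\id{S} \cross j_n)_\sharp^{\hyp}$. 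Applying \eqref{eq:exceptional_descent} therefore yields a natural equivalence
\[
	F \simeq \colim_{[n] \in \Deltaop} (\id{S} \cross j_n)_\sharp^{\hyp}\bigl((\id{S} \cross j_n)^*(F)\bigr) \period
\]

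Next, by \Cref{prop:computing_hyperconstant_hypersheaf} the functor $\prSlowersharphyp$ is a left adjoint, with right adjoint $\prSupperstarhyp$, so it commutes with colimits. Applying it to the equivalence above gives
\[
	\prSlowersharphyp(F) \simeq \colim_{[n] \in \Deltaop} \prSlowersharphyp\bigl((\id{S} \cross j_n)_\sharp^{\hyp}(\id{S} \cross j_n)^*(F)\bigr) \period
\]
It then remains to identify the composite $\prSlowersharphyp \circ (\id{S} \cross j_n)_\sharp^{\hyp}$ with $p_{n,\sharp}^{\hyp}$. Since $p_n = \pr_S \circ (\id{S} \cross j_n)$, functoriality of the hypersheaf pullback gives a canonical equivalence $p_n^{\ast,\hyp} \simeq (\id{S} \cross j_n)^{\ast,\hyp} \circ \prSupperstarhyp$; passing to left adjoints (each of which exists, as noted above) identifies $p_{n,\sharp}^{\hyp}$ with $\prSlowersharphyp \circ (\id{S} \cross j_n)_\sharp^{\hyp}$. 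Substituting into the previous display gives the asserted equivalence.

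I expect no serious obstacle here: the argument is entirely formal once \eqref{eq:exceptional_descent} is available. The points demanding (routine) care are the bookkeeping ones — confirming that $S \cross U_\bullet$ is genuinely a hypercover of $S \cross X$, and making sure every exceptional pushforward in play is defined, namely $(\id{S} \cross j_n)_\sharp^{\hyp}$ because $\id{S} \cross j_n$ is a local homeomorphism, and $\prSlowersharphyp$ (equivalently each $p_{n,\sharp}^{\hyp}$, since $j_n$ is a local homeomorphism and $X$ is locally weakly contractible, so $U_n$ is too) by \Cref{prop:computing_hyperconstant_hypersheaf}. Naturality in $F$ is automatic, as every step applies a fixed functor or a natural equivalence of functors.
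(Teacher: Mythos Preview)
Your proposal is correct and follows essentially the same approach as the paper: apply the descent equivalence \eqref{eq:exceptional_descent} to the hypercover $S \cross U_\bullet$ of $S \cross X$, push forward along the colimit-preserving functor $\prSlowersharphyp$, and then identify $\prSlowersharphyp \circ (\id{S} \cross j_n)_\sharp^{\hyp} \simeq p_{n,\sharp}^{\hyp}$ by passing to left adjoints. The paper's proof is just a terser version of exactly this argument; your additional justifications (that $S \cross U_\bullet$ is a hypercover, that the relevant left adjoints exist) are correct and appropriate.
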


\begin{proof}
	Since \smash{$ \prSlowersharphyp $} preserves colimits, the claim follows from applying \smash{$ \prSlowersharphyp $} to the equivalence \eqref{eq:exceptional_descent}, combined with the natural equivalence 
	\begin{equation*}
		p_{n,\sharp}^{\hyp} \simeq \prSlowersharphyp \circ (\id{S} \cross j_{n})_{\sharp}^{\hyp} \period \qedhere
	\end{equation*}
\end{proof}

\begin{notation}
	We denote by
	\begin{equation*} 
		\chi \colon \prSlowersharphyp \circ \prSupperstarhyp \longrightarrow \prSlowerstar \circ \prSupperstarhyp 
	\end{equation*}
	the composition of the counit $\prSlowersharphyp \circ \prSupperstarhyp \to \id{}$ with the unit $\id{} \to \prSlowerstar \circ \prSupperstarhyp$.
\end{notation}

\begin{lemma}\label{lem:computation_exceptional_pushforward}
	In addition to the hypotheses made in \Cref{nul:generic_assumptions}, assume that $X$ is weakly contractible.
	Then for each \smash{$F \in \LChyp_S(S \cross X; \E)$}, the natural transformation $\chi$ induces an equivalence
	\begin{equation*} 
		\equivto{\prSlowersharphyp(F)}{\prSlowerstar(F)} \period 
	\end{equation*}
\end{lemma}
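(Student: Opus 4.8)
The plan is to reduce the statement to the case where $F$ lies in the essential image of $\prSupperstarhyp$, after which it becomes a formal consequence of the full faithfulness proved in \Cref{thm:relative_full_faithfulness}. Since $X$ is \wclwc, \enumref{thm:relative_full_faithfulness}{2} tells us that the essential image of $\prSupperstarhyp \colon \Shhyp(S;\E) \to \Shhyp(S \cross X;\E)$ is exactly $\LChyp_S(S \cross X;\E)$. So I would fix $G \in \Shhyp(S;\E)$ together with an equivalence $F \simeq \prSupperstarhyp(G)$, and observe that under this identification the map in the statement is the component $\chi_G \colon \prSlowersharphyp\prSupperstarhyp(G) \to \prSlowerstar\prSupperstarhyp(G)$ of the natural transformation $\chi$. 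One should note here that, because $\prSupperstarhyp$ is fully faithful, this component is independent of the chosen $G$ and natural over the essential image, so that $\chi$ genuinely descends to a transformation of functors on $\LChyp_S(S \cross X;\E)$.

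Next I would unwind the definition of $\chi$: the component $\chi_G$ is the composite
\[
	\prSlowersharphyp\prSupperstarhyp(G) \xrightarrow{\counit} G \xrightarrow{\unit} \prSlowerstar\prSupperstarhyp(G),
\]
where $\counit$ is the counit of the adjunction $\prSlowersharphyp \leftadjoint \prSupperstarhyp$ and $\unit$ is the unit of the adjunction $\prSupperstarhyp \leftadjoint \prSlowerstar$. By \enumref{thm:relative_full_faithfulness}{1} the functor $\prSupperstarhyp$ is fully faithful. A right adjoint is fully faithful precisely when the counit of its adjunction is an equivalence, and a left adjoint is fully faithful precisely when the unit of its adjunction is an equivalence. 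Applying the first fact to the adjunction $\prSlowersharphyp \leftadjoint \prSupperstarhyp$, whose right adjoint $\prSupperstarhyp$ is fully faithful, shows that $\counit$ is an equivalence; applying the second fact to the adjunction $\prSupperstarhyp \leftadjoint \prSlowerstar$, whose left adjoint $\prSupperstarhyp$ is fully faithful, shows that $\unit$ is an equivalence. Hence $\chi_G$ is a composite of equivalences, and we are done.

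There is really no substantial obstacle here: all the difficulty has already been absorbed into \Cref{thm:relative_full_faithfulness}. The only thing to be careful about is the bookkeeping in the first paragraph — that $\chi$, which is a priori a natural transformation between two endofunctors of $\Shhyp(S;\E)$, transports along the fully faithful functor $\prSupperstarhyp$ to a well-defined natural transformation on its essential image $\LChyp_S(S \cross X;\E)$ — but this too is immediate from full faithfulness.
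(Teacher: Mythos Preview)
Your proof is correct and follows essentially the same route as the paper's: write $F \simeq \prSupperstarhyp(G)$ using \Cref{thm:relative_full_faithfulness}, then observe that $\chi_G$ is the composite of the counit of $\prSlowersharphyp \leftadjoint \prSupperstarhyp$ and the unit of $\prSupperstarhyp \leftadjoint \prSlowerstar$, each of which is an equivalence by full faithfulness of $\prSupperstarhyp$. The paper's version is just slightly terser and omits your parenthetical about $\chi$ descending to the essential image.
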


\begin{proof}
	Since $X$ is weakly contractible, \cref{thm:relative_full_faithfulness} guarantees the existence of a hypersheaf $G \in \Shhyp(S; \E)$ and an equivalence \smash{$F \simeq \prSupperstarhyp(G)$}.
	Since \smash{$ \prSupperstarhyp $} is fully faithful (again by \cref{thm:relative_full_faithfulness}), the morphism $ \chi $ applied to $ G $ is the composite equivalence
	\begin{equation*}
		\begin{tikzcd}
			\prSlowersharphyp(F) \equivalent \prSlowersharphyp \prSupperstarhyp(G) \arrow[r, "\sim"{yshift=-0.25em}] & G \arrow[r, "\sim"{yshift=-0.25em}] & \prSlowerstar \prSupperstarhyp(G) \equivalent \prSlowerstar(F) \period 
		\end{tikzcd}
		\qedhere
	\end{equation*}
\end{proof}

\begin{corollary}\label{cor:exceptional_preservation_locally_hyperconstant}
	In addition to the hypotheses made in \Cref{nul:generic_assumptions}, assume that one of the following hypotheses is satisfied:
	\begin{enumerate}[label=\stlabel{cor:exceptional_preservation_locally_hyperconstant}, ref=\arabic*]
		\item\label{cor:exceptional_preservation_locally_hyperconstant.1} The topological space $ X $ is weakly contractible.
		
		\item\label{cor:exceptional_preservation_locally_hyperconstant.2} The topological space $ S $ is locally weakly contractible.
	\end{enumerate}
	Then the functor \smash{$ \prSlowersharphyp \colon \Shhyp(S \cross X; \E) \to \Shhyp(S;\E) $} preserves locally hyperconstant hypersheaves.
\end{corollary}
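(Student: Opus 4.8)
\emph{Strategy.} I would establish hypothesis \enumref{cor:exceptional_preservation_locally_hyperconstant}{1} directly from the lemmas of this subsection, and then bootstrap hypothesis \enumref{cor:exceptional_preservation_locally_hyperconstant}{2} to it via the descent formula of \Cref{lem:exceptional_descent}. For case \enumref{cor:exceptional_preservation_locally_hyperconstant}{1}: let $F \in \LChyp(S \cross X; \E)$. By \Cref{lem:hyperconstant_implies_S_hyperconstant}, $F$ is locally $S$-hyperconstant, so \Cref{lem:computation_exceptional_pushforward} applies and yields an equivalence $\prSlowersharphyp(F) \simeq \prSlowerstar(F)$. Since $\prSlowerstar$ preserves locally hyperconstant hypersheaves (\Cref{lem:exceptional_pushforward_locally_hyperconstant}), $\prSlowersharphyp(F)$ is locally hyperconstant, and case \enumref{cor:exceptional_preservation_locally_hyperconstant}{1} is done.

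\emph{Reducing case \enumref{cor:exceptional_preservation_locally_hyperconstant}{2} to case \enumref{cor:exceptional_preservation_locally_hyperconstant}{1}.} Now assume $S$ is locally weakly contractible. First I would record that, by \Cref{cor:locally_constant_closed_under_limits_colimits} (applied with $\pt$ and $S$ in place of $S$ and $X$ there), the subcategory $\LChyp(S;\E) \subset \Shhyp(S;\E)$ is closed under colimits. Fix $F \in \LChyp(S \cross X;\E)$. Since $X$ is locally weakly contractible, its weakly contractible open subsets form a basis, so, building a splitting hypercover level by level from this basis, I may choose a hypercover $U_\bullet$ of $X$ each of whose terms is a disjoint union $U_n = \coprod_{i \in I_n} W_{n,i}$ of weakly contractible open subsets of $X$. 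Writing $j_n \colon U_n \to X$ for the canonical map and $p_n \coloneqq \pr_S \circ (\id{S} \cross j_n)$, \Cref{lem:exceptional_descent} yields
\begin{equation*}
	\prSlowersharphyp(F) \simeq \colim_{[n] \in \DDelta} p_{n,\sharp}^{\hyp}(\id{S} \cross j_n)^*(F) \period
\end{equation*}
By closure of $\LChyp(S;\E)$ under colimits, it suffices to prove each term $p_{n,\sharp}^{\hyp}(\id{S} \cross j_n)^*(F)$ is locally hyperconstant.

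\emph{A single term, and the main obstacle.} Restriction to an open subset preserves local hyperconstancy, so $(\id{S} \cross j_n)^*(F) \in \LChyp(S \cross U_n;\E)$; under the identification $\Shhyp(S \cross U_n;\E) \simeq \prod_{i \in I_n} \Shhyp(S \cross W_{n,i};\E)$ coming from the disjoint-union decomposition of $U_n$, this corresponds to the family $(\restrict{F}{S \cross W_{n,i}})_{i \in I_n}$ of locally hyperconstant hypersheaves. Because $p_{n,\sharp}^{\hyp}$ is a left adjoint and $S \cross U_n = \coprod_i (S \cross W_{n,i})$, a short computation identifies it with the coproduct of the exceptional pushforwards along the individual projections $S \cross W_{n,i} \to S$, whence
\begin{equation*}
	p_{n,\sharp}^{\hyp}(\id{S} \cross j_n)^*(F) \simeq \coprod_{i \in I_n} \prSlowersharphyp\paren{\restrict{F}{S \cross W_{n,i}}} \period
\end{equation*}
Each $W_{n,i}$ is weakly contractible and locally weakly contractible, so case \enumref{cor:exceptional_preservation_locally_hyperconstant}{1} puts each summand in $\LChyp(S;\E)$, and closure under colimits then puts the coproduct — hence $p_{n,\sharp}^{\hyp}(\id{S} \cross j_n)^*(F)$ — in $\LChyp(S;\E)$, completing the argument. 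I expect the crux to be exactly this step: the terms of a hypercover of a locally weakly contractible space are only disjoint unions of weakly contractible opens rather than weakly contractible spaces, so one must split the exceptional pushforward over the connected pieces before case \enumref{cor:exceptional_preservation_locally_hyperconstant}{1} applies. The remaining ingredients — the descent formula \Cref{lem:exceptional_descent}, stability of local hyperconstancy under restriction, and closure of $\LChyp(S;\E)$ under colimits — are all in hand, so beyond this the work is routine.
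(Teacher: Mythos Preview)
Your proof is correct and follows essentially the same approach as the paper: case \enumref{cor:exceptional_preservation_locally_hyperconstant}{1} is handled identically, and for case \enumref{cor:exceptional_preservation_locally_hyperconstant}{2} the paper also invokes closure of $\LChyp(S;\E)$ under colimits and the descent formula of \Cref{lem:exceptional_descent} to reduce to case \enumref{cor:exceptional_preservation_locally_hyperconstant}{1}. The only difference is that you spell out explicitly the disjoint-union decomposition of the hypercover terms and the corresponding splitting of the exceptional pushforward, a detail the paper leaves implicit in the phrase ``reduce to the case where $X$ is weakly contractible.''
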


\begin{proof}
	Let $F \in \LChyp(S \cross X;\E)$.
	To prove the claim under assumption \enumref{cor:exceptional_preservation_locally_hyperconstant}{1}, using \cref{lem:hyperconstant_implies_S_hyperconstant}, we see that $F$ belongs to $\LChyp_S(S \cross X; \E)$.
	\Cref{lem:computation_exceptional_pushforward} implies that
	\begin{equation*}
		\prSlowersharphyp(F) \simeq \prSlowerstar(F) \period
	\end{equation*}
	The conclusion follows from \cref{lem:exceptional_pushforward_locally_hyperconstant}.
	
	To prove the claim under assumption \enumref{cor:exceptional_preservation_locally_hyperconstant}{2}, using \cref{cor:locally_constant_closed_under_limits_colimits} we see that that $\LChyp(S;\E)$ is closed under small colimits in \smash{$\Shhyp(S; \E)$}.
	Using \cref{lem:exceptional_descent}, we can reduce to the case where $X$ is weakly contractible, in which case the conclusion follows from \enumref{cor:exceptional_preservation_locally_hyperconstant}{1}.
\end{proof}


\subsection{Comparison of sheaf and singular cohomology}\label{subsec:cohomology_comparison}

Now we explain why our work implies that for locally weakly contractible spaces, singular and sheaf cohomology agree.

\begin{notation}
	Let $ R $ be a ring and $ X $ a topological space.
	Write $ \Dup(R) $ for the derived \category of $ R $, and write $ \Cup_{*}(X;R) \in \Dup(R) $ for the complex of \textit{singular chains} on $ X $.
	Given an object $ M \in \Dup(R) $, the cotensor $ M^{\Piinfty(X)} $ is given by the internal $ \Hom $ complex
	\begin{equation*}
		\Cup^{-*}(X;M) \colonequals \RHom_R(\Cup_{*}(X;R),M) \period
	\end{equation*}
	If $ M $ is an ordinary $ R $-module, then $ \Cup^{-*}(X;M) $ is what is usually referred to as the complex of \textit{singular cochains} on $ X $ with values in $ M $.
\end{notation}

\begin{nul}
	The functor $ \Pi_{\Dup(R)}^{\infty} \colon \fromto{\Dup(R)}{\Shhyp(X;\Dup(R))} $ is given by the assignment $ \goesto{M}{\Cup^{-*}(-;M)} $.
\end{nul}

The following is an immediate consequence of \Cref{prop:computing_hyperconstant_hypersheaf}:

\begin{corollary}\label{cor:cohomology_comparison}
	Let $ R $ be a ring and $ X $ a locally weakly contractible topological space.
	Then:
	\begin{enumerate}[label=\stlabel{cor:cohomology_comparison}]
		\item The functor $ \fromto{\Dup(R)}{\Shhyp(X;\Dup(R))} $ given by the assignment $ \goesto{M}{\Cup^{-*}(-;M)} $ is the constant hypersheaf functor.

		\item For each $ M \in \Dup(R) $, there is a natural equivalence $ \RGamma(X;M) \equivalence \Cup^{-*}(X;M) $ from the derived global sections of the constant hypersheaf at $ M $ to the complex of singular cochains on $ X $ with values in $ M $.

		\item For each ordinary $ R $-module $ M $, there is a natural isomorphism \smash{$ \Hsheaf^*(X;M) \equivalence \Hsing^*(X;M) $} from sheaf cohomology to singular cohomology.
	\end{enumerate}
\end{corollary}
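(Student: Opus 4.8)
The strategy is to read off all three statements from \Cref{prop:computing_hyperconstant_hypersheaf} applied with $ S = \pt $ and $ \E = \Dup(R) $. With these choices the functor $ \PiEinfty(-/\pt) \colon \Dup(R) \to \Shhyp(X;\Dup(R)) $ of \Cref{constr:Piinf} sends $ M $ to the hypersheaf $ U \mapsto M^{\Piinfty(U)} $; by the computation of the cotensor $ M^{\Piinfty(U)} $ as the internal $ \Hom $-complex $ \Cup^{-*}(U;M) = \RHom_R(\Cup_{*}(U;R),M) $ recalled immediately before the corollary, this is precisely the functor $ M \mapsto \Cup^{-*}(-;M) $, i.e.\ the functor $ \Pi^{\infty}_{\Dup(R)} $. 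Since $ X $ is locally weakly contractible, \Cref{prop:computing_hyperconstant_hypersheaf} asserts that the comparison map $ \alpha \colon \GammaXupperstarhyp \to \PiEinfty(-/\pt) $ is an equivalence, which is the first statement.

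For the second statement, I would evaluate the constant hypersheaf on the whole space: the derived global sections of $ \GammaXupperstarhyp(M) $ are $ \GammaXupperstarhyp(M)(X) $, and the first statement identifies this with $ \Cup^{-*}(X;M) $; naturality in $ M $ is inherited from that of $ \alpha $. For the third statement I would pass to cohomology in the second. On the right, the Notation block preceding the corollary already records that for an ordinary $ R $-module $ M $ the complex $ \Cup^{-*}(X;M) $ is the singular cochain complex of $ X $ with coefficients in $ M $ (the singular chain complex is degreewise free, so no resolution is needed), so $ H^{n} $ of it is $ \Hsing^{n}(X;M) $.

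The one point that needs a short argument is that on the left, $ H^{n}\bigl(\RGamma(X;\GammaXupperstarhyp(M))\bigr) = \Hsheaf^{n}(X;M) $. Here I would use that $ \Shhyp(X;\Dup(R)) $ is the $ \infty $-categorical enhancement of the classical unbounded derived category of sheaves of $ R $-modules (recalled in \cref{subsec:sheaveshypersheaves}), under which sheaf cohomology is computed as $ H^{n} $ of derived global sections of the classical constant sheaf $ \underline{M} $. So it suffices to identify the constant hypersheaf $ \GammaXupperstarhyp(M) $ with $ \underline{M} $ (placed in cohomological degree $ 0 $). By the first statement and \Cref{eg:contractible_opens_are_a_basis}, both hypersheaves may be restricted to the basis $ \Openctr(X) $ of weakly contractible opens; on such a $ U $ one has $ \Piinfty(U) \simeq \pt $, hence $ \GammaXupperstarhyp(M)(U) \simeq \Cup^{-*}(U;M) \simeq M $ in degree $ 0 $ with identity restriction maps, while the classical constant sheaf has the same value $ M $ on the (connected) open $ U $ with identity restrictions. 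By \Cref{prop:hypersheaves_as_basis_RKE} a hypersheaf is recovered from its restriction to a basis, so the two agree.

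The main obstacle is exactly this last identification: because the paper carefully separates sheaves from hypersheaves, one must be sure that ``classical sheaf cohomology'' coincides with derived global sections computed in the hypercomplete category and of the correct object — and the basis argument above, together with the standard description of $ \Shhyp(X;\Dup(R)) $ as the classical derived category, supplies precisely this. All other steps are formal unwindings of \Cref{prop:computing_hyperconstant_hypersheaf} and the cotensor formula.
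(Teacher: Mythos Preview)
Your proposal is correct and matches the paper's approach exactly: the paper states this corollary as ``an immediate consequence of \Cref{prop:computing_hyperconstant_hypersheaf}'' and gives no further proof. Your additional basis argument for part (3), identifying the constant hypersheaf $\GammaXupperstarhyp(M)$ with the classical constant sheaf $\underline{M}$, is a careful spelling-out of a step the paper leaves implicit; it is correct and arguably makes the corollary more honestly ``immediate.''
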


\noindent Hence, sheaf cohomology is an invariant of the \textit{weak} homotopy type of locally weakly contractible topological spaces. 

\begin{remark}
	After work of Sella \cite{arXiv:1602.06674}, Petersen \cite[Theorem 1.2]{arXiv:2102.06927} recently proved a comparison for cohomology valued in \textit{ordinary} $ R $-modules.
	Petersen's comparison is under slightly weaker assumptions on the topological space $ X $, in relation to the chosen $ R $-module.
	The argument is as follows: given an ordinary $ R $-module $ M $, the morphism \smash{$ \alpha_M \colon \fromto{\GammaXupperstarhyp(M)}{\Cup^{-*}(-;M)} $} is an equivalence if and only if it induces an equivalence on all stalks.
	Given $ x \in X $, the morphism $ \xupperstar \alpha_M $ is an equivalence if and only if the fiber of $ \xupperstar \alpha_M $ is zero. 
	Since singular and sheaf cohomology agree for the point $ \{x\} $, the latter requirement is equivalent to the requirement that for each $ i \geq 0 $, the colimit of relative singular cohomology modules
	\begin{equation*}
		\colim_{U \ni x} \Hsing^i(U,x;M)
	\end{equation*}
	vanishes.
	This vanishing condition is exactly the assumption under which Petersen proves the comparison of sheaf and singular cohomology.
\end{remark}


\section{Homotopy-invariance for locally constant sheaves}\label{sec:homotopy_invariance_for_LC}

The purpose of this section is to prove a non-hypercomplete variant of \Cref{thm:homotopy_invariance_hyperconstant}.
Specifically, for a presentable \category $ \E $ and topological space $ S $, we prove that the pullback functor
\begin{equation*}
	\prSupperstar \colon \incto{\LC(S;\E)}{\LC(S \cross[0,1];\E)}
\end{equation*}
is an equivalence of \categories (\Cref{cor:hiofLC}).
The proof follows the same format of \Cref{thm:homotopy_invariance_hyperconstant} expanding on Clausen and Ørsnes Jansen's proof of \cite[Proposition 3.2]{arXiv:2108.01924}.

\Cref{subsec:exceptional_pushforward_nonhypercomplete} recalls what we need about the exceptional pushforward in the non-hypercomplete setting.
In \cref{subsec:homotopy_invariance_for_LC}, we show that the exceptional pushforward preserves locally constant sheaves; given previous results this immediatly implies that the functor $ \LC(-;\E) $ is homotopy-invariant.

\begin{remark}
	Note that the above homotopy-invariance statement is with respect to the unit interval rather than a general \wclwc space.
	Indeed, we do not expect that $ \LC(-;\E) $ is strongly homotopy-invariant (in the sense of \Cref{def:homotopy_invariant}).
\end{remark}


\subsection{The exceptional pushforward}\label{subsec:exceptional_pushforward_nonhypercomplete}

We now record the existence of the exceptional pushforward in the non-hypercomplete setting as well as its compatibility with basechange.
In this section, we are most interested in the case where $ X $ is a subinterval of $ [0,1] $.

\begin{recollection}\label{rec:locallycompacttensor}
	Let $ S $ and $ X $ be topological spaces.
	There is a natural geometric morphism of \topoi
	\begin{equation*}
		\fromto{\Sh(S \cross X)}{\Sh(S) \tensor \Sh(X)}
	\end{equation*}
	\HA{Example}{4.8.1.19}.
	If $ X $ is locally compact, then this geometric morphism $ \fromto{\Sh(S \cross X)}{\Sh(S) \tensor \Sh(X)} $ is an equivalence \HTT{Proposition}{7.3.1.11}.
\end{recollection}

\begin{lemma}\label{lem:nonhypercomplete_prlowersharp}
	Let $ S $ be a topological space and $ \E $ a presentable \category.
	Let $ X $ be a locally compact topological space and assume that the constant sheaf functor $ \Gammaupperstar_X \colon \fromto{\Spc}{\Sh(X)} $ admits a left adjoint $ \Gamma_{X,\sharp} $.
	Then: 
	\begin{enumerate}[label=\stlabel{lem:nonhypercomplete_prlowersharp}, ref=\arabic*]
		\item The pullback functor $ \prSupperstar \colon \fromto{\Sh(S;\E)}{\Sh(S \cross X;\E)} $ admits a left adjoint $ \prSlowersharp $.

		\item If $ \Gammaupperstar_X \colon \fromto{\Spc}{\Sh(X)} $ is fully faithful, then $ \prSupperstar \colon \fromto{\Sh(S;\E)}{\Sh(S \cross X;\E)} $ is also fully faithful.
	\end{enumerate}
\end{lemma}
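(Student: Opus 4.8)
The plan is to reduce both assertions to the corresponding facts about the constant-sheaf functor $\Gammaupperstar_X \colon \Spc \to \Sh(X)$, using the tensor-product description of sheaves on a product. Since $X$ is locally compact, \cref{rec:locallycompacttensor} provides an equivalence $\Sh(S \cross X) \simeq \Sh(S) \tensor \Sh(X)$. The first step is to check that under this equivalence the space-valued pullback $\prSupperstar \colon \Sh(S) \to \Sh(S \cross X)$ is identified with $\id{\Sh(S)} \tensor \Gammaupperstar_X$ (where one uses $\Sh(S) \tensor \Spc \simeq \Sh(S)$). This is because $\pr_S$ factors as $\id{S} \cross \Gamma_X \colon S \cross X \to S \cross \pt = S$, so the associated geometric morphism is $\id{\Sh(S)}$ tensored with the terminal geometric morphism $\Sh(X) \to \Spc$; concretely, the external-product functor $(F, G) \mapsto \prupperstar_S(F) \cross \prupperstar_X(G)$ realizing the equivalence $\Sh(S) \tensor \Sh(X) \simeq \Sh(S \cross X)$ sends $(F, \Gammaupperstar_X(\ast))$ to $\prupperstar_S(F) \cross \prupperstar_X(\Gammaupperstar_X(\ast)) \simeq \prupperstar_S(F)$, since $\Gammaupperstar_X(\ast)$ is the terminal sheaf on $X$ and $\prupperstar_X$ preserves it.

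For part (1): both $\Gamma_{X,\sharp}$ and $\Gammaupperstar_X$ preserve small colimits --- the former by hypothesis, the latter because it is left adjoint to global sections $\GammaXlowerstar$ --- so $\Gamma_{X,\sharp} \leftadjoint \Gammaupperstar_X$ is an adjunction internal to $\LPr$. Tensoring with $\Sh(S) \tensor -$ yields $\id{\Sh(S)} \tensor \Gamma_{X,\sharp} \leftadjoint \id{\Sh(S)} \tensor \Gammaupperstar_X = \prSupperstar$, and applying $- \tensor \E$ (functoriality of the tensor product of presentable \categories, \cite[\HAsubsec{4.8.1}]{HA}) shows that the $\E$-valued functor $\prSupperstar \simeq (\id{\Sh(S)} \tensor \Gammaupperstar_X) \tensor \id{\E}$ admits the left adjoint
\[
	\prSlowersharp \coloneqq (\id{\Sh(S)} \tensor \Gamma_{X,\sharp}) \tensor \id{\E} \period
\]
For part (2): full faithfulness of $\Gammaupperstar_X$ is the statement that the counit $\Gamma_{X,\sharp}\Gammaupperstar_X \to \id{\Spc}$ of the adjunction $\Gamma_{X,\sharp} \leftadjoint \Gammaupperstar_X$ is an equivalence. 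Applying $\Sh(S) \tensor -$ and then $- \tensor \E$ carries this counit to the counit $\prSlowersharp\prSupperstar \to \id{}$ of the adjunction just built; since functors preserve equivalences, this counit is an equivalence, and hence $\prSupperstar$ is fully faithful.

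The argument is almost entirely formal once the two ingredients --- the locally-compact tensor decomposition of \cref{rec:locallycompacttensor} and the compatibility of $- \tensor \cC$ with adjunctions between colimit-preserving functors --- are in place. The step that takes the most care, and which I would write out in detail, is the identification of $\prSupperstar$ with $\id{\Sh(S)} \tensor \Gammaupperstar_X$ (rather than with some other functor $\Sh(S) \to \Sh(S \cross X)$); everything afterwards is bookkeeping with adjunction units and counits. One could instead try to produce an explicit right adjoint to $\prSupperstar$ in the spirit of \cref{constr:Piinf} and \cref{prop:computing_hyperconstant_hypersheaf}, but the tensor-product route is cleaner and makes the role of local compactness transparent.
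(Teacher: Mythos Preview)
Your proof is correct and follows essentially the same approach as the paper: both use the tensor decomposition $\Sh(S \cross X) \simeq \Sh(S) \tensor \Sh(X)$ from \cref{rec:locallycompacttensor} and then tensor the adjunction $\Gamma_{X,\sharp} \leftadjoint \GammaXupperstar$ (which lives in $\LPr$) with $\Sh(S;\E)$. The paper compresses this into a single sentence, whereas you spell out the identification $\prSupperstar \simeq \id{\Sh(S)} \tensor \GammaXupperstar$ and the counit argument for full faithfulness explicitly; these are precisely the details underlying the paper's proof.
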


\begin{proof}
	Appealing to \Cref{rec:locallycompacttensor}, this follows by tensoring the chain of adjoints
	\begin{equation*}
		\begin{tikzcd}[sep=4em]
			\Sh(X) \arrow[r, shift left=1.5ex, "\Gamma_{X,\sharp}"] \arrow[r, shift right=1.5ex, "\GammaXlowerstar"'] & \Spc \arrow[l, "\GammaXupperstar" description] \period
		\end{tikzcd}
	\end{equation*}
	with the presentable \category $ \Sh(S;\E) $.
\end{proof}

\begin{nul}
	In the situation of \Cref{prop:exceptional_pushforward_sheaves}, we refer to $ \prSlowersharp $ as the \defn{exceptional pushforward}.
\end{nul}

\begin{remark}\label{rmk:prupperstar_CW_complex}
	In light of \Cref{nul:CWhypercomplete,prop:computing_hyperconstant_hypersheaf}, if $ X $ is a topological space that admits a CW structure, then the hypotheses of \Cref{lem:nonhypercomplete_prlowersharp} are satisfied.
	Moreover, if $ X $ is also contractible, then $ \Gammaupperstar_X $ is fully faithful.
\end{remark}

The following compatibility with basechange is immediate from the definition of $ \prSlowersharp $ as a tensor product \cite[Observation 1.15]{arXiv:2108.03545}.
See also \cite[Lemma 3.3]{arXiv:2110.10212}.

\begin{lemma}\label{prop:exceptional_pushforward_sheaves}
	Let $X$ be topological spaces and $ \E $ a presentable \category.
	Given a map $ g \colon T \to S$ of topological spaces, there is a canonically commutative square of \categories
	\begin{equation*}
		\begin{tikzcd}[column sep=4.5em, row sep=2.5em]
			\Sh(S \cross X;\E) \arrow[r, "{(g \cross \id{X})\upperstar}"] \arrow[d, "\prSlowersharp"'] & \Sh(T \cross X;\E) \arrow[d, "\prTlowersharp"] \\ 
			\Sh(S;\E) \arrow[r, "\gupperstar"'] & \Sh(T;\E) \phantom{ \cross X} \period 
		\end{tikzcd}
	\end{equation*}
\end{lemma}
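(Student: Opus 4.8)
The plan is to read the claim off directly from the construction of the exceptional pushforward in the proof of \Cref{lem:nonhypercomplete_prlowersharp}, using only the bifunctoriality of the tensor product of presentable \categories. Recall that, under the equivalence $\Sh(S \cross X;\E) \equivalent \Sh(X) \tensor \Sh(S;\E)$ of \Cref{rec:locallycompacttensor} (valid since $X$ is locally compact), the functor $\prSlowersharp$ was \emph{defined} by tensoring the chain of adjoints $\GammaXlowersharp \leftadjoint \GammaXupperstar \leftadjoint \GammaXlowerstar$ with $\Sh(S;\E)$. Hence, after the further identification $\Spc \tensor \Sh(S;\E) \equivalent \Sh(S;\E)$, one has canonically
\begin{equation*}
	\prSlowersharp \simeq \GammaXlowersharp \tensor \id{\Sh(S;\E)} \andeq \prTlowersharp \simeq \GammaXlowersharp \tensor \id{\Sh(T;\E)} \period
\end{equation*}

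Next I would record that, under the same identifications, the pullback $(g \cross \id{X})\upperstar$ is the functor $\id{\Sh(X)} \tensor g\upperstar$, where $g\upperstar \colon \Sh(S;\E) \to \Sh(T;\E)$. This is exactly the compatibility recorded in \cite[Observation 1.15]{arXiv:2108.03545} (see also \cite[Lemma 3.3]{arXiv:2110.10212}); alternatively, it follows from the naturality of the comparison functor $\Sh(- \cross X) \to \Sh(-) \tensor \Sh(X)$ of \Cref{rec:locallycompacttensor} in the first variable, applied to $g$, together with the observation that pullback along $g \cross \id{X}$ is the identity on the $\Sh(X)$-factor.

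Granting these two identifications, the square in the statement is obtained by applying the bifunctor ${-} \tensor {-} \colon \LPr \cross \LPr \to \LPr$ to the pair consisting of the colimit-preserving functor $\GammaXlowersharp \colon \Sh(X) \to \Spc$ and the colimit-preserving functor $g\upperstar \colon \Sh(S;\E) \to \Sh(T;\E)$: both composites around the square are then canonically identified with $\GammaXlowersharp \tensor g\upperstar$, so the square commutes. (One could also first note that the square of right adjoints, with $\prSupperstar$ and $\prTupperstar$ in place of the exceptional pushforwards, commutes strictly because $\pr_S \circ (g \cross \id{X}) = g \circ \pr_T$, and then pass to left adjoints using the tensor-product description; but the route above avoids having to invoke a left-adjointability argument.)

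The only genuine work lies in the two identifications above; once they are in place, commutativity is formal. I expect the matching of $(g \cross \id{X})\upperstar$ with $\id{\Sh(X)} \tensor g\upperstar$ to be the step requiring the most care, since it amounts to unwinding the construction of the geometric morphism $\Sh(S \cross X) \to \Sh(S) \tensor \Sh(X)$ of \Cref{rec:locallycompacttensor} and its functoriality — and this is precisely what the cited \cite[Observation 1.15]{arXiv:2108.03545} supplies, so in the writeup I would simply appeal to it.
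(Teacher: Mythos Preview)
Your proposal is correct and takes essentially the same approach as the paper: the paper does not give a formal proof at all but simply remarks, immediately before the statement, that the compatibility is ``immediate from the definition of $\prSlowersharp$ as a tensor product'' and cites \cite[Observation 1.15]{arXiv:2108.03545} and \cite[Lemma 3.3]{arXiv:2110.10212}. Your write-up is a faithful expansion of exactly that argument, invoking the same references and the same bifunctoriality of $-\tensor-$ on $\LPr$.
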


As for hypersheaves, we can check that the unit morphism $ \fromto{F}{\prSupperstar\prSlowersharp(F)} $ is an equivalence locally.

\begin{lemma}\label{lem:pullbackcounit}
	Let $ S $ be a topological space, $ \{f_{\alpha} \colon \fromto{S_{\alpha}}{S}\}_{\alpha \in A} $ a collection of maps of topological spaces, and $ \E $ a presentable \category.
	Assume that the pullback functors
	\begin{equation*}
		\{(f_{\alpha} \cross \id{[0,1]})\upperstar \colon \fromto{\Sh(S \cross [0,1];\E)}{\Sh(S_{\alpha} \cross [0,1];\E)}\}_{\alpha \in A}
	\end{equation*}
	are jointly conservative.
	Given $ F \in \Sh(S \cross [0,1];\E) $, the unit $ \unit_{F} \colon \fromto{F}{\prSupperstar\prSlowersharp(F)} $ is an equivalence if and only if for each $ \alpha \in A $, the unit
	\begin{equation*}
		(f_{\alpha} \cross \id{[0,1]})\upperstar(F) \to \prupperstar_{S_{\alpha}}\pr_{S_{\alpha},\sharp}(f_{\alpha} \cross \id{[0,1]})\upperstar(F)
	\end{equation*}
	is an equivalence.
\end{lemma}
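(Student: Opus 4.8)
The plan is to run the argument used to prove \Cref{cor:pullbackcounit}, feeding in the non-hypercomplete input \Cref{prop:exceptional_pushforward_sheaves} in place of \Cref{cor:projection_left_adjointable}. First I would record that all of the exceptional pushforwards appearing in the statement are defined: the interval $[0,1]$ is compact, hence locally compact, and it admits a CW structure, so \Cref{lem:nonhypercomplete_prlowersharp} (see also \Cref{rmk:prupperstar_CW_complex}) guarantees that $\prSupperstar$ admits a left adjoint $\prSlowersharp$ for every topological space $S$, and likewise that $\prupperstar_{S_{\alpha}}$ admits a left adjoint $\pr_{S_{\alpha},\sharp}$ for each $\alpha \in A$.

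Next, fix $\alpha \in A$. The commutative square of spaces with vertical projections $S_\alpha \cross [0,1] \to S_\alpha$ and $S \cross [0,1] \to S$ and horizontal maps induced by $f_\alpha$ yields a commutative square of pullback functors
\[
\begin{tikzcd}[column sep=4em, row sep=2.5em]
	\Sh(S;\E) \arrow[r, "f_{\alpha}\upperstar"] \arrow[d, "\prSupperstar"'] & \Sh(S_\alpha;\E) \arrow[d, "\prupperstar_{S_{\alpha}}"] \\
	\Sh(S \cross [0,1];\E) \arrow[r, "{(f_{\alpha} \cross \id{[0,1]})\upperstar}"'] & \Sh(S_\alpha \cross [0,1];\E) \period
\end{tikzcd}
\]
Applied with $g = f_\alpha$ and $X = [0,1]$, \Cref{prop:exceptional_pushforward_sheaves} says precisely that this square is vertically left adjointable, i.e.\ that the Beck--Chevalley mate
\[
	\pr_{S_{\alpha},\sharp} \circ (f_{\alpha} \cross \id{[0,1]})\upperstar \longrightarrow f_{\alpha}\upperstar \circ \prSlowersharp
\]
is an equivalence.

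It then follows from the general compatibility of mates with units of adjunctions (the ``mate of the unit is the unit'' principle) that, under the mate equivalence above, the functor $(f_{\alpha} \cross \id{[0,1]})\upperstar$ carries the unit $\unit_F \colon F \to \prSupperstar\prSlowersharp(F)$ to the unit of the adjunction $\pr_{S_{\alpha},\sharp} \leftadjoint \prupperstar_{S_{\alpha}}$ evaluated at $(f_{\alpha} \cross \id{[0,1]})\upperstar(F)$. Granting this, the ``only if'' direction is immediate, and the ``if'' direction follows from the hypothesis that the functors $\{(f_{\alpha} \cross \id{[0,1]})\upperstar\}_{\alpha \in A}$ are jointly conservative. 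The only step that demands genuine care is this compatibility of the mate with the units; it is the same purely formal $2$-categorical fact that is invoked without elaboration in the proof of \Cref{cor:pullbackcounit}, and I expect it to be the main (but routine) obstacle, everything else being bookkeeping.
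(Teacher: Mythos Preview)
Your proposal is correct and follows essentially the same approach as the paper's own proof: invoke \Cref{prop:exceptional_pushforward_sheaves} to identify $(f_{\alpha} \cross \id{[0,1]})\upperstar(\unit_F)$ with the unit of $\pr_{S_{\alpha},\sharp} \leftadjoint \prupperstar_{S_{\alpha}}$ applied to the pulled-back sheaf, then conclude by joint conservativity. The paper omits your preliminary remarks on the existence of $\prSlowersharp$ and is terser about the mate-unit compatibility, but the argument is the same.
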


\begin{proof}
	By \Cref{prop:exceptional_pushforward_sheaves}, we have a natural equivalence 
	\begin{equation*}
		(f_{\alpha} \cross \id{[0,1]})\upperstar\prSupperstar\prSlowersharp(F) \equivalent \prupperstar_{S_{\alpha}}\pr_{S_{\alpha},\sharp}(f_{\alpha} \cross \id{[0,1]})\upperstar(F) \period
	\end{equation*}
	Moreover, notice that the pullback
	\begin{equation*}
		(f_{\alpha} \cross \id{[0,1]})\upperstar(\unit_{F}) \colon (f_{\alpha} \cross \id{[0,1]})\upperstar(F) \to \prupperstar_{S_{\alpha}}\pr_{S_{\alpha},\sharp}(f_{\alpha} \cross \id{[0,1]})\upperstar(F)
	\end{equation*}
	is homotopic to the unit of the adjunction $ \pr_{S_{\alpha},\sharp} \leftadjoint \prupperstar_{S_{\alpha}} $ applied to the sheaf $ (f_{\alpha} \cross \id{[0,1]})\upperstar(F) $.
	The claim now follows from the assumption that the functors $ \{(f_{\alpha} \cross \id{[0,1]})\upperstar\}_{\alpha \in A} $ are jointly conservative.
\end{proof}


\subsection{Homotopy-invariance of locally constant sheaves}\label{subsec:homotopy_invariance_for_LC}

We now show that $ \prSlowersharp $ preserves locally constant sheaves.
The compactness of $ [0,1] $ and the fact that $ [0,1] $ has the order topology imply the following:

\begin{lemma}\label{lem:opencover}
	Let $ S $ be a topological space and $ \Ucal $ an open cover of $ S \cross [0,1] $.
	Then there exist:
	\begin{enumerate}[label=\stlabel{lem:opencover}, ref=\arabic*]
		\item An open cover $ \{U_{\alpha}\}_{\alpha \in A} $ of $ S $.

		\item For each $ \alpha \in A $, a positive integer $ n_{\alpha} $ and open subintervals $ I_{\alpha,1}, \ldots, I_{\alpha,n_{\alpha}} $ of $ [0,1] $ covering $ [0,1] $ such that $ I_{\alpha,k} \intersect I_{\alpha,\el} \neq \emptyset $ if and only if $ k = \el \pm 1 $. 
	\end{enumerate}
	Such that $ \Union_{\alpha \in A} \{U_{\alpha} \cross I_{\alpha,1}, \ldots, U_{\alpha} \cross I_{\alpha,n_{\alpha}} \} $ refines the cover $ \Ucal $.
\end{lemma}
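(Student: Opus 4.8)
The plan is to combine the tube lemma with a Lebesgue-number argument on the compact metric space $[0,1]$, producing the desired data slicewise over points of $S$.

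First I would fix a point $s \in S$ and exploit compactness of the slice $\{s\} \cross [0,1]$. For each $t \in [0,1]$, choose $V_{s,t} \in \Ucal$ with $(s,t) \in V_{s,t}$ together with a basic open neighborhood $W_{s,t} \cross J_{s,t} \subseteq V_{s,t}$, where $W_{s,t}$ is an open neighborhood of $s$ in $S$ and $J_{s,t}$ is an open subinterval of $[0,1]$ containing $t$. The intervals $\{J_{s,t}\}_{t \in [0,1]}$ cover $[0,1]$, so finitely many of them, say $J_{s,t_1}, \dots, J_{s,t_m}$, already do; set $U_s \coloneqq \bigcap_{i=1}^{m} W_{s,t_i}$, an open neighborhood of $s$ satisfying $U_s \cross J_{s,t_i} \subseteq V_{s,t_i}$ for every $i$.

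Next I would convert the finite interval cover $\{J_{s,t_i}\}_{i=1}^m$ of $[0,1]$ into a chain. Pick a Lebesgue number $\delta > 0$ for it, choose an integer $N = N_s \geq 2$ with $2/N < \delta$, and for $1 \leq k \leq N$ set
\[
	I_{s,k} \coloneqq \Bigl( \tfrac{2k-3}{2N},\ \tfrac{2k+1}{2N} \Bigr) \intersect [0,1] \period
\]
A routine verification shows that the $I_{s,k}$ are open subintervals of $[0,1]$ that cover $[0,1]$, that two distinct ones meet exactly when their indices differ by $1$, and that each has diameter at most $2/N < \delta$, hence is contained in some $J_{s,t_{i(k)}}$. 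Consequently $U_s \cross I_{s,k} \subseteq V_{s,t_{i(k)}} \in \Ucal$ for every $k$.

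Finally, letting $s$ range over $S$ yields an open cover $\{U_s\}_{s \in S}$ of $S$ together with, for each $s$, the integer $n_s \coloneqq N_s$ and the intervals $I_{s,1}, \dots, I_{s,n_s}$ as above; by construction $\bigcup_{s \in S}\{U_s \cross I_{s,1}, \dots, U_s \cross I_{s,n_s}\}$ refines $\Ucal$, which is exactly the claim (reindexing $S$ as $A$ if one prefers). I do not expect any genuine obstacle: the conceptual content is just the tube lemma together with the existence of a Lebesgue number, and the only delicate point is arranging the explicit intervals $I_{s,k}$ so that \emph{non}-adjacent members are truly disjoint rather than merely ensuring adjacent members overlap — which is pure bookkeeping.
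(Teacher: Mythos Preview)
Your argument is correct. The paper does not actually supply a proof of this lemma; it merely remarks that ``the compactness of $[0,1]$ and the fact that $[0,1]$ has the order topology imply the following'' and then states the lemma. Your tube-lemma-plus-Lebesgue-number argument is precisely the standard way to unpack that remark, so there is nothing to compare.
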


\begin{observation}\label{obs:restrictsubint}
	Let $ U $ a topological space, and $ I, J \subset [0,1] $ subintervals which are open in $ [0,1] $.
	Assume that the intersection $ I \intersect J $ is nonempty.
	Since $\{U \cross I, U \cross J\}$ is an open cover of $U \cross (I \union J)$, descent and the fact that the pullback functors
	\begin{equation*}
		\fromto{\Sh(U;\E)}{\Sh(U \cross I;\E)} \, , \quad \fromto{\Sh(U;\E)}{\Sh(U \cross J;\E)} \comma \andeq \fromto{\Sh(U;\E)}{\Sh(U \cross (I \intersect J);\E)}
	\end{equation*}
	are fully faithful (\Cref{lem:nonhypercomplete_prlowersharp,rmk:prupperstar_CW_complex}) implies that if $ F_I \in \Sh(U \cross I;\E) $ and $ F_J \in \Sh(U \cross J;\E) $ are pulled back from $ U $ and 
	\begin{equation*}
		\restrict{F_I}{U \cross (I \intersect J)} \equivalent \restrict{F_J}{U \cross (I \intersect J)} \comma
	\end{equation*}
	then there exists a \textit{unique} sheaf $ G \in \Sh(U;\E) $ such that 
	\begin{equation*}
		F_I \equivalent \prUupperstar(G) \andeq F_J \equivalent \prUupperstar(G) \period
	\end{equation*}
	In particular, if $ L \in \Sh(U \cross (I \union J);\E) $ is such that both $ \restrict{L}{U \cross I} $ and $ \restrict{L}{U \cross J} $ are constant, then $ L $ is constant.
\end{observation}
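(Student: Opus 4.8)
The plan is to unwind the descent equivalence for the two-element open cover $\{U\cross I,\,U\cross J\}$ of $U\cross(I\union J)$ and feed in the full faithfulness of the ``constant relative to $U$'' pullback functors. First I would record the elementary point that all four of the intervals $I$, $J$, $I\intersect J$, and $I\union J$ are nonempty subintervals of $[0,1]$ which are open in $[0,1]$: the intersection of two intervals is again an interval, and since $I\intersect J\neq\emptyset$ their union is an interval as well; and a nonempty subset of $[0,1]$ that is open in $[0,1]$ is a non-degenerate interval. Hence each such interval $K$ is locally compact, contractible, and admits a $1$-dimensional CW structure, so \Cref{lem:nonhypercomplete_prlowersharp} and \Cref{rmk:prupperstar_CW_complex} apply: the pullback functor $\prUupperstar\colon\Sh(U;\E)\longrightarrow\Sh(U\cross K;\E)$ is fully faithful (and admits a left adjoint). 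Moreover the inclusions $U\cross(I\intersect J)\subseteq U\cross I\subseteq U\cross(I\union J)$ (and the analogous ones for $J$) all commute with the projections to $U$, so every restriction functor between the four categories $\Sh(U\cross K;\E)$ is compatible with the corresponding pullbacks $\prUupperstar$.

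Next, descent for the cover $\{U\cross I,\,U\cross J\}$, whose only nonempty double overlap is $U\cross(I\intersect J)$, gives an equivalence
\[
	\Sh(U\cross(I\union J);\E)\;\simeq\;\Sh(U\cross I;\E)\times_{\Sh(U\cross(I\intersect J);\E)}\Sh(U\cross J;\E)\period
\]
Under it, a sheaf $L$ on $U\cross(I\union J)$ is the datum of its two restrictions $\restrict{L}{U\cross I}$, $\restrict{L}{U\cross J}$ together with the induced equivalence $\phi$ between their further restrictions to $U\cross(I\intersect J)$. Now suppose $F_I\simeq\prUupperstar(G_I)$ and $F_J\simeq\prUupperstar(G_J)$ are pulled back from $U$ and $\restrict{F_I}{U\cross(I\intersect J)}\simeq\restrict{F_J}{U\cross(I\intersect J)}$. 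Because restriction commutes with pullback from $U$, this last equivalence is an equivalence $\prUupperstar(G_I)\simeq\prUupperstar(G_J)$ in $\Sh(U\cross(I\intersect J);\E)$; as $\prUupperstar$ is fully faithful there, it comes from an essentially unique equivalence $G_I\simeq G_J$ in $\Sh(U;\E)$. Setting $G\coloneqq G_I$, the triple consisting of $\prUupperstar(G)$, $\prUupperstar(G)$ and the identity gluing is a descent datum corresponding to $\prUupperstar(G)\in\Sh(U\cross(I\union J);\E)$, whose restrictions to $U\cross I$ and $U\cross J$ recover $F_I$ and $F_J$. Uniqueness of $G$ follows from the full faithfulness of $\prUupperstar\colon\Sh(U;\E)\to\Sh(U\cross(I\union J);\E)$ (equivalently, already over $U\cross I$).

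Finally, for the last assertion: if $\restrict{L}{U\cross I}$ and $\restrict{L}{U\cross J}$ are constant, then they are in particular pulled back from $U$, since a constant sheaf on $U\cross K$ is $\prUupperstar\GammaUupperstar(E)$ for some $E\in\E$ (as $\Gamma_{U\cross K}=\Gamma_U\circ\pr_U$). By the previous paragraph there is a unique $G\in\Sh(U;\E)$ with $L\simeq\prUupperstar(G)$; restricting to $U\cross I$ and applying full faithfulness of $\prUupperstar$ once more identifies $G$ with $\GammaUupperstar(E)$, where $\restrict{L}{U\cross I}\simeq\Gamma_{U\cross I}^{\ast}(E)$, so $L\simeq\prUupperstar\GammaUupperstar(E)=\Gamma_{U\cross(I\union J)}^{\ast}(E)$ is constant. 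The argument is essentially bookkeeping; the only step needing a little care is the reduction of open-cover descent to the two-term fiber product above and the verification that all the restriction and pullback functors in sight commute as claimed, but both are formal once the full faithfulness statements of the first step are in hand.
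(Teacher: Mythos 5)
Your proposal is correct and takes essentially the same route as the paper, whose justification is given inline in the observation itself: descent for the two-element open cover $\{U \cross I, U \cross J\}$ combined with full faithfulness of the pullbacks from $U$ over $I$, $J$, $I \cap J$, and $I \cup J$ (via \Cref{lem:nonhypercomplete_prlowersharp} and \Cref{rmk:prupperstar_CW_complex}). One cosmetic slip: a nonempty open subset of $[0,1]$ need not be an interval in general---what you actually use is that $I \cap J$ and $I \cup J$ are themselves intervals (the latter because $I \cap J \neq \emptyset$), hence contractible, locally compact, and admitting CW structures, which is all the cited results require.
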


\begin{lemma}\label{lem:LCcover}
	Let $ S $ be a topological space, and $ L \in \Sh(S \cross [0,1];\E) $.
	If $ L \in \LC(S \cross [0,1];\E) $, then there exists an open cover $ \{U_\alpha\}_{\alpha \in A} $ of $ S $ such that for each $ \alpha \in A $, the sheaf $ \restrict{L}{U_{\alpha} \cross [0,1]} $ is constant.
\end{lemma}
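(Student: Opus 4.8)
The plan is to reduce to the one-variable gluing statement recorded in \Cref{obs:restrictsubint}. First, by the definition of $\LC(S \cross [0,1];\E)$ (\Cref{def:LC}), choose an open cover $\Ucal$ of $S \cross [0,1]$ such that $L|_W$ is constant for every $W \in \Ucal$. Apply \Cref{lem:opencover} to $\Ucal$: this produces an open cover $\{U_\alpha\}_{\alpha \in A}$ of $S$ and, for each $\alpha$, a positive integer $n_\alpha$ together with open subintervals $I_{\alpha,1}, \ldots, I_{\alpha,n_\alpha}$ of $[0,1]$ covering $[0,1]$ and satisfying $I_{\alpha,k} \cap I_{\alpha,\ell} \neq \emptyset$ if and only if $k = \ell \pm 1$, such that $\bigcup_{\alpha \in A} \{U_\alpha \cross I_{\alpha,1}, \ldots, U_\alpha \cross I_{\alpha,n_\alpha}\}$ refines $\Ucal$. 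In particular, each $U_\alpha \cross I_{\alpha,k}$ is contained in some member of $\Ucal$, so $L|_{U_\alpha \cross I_{\alpha,k}}$ is constant, since the restriction to an open subspace of a constant sheaf is again constant.

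Now fix $\alpha$ and set $J_{\alpha,k} \coloneqq I_{\alpha,1} \cup \cdots \cup I_{\alpha,k}$ for $1 \leq k \leq n_\alpha$, so that $J_{\alpha,n_\alpha} = [0,1]$. The overlap condition gives $I_{\alpha,j} \cap I_{\alpha,j+1} \neq \emptyset$ for every $j$, and a straightforward induction shows that each $J_{\alpha,k}$ is connected; being also open in $[0,1]$, it is a subinterval of $[0,1]$ that is open in $[0,1]$. I would then prove, by induction on $k$, that $L|_{U_\alpha \cross J_{\alpha,k}}$ is constant. The base case $k = 1$ is the statement that $L|_{U_\alpha \cross I_{\alpha,1}}$ is constant, established above. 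For the inductive step, write $J_{\alpha,k+1} = J_{\alpha,k} \cup I_{\alpha,k+1}$; the intersection $J_{\alpha,k} \cap I_{\alpha,k+1}$ contains $I_{\alpha,k} \cap I_{\alpha,k+1}$, hence is nonempty, and $J_{\alpha,k}$ and $I_{\alpha,k+1}$ are both subintervals of $[0,1]$ open in $[0,1]$. Since $L|_{U_\alpha \cross J_{\alpha,k}}$ is constant (the inductive hypothesis) and $L|_{U_\alpha \cross I_{\alpha,k+1}}$ is constant, the ``in particular'' clause of \Cref{obs:restrictsubint}, applied with $U = U_\alpha$, $I = J_{\alpha,k}$, and $J = I_{\alpha,k+1}$, shows that $L|_{U_\alpha \cross J_{\alpha,k+1}}$ is constant. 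Taking $k = n_\alpha$ yields that $L|_{U_\alpha \cross [0,1]}$ is constant, which is exactly what is claimed.

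I do not anticipate a genuine obstacle: the content is already packaged in \Cref{lem:opencover} (the combinatorics of covering $S \cross [0,1]$ by products $U_\alpha \cross I_{\alpha,k}$) and in \Cref{obs:restrictsubint} (the one-variable gluing of constant sheaves across two overlapping subintervals). The only point that needs a line of justification is that the partial unions $J_{\alpha,k}$ are again subintervals of $[0,1]$ that are open in $[0,1]$ — so that \Cref{obs:restrictsubint} may legitimately be invoked at each stage of the induction — and this is precisely where the adjacency pattern $I_{\alpha,k} \cap I_{\alpha,\ell} \neq \emptyset \iff k = \ell \pm 1$ furnished by \Cref{lem:opencover} is used.
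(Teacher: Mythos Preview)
Your proposal is correct and follows the same approach as the paper: refine to a cover by products $U_\alpha \cross I_{\alpha,k}$ via \Cref{lem:opencover}, then induct along the chain of intervals using \Cref{obs:restrictsubint}. You spell out in more detail than the paper does why the partial unions $J_{\alpha,k}$ remain open subintervals and meet $I_{\alpha,k+1}$ nontrivially, but the argument is identical.
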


\begin{proof}
	Choose an open cover $\{U_{\alpha} \cross I_{\alpha,1}, \ldots, U_{\alpha} \cross I_{\alpha,n_{\alpha}} \}_{\alpha \in A}$ of $ S \cross [0,1] $ as in \Cref{lem:opencover} such that each restriction $ \restrict{L}{U_{\alpha} \cross I_{\alpha,k}} $ is constant.
	We claim that for each $ \alpha \in A $, the restriction $ \restrict{L}{U_{\alpha} \cross [0,1]} $ is constant.
	To see this, apply \Cref{obs:restrictsubint} inductively with \smash{$ I = I_{\alpha,1} \union \cdots \union I_{\alpha,m} $} and $ J = I_{\alpha,m+1} $.
\end{proof}

\begin{observation}\label{obs:constsheavespushforward}
	Notice that
	\begin{equation*}
		\Gammaupperstar_{S \cross [0,1]} \equivalent \prSupperstar \Gammaupperstar_{S} \period
	\end{equation*}
	Hence, if $ F \in \Sh(S \cross [0,1];\E) $ is constant, then the exceptional pushforward $ \prSlowersharp(F) $ is constant and the unit $ \fromto{F}{\prSupperstar\prSlowersharp(F)} $ is an equivalence.
\end{observation}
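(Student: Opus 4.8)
The plan is to reduce the statement to functoriality of the sheaf pullback together with the fully faithfulness of $\prSupperstar$ recorded in \Cref{lem:nonhypercomplete_prlowersharp,rmk:prupperstar_CW_complex}. First I would establish the identification $\Gammaupperstar_{S \cross [0,1]} \simeq \prSupperstar\Gammaupperstar_{S}$. The unique map $\Gamma_{S \cross [0,1]} \colon S \cross [0,1] \to \pt$ factors as $\Gamma_{S} \circ \pr_{S}$, so passing to sheaf pullbacks and using the pseudofunctoriality $(g \circ f)^{\ast} \simeq f^{\ast} \circ g^{\ast}$ yields the claim; this is legitimate for $\E$-valued sheaves since the sheaf pullback is the composite of the (manifestly functorial) presheaf pullback with sheafification.

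Next I would note that $[0,1]$ is compact, hence locally compact, is contractible, and admits a CW structure, so \Cref{rmk:prupperstar_CW_complex} applies: the constant sheaf functor $\Gammaupperstar_{[0,1]} \colon \Spc \to \Sh([0,1])$ is fully faithful, and \Cref{lem:nonhypercomplete_prlowersharp} then produces the exceptional pushforward $\prSlowersharp \leftadjoint \prSupperstar$ and shows that $\prSupperstar \colon \Sh(S;\E) \to \Sh(S \cross [0,1];\E)$ is fully faithful. Equivalently, the counit $\prSlowersharp\prSupperstar \to \id{}$ is an equivalence.

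With these inputs the two assertions are immediate. If $F$ is constant, write $F \simeq \Gammaupperstar_{S \cross [0,1]}(E) \simeq \prSupperstar\Gammaupperstar_{S}(E)$ for some $E \in \E$ using the first step; applying $\prSlowersharp$ and the counit equivalence gives $\prSlowersharp(F) \simeq \prSlowersharp\prSupperstar\Gammaupperstar_{S}(E) \simeq \Gammaupperstar_{S}(E)$, which is constant. For the unit, recall that for the adjunction $\prSlowersharp \leftadjoint \prSupperstar$ and any $G \in \Sh(S;\E)$ the triangle identity says the composite $\prSupperstar(G) \to \prSupperstar\prSlowersharp\prSupperstar(G) \to \prSupperstar(G)$ is the identity; the second arrow is $\prSupperstar$ applied to the (invertible) counit, so the first arrow --- which is the unit at $\prSupperstar(G)$ --- is an equivalence. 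Taking $G = \Gammaupperstar_{S}(E)$ gives that the unit $F \to \prSupperstar\prSlowersharp(F)$ is an equivalence.

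I do not expect a genuine obstacle here: the statement is formal once \Cref{lem:nonhypercomplete_prlowersharp} and \Cref{rmk:prupperstar_CW_complex} are in place. The only mild points of care are checking the geometric hypotheses on $[0,1]$ that license the fully faithfulness input, and bookkeeping about which adjunction's unit and counit are in play --- all of the content being the single categorical fact that a fully faithful right adjoint has invertible counit and, as a consequence, invertible unit on its essential image.
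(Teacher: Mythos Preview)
Your proposal is correct and matches the paper's implicit reasoning: the paper states this as an observation without further proof, and you have simply unpacked the ``Hence'' by invoking functoriality of pullback for the first identification and the full faithfulness of $\prSupperstar$ from \Cref{lem:nonhypercomplete_prlowersharp,rmk:prupperstar_CW_complex} (equivalently, invertibility of the counit on the essential image) for the two consequences.
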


\begin{lemma}\label{lem:prlowersharppreservesLC}
	Let $ S $ be a topological space and $ \E $ a presentable \category.
	Then the exceptional pushforward
	\begin{equation*}
		\prSlowersharp \colon \fromto{\Sh(S \cross [0,1];\E)}{\Sh(S;\E)}
	\end{equation*}
	preserves locally constant sheaves.
\end{lemma}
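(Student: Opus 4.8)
The plan is to make the statement local on $S$, reduce to the case of a globally constant sheaf, and then quote results that are already in hand. First I would observe that local constancy of $\prSlowersharp(L)$ can be checked on an open cover of $S$, and that $\prSlowersharp$ is compatible with restriction to opens: for an open immersion $g \colon U \hookrightarrow S$, the basechange square of \Cref{prop:exceptional_pushforward_sheaves} (applied to $g$ and $X = [0,1]$) yields a natural equivalence
\[ \restrict{\bigl(\prSlowersharp(L)\bigr)}{U} \simeq \pr_{U,\sharp}\bigl(\restrict{L}{U \cross [0,1]}\bigr) \period \]
Hence it suffices to exhibit, around every point of $S$, an open neighbourhood $U$ for which $\pr_{U,\sharp}\bigl(\restrict{L}{U \cross [0,1]}\bigr)$ is a constant sheaf on $U$.

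Such a neighbourhood is provided by \Cref{lem:LCcover}: since $L$ is locally constant, there is an open cover $\{U_\alpha\}_{\alpha \in A}$ of $S$ such that $\restrict{L}{U_\alpha \cross [0,1]}$ is a constant sheaf on $U_\alpha \cross [0,1]$ for every $\alpha \in A$. This is the only substantive step, and it is precisely where the special geometry of the interval — its compactness and the fact that it carries the order topology — enters, through \Cref{lem:opencover} and \Cref{obs:restrictsubint}; but by this point that work is done.

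It then remains only to note that the exceptional pushforward of a constant sheaf is again constant, which is exactly \Cref{obs:constsheavespushforward}. Applying it with $F = \restrict{L}{U_\alpha \cross [0,1]}$ shows that $\pr_{U_\alpha,\sharp}\bigl(\restrict{L}{U_\alpha \cross [0,1]}\bigr)$ is constant for each $\alpha$, which completes the argument. Since everything nontrivial has been packaged into \Cref{lem:LCcover} (and, upstream, into the construction of $\prSlowersharp$ via \Cref{lem:nonhypercomplete_prlowersharp} and \Cref{rmk:prupperstar_CW_complex}), there is no real obstacle left at this stage; the only point one must take care to record is the compatibility of $\prSlowersharp$ with restriction to opens of $S$, which is the content of \Cref{prop:exceptional_pushforward_sheaves}.
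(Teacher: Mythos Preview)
Your proposal is correct and follows essentially the same argument as the paper: choose an open cover of $S$ via \Cref{lem:LCcover} on which $L$ becomes constant, use the basechange of \Cref{prop:exceptional_pushforward_sheaves} to identify $\restrict{\prSlowersharp(L)}{U_\alpha}$ with $\pr_{U_\alpha,\sharp}(\restrict{L}{U_\alpha \cross [0,1]})$, and conclude constancy from \Cref{obs:constsheavespushforward}.
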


\begin{proof}
	Let $ F \in \LC(S \cross [0,1];\E) $.
	Using \Cref{lem:LCcover}, choose an open cover $ \{U_{\alpha}\}_{\alpha \in A} $ of $ S $ such that each of the restrictions $ \restrict{F}{U_{\alpha} \cross [0,1]} $ is constant. 
	By \Cref{prop:exceptional_pushforward_sheaves} we have $\restrict{\prSlowersharp(F)}{U_{\alpha}} \equivalent \pr_{U_{\alpha},\sharp}(\restrict{F}{U_{\alpha} \cross [0,1]})$.
	Hence \Cref{obs:constsheavespushforward} shows that the sheaf $ \restrict{\prSlowersharp(F)}{U_{\alpha}} $ is constant.
\end{proof}

Homotopy-invariance is now an immediate consequence:

\begin{corollary}\label{cor:hiofLC}
	Let $ S $ be a topological space and $ \E $ a presentable \category.
	Then the functors
	\begin{equation*}
		\adjto{\prSlowersharp}{\LC(S \cross [0,1];\E)}{\LC(S;\E)}{\prSupperstar}
	\end{equation*}
	are inverse equivalences of \categories.
	In particular, the functor $ \LC(-;\E) \colon \fromto{\Top^{\op}}{\Catinfty} $ is homotopy-invariant
\end{corollary}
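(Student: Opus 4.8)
The plan is to upgrade the adjunction $\prSlowersharp \leftadjoint \prSupperstar$ to a pair of mutually inverse equivalences between the subcategories of locally constant sheaves, and then read off homotopy-invariance. The unit interval $[0,1]$ is a contractible CW complex, so by \Cref{rmk:prupperstar_CW_complex} the hypotheses of \Cref{lem:nonhypercomplete_prlowersharp} are satisfied: the pullback $\prSupperstar \colon \Sh(S;\E) \to \Sh(S \cross [0,1];\E)$ admits a left adjoint $\prSlowersharp$, and moreover $\prSupperstar$ is fully faithful since $\Gammaupperstar_{[0,1]}$ is. I would first observe that both functors restrict to the locally constant subcategories. For $\prSlowersharp$ this is \Cref{lem:prlowersharppreservesLC}. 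For $\prSupperstar$, if $\restrict{L}{V} \simeq \Gammaupperstar_V(E)$ is constant on an open $V \subset S$, then $\restrict{\prSupperstar(L)}{V \cross [0,1]} \simeq \Gammaupperstar_{V \cross [0,1]}(E)$ is constant by (the identity recorded in) \Cref{obs:constsheavespushforward}, so $\prSupperstar$ carries locally constant sheaves to locally constant sheaves.

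Full faithfulness of the right adjoint $\prSupperstar$ means the counit $\prSlowersharp \prSupperstar \to \id{}$ is already an equivalence, so the only thing left to prove is that for $F \in \LC(S \cross [0,1];\E)$ the unit $\unit_F \colon F \to \prSupperstar \prSlowersharp(F)$ is an equivalence. By \Cref{lem:LCcover} there is an open cover $\{U_\alpha\}_{\alpha \in A}$ of $S$ for which each restriction $\restrict{F}{U_\alpha \cross [0,1]}$ is \emph{constant}. The restriction functors $\restrict{(-)}{U_\alpha \cross [0,1]}$ are jointly conservative, so \Cref{lem:pullbackcounit}, applied to the inclusions $U_\alpha \subset S$, reduces the claim to checking, for each $\alpha$, that the unit of $\pr_{U_\alpha,\sharp} \leftadjoint \prupperstar_{U_\alpha}$ is an equivalence on $\restrict{F}{U_\alpha \cross [0,1]}$; since these sheaves are constant, \Cref{obs:constsheavespushforward} does exactly this.

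Putting the two paragraphs together, $\prSlowersharp$ and $\prSupperstar$ restrict to inverse equivalences $\LC(S \cross [0,1];\E) \simeq \LC(S;\E)$. Homotopy-invariance of $\LC(-;\E)$ is then immediate from \Cref{def:homotopy_invariant}: the functor $\LC(-;\E) \colon \Top^{\op} \to \Catinfty$ sends the projection $\pr_S \colon S \cross [0,1] \to S$ to the pullback $\prSupperstar$, which we have just shown to be an equivalence. I do not anticipate a genuine obstacle here, since all the substantive input is in the lemmas already proved; the only point requiring any care is the bookkeeping that the two functors are well-defined on the locally constant subcategories, which rests on the elementary identity $\prSupperstar \Gammaupperstar_S \simeq \Gammaupperstar_{S \cross [0,1]}$ from \Cref{obs:constsheavespushforward}.
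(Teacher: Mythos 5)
Your proposal is correct and follows essentially the same route as the paper: reduce via full faithfulness of $\prSupperstar$ (from \Cref{lem:nonhypercomplete_prlowersharp} and \Cref{rmk:prupperstar_CW_complex}) to showing the unit is an equivalence on locally constant sheaves, pass to a cover from \Cref{lem:LCcover} where the sheaf is constant, and conclude by \Cref{lem:pullbackcounit} together with \Cref{obs:constsheavespushforward}. Your extra bookkeeping that both functors preserve (local) constancy is also how the paper handles it, via \Cref{lem:prlowersharppreservesLC} and the identity $\prSupperstar\Gammaupperstar_S \simeq \Gammaupperstar_{S\cross[0,1]}$.
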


\begin{proof}
	Since $ \prSupperstar $ is fully faithful, it suffices to show that if $ F \in \LC(S \cross [0,1];\E) $, then the unit \smash{$ \fromto{F}{\prSupperstar\prSlowersharp(F)} $} is an equivalence.
	Using \Cref{lem:LCcover}, choose an open cover $ \{U_{\alpha}\}_{\alpha \in A} $ of $ S $ such that each of the restrictions $ \restrict{F}{U_{\alpha} \cross [0,1]} $ is constant.
	The claim now follows from \Cref{lem:pullbackcounit,obs:constsheavespushforward}.
\end{proof}


\section{Homotopy-invariance for (hyper)constructible (hyper)sheaves}\label{sec:homotopy_invariance_for_constructible}

We now bootstrap our homotopy-invariance results (\cref{thm:homotopy_invariance_hyperconstant,cor:hiofLC}) from the locally constant setting to the constructible setting.
In \cref{subsec:stratifiedspaces}, we review the basics of stratified spaces and (hyper)constructible (hyper)sheaves.
In \cref{subsec:formal_homotopy_invariance}, we prove that the exceptional pushforwards \smash{$ \prSlowersharp $} and \smash{$ \prSlowersharphyp $} preserve constructibility (\Cref{lem:exceptional_preservation_hyperconstructible,cor:prlowersharppreservesCons}) and give equivalent conditions for homotopy-invariance to hold (\Cref{cor:formal_homotopy_invariance}).
Finally, in \cref{subsec:detecting_equivalences_on_strata} we use these criteria to show that, in many situations of interest, (hyper)constructible (hyper)sheaves are homotopy-invariant (\Cref{cor:hiofConsCG,cor:truncated,cor:hiofConsnoeth}).


\subsection{Stratified spaces \& constructible sheaves}\label{subsec:stratifiedspaces}

We first recall the notion of a stratified space:

\begin{notation}\label{notation:Alexandroff_topology}
	Let $ P $ be a poset.
	We also write $ P $ for the set $ P $ equipped with the \textit{Alexandroff topology} in which a subset $ U \subset P $ is open if and only if $ U $ is upwards-closed.
	Given an element $ p \in P $, we write
	\begin{equation*}
		P_{\geq p} \colonequals \{q \in P \, | \, q \geq p \}  \andeq P_{>p} \colonequals P_{\geq p} \sminus \{p\} \period
	\end{equation*}
	The category of \defn{$ P $-stratified topological spaces} is the overcategory \smash{$ \TopP $}.
	Given a $ P $-stratified topological space $ \sigma \colon \fromto{S}{P} $ and $ p \in P $, we write \smash{$ S_p \colonequals \sigmainverse(p) $} and call $ S_p $ the \defn{$ p $-th stratum} of $ S $.
	We also write
	\begin{equation*}
		S_{\geq p} \colonequals \sigmainverse(P_{\geq p}) \andeq S_{> p} \colonequals \sigmainverse(P_{> p}) \period
	\end{equation*}
	We write $ i_p \colon \fromto{S_p}{S} $ for the inclusion of the $ p $-th stratum.
\end{notation}

\begin{definition}\label{def:constructible}
	Let $ P $ be a poset, $ \fromto{S}{P} $ be a $ P $-stratified space, and $ \E $ be a presentable \category.
	\begin{enumerate}[label=\stlabel{def:constructible}, ref=\arabic*]
		\item We say that a sheaf $ F \in \Sh(S;\E) $ is a \defn{$ P $-constructible} if $ F $ for each $ p \in P $, the restriction $ \iupperstar_p(F) $ is a locally constant sheaf on the stratum $ S_p $.
		
		\item We say that a hypersheaf $ F \in \Shhyp(S;\E) $ is a \defn{$ P $-hyperconstructible} if $ F $ for each $ p \in P $, the restriction \smash{$ \iupperstarhyp_p(F) $} is a locally hyperconstant hypersheaf on the stratum $ S_p $.
	\end{enumerate}
	We, respectively, write
	\begin{equation*}
		\ConsP(T;\E) \subset \Sh(T;\E) \andeq \ConsPhyp(T;\E) \subset \Shhyp(T;\E)
	\end{equation*}
	for the full subcategories spanned by the $ P $-constructible sheaves and $ P $-hyperconstructible hypersheaves.
\end{definition}

\begin{warning}
	There is a containment
	\begin{equation*}
		\ConsP(S;\E) \intersect \Shhyp(S;\E) \subset \ConsPhyp(S;\E) \comma
	\end{equation*}
	however, this inclusion need not be an equality.
	Also note that if $ F $ is a $ P $-constructible sheaf, then then $ F^{\hyp} \in \ConsPhyp(S;\E) $.
\end{warning}

\begin{remark}\label{rem:Conshyp_and_Cons_agree}
	Let $ P $ be a Noetherian poset and let $ X \to P $ be a paracompact $ P $-stratified space.
	Assume that the stratification of $ X $ is \textit{conical} in the sense of \HA{Definition}{A.5.5} and that all of the strata of $ X $ are locally of singular shape.
	Then
	\begin{equation*}
		\ConsPhyp(X) = \ConsP(X) \intersect \Shhyp(X) = \ConsP(X) \period
	\end{equation*}
	See \cites[\HAappthm{Proposition}{A.5.9}]{HA}[Proposition 2.11]{arXiv:2102.12325}.
\end{remark}

\begin{observation}\label{obs:pullbackofhypcons}
	For any map $ f \colon \fromto{T}{S} $ of $ P $-stratified spaces, the sheaf pullback functor $\fupperstar $ preserves $ P $-constructible sheaves and the hypersheaf pullback functor $\fupperstarhyp $ preserves $ P $-hyperconstructible hypersheaves.
	Hence the assignments
	\begin{equation*}
		\goesto{S}{\ConsP(S;\E)} \andeq \goesto{S}{\ConsPhyp(S;\E)}
	\end{equation*}
	define subfunctors of the functors $ \Sh(-;\E),\Shhyp(-;\E) \colon \fromto{\TopP^{\op}}{\Catinfty} $.
\end{observation}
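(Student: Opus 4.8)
The plan is to reduce the statement to the unstratified case already established in \Cref{obs:pullbackofhypLC}. First I would fix a map $ f \colon \fromto{T}{S} $ of $ P $-stratified spaces, with structure maps $ \tau \colon \fromto{T}{P} $ and $ \sigma \colon \fromto{S}{P} $ satisfying $ \sigma \circ f = \tau $. For each $ p \in P $ one has $ \tau\inv(p) = f\inv(\sigma\inv(p)) $, so restricting $ f $ to strata yields a continuous map $ f_p \colon \fromto{T_p}{S_p} $ sitting in a commutative square
\begin{equation*}
	\begin{tikzcd}
		T_p \arrow[r, "f_p"] \arrow[d, "i_p"'] & S_p \arrow[d, "i_p"] \\
		T \arrow[r, "f"'] & S
	\end{tikzcd}
\end{equation*}
(in fact a pullback square, since $ i_p \colon \incto{S_p}{S} $ is a subspace inclusion, but only its commutativity is used below). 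Applying the pseudofunctors $ \Sh(-;\E), \Shhyp(-;\E) \colon \fromto{\Top^{\op}}{\Catinfty} $ to the relation $ f \circ i_p = i_p \circ f_p $ produces canonical equivalences $ i_p\upperstar \circ \fupperstar \simeq f_p\upperstar \circ i_p\upperstar $ and $ i_p\upperstarhyp \circ \fupperstarhyp \simeq f_p\upperstarhyp \circ i_p\upperstarhyp $.

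Next, given a $ P $-constructible sheaf $ F \in \Sh(S;\E) $, the restriction $ i_p\upperstar(F) $ is locally constant on $ S_p $ by \Cref{def:constructible}; applying \Cref{obs:pullbackofhypLC} with the role of ``$ S $'' there played by $ \pt $ --- so that the functor ``$ f_S $'' is simply $ f_p $ --- shows that $ f_p\upperstar $ carries locally constant sheaves to locally constant sheaves. Combining this with the commutativity just noted gives that $ i_p\upperstar(\fupperstar F) \simeq f_p\upperstar(i_p\upperstar F) $ is locally constant on $ T_p $ for every $ p \in P $, i.e.\ $ \fupperstar(F) \in \ConsP(T;\E) $. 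The hypersheaf assertion is proved verbatim, replacing locally constant sheaves by locally hyperconstant hypersheaves, $ \fupperstar $ by $ \fupperstarhyp $, and invoking the hypersheaf half of \Cref{obs:pullbackofhypLC}. Once these preservation statements are in hand the final claim is automatic: saying that $ \goesto{S}{\ConsP(S;\E)} $ (resp.\ $ \goesto{S}{\ConsPhyp(S;\E)} $) is a subfunctor of $ \Sh(-;\E) $ (resp.\ $ \Shhyp(-;\E) $) means exactly that the transition maps $ \fupperstar $ (resp.\ $ \fupperstarhyp $) of the ambient functor restrict to the full subcategory of $ P $-constructible sheaves (resp.\ $ P $-hyperconstructible hypersheaves).

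I do not expect any genuine difficulty: the argument is essentially bookkeeping. The single point worth a moment's care is the identification of $ f_p $ with the base change of $ f $ along the stratum inclusion $ \incto{S_p}{S} $, which is what makes the two squares of pullback functors commute strictly; because $ i_p $ is a subspace inclusion this is purely formal, and --- crucially --- no actual base-change theorem is invoked (proper base change fails in the hypercomplete setting; compare \Cref{rem:projection_left_adjointable}), only functoriality of $ (-)\upperstar $ and $ (-)\upperstarhyp $.
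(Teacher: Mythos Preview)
Your argument is correct and is exactly the justification one would supply for this statement. The paper itself gives no proof: the result is recorded as an ``Observation'' and left to the reader, precisely because it is the straightforward bookkeeping you describe (functoriality of pullback plus the commutative square over each stratum, reducing to \Cref{obs:pullbackofhypLC}).
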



\begin{convention}
	Let $ P $ be a poset and $ \sigma \colon \fromto{S}{P} $ be a $ P $-stratified topological space.
	Let $X$ be a topological space.
	We write $ S \cross X $ for the $ P $-stratified topological space with stratification given by the composite
	\begin{equation*}
		\begin{tikzcd}
			S \cross X \arrow[r, "\pr_{S}"] & S \arrow[r, "\sigma"] & P \period
		\end{tikzcd}
	\end{equation*}
\end{convention}


The main goal of this section is to explain when the functors $\ConsP(-;\E) $, and \smash{$ \ConsPhyp(-;\E) $} are homotopy-invariant in the sense of \cref{def:homotopy_invariant}.
Again, we remind the reader that a functor \smash{$ C \colon \fromto{\TopP^{\op}}{\Catinfty} $} is homotopy-invariant in the sense of \Cref{def:homotopy_invariant} if and only if $ C $ inverts all \textit{stratified homotopy equivalences} in the following sense.

\begin{definition}[(stratified homotopy)]\label{def:stratified_homotopies}
	Let $ P $ be a poset.
	\begin{enumerate}[label=\stlabel{def:stratified_homotopies}, ref=\arabic*]
		\item Given maps of $ P $-stratified topological spaces $ f_0,f_1 \colon \fromto{T}{S} $, a \defn{$ P $-stratified homotopy} from $ f_0 $ to $ f_1 $ is a $ P $-stratified map $ h \colon \fromto{T \cross [0,1]}{S} $ such that $ \restrict{h}{T \cross \{0\}} = f_0 $ and $ \restrict{h}{T \cross \{1\}} = f_1 $.

		\item A map of $ P $-stratified topological spaces $ f \colon \fromto{T}{S} $ is a \defn{$ P $-stratified homotopy equivalence} if there exists a $ P $-stratified map $ g \colon \fromto{S}{T} $ and $ P $-stratified homotopies from $ gf $ to $ \id{T} $ and from $ fg $ to $ \id{S} $.
	\end{enumerate}
\end{definition}


\begin{lemma}\label{lem:cons_homotopy_invariance_reformulation}
	Let $ P $ be a poset.
	The following are equivalent for a functor \smash{$ C \colon \fromto{\TopP}{\Catinfty} $}:
	\begin{enumerate}[label=\stlabel{lem:cons_homotopy_invariance_reformulation}, ref=\arabic*]
		\item The functor $ C $ is homotopy-invariant.

		\item For each $ P $-stratified homotopy equivalence $ f \colon \fromto{T}{S} $, the functor $ C(f) \colon \fromto{C(S)}{C(T)} $ is an equivalence of \categories.
	\end{enumerate}
\end{lemma}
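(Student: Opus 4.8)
The plan is to prove both implications directly; the argument is entirely formal, the one observation being that homotopy-invariance forces $C$ to carry $P$-stratified homotopic maps to equivalent functors.

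First I would prove \enumref{lem:cons_homotopy_invariance_reformulation}{2} $\Rightarrow$ \enumref{lem:cons_homotopy_invariance_reformulation}{1}. For every $P$-stratified space $S$, the projection $\pr_S \colon \fromto{S \cross [0,1]}{S}$ is a $P$-stratified homotopy equivalence: the inclusion $\iota_{0} \colon \fromto{S}{S \cross [0,1]}$ at $0$ is $P$-stratified, satisfies $\pr_S \circ \iota_{0} = \id{S}$, and the map
\begin{equation*}
	h \colon \fromto{(S \cross [0,1]) \cross [0,1]}{S \cross [0,1]} \comma \qquad ((s,t),u) \longmapsto (s, tu)
\end{equation*}
is a $P$-stratified homotopy from $\iota_{0} \circ \pr_S$ to $\id{S \cross [0,1]}$; it is $P$-stratified precisely because it does not move the $S$-coordinate. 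Consequently, if $C$ inverts all $P$-stratified homotopy equivalences, then $C(\pr_S)$ is an equivalence for every $S$, i.e.\ $C$ is homotopy-invariant.

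For the converse, suppose $C$ is homotopy-invariant. The main step is to show that if $H \colon \fromto{T \cross [0,1]}{S}$ is a $P$-stratified homotopy from $f_{0}$ to $f_{1}$, then $C(f_{0}) \simeq C(f_{1})$. Writing $\iota_{0}, \iota_{1} \colon \fromto{T}{T \cross [0,1]}$ for the endpoint inclusions, we have $\pr_T \circ \iota_{0} = \pr_T \circ \iota_{1} = \id{T}$; applying $C$, this exhibits $C(\iota_{0})$ and $C(\iota_{1})$ as one-sided inverses of the equivalence $C(\pr_T)$, so $C(\iota_{0}) \simeq C(\iota_{1})$ because two-sided inverses in an \category are unique up to equivalence. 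Whiskering this equivalence with $C(H)$ and using $f_{k} = H \circ \iota_{k}$ gives $C(f_{0}) \simeq C(f_{1})$.

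Finally, let $f \colon \fromto{T}{S}$ be a $P$-stratified homotopy equivalence with $P$-stratified homotopy inverse $g \colon \fromto{S}{T}$ and $P$-stratified homotopies $gf \simeq \id{T}$, $fg \simeq \id{S}$. Applying the previous step yields $C(gf) \simeq \id{C(T)}$ and $C(fg) \simeq \id{C(S)}$; since $C$ is a functor, these exhibit $C(f)$ as having both a left and a right inverse up to equivalence, so $C(f)$ is an equivalence of \categories, as desired. There is no real obstacle in this argument; the only points needing (routine) verification are that the contracting homotopy $h$ is $P$-stratified and that $C(\iota_{0}) \simeq C(\iota_{1})$, exactly as in the unstratified \Cref{lem:LC_homotopy_invariance_reformulation}.
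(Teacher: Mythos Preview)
Your argument is correct. The paper does not actually prove this lemma; it is stated without proof, and the only hint given (in the text preceding the unstratified analogue, \Cref{lem:LC_homotopy_invariance_reformulation}) is an appeal to the $2$-of-$6$ property of equivalences. Your route is a mild variant: rather than invoking $2$-of-$6$ to deduce that $C(f)$ is an equivalence from $C(gf)$ and $C(fg)$ being equivalences, you prove the sharper fact that $C(\iota_0) \simeq C(\iota_1)$ as functors (via uniqueness of inverses to the equivalence $C(\pr_T)$), which yields $C(gf) \simeq \id{}$ and $C(fg) \simeq \id{}$ on the nose and lets you conclude by $2$-of-$3$. Both arguments are standard and essentially interchangeable; yours is slightly more explicit and avoids naming $2$-of-$6$.
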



\subsection{Formal homotopy-invariance}\label{subsec:formal_homotopy_invariance}

Bootstrapping off of the results of \cref{sec:hyperconstant,sec:homotopy_invariance_for_LC}, we can provide a first, formal version of our homotopy-invariance result.
The compatibility of the exceptional pushforwards with pullbacks immediately gives:

\begin{corollary}\label{lem:exceptional_preservation_hyperconstructible}
	Let $\E$ be a presentable \category and let $P$ be a poset.
	Let $S $ be a $ P $-stratified space and let $X$ be a \wclwc topological space.
	Then the exceptional hypersheaf pushforward
	\begin{equation*} 
		\prSlowersharphyp \colon \Shhyp(S \cross X; \E) \longrightarrow \Shhyp(S; \E)
	\end{equation*}
	preserves hyperconstructible hypersheaves.
\end{corollary}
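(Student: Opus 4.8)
The plan is to check $P$-hyperconstructibility of $\prSlowersharphyp(F)$ one stratum at a time, feeding the stratumwise restrictions into the results on locally hyperconstant hypersheaves from \cref{sec:hyperconstant}. Recall that $S \cross X$ is stratified by the composite $S \cross X \to S \to P$, so for each $p \in P$ the $p$-th stratum of $S \cross X$ is $S_p \cross X$ and the inclusion of this stratum is $i_p \cross \id{X}$. Thus, by definition of $P$-hyperconstructibility, for $F \in \ConsPhyp(S \cross X;\E)$ and each $p \in P$ the restriction $(i_p \cross \id{X})\upperstarhyp(F)$ lies in $\LChyp(S_p \cross X;\E)$.

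First I would invoke the base-change statement of \Cref{cor:projection_left_adjointable}, applied to the map $i_p \colon S_p \to S$ (the corollary is stated for an arbitrary map of topological spaces, so the possibly poorly behaved, non-open stratum inclusions are allowed). It supplies a natural equivalence
\[
	i_p\upperstarhyp \circ \prSlowersharphyp \;\simeq\; \pr_{S_p,\sharp}^{\hyp} \circ (i_p \cross \id{X})\upperstarhyp
\]
of functors $\Shhyp(S \cross X;\E) \to \Shhyp(S_p;\E)$, whence $i_p\upperstarhyp\big(\prSlowersharphyp(F)\big) \simeq \pr_{S_p,\sharp}^{\hyp}\big((i_p \cross \id{X})\upperstarhyp(F)\big)$.

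I would then apply \Cref{cor:exceptional_preservation_locally_hyperconstant} with $S_p$ in place of $S$: since $X$ is \wclwc it is in particular weakly contractible, so hypothesis \enumref{cor:exceptional_preservation_locally_hyperconstant}{1} holds, and hence $\pr_{S_p,\sharp}^{\hyp} \colon \Shhyp(S_p \cross X;\E) \to \Shhyp(S_p;\E)$ carries $\LChyp(S_p \cross X;\E)$ into $\LChyp(S_p;\E)$. Applying this to $(i_p \cross \id{X})\upperstarhyp(F)$ shows that $i_p\upperstarhyp\big(\prSlowersharphyp(F)\big)$ is locally hyperconstant on $S_p$. As this holds for every $p \in P$, the hypersheaf $\prSlowersharphyp(F)$ is $P$-hyperconstructible, as desired.

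I do not expect a genuine obstacle: the statement is a formal consequence of base change together with \Cref{cor:exceptional_preservation_locally_hyperconstant}. The only points deserving a moment's care — and worth spelling out explicitly in the write-up — are that the hypothesis "\wclwc" on $X$ simultaneously supplies the local weak contractibility needed for $\prSlowersharphyp$ (and $\pr_{S_p,\sharp}^{\hyp}$) to exist and the global weak contractibility needed to be in the first case of \Cref{cor:exceptional_preservation_locally_hyperconstant}, and that the stratum of $S \cross X$ over $p$ really is $S_p \cross X$ with inclusion $i_p \cross \id{X}$, so that $P$-hyperconstructibility of $F$ directly yields $(i_p \cross \id{X})\upperstarhyp(F) \in \LChyp(S_p \cross X;\E)$.
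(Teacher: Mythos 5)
Your proposal is correct and follows essentially the same route as the paper: restrict to each stratum $S_p \cross X$, use the base-change equivalence of \Cref{cor:projection_left_adjointable} to identify $\iupperstarhyp_p\prSlowersharphyp(F)$ with $\pr_{S_p,\sharp}^{\hyp}\bigl((i_p \cross \id{X})\upperstarhyp(F)\bigr)$, and then invoke \Cref{cor:exceptional_preservation_locally_hyperconstant}. Your explicit remark that the weak contractibility of $X$ places you in case \enumref{cor:exceptional_preservation_locally_hyperconstant}{1} is a helpful clarification the paper leaves implicit, but it is not a difference in substance.
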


\begin{proof}
	Let $F \in \ConsPhyp(S \cross X; \E)$ be a hyperconstructible hypersheaf on $S \cross X$.
	We have to prove that for every $p \in P$, the restriction
	\begin{equation*}
		\iupperstarhyp_p\prSlowersharphyp(F)
	\end{equation*}
	is a locally hyperconstant hypersheaf on $ S_p $.
	By the compatibility of the exceptional pushforward with pullbacks (\cref{cor:projection_left_adjointable}), there is a natural equivalence
	\begin{equation*} 
		\iupperstarhyp_p(\prSlowersharphyp(F)) \simeq \pr_{S_p,\sharp}^{\hyp}(( i_p \cross \id{X} )^{*,\hyp}(F)) \period 
	\end{equation*}
	Since $(i_p \cross \id{X})^{*,\hyp}(F)$ is locally hyperconstant on $S_p \cross X$, \cref{cor:exceptional_preservation_locally_hyperconstant} completes the proof.
\end{proof}

\begin{corollary}\label{cor:prlowersharppreservesCons}
	For $ S \in \TopP $, the functor $ \prSlowersharp \colon \fromto{\Sh(S \cross [0,1];\E)}{\Sh(S;\E)} $ preserves constructible sheaves.
\end{corollary}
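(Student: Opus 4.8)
The plan is to copy the structure of the proof of the hypercomplete version, \Cref{lem:exceptional_preservation_hyperconstructible}, replacing the basechange input \Cref{cor:projection_left_adjointable} by its non-hypercomplete analogue \Cref{prop:exceptional_pushforward_sheaves}, and the preservation statement \Cref{cor:exceptional_preservation_locally_hyperconstant} by \Cref{lem:prlowersharppreservesLC}. Once the pieces are lined up correctly, the argument is essentially formal.

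First I would record that, by the convention on stratified products in force, the $P$-stratification on $S \cross [0,1]$ is the composite $S \cross [0,1] \xrightarrow{\pr_S} S \xrightarrow{\sigma} P$, so that its $p$-th stratum is $S_p \cross [0,1]$ and the corresponding stratum inclusion is $i_p \cross \id{[0,1]} \colon S_p \cross [0,1] \to S \cross [0,1]$. Thus, by \Cref{def:constructible}, a constructible sheaf $F \in \ConsP(S \cross [0,1];\E)$ has the property that $(i_p \cross \id{[0,1]})^{\ast}(F) \in \LC(S_p \cross [0,1];\E)$ for every $p \in P$.

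Next I would apply \Cref{prop:exceptional_pushforward_sheaves} with $g = i_p \colon S_p \to S$ and with the (locally compact, CW) space $X = [0,1]$ to obtain a natural equivalence $i_p^{\ast}\,\prSlowersharp(F) \simeq \pr_{S_p,\sharp}\bigl((i_p \cross \id{[0,1]})^{\ast}(F)\bigr)$. Combining this with \Cref{lem:prlowersharppreservesLC} applied to $S_p$ in place of $S$ — which says precisely that $\pr_{S_p,\sharp}$ sends locally constant sheaves on $S_p \cross [0,1]$ to locally constant sheaves on $S_p$ — yields that $i_p^{\ast}\,\prSlowersharp(F)$ is locally constant on $S_p$. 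Since this holds for every $p \in P$, \Cref{def:constructible} gives $\prSlowersharp(F) \in \ConsP(S;\E)$, as desired.

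There is no substantial obstacle here: the only point needing care is the bookkeeping identifying the strata of $S \cross [0,1]$ with the $S_p \cross [0,1]$ and the stratum inclusions with $i_p \cross \id{[0,1]}$, so that the basechange square of \Cref{prop:exceptional_pushforward_sheaves} applies verbatim; everything else is a direct invocation of already-established results.
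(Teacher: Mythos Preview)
Your proof is correct and follows exactly the approach of the paper, which simply says to combine \Cref{lem:prlowersharppreservesLC} with \Cref{prop:exceptional_pushforward_sheaves} as in the proof of \Cref{lem:exceptional_preservation_hyperconstructible}. You have spelled out the details carefully and accurately.
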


\begin{proof}
	As in the proof of \Cref{lem:exceptional_preservation_hyperconstructible}, combine \Cref{lem:prlowersharppreservesLC} with \Cref{prop:exceptional_pushforward_sheaves}.
\end{proof}


\begin{theorem} \label{thm:formal_homotopy_invariance}
	Under the hypotheses of \cref{lem:exceptional_preservation_hyperconstructible}, the essential image of the fully faithful functor
	\begin{equation}\label{eq:prSupperstarhyp_Cons}
		\prSupperstarhyp \colon \ConsPhyp(S;\E) \inclusion \ConsPhyp(S \cross X; \E) 
	\end{equation}
	is the intersection $\ConsPhyp(S \cross X; \E) \intersect \LChyp_S(S \cross X; \E)$.
\end{theorem}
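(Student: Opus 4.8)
The plan is to deduce this from \Cref{thm:relative_full_faithfulness} and \Cref{lem:exceptional_preservation_hyperconstructible} by a short bookkeeping argument; essentially all of the content is already contained in those two results. First I would check that \eqref{eq:prSupperstarhyp_Cons} is well-posed: since $\pr_S \colon S \cross X \to S$ is a map of $P$-stratified spaces for the product stratification $S \cross X \to S \to P$, \Cref{obs:pullbackofhypcons} shows that $\prSupperstarhyp$ carries $\ConsPhyp(S;\E)$ into $\ConsPhyp(S \cross X;\E)$; and it is fully faithful since it is the restriction to full subcategories of the fully faithful functor $\prSupperstarhyp \colon \Shhyp(S;\E) \to \Shhyp(S \cross X;\E)$ of \Cref{thm:relative_full_faithfulness} (whose hypothesis that $X$ be \wclwc is in force). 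The inclusion ``$\subseteq$'' of essential images is then immediate: an object of the essential image lies in $\ConsPhyp(S \cross X;\E)$ by well-posedness, and in $\LChyp_S(S \cross X;\E)$ because, by \Cref{thm:relative_full_faithfulness}, the essential image of $\prSupperstarhyp$ on all of $\Shhyp(S;\E)$ is precisely $\LChyp_S(S \cross X;\E)$.

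The reverse inclusion is where the exceptional pushforward enters. Let $F \in \ConsPhyp(S \cross X;\E) \intersect \LChyp_S(S \cross X;\E)$. Because $F$ is locally $S$-hyperconstant, \Cref{thm:relative_full_faithfulness} produces $G \in \Shhyp(S;\E)$ with $F \simeq \prSupperstarhyp(G)$, and the key point is to identify $G$: since $\prSupperstarhyp$ is fully faithful, the counit of the adjunction $\prSlowersharphyp \dashv \prSupperstarhyp$ is an equivalence (a right adjoint is fully faithful precisely when its counit is invertible), so applying $\prSlowersharphyp$ gives $G \simeq \prSlowersharphyp\prSupperstarhyp(G) \simeq \prSlowersharphyp(F)$ (equivalently, by \Cref{lem:computation_exceptional_pushforward}, $G \simeq \prSlowerstar(F)$). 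Now \Cref{lem:exceptional_preservation_hyperconstructible} applies: since $F$ is $P$-hyperconstructible, so is $\prSlowersharphyp(F) \simeq G$, i.e.\ $G \in \ConsPhyp(S;\E)$. Hence $F \simeq \prSupperstarhyp(G)$ with $G$ in $\ConsPhyp(S;\E)$, so $F$ lies in the essential image of \eqref{eq:prSupperstarhyp_Cons}, completing the argument.

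I do not expect a genuine obstacle here beyond invoking the correct earlier results; the one step that carries the idea is the identification $G \simeq \prSlowersharphyp(F)$, which is what transports $P$-hyperconstructibility from $F$ back to $G$ through the \emph{exceptional} pushforward — the functor for which preservation of constructibility has been established (\Cref{lem:exceptional_preservation_hyperconstructible}) — rather than through the ordinary pushforward $\prSlowerstar$, for which no such statement is available at this level of generality.
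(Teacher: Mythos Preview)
Your proof is correct and follows essentially the same approach as the paper: both directions rest on \Cref{thm:relative_full_faithfulness} for the inclusion of the essential image into the intersection, and on identifying $G \simeq \prSlowersharphyp(F)$ together with \Cref{lem:exceptional_preservation_hyperconstructible} for the reverse inclusion. Your justification of $G \simeq \prSlowersharphyp(F)$ via invertibility of the counit is equivalent to the paper's appeal to \Cref{lem:computation_exceptional_pushforward}, and your added remarks on well-posedness are a welcome clarification.
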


\begin{proof}
	Since
	\begin{equation*} 
		\prSupperstarhyp \colon \Shhyp(S; \E) \longrightarrow \LChyp_S(S \cross X; \E)
	\end{equation*}
	is an equivalence of \categories and preserves constructiblity, we immediately see that the essential image of \eqref{eq:prSupperstarhyp_Cons} is contained in
	\begin{equation*}
		\ConsPhyp(S \cross X; \E) \intersect \LChyp_S(S \cross X; \E) \period
	\end{equation*}
	Conversely, assume that $F$ belongs to this intersection.
	Since \smash{$F \in \LChyp_S(S \cross X; \E)$}, \cref{thm:relative_full_faithfulness,lem:computation_exceptional_pushforward} imply that
	\begin{equation*}
		F \simeq \prSupperstarhyp( \prSlowersharphyp(F))
	\end{equation*}
	Since $F$ belongs to $\ConsPhyp(S \cross X; \E)$, \cref{lem:exceptional_preservation_hyperconstructible} implies that \smash{$\prSlowersharphyp(F) \in \ConsPhyp(S;\E)$}.
	Therefore, $F$ belongs to the essential image of \eqref{eq:prSupperstarhyp_Cons}, as desired.
\end{proof}

\begin{corollary}\label{cor:formal_homotopy_invariance}
	Under the hypotheses of \cref{lem:exceptional_preservation_hyperconstructible}, the following conditions are equivalent:
	\begin{enumerate}[label=\stlabel{cor:formal_homotopy_invariance}, ref=\arabic*]
		\item\label{cor:formal_homotopy_invariance.1} The functor $ \prSupperstarhyp \colon \ConsPhyp(S;\E) \inclusion \ConsPhyp(S \cross X; \E) $ is an equivalence.
		
		\item\label{cor:formal_homotopy_invariance.2} For each $F \in \ConsPhyp(S \cross X; \E)$, the unit $ F \to \prSupperstarhyp( \prSlowersharphyp(F))  $ is an equivalence.
		
		\item\label{cor:formal_homotopy_invariance.3} We have $\ConsPhyp(S\cross X; \E) \subset \LChyp_S(S\cross X; \E)$ as subcategories of $ \Shhyp(S\cross X; \E) $.
		
		\item\label{cor:formal_homotopy_invariance.4} For each $F \in \ConsPhyp(S \cross X; \E)$, each open subset $W \subset S$, and each pair of weakly contractible open subsets $U \subset V$ of $X$, the restriction map $ F(W \cross V) \to F(W \cross U) $ is an equivalence.
	\end{enumerate}
\end{corollary}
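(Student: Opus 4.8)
The plan is to take condition \enumref{cor:formal_homotopy_invariance}{3} as a hub and prove each of the other three conditions equivalent to it; all of the substance is already contained in \Cref{thm:relative_full_faithfulness,thm:formal_homotopy_invariance,prop:strong_local_hyperconstancy}, so the argument is essentially a matter of tracking quantifiers.

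For \enumref{cor:formal_homotopy_invariance}{1} $\Leftrightarrow$ \enumref{cor:formal_homotopy_invariance}{3}: by \Cref{thm:formal_homotopy_invariance}, the functor \smash{$\prSupperstarhyp \colon \ConsPhyp(S;\E) \to \ConsPhyp(S \cross X; \E)$} is fully faithful with essential image the intersection $\ConsPhyp(S \cross X; \E) \intersect \LChyp_S(S \cross X; \E)$. A fully faithful functor is an equivalence exactly when it is essentially surjective, and here that happens precisely when this intersection equals all of $\ConsPhyp(S \cross X;\E)$, i.e.\ precisely when the containment in \enumref{cor:formal_homotopy_invariance}{3} holds.

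For \enumref{cor:formal_homotopy_invariance}{2} $\Leftrightarrow$ \enumref{cor:formal_homotopy_invariance}{3}: since $X$ is \wclwc, \Cref{thm:relative_full_faithfulness} shows that the right adjoint \smash{$\prSupperstarhyp \colon \Shhyp(S;\E) \to \Shhyp(S \cross X;\E)$} of $\prSlowersharphyp$ is fully faithful with essential image \smash{$\LChyp_S(S \cross X;\E)$}. For any adjunction with fully faithful right adjoint, the unit at an object $F$ is an equivalence if and only if $F$ lies in the essential image of the right adjoint (one direction is immediate; the other follows from a triangle identity together with the fact that the counit is an equivalence). Hence the unit $F \to \prSupperstarhyp\prSlowersharphyp(F)$ is an equivalence if and only if $F \in \LChyp_S(S \cross X;\E)$, and requiring this for every $F \in \ConsPhyp(S \cross X;\E)$ is exactly the containment in \enumref{cor:formal_homotopy_invariance}{3}.

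For \enumref{cor:formal_homotopy_invariance}{3} $\Leftrightarrow$ \enumref{cor:formal_homotopy_invariance}{4}: \Cref{prop:strong_local_hyperconstancy} says that a single hypersheaf $F \in \Shhyp(S \cross X;\E)$ is locally $S$-hyperconstant if and only if the restriction map $F(W \cross V) \to F(W \cross U)$ is an equivalence for every open $W \subset S$ and every pair of weakly contractible opens $U \subset V$ of $X$. Imposing this on every $F \in \ConsPhyp(S \cross X;\E)$ is, by definition, the assertion $\ConsPhyp(S \cross X;\E) \subset \LChyp_S(S \cross X;\E)$. The only mild subtlety throughout is the interchange between statements about a fixed object $F$ and statements about the whole subcategory $\ConsPhyp(S \cross X;\E)$; once the universal quantifier over $F$ is carried uniformly through each of the three cited results, every equivalence is immediate, so there is no genuine obstacle here—the real work was done in \Cref{thm:relative_full_faithfulness} and \Cref{thm:formal_homotopy_invariance}.
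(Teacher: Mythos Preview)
Your proof is correct and follows essentially the same route as the paper: the paper proves the equivalence of \enumref{cor:formal_homotopy_invariance}{1}, \enumref{cor:formal_homotopy_invariance}{2}, \enumref{cor:formal_homotopy_invariance}{3} by appeal to \Cref{thm:formal_homotopy_invariance} and then invokes \Cref{prop:strong_local_hyperconstancy} for \enumref{cor:formal_homotopy_invariance}{3} $\Leftrightarrow$ \enumref{cor:formal_homotopy_invariance}{4}. Your version simply unpacks the \enumref{cor:formal_homotopy_invariance}{2} $\Leftrightarrow$ \enumref{cor:formal_homotopy_invariance}{3} step by citing \Cref{thm:relative_full_faithfulness} directly (which is what underlies the relevant part of the proof of \Cref{thm:formal_homotopy_invariance}), so the content is identical.
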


\begin{proof}
	The equivalence between \enumref{cor:formal_homotopy_invariance}{1}, \enumref{cor:formal_homotopy_invariance}{2}, and \enumref{cor:formal_homotopy_invariance}{3} follows from \cref{thm:formal_homotopy_invariance}.
	On the other hand, \cref{prop:strong_local_hyperconstancy} shows that \enumref{cor:formal_homotopy_invariance}{3} and \enumref{cor:formal_homotopy_invariance}{4} are equivalent.
\end{proof}

In particular, we obtain the following sufficient criterion ensuring that $\prSupperstarhyp$ is an equivalence:

\begin{corollary}\label{cor:sufficient_criterion_for_homotopy_invariance}
	In the situation of \cref{thm:formal_homotopy_invariance}, assume that the hypersheaf restriction functors
	\begin{equation*} 
		\left\{\restrict{(-)}{S_p \cross X}^{\hyp} \colon \ConsPhyp(S \cross X; \E) \longrightarrow \LChyp(S_p \cross X; \E)\right\}_{p\in P} 
	\end{equation*}
	are jointly conservative.
	Then the functors
	\begin{equation*} 
		\adjto{\prSlowersharphyp}{\ConsPhyp(S \cross X;\E)}{\ConsPhyp(S;\E)}{\prSupperstarhyp}
	\end{equation*}
	are inverse equivalences.
\end{corollary}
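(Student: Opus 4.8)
The plan is to deduce everything from \Cref{cor:formal_homotopy_invariance} by verifying its condition \enumref{cor:formal_homotopy_invariance}{2}, namely that for every $F \in \ConsPhyp(S \cross X;\E)$ the unit $\unit_F \colon F \to \prSupperstarhyp\prSlowersharphyp(F)$ is an equivalence; once \enumref{cor:formal_homotopy_invariance}{1} holds, the left adjoint $\prSlowersharphyp$ of the equivalence $\prSupperstarhyp$ is automatically its inverse, and both functors restrict to $\ConsPhyp$ by \Cref{lem:exceptional_preservation_hyperconstructible} and \Cref{obs:pullbackofhypcons}, so the claimed inverse equivalences follow. First I would record that $\unit_F$ is a morphism \emph{inside} the full subcategory $\ConsPhyp(S \cross X;\E)$: its target is hyperconstructible because $\prSlowersharphyp(F) \in \ConsPhyp(S;\E)$ by \Cref{lem:exceptional_preservation_hyperconstructible}, and $\prSupperstarhyp$ preserves hyperconstructibility since it is the pullback along the $P$-stratified map $\pr_S \colon S \cross X \to S$ (\Cref{obs:pullbackofhypcons}). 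Consequently the joint conservativity hypothesis lets us test whether $\unit_F$ is an equivalence after applying each restriction functor $(i_p \cross \id{X})\upperstarhyp$, $p \in P$ — and it is exactly at this point that conservativity is only needed on $\ConsPhyp$, which is what is assumed.

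Next I would identify the restricted unit. Applying the left-adjointable square of \Cref{cor:projection_left_adjointable} to the inclusion $i_p \colon S_p \inclusion S$, and arguing exactly as in the proof of \Cref{cor:pullbackcounit}, the functor $(i_p \cross \id{X})\upperstarhyp$ carries $\unit_F$ to the unit of the adjunction $\pr_{S_p,\sharp}^{\hyp} \leftadjoint \pr_{S_p}\upperstarhyp$ evaluated at $F' \coloneqq (i_p \cross \id{X})\upperstarhyp(F)$. Since $F$ is $P$-hyperconstructible, $F' \in \LChyp(S_p \cross X;\E)$, and because $X$ is \wclwc, \Cref{lem:hyperconstant_implies_S_hyperconstant} places $F'$ in $\LChyp_{S_p}(S_p \cross X;\E)$.

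Finally I would invoke \Cref{thm:relative_full_faithfulness}, which identifies $\LChyp_{S_p}(S_p \cross X;\E)$ with the essential image of the fully faithful functor $\pr_{S_p}\upperstarhyp$: writing $F' \simeq \pr_{S_p}\upperstarhyp(G')$, full faithfulness of the right adjoint $\pr_{S_p}\upperstarhyp$ makes the counit $\pr_{S_p,\sharp}^{\hyp}\pr_{S_p}\upperstarhyp(G') \to G'$ an equivalence, and a triangle identity then forces the unit of $\pr_{S_p,\sharp}^{\hyp} \leftadjoint \pr_{S_p}\upperstarhyp$ at $F'$ to be an equivalence. Hence $(i_p \cross \id{X})\upperstarhyp(\unit_F)$ is an equivalence for every $p \in P$, so $\unit_F$ is an equivalence, and \Cref{cor:formal_homotopy_invariance} gives the inverse equivalences as claimed. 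I do not anticipate a real obstacle: the argument is a direct assembly of \Cref{cor:projection_left_adjointable}, \Cref{lem:hyperconstant_implies_S_hyperconstant}, and \Cref{thm:relative_full_faithfulness}, and the only mildly delicate points are the bookkeeping ones — confirming that $\unit_F$ stays inside $\ConsPhyp$ so the weaker conservativity hypothesis suffices, and correctly matching the stratum-restricted unit with the unit of the adjunction $\pr_{S_p,\sharp}^{\hyp} \leftadjoint \pr_{S_p}\upperstarhyp$ via left adjointability.
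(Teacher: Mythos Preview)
Your proposal is correct and follows essentially the same route as the paper, which in one line combines \Cref{thm:homotopy_invariance_hyperconstant}, \Cref{cor:pullbackcounit}, and \enumref{cor:formal_homotopy_invariance}{2}. You are slightly more explicit on two points the paper leaves implicit: first, that the unit $\unit_F$ lies in $\ConsPhyp(S\cross X;\E)$ (via \Cref{lem:exceptional_preservation_hyperconstructible} and \Cref{obs:pullbackofhypcons}) so that the weaker conservativity hypothesis on $\ConsPhyp$ suffices; second, you unpack the invocation of \Cref{thm:homotopy_invariance_hyperconstant} into its constituent ingredients \Cref{lem:hyperconstant_implies_S_hyperconstant} and \Cref{thm:relative_full_faithfulness} rather than citing it as a black box.
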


\begin{proof}
	Combine \cref{thm:homotopy_invariance_hyperconstant}, \Cref{cor:pullbackcounit}, and \enumref{cor:formal_homotopy_invariance}{2}.
\end{proof}

\begin{corollary}\label{cor:sufficient_criterion_for_homotopy_invariance_nonhypercomplete}
	Let $ P $ be a poset, $ S \in \TopP $, and $ \E $ a presentable \category.
	Assume that the restriction functors
	\begin{equation*} 
		\left\{\restrict{(-)}{S_p \cross [0,1]} \colon \ConsP(S \cross [0,1]; \E) \longrightarrow \LC(S_p \cross [0,1]; \E)\right\}_{p\in P} 
	\end{equation*}
	are jointly conservative.
	Then the functors
	\begin{equation*} 
		\adjto{\prSlowersharp}{\ConsP(S \cross [0,1];\E)}{\ConsP(S;\E)}{\prSupperstar}
	\end{equation*}
	are inverse equivalences.
\end{corollary}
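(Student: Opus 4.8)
The plan is to run the argument of \Cref{cor:sufficient_criterion_for_homotopy_invariance} in the non-hypercomplete setting, feeding in the sheaf-theoretic analogues from \cref{sec:homotopy_invariance_for_LC} in place of their hypercomplete counterparts. First I would record that all the functors in sight behave well: since $[0,1]$ is a contractible CW complex, \Cref{rmk:prupperstar_CW_complex} (together with \Cref{lem:nonhypercomplete_prlowersharp}) shows that $\prSupperstar$ is fully faithful, so the counit of the adjunction $\prSlowersharp \leftadjoint \prSupperstar$ is an equivalence; moreover $\prSupperstar$ preserves $P$-constructibility by \Cref{obs:pullbackofhypcons} and $\prSlowersharp$ preserves it by \Cref{cor:prlowersharppreservesCons}. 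Consequently both functors restrict to an adjunction between the two displayed subcategories, and the whole statement reduces to showing that the unit $\unit_F \colon F \to \prSupperstar\prSlowersharp(F)$ is an equivalence for every $F \in \ConsP(S \cross [0,1];\E)$.

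To prove this I would argue stratum by stratum. Note that both the domain and the codomain of $\unit_F$ lie in $\ConsP(S \cross [0,1];\E)$, so $\unit_F$ is a morphism of that category; by the joint conservativity hypothesis it is therefore enough to check that $(i_p \cross \id{[0,1]})^\ast(\unit_F)$ is an equivalence for each $p \in P$. Running the computation in the proof of \Cref{lem:pullbackcounit} — which uses only the basechange square of \Cref{prop:exceptional_pushforward_sheaves} — identifies this morphism with the unit of the adjunction $\pr_{S_p,\sharp} \leftadjoint \prupperstar_{S_p}$ applied to $\restrict{F}{S_p \cross [0,1]}$. Since $F$ is $P$-constructible, this restriction is a locally constant sheaf on $S_p \cross [0,1]$, and \Cref{cor:hiofLC} with $S$ replaced by $S_p$ says exactly that this unit is an equivalence.

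I do not expect a genuine obstacle here; the proof is an assembly of results already in hand. The one point that needs a little care is that the joint-conservativity hypothesis is imposed only on $\ConsP(S \cross [0,1];\E)$ rather than on all of $\Sh(S \cross [0,1];\E)$, so one cannot simply quote the statement of \Cref{lem:pullbackcounit} verbatim; instead one invokes its proof, after first observing (as above) that $\unit_F$ already lives in the constructible subcategory, so that conservativity there suffices.
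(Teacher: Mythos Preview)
Your proposal is correct and follows the same route as the paper's proof, which simply reads ``Combine \Cref{lem:pullbackcounit,cor:hiofLC,cor:prlowersharppreservesCons}.'' You have unpacked this correctly and, in fact, been more careful than the paper on the one subtle point: the conservativity hypothesis is on $\ConsP(S \cross [0,1];\E)$ rather than on all of $\Sh(S \cross [0,1];\E)$, so one must first observe (via \Cref{cor:prlowersharppreservesCons} and \Cref{obs:pullbackofhypcons}) that $\unit_F$ is a morphism in the constructible subcategory before invoking the argument of \Cref{lem:pullbackcounit}.
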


\begin{proof}
	Combine \Cref{lem:pullbackcounit,cor:hiofLC,cor:prlowersharppreservesCons}.
\end{proof}


\subsection{Detecting equivalences on strata}\label{subsec:detecting_equivalences_on_strata}

\Cref{cor:sufficient_criterion_for_homotopy_invariance} shows that a sufficient criterion for the functor
\begin{equation*}
	\prSupperstarhyp \colon \ConsPhyp(S; \E) \longrightarrow \ConsPhyp(S \cross X; \E) 
\end{equation*}
to be an equivalence is given by the joint conservativity of the hyperrestrictions to the strata of $S \cross X$.
We offer two ways of checking this independently of both $S$ and $X$.


\subsubsection{The compactly generated case}
\hfill

The fact that equivalences of hypersheaves on a topological space with values in a compactly generated \category can be checked on stalks implies our first homotopy-invariance result:

\begin{corollary}\label{lem:CGconservativity}\label{cor:hiofConsCG}
	Let $ P $ be a poset, $ S \in \TopP $, and let $ \E $ be a compactly generated \category.
	Then:
	\begin{enumerate}[label=\stlabel{lem:CGconservativity}, ref=\arabic*]
		\item\label{lem:CGconservativity.1} The restriction functors \smash{$ \{\restrict{(-)}{S_p}^{\hyp} \colon \fromto{\Shhyp(S;\E)}{\Shhyp(S_p;\E)}\}_{p \in P} $} are jointly conservative.

		\item\label{lem:CGconservativity.2} The functor $ \ConsPhyp(-;\E) \colon \TopP^{\op} \longrightarrow \Cat_\infty $ is strongly homotopy-invariant.
	\end{enumerate}
\end{corollary}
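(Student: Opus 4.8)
I would prove the two parts in sequence, with \ref{lem:CGconservativity.1} feeding into \ref{lem:CGconservativity.2} via \Cref{cor:sufficient_criterion_for_homotopy_invariance}.

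For \ref{lem:CGconservativity.1}, the idea is to reduce joint conservativity of the stratum-restrictions to joint conservativity of the stalk functors, which is available since $\E$ is compactly generated (\Cref{rec:stalksconservativeCGhyp}). Let $f$ be a morphism of $\Shhyp(S;\E)$ whose restriction $\restrict{f}{S_p}^{\hyp}$ is an equivalence for every $p \in P$; it suffices to show that $\supperstar(f)$ is an equivalence for each $s \in S$. Writing $p = \sigma(s)$, the point $s$ lies in the stratum $S_p$, so the inclusion $\{s\} \hookrightarrow S$ factors as $\{s\} \hookrightarrow S_p \xrightarrow{i_p} S$. By functoriality of hypersheaf pullback under composition, this yields a factorization of the stalk functor $\supperstar$ (relative to $S$) as the stalk functor at $s$ relative to $S_p$ postcomposed with $\restrict{(-)}{S_p}^{\hyp}$. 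Since $\restrict{f}{S_p}^{\hyp}$ is an equivalence, so is $\supperstar(f)$, and \Cref{rec:stalksconservativeCGhyp} finishes the argument.

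For \ref{lem:CGconservativity.2}, fix a $P$-stratified space $S$ and a \wclwc space $X$; by \Cref{def:homotopy_invariant} I must show that $\prSupperstarhyp \colon \ConsPhyp(S;\E) \to \ConsPhyp(S \cross X;\E)$ is an equivalence. By \Cref{cor:sufficient_criterion_for_homotopy_invariance}, this reduces to checking that the hypersheaf restriction functors $\{\restrict{(-)}{S_p \cross X}^{\hyp}\}_{p \in P}$ on $\ConsPhyp(S \cross X;\E)$ are jointly conservative. But the strata of $S \cross X$ (for the stratification $S \cross X \xrightarrow{\pr_S} S \xrightarrow{\sigma} P$) are precisely the subspaces $S_p \cross X$, and the restriction of a hyperconstructible hypersheaf to such a stratum lands in $\LChyp(S_p \cross X;\E)$ by the very definition of hyperconstructibility. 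So the required joint conservativity follows by applying \ref{lem:CGconservativity.1} to the $P$-stratified space $S \cross X$: the stratum-restrictions are jointly conservative on all of $\Shhyp(S \cross X;\E)$, hence a fortiori on the full subcategory $\ConsPhyp(S \cross X;\E)$. Since $X$ was an arbitrary \wclwc space, this shows that $\ConsPhyp(-;\E)$ is strongly homotopy-invariant.

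I do not anticipate a genuine obstacle: everything is assembled from results already established. The only point requiring a little care is the factorization of the stalk functor through the stratum-restriction in \ref{lem:CGconservativity.1}, which rests on the functoriality of hypersheaf pullback under composition of maps; this is routine, following for instance from the identification $\fupperstarhyp \simeq (-)^{\hyp} \circ \fupperstar$ together with the functoriality of sheaf pullback.
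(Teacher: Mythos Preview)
Your proposal is correct and follows essentially the same approach as the paper: part \ref{lem:CGconservativity.1} is deduced from \Cref{rec:stalksconservativeCGhyp} via the factorization of stalk functors through stratum restrictions, and part \ref{lem:CGconservativity.2} follows by applying \ref{lem:CGconservativity.1} to $S \cross X$ together with \Cref{cor:sufficient_criterion_for_homotopy_invariance}. The paper's proof is terser (it simply asserts that \Cref{rec:stalksconservativeCGhyp} ``immediately implies'' \ref{lem:CGconservativity.1}), but the underlying argument is the one you spelled out.
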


\begin{proof}
	\Cref{rec:stalksconservativeCGhyp} immediately implies \enumref{lem:CGconservativity}{1}.
	For \enumref{lem:CGconservativity}{2}, combine \enumref{lem:CGconservativity}{1} for the $ P $-stratified space $ S \cross X $ with \cref{cor:sufficient_criterion_for_homotopy_invariance}.
\end{proof}


\begin{notation}
	Let $ P $ be a poset and $ \fromto{S}{P} $ be a $ P $-stratified topological space.
	Write
	\begin{equation*}
		\ConsP(S)_{<\infty} \subset \ConsPhyp(S)
	\end{equation*}
	for the full subcategory spanned by those $ P $-constructible sheaves that are also $ n $-truncated for some integer $ n \geq 0 $.
	Since left exact functors preserve truncated objects, the assignment $ \goesto{S}{\ConsP(S)_{<\infty}} $ defines a subfunctor of $ \ConsPhyp $.
\end{notation}	

\begin{corollary}\label{cor:truncated}
	Let $ P $ be a poset.
	The functor \smash{$ \ConsP(-)_{<\infty} \colon \fromto{\TopP^{\op}}{\Catinfty} $} is strongly homotopy-invariant.
\end{corollary}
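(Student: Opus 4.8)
The plan is to bootstrap from the compactly generated case. Since $\Spc$ is compactly generated, \Cref{cor:hiofConsCG} says that for every poset $P$, every $S\in\TopP$, and every \wclwc topological space $X$, the hypersheaf pullback
\begin{equation*}
	\prSupperstarhyp\colon\ConsPhyp(S;\Spc)\longrightarrow\ConsPhyp(S\cross X;\Spc)
\end{equation*}
is an equivalence of \categories. By construction $\ConsP(-)_{<\infty}$ is a subfunctor of $\ConsPhyp(-;\Spc)$, and $\ConsP(S)_{<\infty}$ is the full subcategory of $\ConsPhyp(S;\Spc)$ on the objects that are $n$-truncated for some $n\geq 0$. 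So it is enough to show (i) that $\prSupperstarhyp$ sends $\ConsP(S)_{<\infty}$ into $\ConsP(S\cross X)_{<\infty}$ — full faithfulness of the restricted functor is then automatic, being the restriction of a fully faithful functor to a full subcategory — and (ii) that the restricted functor is essentially surjective. Granting these, $\prSupperstarhyp$ restricts to an equivalence $\ConsP(S)_{<\infty}\to\ConsP(S\cross X)_{<\infty}$, which is precisely the strong homotopy-invariance of $\ConsP(-)_{<\infty}$.

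First I would record a small observation used in both (i) and (ii): an $n$-truncated $P$-hyperconstructible hypersheaf is the same thing as an $n$-truncated $P$-constructible sheaf. Indeed an $n$-truncated (hyper)sheaf is automatically hypercomplete, and along any stratum its restriction is again $n$-truncated; for an $n$-truncated object, being hyperconstant coincides with being constant, because the constant sheaf at an $n$-truncated object is itself $n$-truncated, hence hypercomplete. With this in hand, (i) is immediate: the hypersheaf pullback $\prSupperstarhyp$ is left exact, so it preserves $n$-truncated objects for every $n$, and it preserves $P$-hyperconstructibility (\Cref{obs:pullbackofhypcons}, applied to the stratified projection $\pr_S\colon S\cross X\to S$); combining these with the observation gives (i).

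For (ii), fix $F\in\ConsP(S\cross X)_{<\infty}$, say $F$ is $n$-truncated. Regarding $F$ as an object of $\ConsPhyp(S\cross X;\Spc)$ and using that $\prSupperstarhyp$ is an equivalence there, pick $G\in\ConsPhyp(S;\Spc)$ with $\prSupperstarhyp(G)\simeq F$. The only point that is not purely formal is that $G$ is again $n$-truncated, and this is exactly where the hypothesis on $X$ enters: a priori the quasi-inverse of $\prSupperstarhyp$ is the \emph{left} adjoint $\prSlowersharphyp$, which has no reason to preserve truncatedness. To sidestep this I would use that $X$ is weakly contractible, so that by (the proof of) \Cref{thm:relative_full_faithfulness} the unit $\id{}\to\prSlowerstar\prSupperstarhyp$ is an equivalence on $\Shhyp(S;\Spc)$ — equivalently, by \Cref{lem:computation_exceptional_pushforward}, $\prSlowersharphyp$ agrees with $\prSlowerstar$ on locally $S$-hyperconstant hypersheaves. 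Hence $G\simeq\prSlowerstar\prSupperstarhyp(G)\simeq\prSlowerstar(F)$; since $\prSlowerstar$ is a right adjoint it is left exact, so $G$ is $n$-truncated, hence lies in $\ConsP(S)_{<\infty}$ by the observation of the previous paragraph. This completes (ii), and with it the proof. As the discussion shows, the argument is entirely formal apart from that one use of weak contractibility — the replacement of the left adjoint $\prSlowersharphyp$ by the left-exact $\prSlowerstar$ — which is the only real obstacle and is already isolated in \Cref{thm:relative_full_faithfulness} and \Cref{lem:computation_exceptional_pushforward}.
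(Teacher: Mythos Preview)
Your proof is correct and is precisely the argument the paper leaves implicit (the corollary is stated without proof): restrict the equivalence of \Cref{cor:hiofConsCG} for $\E=\Spc$ to truncated objects, using that $\prSupperstarhyp$ is left exact and that its inverse on the relevant subcategory is given by the left-exact pushforward $\prSlowerstar$. Your identification of $n$-truncated hyperconstructible hypersheaves with $n$-truncated constructible sheaves is also the point behind the paper's chosen notation $\ConsP(S)_{<\infty}\subset\ConsPhyp(S)$.
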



\subsubsection{The Noetherian case}
\hfill

In order to drop the compact generation assumption on $\E$, there are two difficulties to overcome.
Recall that a poset $ P $ is \defn{Noetherian} if $ P $ satisfies the \defn{ascending chain condition}: there does not exist an infinite strictly ascending sequence $ p_0 < p_1 < p_2 < \cdots $ of elements of $ P $.
The first issue is that there exist non-Noetherian posets $ P $ for which the \topos $ \Sh(P) = \ConsP(P) $ is not hypercomplete; see \cite[Example A.13]{arXiv:2001.00319}.
Said differently, the functors $ \fromto{\Sh(P)}{\Sh(\{p\})} $ given by pulling back to strata need not be jointly conservative.
Thus we restrict ourselves to Noetherian posets.

The second issue is with the coefficient \category $ \E $.
Consider the most simple stratification when $ P = \{0 < 1\} $, so that a stratification $ \fromto{S}{\{0 < 1\}} $ is the data of a closed subspace $ Z = S_0 $ and its open complement $ S \sminus Z = S_1 $.
Unfortunately, in general the restriction functors
\begin{equation*}
	\restrict{(-)}{Z} \colon \fromto{\Sh(S;\E)}{\Sh(Z;\E)} \andeq \restrict{(-)}{S \sminus Z} \colon \fromto{\Sh(S;\E)}{\Sh(S \sminus Z;\E)}
\end{equation*}
need not be jointly conservative.
Thus, we have to assume this property:

\begin{definition}\label{def:respectsgluing}
	We say that a presentable \category $ \E $ \defn{respects gluing} if for each topological space $ S $ and closed subspace $ Z \subset S $, the restriction functors
	\begin{equation*}
		\restrict{(-)}{Z} \colon \fromto{\Sh(S;\E)}{\Sh(Z;\E)} \andeq \restrict{(-)}{S \sminus Z} \colon \fromto{\Sh(S;\E)}{\Sh(S \sminus Z;\E)}
	\end{equation*}
	and the hypersheaf restriction functors
	\begin{equation*}
		\restrict{(-)}{Z}^{\hyp} \colon \fromto{\Shhyp(S;\E)}{\Shhyp(Z;\E)} \andeq \restrict{(-)}{S \sminus Z} \colon \fromto{\Shhyp(S;\E)}{\Shhyp(S \sminus Z;\E)}
	\end{equation*}
	are jointly conservative.
\end{definition}

\noindent Luckily, many presentable \categories that arise in nature respect gluing:

\begin{example}
	If each \category $ \Sh(S;\E) $ is the \textit{recollement} of $ \Sh(Z;\E) $ and $ \Sh(S \sminus Z;\E) $ in the sense of \HAa{Definition}{A.8.1}, and each \category \smash{$ \Shhyp(S;\E) $} is the recollement of \smash{$ \Shhyp(Z;\E) $} and \smash{$ \Shhyp(S \sminus Z;\E) $}, then $ \E $ respects gluing.
	Importantly, this is satisfied if $ \E $ is stable or $ \E \equivalent \C \tensor \D $ where $ \C $ is a compactly generated \category and $ \D $ is \atopos \cite[Corollary 2.13, Proposition 2.21, \& Remark 2.26]{arXiv:2108.03545}.
\end{example}

\begin{lemma}\label{lem:Noetherianconservativity}
	Let $ P $ be a Noetherian poset, $ S \in \TopP $, and let $ \E $ be a presentable \category that respects gluing.
	Then:
	\begin{enumerate}[label=\stlabel{lem:Noetherianconservativity}, ref=\arabic*]
		\item\label{lem:Noetherianconservativity.1} The pullback functors \smash{$ \{\restrict{(-)}{S_p} \colon \fromto{\Sh(S;\E)}{\Sh(S_p;\E)}\}_{p \in P} $} are jointly conservative.

		\item\label{lem:Noetherianconservativity.2} The hypersheaf pullback functors \smash{$ \{\restrict{(-)}{S_p}^{\hyp} \colon \fromto{\Shhyp(S;\E)}{\Shhyp(S_p;\E)}\}_{p \in P} $} are jointly conservative.
	\end{enumerate}
\end{lemma}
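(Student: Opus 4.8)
The plan is to argue by Noetherian induction on $P$, peeling off one stratum at a time so that at each step the hypothesis that $\E$ respects gluing applies to a two-stratum decomposition. Since the sheaf and hypersheaf statements have verbatim-identical proofs — replace $\Sh(-;\E)$ by $\Shhyp(-;\E)$, ordinary restriction by hyperrestriction, and open-cover descent by hyperdescent — I will only describe the sheaf case. So let $\phi \colon F \to G$ be a morphism in $\Sh(S;\E)$ such that $\restrict{\phi}{S_p}$ is an equivalence for every $p \in P$; I want to conclude that $\phi$ is an equivalence. Using the notation of \Cref{notation:Alexandroff_topology}, for $p \in P$ write $S_{\geq p} = \sigma^{-1}(P_{\geq p})$ and $S_{>p} = \sigma^{-1}(P_{>p})$; these are open subspaces of $S$, and inside $S_{\geq p}$ the stratum $S_p$ is closed with open complement $S_{>p}$. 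Let $\Phi(p)$ be the statement that $\restrict{\phi}{S_{\geq p}}$ is an equivalence. Because $P$ is Noetherian, the ascending chain condition guarantees that every nonempty subset of $P$ has a maximal element, so to prove $\Phi(p)$ for all $p$ it suffices to prove $\Phi(p)$ assuming $\Phi(q)$ holds for all $q > p$ (otherwise the set of $p$ with $\neg\Phi(p)$ would have a maximal element, contradicting the inductive step applied there).

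For the inductive step, I would apply the hypothesis that $\E$ respects gluing (\Cref{def:respectsgluing}) to the space $S_{\geq p}$ and its closed subspace $S_p$: this says that $\restrict{(-)}{S_p}$ and $\restrict{(-)}{S_{>p}}$ are jointly conservative on $\Sh(S_{\geq p};\E)$. Now $\restrict{\phi}{S_{\geq p}}$ restricts on $S_p$ to $\restrict{\phi}{S_p}$, which is an equivalence by hypothesis. It restricts on $S_{>p}$ to $\restrict{\phi}{S_{>p}}$; and $\{S_{\geq q}\}_{q>p}$ is an open cover of $S_{>p}$ (since $P_{>p} = \bigcup_{q > p} P_{\geq q}$) on each member of which $\phi$ restricts to an equivalence by the inductive hypothesis, so open-cover descent forces $\restrict{\phi}{S_{>p}}$ to be an equivalence. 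Joint conservativity then gives $\Phi(p)$, completing the induction.

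Finally, $\{S_{\geq p}\}_{p\in P}$ is an open cover of $S$, so once $\Phi(p)$ holds for every $p$, open-cover descent one last time shows that $\phi$ itself is an equivalence. I do not expect a serious obstacle here; the only points that require care are organizing the Noetherian induction \emph{from the top} (exploiting the ascending chain condition to extract maximal elements rather than minimal ones), remembering that $S_p$ must be viewed as a \emph{closed} subspace of the open set $S_{\geq p}$ rather than of $S$ itself, and identifying $\{S_{\geq q}\}_{q>p}$ as the open cover of $S_{>p}$ that lets the inductive hypothesis propagate.
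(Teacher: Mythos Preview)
Your proof is correct and matches the paper's argument essentially verbatim: both reduce to the open cover $\{S_{\geq p}\}_{p \in P}$, prove by Noetherian induction that $\restrict{\phi}{S_{\geq p}}$ is an equivalence using the gluing hypothesis applied to the closed--open pair $(S_p, S_{>p})$ inside $S_{\geq p}$, and invoke the inductive hypothesis on the open cover $\{S_{\geq q}\}_{q>p}$ of $S_{>p}$. The only cosmetic difference is that the paper states the reduction to the open cover before the induction rather than after.
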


\begin{proof}
	We prove \enumref{lem:Noetherianconservativity}{1}; the proof of \enumref{lem:Noetherianconservativity}{2} is exactly the same, replacing sheaves by hypersheaves.
	Let $ \phi $ be a morphism in $ \Sh(S;\E) $ that restricts to an equivalence on each stratum; we need to show that $ \phi $ is an equivalence.
	Since the open subsets $ \{S_{\geq p}\}_{p \in P} $ cover $ S $, it suffices to show:
	\begin{enumerate}
		\item[($\ast$)]\label{item:noethinductionclaim} For each $ p \in P $, the restriction $ \restrict{\phi}{S_{\geq p}} $ is an equivalence in $ \Sh(S_{\geq p};\E) $.
	\end{enumerate}
	We prove (\hyperref[item:noethinductionclaim]{$ \ast $}) by Noetherian induction on $ p \in P $.
	We need to show that if the restriction \smash{$ \restrict{\phi}{S_{\geq q}} $} is an equivalence for each $ q > p $, then \smash{$ \restrict{\phi}{S_{\geq p}} $} is an equivalence.
	Note that
	\begin{equation*}
		S_{\geq p} \sminus S_p = S_{>p} = \Union_{q \in P_{>p}} S_{\geq q} \period
	\end{equation*}
	Hence the inductive hypothesis implies that the restriction \smash{$ \restrict{\phi}{S_{>p}} $} is an equivalence.
	By assumption \smash{$ \restrict{\phi}{S_{p}} $} is also an equivalence.
	Since $ \E $ respects gluing, the restriction functors
	\begin{equation*}
		\restrict{(-)}{S_p} \colon \fromto{\Sh(S_{\geq p};\E)}{\Sh(S_p;\E)} \andeq \restrict{(-)}{S_{>p}} \colon \fromto{\Sh(S_{\geq p};\E)}{\Sh(S_{>p};\E)}
	\end{equation*}
	are jointly conservative, completing the proof.
\end{proof}

Finally we deduce the homotopy-invariance of constructible sheaves.

\begin{corollary}\label{cor:hiofConsnoeth}
	Let $ P $ be a Noetherian poset and let $ \E $ be a presentable \category that respects gluing.
	Then:
	\begin{enumerate}[label=\stlabel{cor:hiofConsnoeth}, ref=\arabic*]
		\item\label{cor:hiofConsnoeth.1} The functor \smash{$ \ConsP(-;\E) \colon \fromto{\TopP^{\op}}{\Catinfty} $} is homotopy-invariant

		\item\label{cor:hiofConsnoeth.2} The functor \smash{$ \ConsPhyp(-;\E) \colon \fromto{\TopP^{\op}}{\Catinfty} $} is strongly homotopy-invariant.
	\end{enumerate}
\end{corollary}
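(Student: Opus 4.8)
The plan is to deduce both statements formally: each is precisely the combination of one of the sufficient criteria \cref{cor:sufficient_criterion_for_homotopy_invariance} and \cref{cor:sufficient_criterion_for_homotopy_invariance_nonhypercomplete} with the Noetherian conservativity established in \cref{lem:Noetherianconservativity}. No new ideas are needed; the point is simply to feed the right input into the right machine.

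For \enumref{cor:hiofConsnoeth}{2}, I would fix $S \in \TopP$ and a \wclwc topological space $X$, and view $S \cross X$ as a $P$-stratified space via $\pr_S$, so that its $p$-th stratum is $S_p \cross X$. By \cref{cor:sufficient_criterion_for_homotopy_invariance} it suffices to check that the hypersheaf restriction functors $\{\restrict{(-)}{S_p \cross X}^{\hyp}\colon \ConsPhyp(S \cross X;\E)\to\LChyp(S_p \cross X;\E)\}_{p\in P}$ are jointly conservative. Since $P$ is Noetherian and $\E$ respects gluing, \enumref{lem:Noetherianconservativity}{2}, applied to the $P$-stratified space $S \cross X$, tells us that the hypersheaf pullbacks $\{\restrict{(-)}{S_p \cross X}^{\hyp}\colon \Shhyp(S \cross X;\E)\to\Shhyp(S_p \cross X;\E)\}_{p\in P}$ are jointly conservative. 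A conservative functor stays conservative after restriction to a full subcategory, and by \cref{def:constructible} the hyperrestriction of a $P$-hyperconstructible hypersheaf to $S_p \cross X$ is locally hyperconstant; hence the displayed family is jointly conservative on $\ConsPhyp(S \cross X;\E)$, and \cref{cor:sufficient_criterion_for_homotopy_invariance} yields that $\prSupperstarhyp\colon \ConsPhyp(S;\E)\to\ConsPhyp(S \cross X;\E)$ is an equivalence. As this holds for every such $S$ and $X$, the functor $\ConsPhyp(-;\E)$ is strongly homotopy-invariant in the sense of \cref{def:homotopy_invariant}.

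For \enumref{cor:hiofConsnoeth}{1}, the argument is the same with $X=[0,1]$ and hypersheaves replaced by sheaves: I would view $S \cross [0,1]$ as $P$-stratified via $\pr_S$ (strata $S_p \cross [0,1]$), apply \enumref{lem:Noetherianconservativity}{1} to obtain that $\{\restrict{(-)}{S_p \cross [0,1]}\colon \Sh(S \cross [0,1];\E)\to\Sh(S_p \cross [0,1];\E)\}_{p\in P}$ is jointly conservative, hence so is the induced family on the full subcategory $\ConsP(S \cross [0,1];\E)$ (which lands in $\LC(S_p \cross [0,1];\E)$ by \cref{def:constructible}), and then conclude from \cref{cor:sufficient_criterion_for_homotopy_invariance_nonhypercomplete} that $\prSupperstar\colon \ConsP(S;\E)\to\ConsP(S \cross [0,1];\E)$ is an equivalence for every $S \in \TopP$, i.e.\ that $\ConsP(-;\E)$ is homotopy-invariant.

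The only thing worth flagging is a bookkeeping point rather than a genuine obstacle: \cref{lem:Noetherianconservativity} is phrased for arbitrary (hyper)sheaves, whereas the criteria \cref{cor:sufficient_criterion_for_homotopy_invariance,cor:sufficient_criterion_for_homotopy_invariance_nonhypercomplete} want joint conservativity on the constructible subcategories — but this transfer is automatic since constructible (hyper)sheaves form full subcategories. All the genuinely substantial inputs (preservation of (hyper)constructibility by the exceptional pushforward, the unit/counit identifications of \cref{lem:computation_exceptional_pushforward} and \cref{cor:pullbackcounit}, and the Noetherian induction behind \cref{lem:Noetherianconservativity}) are already available, so this corollary is a pure assembly.
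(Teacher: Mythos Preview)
Your proposal is correct and matches the paper's own proof exactly: the paper simply says to combine \cref{cor:sufficient_criterion_for_homotopy_invariance,cor:sufficient_criterion_for_homotopy_invariance_nonhypercomplete} with \cref{lem:Noetherianconservativity}, and you have spelled out precisely how that combination goes. The bookkeeping point you flag (passing conservativity to the full constructible subcategory) is indeed the only thing to check, and it is as automatic as you say.
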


\begin{proof}
	Combine \Cref{cor:sufficient_criterion_for_homotopy_invariance,cor:sufficient_criterion_for_homotopy_invariance_nonhypercomplete} with \Cref{lem:Noetherianconservativity}.
\end{proof}


\DeclareFieldFormat{labelnumberwidth}{#1}
\printbibliography[keyword=alph]
\DeclareFieldFormat{labelnumberwidth}{{#1\adddot\midsentence}}
\printbibliography[heading=none, notkeyword=alph]

\end{document}